\documentclass[10pt]{amsart}
\usepackage{amsfonts}
\usepackage{mathrsfs}
\usepackage{amsmath}
\usepackage{amssymb}
\usepackage{latexsym}
\usepackage{lscape}
\usepackage[all,cmtip]{xy}
\usepackage{comment}
\usepackage{appendix}
\usepackage{geometry}
\usepackage[nice]{nicefrac}
\usepackage{dsfont}

\usepackage[pagebackref,hyperindex,linktocpage=true]{hyperref}
\hypersetup{
    colorlinks,
    linkcolor={red!50!black},
    citecolor={blue!50!black},
    urlcolor={blue!80!black}
}

\usepackage{color}
\newcommand{\cc}{\color{cyan}}
\newcommand{\cb}{\color{blue}}
\newcommand{\cm}{\color{magenta}}
\newcommand{\cg}{\color{green}}
\newcommand{\cp}{\color{pink}}

\usepackage[colorinlistoftodos]{todonotes}

\newtheorem{thm}{Theorem}[subsection]

\newtheorem{prop}[thm]{Proposition}
\newtheorem{lem}[thm]{Lemma}

\newtheorem{lem-def}[thm]{Lemma-Definition}
\newtheorem{cor}[thm]{Corollary}

\theoremstyle{definition}

\newtheorem{rem}[thm]{Remark}
\newtheorem{defn}[thm]{Definition}

\numberwithin{equation}{section}

\newcommand{\into}{\hookrightarrow}

\newcommand{\bbA}{\mathbb{A}}
\newcommand{\bbB}{\mathbb{B}}
\newcommand{\bbC}{\mathbb{C}}
\newcommand{\bbD}{\mathbb{D}}
\newcommand{\bbE}{\mathbb{E}}
\newcommand{\bbF}{\mathbb{F}}
\newcommand{\bbG}{\mathbb{G}}

\newcommand{\bbK}{\mathbb{K}}

\newcommand{\bbN}{\mathbb{N}}

\newcommand{\bbQ}{\mathbb{Q}}
\newcommand{\bbR}{\mathbb{R}}

\newcommand{\bbT}{\mathbb{T}}

\newcommand{\bbV}{\mathbb{V}}

\newcommand{\bbZ}{\mathbb{Z}}

\newcommand{\bfG}{\mathbf{G}}
\newcommand{\bfP}{\mathbf{P}}

\newcommand{\cA}{\mathcal{A}}

\newcommand{\cD}{\mathcal{D}}

\newcommand{\cF}{\mathcal{F}}
\newcommand{\cG}{\mathcal{G}}

\newcommand{\cL}{\mathcal{L}}

\newcommand{\cN}{\mathcal{N}}
\newcommand{\cO}{\mathcal{O}}

\newcommand{\cX}{\mathcal{X}}

\newcommand{\fM}{\mathfrak{M}}

\newcommand{\fp}{\mathfrak{p}}
\newcommand{\fq}{\mathfrak{q}}

\newcommand{\ga}{\gamma}
\newcommand{\la}{\lambda}
\newcommand{\La}{\Lambda}

\newcommand{\spec}{\mathrm{Spec}}
\newcommand{\Sh}{\mathrm{Sh}}
\newcommand{\tr}{\mathrm{tr}}
\newcommand{\vol}{\mathrm{vol}}

\topmargin-0.5cm \textheight22cm \oddsidemargin 1cm \textwidth15.2cm

\setcounter{tocdepth}{1}

\title{On the cohomology of simple Shimura varieties with \\ non quasi-split local groups}

\author{Jingren Chi}
\email{jrenchi@amss.ac.cn}
\address{Morningside Center of Mathematics and Hua Loo-Keng Key Laboratory of Mathematics, Academy of Mathematics and Systems Science, Chinese
Academy of Sciences, Beijing 100190, China;}

\author{Thomas J. Haines}
\email{tjh@umd.edu}
\address{Department of Mathematics, University of Maryland, College Park, MD 20742-4015, USA.}

\begin{document}

\begin{abstract}
We study the Scholze test functions for bad reduction of simple Shimura varieties at a prime where the underlying local group is any inner form of a product of Weil restrictions of general linear groups. Using global methods, we prove that these test functions satisfy a vanishing property of their twisted orbital integrals, and we prove that the pseudostabilization base changes of such functions exist (even though the local group need not be quasi-split) and can be expressed in terms of explicit distributions in the stable Bernstein center. We then deduce applications to the stable trace formula and local Hasse-Weil zeta functions for these  Shimura varieties.
\end{abstract}


\maketitle 
\tableofcontents

\section{Introduction}
In this paper, we study the bad reduction of the simple Shimura varieties attached to unitary similitude groups introduced in the work of Kottwitz \cite{Ko-simple}. Our goal is twofold. On the one hand, we extend the results of \cite{Sch-Shin} that require the local group to be a product of Weil restrictions of general linear groups, instead allowing the local group to be any inner form of such a group. We thus systematically deal with issues arising from the failure of the local group to be quasi-split. This extension includes the proof of a vanishing property of twisted orbital integrals of various test functions and a theory of pseudostabilization base change transfer for such functions (both of these are new features arising for non-quasi-split groups). Further, it includes a comparison of certain generalizations of the Scholze test functions defined in \cite{Sch-deformation} with certain standard test distributions in the stable Bernstein center detailed in \cite{Ha14}. On the other hand, we use these local results to describe the cohomology and the semisimple Hasse-Weil local factors of Kottwitz-type simple Shimura varieties, generalizing previous works, such as  \cite{Sch-Shin} and \cite{shen-1}, to more cases of bad reduction.

\subsection{Main results}
Let $\bbF$ be a CM field with totally real subfield $\bbF_0$. Let $(\bbD,*)$ be a division algebra with center $\bbF$ together with an involution $*$ of second kind, giving rise to the unitary similitude group $\bbG$ and simple Shimura varieties $\mathrm{Sh}_K$ as in \cite{Ko-simple}, where $K\subset\bbG(\bbA_f)$ is sufficiently small compact open subgroup. Fix a prime number $p$. Let $\bbE$ be the reflex field and $\fp$ a prime of $\bbE$ above the rational prime $p$. We assume that all places of $\bbF_0$ above $p$ are split in $\bbF$. In particular, $\bbG_{\bbQ_p}$ is a product of inner forms of Weil restrictions of general linear groups. Also we assume that $K=K^pK_p$ for compact open subgroups $K_p\subset\bbG(\bbQ_p)$ and $K^p\subset\bbG(\bbA_f^p)$.
\par 

Fix another prime number $\ell\ne p$ and let $\xi$ be an algebraic representation of $\bbG$ giving rise to an $\ell$-adic local system $\cF_{\xi,K}$ on $\mathrm{Sh}_K$. Consider the $\ell$-adic \'etale cohomology groups
\[H_\xi^i:=\varinjlim_K H_{\acute{e}t}^i(\Sh_K\otimes_{\bbE}\overline{\bbQ},\cF_{\xi,K})\]
and form their alternating sum $H_\xi^*:=\sum_i(-1)^iH_\xi^i$ in the Grothendieck group of $\bar{\bbQ}_\ell$-representations of  $\mathrm{Gal}(\overline{\bbQ}/\bbE)\times\bbG(\bbA_f)$. The main global theorem concerns the restriction of this representation to the local Weil group $W_{\bbE_\fp}$. We choose a smooth $\bbZ_p$-model $\cG$ of $\bbG_{\bbQ_p}$ such that $\cG(\bbZ_p)$ is a  parahoric subgroup of $\bbG(\bbQ_p)$, and we always assume $K_p \subseteq \cG(\bbZ_p)$.

\begin{thm}\label{thm:main-global-intro}
There is an identity in the Grothendieck group of $\bbG(\bbA_f^p)\times \cG(\bbZ_p)\times W_{\bbE_\fp}$ representations
\[H^*_\xi\cong\sum_{\pi_f}a(\pi_f)\pi_f\otimes(r_{-\mu}\circ\varphi_{\pi_p}|_{W_{\bbE_\fp}})|\cdot|^{-\dim\mathrm{Sh}/2}.\]
\end{thm}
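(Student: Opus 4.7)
The overall strategy is the Langlands-Kottwitz-Scholze method, combined with pseudostabilization and the interpretation of the transferred test function via the stable Bernstein center. Fix $f^p \in C_c^\infty(\bbG(\bbA_f^p))$, $f_p \in C_c^\infty(\cG(\bbZ_p))$, and an integer $j \geq 1$, and let $\tau$ be a geometric Frobenius element in $W_{\bbE_\fp}$. The target is to compute the trace of $(f^p f_p) \times \tau^j$ on $H^*_\xi$ and match it, for every choice of these data, with the proposed spectral expansion; the identity in the Grothendieck group then follows by density of test functions. First I would apply the generalized Langlands-Kottwitz-Scholze counting point formula for the parahoric integral model of $\mathrm{Sh}_K$ at level $K=K^p K_p$ with $K_p$ sufficiently small inside $\cG(\bbZ_p)$: in the form worked out earlier in the paper for non-quasi-split local groups, this expresses the trace as a sum of products of an orbital integral of $f^p$, an orbital integral at infinity involving the Kottwitz function $f_\xi$, and a twisted orbital integral at $p$ of the Scholze test function $\phi_{\tau^j}$ convolved appropriately with $f_p$. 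I would then invoke the vanishing theorem for twisted orbital integrals of $\phi_{\tau^j}$ from earlier in the paper to discard contributions whose norm does not admit a matching stable class in $\bbG(\bbQ)$, reducing the geometric side to a sum indexed by elliptic semisimple stable classes.

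Next I would pseudostabilize. The base change transfer constructed earlier in the paper, valid even when $\bbG_{\bbQ_p}$ is non-quasi-split, yields a function $h_{\tau^j, f_p}$ on $\bbG(\bbQ_p)$ whose stable orbital integrals match the transferred twisted orbital integrals of $\phi_{\tau^j}\cdot f_p$, and which is represented by the element of the stable Bernstein center attached to the cocharacter representation $r_{-\mu}$. Because the Shimura varieties in question are of Kottwitz simple type, with no nontrivial elliptic endoscopic contribution, pseudostabilization alone identifies the geometric side with the elliptic part of the Arthur-Selberg trace formula for $\bbG$ applied to $f^p \otimes h_{\tau^j, f_p} \otimes f_\xi$.

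Then I would expand the spectral side. Since $\bbG_\infty$ is anisotropic modulo center, the trace formula collapses to a sum over discrete automorphic representations $\pi$ of $\bbG(\bbA)$, each contributing $m(\pi)\,\tr(f^p\mid \pi^p)\,\tr(h_{\tau^j,f_p}\mid \pi_p)\,\tr(f_\xi\mid \pi_\infty)$. By the explicit Bernstein-center description of $h_{\tau^j, f_p}$, its trace on $\pi_p$ equals $\tr(f_p\mid \pi_p)\cdot\tr\bigl(r_{-\mu}(\varphi_{\pi_p}(\tau^j))\bigr)\cdot q_\fp^{-j\dim\mathrm{Sh}/2}$. Standard pairing of characters as $f^p$ and $f_p$ vary then yields the asserted identity in the Grothendieck group of $\bbG(\bbA_f^p)\times\cG(\bbZ_p)\times W_{\bbE_\fp}$-representations, the multiplicity $a(\pi_f)$ being assembled from $m(\pi)$ and the contribution of $\pi_\infty$.

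The principal obstacle is not in this final chain of identifications but in the earlier local/global input that the Scholze test function admits such a pseudostable base change, expressible via the stable Bernstein center, in the non-quasi-split setting: for a quasi-split group this follows from known Bernstein-center constructions, but for an inner form the transfer cannot be constructed directly by purely local means and must be pinned down by a global trace formula comparison together with the vanishing of twisted orbital integrals. Granted those inputs, the remaining steps closely follow the classical pattern of Kottwitz for good reduction and Scholze-Shin for bad reduction.
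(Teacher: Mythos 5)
Your proposal follows the paper's proof structure closely: the Langlands--Kottwitz--Scholze point-counting formula for the parahoric integral model (Corollary~\ref{cor:trace-coarse-expansion}), the vanishing theorem (Theorem~\ref{thm:vanishing-property}) to pass to Kottwitz triples, pseudostabilization via Kottwitz signs and archimedean pseudo-coefficients, the stable Bernstein center transfer (Theorem~\ref{thm:main-local}) to produce the function on $\bbG(\bbQ_p)$, and finally the simple trace formula for the anisotropic group $\bbG$ to expand the spectral side using the multiplicity $a(\pi_f)$ of \cite[p.657]{Ko-simple}. Minor imprecision: it is $\bbG$ over $\bbQ$, not $\bbG_\infty$, that is anisotropic modulo center, which is what forces the discrete spectral expansion.

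There is, however, a genuine gap in how you set up the trace comparison. You fix a geometric Frobenius $\tau \in W_{\bbE_\fp}$ and compute traces of $(f^pf_p)\times\tau^j$ for varying $j$, then propose to conclude the identity in the Grothendieck group of $W_{\bbE_\fp}$-representations. But matching Frobenius traces alone does not determine a finite-dimensional $W_{\bbE_\fp}$-representation up to equality in the Grothendieck group when the representations can be ramified, which is precisely the situation of bad reduction. Both $H^*_\xi$ and the representation $r_{-\mu}\circ\varphi_{\pi_p}|_{W_{\bbE_\fp}}$ may carry nontrivial inertia action, and one must match traces for arbitrary elements of $W_{\bbE_\fp}$. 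This is why the paper (following Scholze) defines the test function $\phi_{\tau,h}$ for an \emph{arbitrary} $\tau\in\mathrm{Frob}^j I_{\bbE_\fp}$, not just for powers of a fixed Frobenius: the inertial component of $\tau$ enters through the action of $W_{\bbE_\fp}$ on the nearby-cycle cohomology of the local deformation spaces. Your construction of $\phi_{\tau^j}$ from a single Frobenius element $\tau$ discards exactly this inertial refinement, and your closing appeal to ``density of test functions'' does not recover it, since you have only varied $f^p$, $f_p$, and $j$. The fix is immediate but essential: replace your fixed Frobenius by a variable element $\tau\in\mathrm{Frob}^j I_{\bbE_\fp}$ throughout, and run the same chain of identifications.

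A second, smaller imprecision: you describe the test function as ``$\phi_{\tau^j}$ convolved appropriately with $f_p$,'' but in the paper the cut-off function $h=f_p$ is built into the definition of $\phi_{\tau,h}$ at the level of the Lefschetz trace on the cohomology of the covers $X_{H_\bullet,K}$ of the deformation space; there is no auxiliary convolution step. This does not affect the logic but should be stated correctly.
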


We refer to Theorem \ref{thm:cohomology-isom} for the more precise statement. Here we only mention that $\varphi_{\pi_p}$ is the semisimple $L$-parameter of the irreducible smooth representation $\pi_p$ of $\bbG(\bbQ_p)$. Note that by our assumption $\bbG_{\bbQ_p}$ is an inner form of a product of Weil restrictions a general linear groups, hence the local Langlands correspondence is known for $\bbG_{\bbQ_p}$; see, e.g.,\,\cite{Coh18}.\par 
This result is proved by adapting Scholze's generalization of the Langlands-Kottwitz method to bad reduction cases as in \cite{Sch-deformation}. Roughly speaking, using an $\cO_{\bbE_\fp}$-model of the Shimura variety with $\cG(\bbZ_p)$-level
structure and the description of points in its special fiber, one gets the following expression from the Grothendieck-Lefschetz trace formula:
\begin{equation}\label{eq:tr-intro}
    \mathrm{Tr}(\tau\times hf^p| H_\xi^*)=\sum_{(A,u,\la)}
\mathrm{vol}(I(\bbQ)\backslash I(\bbA_f))~O_{\gamma}(f^p)~TO_{\delta\sigma}(\phi_{\tau,h})\,\mathrm{tr}\xi(\gamma_\ell).
\end{equation}

Here $\tau\in\mathrm{Frob}^jI_{\bbE_\fp}\subset W_{\bbE_\fp}$ for some $j\ge1$ (where $\mathrm{Frob}\in W_{\bbE_\fp}$ is a geometric Frobenius element and $I_{\bbE_\fp}\subset W_{\bbE_\fp}$ is the inertia subgroup), $h\in C_c^\infty(\cG(\bbZ_p))$ is a cut-off function at $p$ and $f^p\in C_c^\infty(\bbG(\bbA_f^p))$ is a function corresponding to a Hecke operator away from $p$. The sum runs over isomorphism classes of virtual polarized abelian varieties $(A,u,\la)$ which, roughly speaking, parametrize isogeny classes in the set of $\kappa_\fp$-points of the special fiber of the integral model. For each isogeny class one has an associated $\sigma$-conjugacy class $\delta\in\bbG(\bbQ_{p^r})$ where $\bbQ_{p^r}$ is the degree $r:=j[\kappa_\fp:\bbF_p]$ unramified extension of $\bbQ_p$ and $\sigma$ is the Frobenius automorphism on $\bbQ_{p^r}$, and a conjugacy class $\ga=(\ga_\ell)_{\ell\ne p}\in\bbG(\bbA_f^p)$. See \S\ref{sec:fixed-points-of-correspondence} for details. The main ingredient in each summand is the twisted orbital integral of certain test function $\phi_{\tau,h}\in C_c^\infty(\bbQ_{p^r})$ that is defined in a purely local way using certain deformation spaces of $p$-divisible groups with extra structures.\par 
Next one would like to relate the right hand side of  \eqref{eq:tr-intro} with the geometric side of the Arthur-Selberg trace formula for $\bbG$. The first step would be to associate a global (stable) conjugacy class $\ga_0\in\bbG(\bbQ)$ to each summand so that $(\ga_0;\ga,\delta)$ forms a Kottwitz triple. However, when the local group $\bbG_{\bbQ_p}$ is not quasi-split, there is an obstruction to the existence of $\ga_0$. Namely, such a $\ga_0$ exists if and only if the naive norm $N\delta:=\delta\sigma(\delta)\dotsm\sigma^{r-1}(\delta)\in\bbG(\bbQ_{p^r})$ is conjugate to an element in $\bbG(\bbQ_p)$; see Proposition \ref{prop:Kott-triple}. When $\bbG_{\bbQ_p}$ is quasi-split, as in \cite{Ko-simple}, \cite{Sch-deformation}, and \cite{Sch-Shin}, this is always true and there is no obstruction to finding $\ga_0$. Now we can state our first local result, the aforementioned vanishing result.
\begin{thm}\label{thm:vanishing-intro}
    With notations as above, if $N\delta\in\bbG(\bbQ_{p^r})$ is not $\bbG(\bbQ_{p^r})$-conjugate to an element of $\bbG(\bbQ_{p})$, then $TO_{\delta\sigma}(\phi_{\tau,h})=0$.
\end{thm}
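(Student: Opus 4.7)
The plan is to derive the vanishing from the geometric description of $\phi_{\tau,h}$ via Scholze's deformation spaces \cite{Sch-deformation}. By construction, $\phi_{\tau,h}\in C_c^\infty(\bbG(\bbQ_{p^r}))$ is set up so that $TO_{\delta\sigma}(\phi_{\tau,h})$ is a trace of $\tau\times h$ on $\ell$-adic nearby cycles on a deformation space of $p$-divisible groups with $\cO_{\bbD}\otimes\bbZ_p$-action and compatible polarization. In particular, the integral can only be nonzero if there exists a witness $(X,\rho)$: a polarized $p$-divisible group $X$ over $\bar{\bbF}_p$ (descending, together with its structures, to $\bbF_{p^r}$) with $\cO_{\bbD}$-action satisfying the EL/PEL-type conditions from the Shimura datum, together with a quasi-isogeny $\rho$ from a fixed framing object $(\bbX_0,\la_0)$, such that under $\rho$ the rational covariant Dieudonn\'e module of $X$ is identified with the isocrystal $(V_0\otimes_{\bbQ_p}\bbQ_{p^r},\delta\sigma)$. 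It therefore suffices to show that for every such $\delta$, the norm $N\delta$ is $\bbG(\bbQ_{p^r})$-conjugate to an element of $\bbG(\bbQ_p)$; the contrapositive is the theorem.

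The mechanism is crystalline descent via the relative Frobenius. Since $X$ with all its structures is defined over $\bbF_{p^r}$, the $r$-th iterate $F_X^r$ of the relative Frobenius is an actual self-quasi-isogeny of $X$ preserving the $\cO_{\bbD}$-action and the polarization up to the rational scalar $p^r\in\bbQ_p^\times$. Thus $F_X^r$ defines an element of $J(\bbQ_p):=\mathrm{Aut}_{\cO_{\bbD}}^0(X,\la)(\bbQ_p)$, which is an inner form of $\bbG_{\bbQ_p}$ identified with it over $\breve\bbQ_p$ via the quasi-isogeny $\rho$. Computed on the isocrystal through the trivialization given by $\rho$, the same operator $F_X^r$ is $\delta\cdot\sigma(\delta)\cdots\sigma^{r-1}(\delta) = N\delta$, viewed as a $\bbQ_{p^r}$-linear operator (using that $\sigma^r$ acts trivially on $\bbQ_{p^r}$). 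Combining these two descriptions produces an element of $\bbG(\bbQ_p)$ whose image in $\bbG(\bbQ_{p^r})$ is $\bbG(\bbQ_{p^r})$-conjugate to $N\delta$, as required.

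The main obstacle is justifying the last step rigorously in the non-quasi-split setting, where $J$ and $\bbG_{\bbQ_p}$ are genuinely distinct inner forms and one must carefully track all the extra structures through $\rho$ to verify that the $\bbQ_p$-point $F_X^r\in J(\bbQ_p)$ indeed transports to the $\bbG(\bbQ_{p^r})$-conjugacy class of a $\bbG(\bbQ_p)$-point inside $\bbG(\bbQ_{p^r})$. The hypothesis that every place of $\bbF_0$ above $p$ splits in $\bbF$ is decisive here: it decomposes $\bbG_{\bbQ_p}$ into a product over places $v\mid p$ of $\bbF_0$ of EL-type factors of the form $GL_m(D_v)$ for a local division algebra $D_v$ (with the involution of the second kind merely swapping the factor at $v$ with the factor at the complex-conjugate place $\bar v$), thereby reducing the analysis on each factor to a question about Dieudonn\'e modules of $p$-divisible groups with $\cO_{D_v}$-action, where the required crystalline descent is standard.
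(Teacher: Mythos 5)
Your argument breaks down at the step you yourself flag as the ``main obstacle,'' and the gap is not a technicality to be cleaned up: the inference from ``$F_X^r$ is a $\bbQ_p$-point of $J$'' to ``$N\delta$ is $\bbG(\bbQ_{p^r})$-conjugate to a $\bbG(\bbQ_p)$-point'' is a non-sequitur. The element $N\delta$ lies in $J(\bbQ_p)=\bbG_{\delta\sigma}(\bbQ_p)$ for \emph{every} $\delta\in\bbG(\bbQ_{p^r})$, by the elementary identity $\delta\,\sigma(N\delta)\,\delta^{-1}=N\delta$ (using $\sigma^r(\delta)=\delta$); this is a tautology, not a constraint produced by the $p$-divisible group. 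The $\bbQ_p$-groups $J$ and $\bbG_{\bbQ_p}$ are distinct, and $J(\bbQ_p)$ sits inside $\bbG(\bbQ_{p^r})$, not inside $\bbG(\bbQ_p)$. What Frobenius descent genuinely yields --- that $N\delta$ has characteristic polynomial with $\bbQ_p$-coefficients, hence a transfer $\cN_r\delta$ to the quasi-split form $\bbG^*(\bbQ_p)$ --- is already Lemma~\ref{lem:transfer-twisted-conj-class}(1), which holds for every $\sigma$-semisimple $\delta$ and thus says nothing about the extra content of the theorem, namely transfer to the non-quasi-split group. Moreover, the vanishing property is not the claim that the support of $\phi_{\tau,h}$ avoids classes with non-transferable norm; it is a statement about the \emph{value} of the twisted orbital integral, achieved by cancellation, not by emptiness of support. (Your setup also imports the Rapoport--Zink framing $\bbX_0$ with quasi-isogeny $\rho$ and asserts $J$ is an inner form of all of $\bbG_{\bbQ_p}$, which is valid only in the basic case; Scholze's deformation spaces used here carry no framing, and $J$ is generically an inner form of a proper centralizer.)

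The paper's actual proof is thoroughly global and bears no resemblance to your local approach. It first identifies the cohomology of an auxiliary Shimura variety $\mathrm{Sh}(\bbG_\beta,X)$ independently via Galois representations (Corollary~\ref{cor:reciprocity}), then writes its Lefschetz number in two ways: via the automorphic spectrum of $\bbG_\beta$ (Corollary~\ref{cor:trace-G}), and via the spectrum of a second auxiliary group $\bbG'$ quasi-split at $p$, after replacing Kottwitz triples with ``generalized Kottwitz triples'' so pseudostabilization can proceed without assuming the vanishing property (Lemma~\ref{lem:generalized-Kott-triple}, Theorem~\ref{thm:Lefschetz-trace-formula}, Corollary~\ref{cor:trace-G'}). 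Comparing the two, and invoking base change and strong multiplicity one, forces $\mathrm{tr}\,\pi_p'(f_{\tau,h}^*)=0$ for every $\pi_p'$ not a Jacquet--Langlands transfer from $\bbG_\beta(\bbQ_p)$; the vanishing property then follows from the spectral criterion of Lemma~\ref{lem:vanishing-criterion}. A geometric/local argument of the kind you sketch is known only in the Lubin--Tate special case $\mu=(1,0,\dots,0)$ (Shen), and the paper explicitly states its general approach is different from Shen's.
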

The special case of this result where $\bbG(\bbQ_p)$ is the unit group of a division algebra and the relevant cocharacter is $(1,0,\dotsc,0)$ is proved in previous work of Shen \cite{shen-1}. Our approach to the general case is different from Shen's method. Vanishing statements like this were first conjectured by Rapoport, for test functions for certain Shimura varieties with parahoric level structure at $p$ (see \cite[Conj.\,5.7]{Rap90}). Special cases of his conjecture were proved by Rapoport in {\em loc.\,cit.}, and were later generalized by Waldspurger to some additional cases (private communication), also by passing to the spectral side. Rapoport made a related vanishing conjecture in the framework of the Langlands-Rapoport Conjecture, see \cite[Conj.\,10.2]{Rap05}. \par

As an immediate consequence of Theorem~\ref{thm:vanishing-intro}, only those isogeny classes to which we can associate a Kottwitz triple contribute to the sum \eqref{eq:tr-intro} and we get
\[\mathrm{Tr}(\tau\times hf^p| H_\xi^*)=\sum_{(\ga_0;\ga,\delta)}c(\ga_0;\ga,\delta)\,O_\ga(f^p)\,TO_{\delta\sigma}(\phi_{\tau,h})\,\mathrm{tr}\xi(\ga_0).\]
To proceed further, we need to know the spectral information of the test function $\phi_{\tau,h}$. Let us introduce more notations. The Shimura data determines a conjugacy class of cocharacters $\mu$ of $\bbG(\bbC)$ (whose field of definition is the reflex field $\bbE$) that we also view as characters of the maximal torus $\hat{\bbT}$ in the Langlands dual group $\hat{\bbG}$. Let $r_{-\mu}$ be the irreducible algebraic representation of $\hat{\bbG}\rtimes W_{\bbE_\fp}$ whose restriction to
$\hat{\bbG}$ has extreme weight $-\mu$, such that $W_{\bbE_\fp}$ acts trivially on the extreme weight spaces. For each $\tau\in W_{\bbE_\fp}$, let $z_{\tau,-\mu}$ be the element of the (stable) Bernstein center of $\bbG_{\bbQ_p}$ that acts on any irreducible smooth representation $\pi$ of $\bbG(\bbQ_p)$ by the scalar 

\[\mathrm{Tr}(\tau|(r_{-\mu}\circ\varphi_{\pi}|_{W_{\bbE_\fp}})|\cdot|_{\bbE_\fp}^{-\langle\rho,\mu\rangle})\]
where $\varphi_\pi:W_{\bbQ_p}\to\hat{\bbG}\rtimes\mathrm{Gal}(\bar{\bbQ}_p/\bbQ_p)$ is the semisimple Langlands parameter of $\pi$ and $\rho$ is half the sum of positive roots of $\bbG$. Here the notion of positive root is determined by a choice of Borel subgroup $\bbB$ containing $\bbT$, and in forming $\langle \rho, \mu \rangle$ we also choose a $\bbB$-dominant representative $\mu \in X_*(\bbT)$ in its Weyl-group orbit. Now we can state our second local result. 
\begin{thm}\label{thm:local-matching-intro}
    The Scholze test function $\phi_{\tau,h}\in C_c^\infty(\bbG(\bbQ_{p^r}))$ has matching \textup{(}twisted\textup{)} orbital integrals with $z_{\tau,-\mu}*h\in C_c^\infty(\bbG(\bbQ_p))$.
\end{thm}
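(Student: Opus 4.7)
My plan is to establish the matching by a global argument: embed $\bbG_{\bbQ_p}$ into a global Shimura datum, compare the two expressions for $\mathrm{Tr}(\tau \times hf^p \mid H^*_\xi)$ arising from the Langlands-Kottwitz formula \eqref{eq:tr-intro} on the one hand and from the pseudostabilized trace formula developed earlier in the paper on the other, and extract the local matching by varying the auxiliary Hecke datum $f^p$.

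First I reduce to the generic case. By \thmref{thm:vanishing-intro}, $TO_{\delta\sigma}(\phi_{\tau,h})=0$ whenever $N\delta$ has no $\bbG(\bbQ_p)$-conjugate; since ordinary orbital integrals of $z_{\tau,-\mu}*h$ are defined only for elements of $\bbG(\bbQ_p)$, it remains to show
\[TO_{\delta\sigma}(\phi_{\tau,h}) = O_{\gamma_0}(z_{\tau,-\mu}*h)\]
for all regular semisimple $\delta\in\bbG(\bbQ_{p^r})$ whose norm is conjugate to $\gamma_0\in\bbG(\bbQ_p)$, via the base-change norm correspondence on semisimple conjugacy classes (which is a bijection in the present setting, since stable conjugacy coincides with conjugacy for $GL_n$-type groups).

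Second, I globalize. By \eqref{eq:tr-intro}, the trace $\mathrm{Tr}(\tau\times hf^p \mid H^*_\xi)$ is a weighted sum of twisted orbital integrals of $\phi_{\tau,h}$. On the other hand, using the pseudostabilization base-change theory developed in the earlier sections, together with the defining property of $z_{\tau,-\mu}$ as an element of the stable Bernstein center acting on any irreducible smooth $\pi$ by the scalar $\mathrm{Tr}(\tau\mid r_{-\mu}\circ\varphi_\pi|\cdot|^{-\langle\rho,\mu\rangle})$ and the known local Langlands correspondence for inner forms of Weil restrictions of $GL_n$ (\cite{Coh18}), the same trace can be rewritten as a weighted sum of ordinary orbital integrals of $z_{\tau,-\mu}*h$ against the same set of global Kottwitz triples. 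Equating the two expressions and varying $f^p$ via standard density and separation arguments isolates the desired local identity at $p$.

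The hard part will be the globalization and separation. In the quasi-split case of \cite{Sch-Shin}, separating contributions from distinct stable classes $\gamma_0$ uses $f^p$ supported on small orbits together with simple supercuspidal representations at an auxiliary place; the same strategy should carry over, provided one verifies compatibility with the pseudostabilization base-change transfer in the inner form setting. The critical new ingredient for non-quasi-split $\bbG_{\bbQ_p}$ is precisely \thmref{thm:vanishing-intro}, which guarantees that the terms in \eqref{eq:tr-intro} without an associated global Kottwitz triple do not survive, so that the right-hand side of \eqref{eq:tr-intro} matches the geometric side of the stable trace formula in exactly the same way as in the quasi-split case. A secondary technical point is that the spectral characterization of $z_{\tau,-\mu}$ via $\varphi_\pi$ must be used in conjunction with the base-change transfer to identify the distribution appearing on the stable side, and this identification is where the explicit description \cite{Coh18} of the local Langlands correspondence for our $\bbG_{\bbQ_p}$ is essential.
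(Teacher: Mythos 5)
Your high-level plan --- globalize, compute the Lefschetz number in two ways, vary $f^p$ to isolate the local identity at $p$ --- is indeed the shape of the paper's argument, but your proposal has a genuine circularity and misses the two ingredients that make the argument work in the non-quasi-split case. You invoke \thmref{thm:vanishing-intro} as a black box (both in your reduction step and in the last paragraph, where you say it ``guarantees that the terms in \eqref{eq:tr-intro} without an associated global Kottwitz triple do not survive''). In the paper, \thmref{thm:vanishing-intro} and \thmref{thm:local-matching-intro} are \emph{simultaneously} deduced from the same comparison (end of \S\ref{sec:proof-local-thm}); the vanishing property is not available as an input, and indeed the whole difficulty is precisely that without it one cannot attach a Kottwitz triple $(\gamma_0;\gamma,\delta)$ with $\gamma_0\in\bbG(\bbQ)$ to every fixed point, so the pseudostabilization procedure of \S\ref{sec:pseudo-stabilization} simply does not apply.

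The paper circumvents this with two innovations that your proposal does not mention. First, the companion group $\bbG'$ of \S\ref{sec:auxiliary-group}, quasi-split at $p$ and isomorphic to $\bbG_\beta$ away from $p$ and $\infty$, is used to define \emph{generalized} Kottwitz triples $(\gamma_0';\gamma,\delta)$ with $\gamma_0'\in\bbG'(\bbQ)$; Lemma \ref{lem:generalized-Kott-triple} shows these exist for \emph{every} fixed point, with no vanishing hypothesis, and this is what allows the LK formula for $\Sh(\bbG_\beta,X)$ to be pseudostabilized into a trace formula for $\bbG'$ (Corollary \ref{cor:trace-G'}). Second, the ``other'' expression for the Lefschetz number is not obtained by rewriting \eqref{eq:tr-intro} using the spectral characterization of $z_{\tau,-\mu}$ --- that would already presuppose the matching --- but comes from an \emph{independent} computation of the Galois action on $H^*_{\beta,\xi}$ via the Harris--Taylor Galois-representation construction, Shapiro's lemma, and Chebotarev density (Theorem \ref{thm:automorphic-to-Galois}, Corollary \ref{cor:reciprocity}, Corollary \ref{cor:trace-G}). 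The Shimura datum $(\bbG_\beta,X_\beta)$ must be chosen carefully (Lemma \ref{lem:global-group}, with $\bbK$ imaginary quadratic split at $p$) precisely so that this Galois-theoretic computation is available. Comparing Corollaries \ref{cor:trace-G} and \ref{cor:trace-G'} --- one a trace formula for $\bbG_\beta$, the other for $\bbG'$ --- and varying $f^p$ yields both \thmref{thm:vanishing-intro} and \thmref{thm:local-matching-intro} at once.
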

This establishes a key relation between the Scholze test functions and the stable Bernstein center. Similar relations of test functions for bad reduction with the stable Bernstein center were predicted by the second author and Kottwitz, and later by Scholze and Shin. We refer the reader to \cite{Ha14} for some background on the test function conjecture and on the transfer conjectures for the stable Bernstein center; these ideas of the second author and Kottwitz were fleshed out during the period 1998-2011. Subsequently, Scholze and Shin formulated a more flexible version (in particular allowing for arbitrary cut-off functions $h$) in \cite{Sch-Shin}.  The version of Theorem \ref{thm:local-matching-intro} is adapted to the process of pseudostabilization which can be used in the setting of this article; the above references are in the framework of stabilization, which is more widely applicable.  When $\bbG_{\bbQ_p}$ is quasi-split, Theorem \ref{thm:local-matching-intro} was proved in \cite{Sch-Shin}. Our proof involves a global-local argument which has the same basis as the argument in \cite{Sch-Shin}, but which is more  complicated both in terms of local harmonic analysis and the global geometric argument, when the local group is allowed to be non-quasi-split. In particular, for our argument it is necessary to introduce two companion global groups $\mathbb G_\beta$ and $\mathbb G'$ whose roles will be detailed later in this introduction. Roughly speaking, we will study the Shimura varieties attached to the group $\bbG_\beta$, and express the point-counting of their special fibers in terms of automorphic representations attached to the other group $\bbG'$ (instead of the group $\bbG_\beta$ itself).  \par 

In fact, for many cut-off functions $h$ we can also find an element in the stable Bernstein center of $\bbG(\bbQ_{p^r})$ whose convolution with 
certain cut-off functions on $\bbG(\bbQ_{p^r})$ have matching twisted orbital integrals with $\phi_{\tau,h}$, see Corollary \ref{cor:TO-equality}. As a consequence, we verify the special case of the test function conjecture of the second author and Kottwitz in our setting; in particular this expresses the semisimple trace of Frobenius in terms of certain functions in the Bernstein center:

\[\mathrm{Tr}^{\mathrm{ss}}(\mathrm{Frob}_\fp^j\!\times\!f^p| H^*(\Sh_{K_pK^p}\otimes_\bbE\bar{\bbQ},\mathcal{F}_{\xi, K_pK^p}))=\sum_{(\ga_0;\ga,\delta)}c(\ga_0;\ga,\delta)\,O_\ga(f^p)\,TO_{\delta\sigma}(z^{(r)}_{-\mu}*e_{K_{p^r}})\,\mathrm{tr}\xi(\gamma_0).\]

Here the level $K_p\subset\cG(\bbZ_p)$ at $p$ can be arbitrarily small, $K_{p^r}\subset\bbG(\bbQ_{p^r})$ is a compact open subgroup corresponding to $K_p$ in a suitable sense, and $e_{K_{p^r}}$ is the characteristic function of $K_{p^r}$ divided by its volume. Further, $f^p \in C^\infty_c(K^p\backslash\mathbb{G}(\mathbb A^p_f)/K^p)$ is any function. Finally the main ingredient is the element $z^{(r)}_{-\mu}$ in the stable Bernstein center of $\bbG(\bbQ_{p^r})$, which is defined similarly to $z_{\tau,-\mu}$, using the semisimple trace (which coincides with trace on inertia invariants in the current setting) instead of the usual trace. See Corollary \ref{cor:Lefschetz-number-central-function} for a complete statement and section \S\ref{sec:Lefschetz-number-central-function} for the details.\par

The proof of Corollary \ref{cor:TO-equality} involves more delicate harmonic analysis than what is needed for the proof of Theorem \ref{thm:local-matching-intro}: it requires a result on the pseudostabilization base-change of the stable Bernstein center, namely Theorem \ref{thm:center-base-change}. The rather involved proof of this result relies on the permanence of the vanishing property of twisted orbital integrals, with respect to convolution by elements in the stable Bernstein center (Proposition \ref{prop:vanishing-property}), and all of these ingredients depend on our twisted local Jacquet-Langlands correspondence (Theorem \ref{thm:twisted-JL}), which to our knowledge is a novel result. We refer to \S\ref{sec:prep-local} for the details.

Our debt to the ideas of Kottwitz, Rapoport, Harris-Taylor, Scholze, and Shin should be clear to the reader. Like the related articles \cite{shen-1, Sch-deformation, Sch-Shin}, our method builds on Rapoport's extension of the Langlands-Kottwitz method to bad reduction, which proposed the idea of counting points by weighting fixed points of Frobenius-Hecke correspondences by the (semisimple) trace of Frobenius on the stalk of the nearby cycles of appropriate local systems on the generic fiber (see \cite[\S10]{Rap05}); but following Scholze, we avoid use of integral models except at the bottom of the tower, where we use an integral model for parahoric level structure. Further, like the earlier articles, we also rely heavily on the global-local arguments and Galois representation constructions of Harris-Taylor \cite{HT01}, in the sense that we generalize their arguments to work in our specific context.

\subsection{Strategy of proof}
Although the global result Theorem~\ref{thm:main-global-intro} will be an immediate consequence of the local results Theorem~\ref{thm:vanishing-intro} and Theorem~\ref{thm:local-matching-intro}, our proof of the local results is by a global method. Starting with the original Shimura data $(\mathbb G, X)$, we construct a companion Shimura data $(\mathbb G_\beta, X_\beta)$ whose localization at $p$ is the same as that for $(\bbG, X)$, and we show that in order to prove our local theorems, it is enough to work with $(\bbG_\beta, X_\beta)$ (see Proposition \ref{prop:independence-of-EL-data}). More precisely, the group $\mathbb G = {\rm GU}(\mathbb D, *)$ and the CM field $\mathbb F$ are replaced by another pair, which we denote by $\bbG_\beta$ and $\bbF = \bbK \bbF_0$.  The latter $\bbF$ is a CM field containing a purely imaginary quadratic extension $\bbK$ in which $p$ splits, and $\bbG_\beta = {\rm GU}(\bbD_\beta, *_\beta)$ is a group like $\bbG$ whose signature at the archimedean places is determined by the original $\mu$, as in our section \ref{sec:unit_sim_grps} (especially Lemma \ref{lem:global-group}).  For this unitary group $\bbG_\beta$, we have the ambient group $\widetilde{\bbG}_\beta$, the linear group of units of $\bbD_\beta$ over its center $\bbF$.  We establish the required theory of base change for these unitary groups (Theorem \ref{thm:global-base-change}) and use this to obtain the Galois representation of ${\rm Gal}(\bar{\bbF}/\bbF)$ attached to a cuspidal automorphic representation $\pi$ of $\bbG_\beta(\bbA)$ (Theorem \ref{thm:automorphic-to-Galois}), which by construction satisfies global-local compatibility.  For each relevant $\pi$ we induce the corresponding Galois representation using Shapiro's Lemma to get something almost like a global Langlands parameter, namely an admissible homomorphism $\varphi_{\ell}(\pi)\,:\,{\rm Gal}(\bar{\bbQ}/\bbK) \rightarrow \,^L\bbG_\beta(\bar{\bbQ}_\ell)$ (see the proof of Corollary \ref{cor:reciprocity}).  We restrict this further to ${\rm Gal}(\bar{\bbQ}/\tilde{\bbE})$ where $\tilde{\bbE} = \bbE \bbK$ (composite of the reflex field with $\bbK$), in order to compare it with the Galois representation $\sigma_{\beta, \xi}(\pi_f)$ appearing tautologically in the cohomology of the Shimura variety attached to $\bbG_\beta$.  
Having done all that, we are in a position to use the results of Kottwitz at the good places, and the Chebotarev density theorem, in order to prove that Theorem \ref{thm:main-global-intro} holds for ${\rm Sh}(\bbG_\beta, X_\beta)$, even without using the Langlands-Kottwitz-Scholze method for this special Shimura variety.  The details are given in the proof of Corollary \ref{cor:reciprocity}.\par

For the next step, we run the Langlands-Kottwitz-Scholze method for ${\rm Sh}(\mathbb G_\beta, X_\beta)$. This connects the purely local test function $\phi_{\tau, h}$ to global cohomology groups where the Galois action is now sufficiently pinned-down, and it would permit us to prove the desired properties of $\phi_{\tau, h}$, provided that the Langlands-Kottwitz-Scholze method could be pushed to the point of stabilization. As mentioned above, there are obstructions to converting the resulting Lefschetz trace formula of form \eqref{eq:tr-intro} into an Arthur-Selberg trace formula without knowing the validity of the vanishing property of the test function $\phi_{\tau,h}$ and without being able to construct, even when the vanishing property is known, the global elements $\gamma_0$ in the desired Kottwitz triples $(\gamma_0; \gamma, \delta)$. To surmount these obstructions, our main innovation is to find a replacement  $(\gamma'_0; \gamma, \delta)$ of the usual Kottwitz triples using an auxiliary group $\bbG'$, also constructed from a division algebra with involution of the second kind, which is quasi-split at $p$ and isomorphic to $\bbG_\beta$ away from $p$ and $\infty$. With this modification we can relate the right hand side of \eqref{eq:tr-intro} to a simple trace formula for $\bbG'$. Finally, by comparing simple trace formulas for $\bbG_\beta$ and $\bbG'$, we get the local results Theorems~\ref{thm:vanishing-intro} and \ref{thm:local-matching-intro} and hence we deduce the global result Theorem~\ref{thm:main-global-intro} in full generality.

\subsection{Organization of the article}
In \S\ref{sec:prep-local}, we establish several results in $p$-adic harmonic analysis. After reviewing various notions of transfer of local test functions, we prove a twisted version of the local Jacquet-Langlands correspondence (Theorem \ref{thm:twisted-JL}) using a global method involving twisted trace formulae.  As a consequence we deduce the technical results mentioned above concerning base change of the stable Bernstein center and the permanence of the vanishing property.\par 
In \S\ref{sec:p-div}, we study the deformation spaces of $p$-divisible groups with EL structures and use it to define the local test functions $\phi_{\tau,h}$, generalizing the constructions in \cite{Sch-deformation}. Then we state our local results concerning its vanishing property (Theorem \ref{thm:vanishing-property}) and its relation with certain elements in the stable Bernstein center of the local group (Theorem \ref{thm:main-local}).\par
In \S\ref{sec:shimura}, we first review the construction of Kottwitz type simple Shimura varieties and their integral models with parahoric level structures. Then we state the 
main global result Theorem~\ref{thm:cohomology-isom} giving the description of the cohomology, review the Langlands-Kottwitz-Scholze method, and explain the extra ingredient needed to adapt it to our situation, that is,  the vanishing property Theorem~\ref{thm:vanishing-property} of the local test function $\phi_{\tau,h}$. After that we deduce Theorem~\ref{thm:cohomology-isom} from the main local result Theorem~\ref{thm:main-local} concerning the spectral information of the test function $\phi_{\tau,h}$.\par 
In \S\ref{sec:proof-local-thm}, we prove the local results by embedding the local situation into a global simple Shimura variety, for which we can use several results on automorphic representations to establish the description of its cohomology directly, without using the Langlands-Kottwitz-Scholze method. From there we deduce the local results concerning the test function $\phi_{\tau,h}$ by modifying the Langlands-Kottwitz-Scholze method using an auxiliary group $\bbG'$. 
\subsection{Notations}
Since we work in both local and global setting and will pass back and forth between them, to minimize confusion we distinguish different settings by different fonts. We use Roman letters $F,D,B,G$ etc. to denote objects over the local field $\bbQ_p$. But note that in \S\ref{sec:prep-local} we use the boldface letter $\bfG$ to denote an algebraic group over $\bbQ_p$, whereas we use the Roman letter $G=\bfG(\bbQ_p)$ to denote its set of $\bbQ_p$-points. We use the calligraphic font $\cG$ to denote a group scheme over $\bbZ_p$, while blackboard bold letters $\bbF,\bbD,\bbB,\bbG$ etc. are used to denote objects defined over the global field $\bbQ$. 

\subsection*{Acknowledgements} 
We heartily thank Michael Rapoport, Xu Shen and Wei Zhang for their comments on the manuscript. The research of J.C.~is partially supported by National Natural Science Foundation of China (Grant No.12288201, No.12231001), National Key R\&D Program of China (No.2023YFA1009701), CAS Project for Young Scientists in Basic Research (Grant No.YSBR-033). Research of T.H.~is partially supported by NSF DMS-2200873. Disclaimer: Any opinions, findings, and conclusions or recommendations expressed in this material are those of the author and do not necessarily reflect the views of the National Science Foundation.

\section{Preparations in local harmonic analysis}\label{sec:prep-local}
In this section we let $\bfG$ be a reductive algebraic group over $\bbQ_p$ whose quasi-split inner form $\bfG^*$ is isomorphic to a product of Weil restrictions of general linear groups. Let $r\ge1$ be an integer. Let $\bfG_r:=\mathrm{Res}_{\bbQ_{p^r}/\bbQ_p}\bfG_{\bbQ_{p^r}}$, and let $\bfG_r^*:=\mathrm{Res}_{\bbQ_{p^r}/\bbQ_p}\bfG_{\bbQ_{p^r}}^*$ be the quasi-split inner form of $\bfG_r$. Fix a generator $\sigma$ of the cyclic group $\mathrm{Gal}(\bbQ_{p^r}/\bbQ_p)$ which induces automorphisms of the $\bbQ_p$-algebraic groups $\bfG_r$ and $\bfG_r^*$ that we also denote by $\sigma$. 
We denote $G=\bfG(\bbQ_p)$, $G_r=\bfG_r(\bbQ_p)=\bfG(\bbQ_{p^r})$, and similarly $G^*=\bfG^*(\bbQ_p)$, $G_r^*=\bfG_r^*(\bbQ_p)=\bfG^*(\bbQ_{p^r})$.

\subsection{Transfer of twisted orbital integrals}
For each element $\delta\in G_r$, define its \emph{naive norm} to be the element
\[N_r\delta:=\delta\sigma(\delta)\dotsm\sigma^{r-1}(\delta)\in G_r.\]
Then there exists an element $\cN_r\delta\in G^*$ that is stably conjugate to the naive norm $N_r\delta$. The stable conjugacy class of $\cN_r\delta$ only depends on the stable $\sigma$-conjugacy class of $\delta$ and we call it \emph{the norm of $\delta$ in $G^*$}; see \cite[$\S5$]{Ko-conj}. Similarly, for $\delta\in G_r^*$ we have the naive norm $N_r^*\delta\in G_r^*$ and the norm $\cN_r^*\delta\in G^*$, the latter of which is, a priori, a stable conjugacy class in $G^*$. We say $\delta\in G_r$ (resp. $\delta^*\in G_r^*$) is \emph{$\sigma$-semisimple} if $N_r\delta$ (resp. $N_r\delta^*$) is semisimple.

For the groups and elements considered here, stable ($\sigma$-) conjugacy just coincides with ordinary ($\sigma$-) conjugacy.

\begin{lem}\label{lem:transfer-twisted-conj-class}
\begin{enumerate}
    \item The map $\delta\mapsto\cN_r\delta$ defines an injection from the set of $\sigma$-conjugacy classes of $\sigma$-semisimple elements of $G_r$ to the set of conjugacy classes of semisimple elements of $G^*$. A similar statement holds for $G_r^*$.
    \item For each $\delta\in G_r$ such that $N_r\delta$ is $G_r$-conjugate to a regular semi-simple element in $G$, there exists $\delta^*\in G_r^*$ such that $\cN_r\delta=\cN_r^*\delta^*$.
\end{enumerate}
\end{lem}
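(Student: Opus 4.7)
The plan is to handle the two parts separately, using Kottwitz's framework for the norm map \cite[\S5]{Ko-conj} and the remark just before the statement that stable and ordinary ($\sigma$-)conjugacy coincide for the groups at hand.

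For part (1), Kottwitz's construction produces a well-defined map $\cN_r$ from stable $\sigma$-conjugacy classes of $\sigma$-semisimple elements of $G_r$ to stable conjugacy classes of semisimple elements of $G^*$, the injectivity of which is \cite[\S5]{Ko-conj}. Reducing via the product decomposition of $\bfG^*$ to the case $\bfG^* = \mathrm{GL}_n$ over a finite extension of $\bbQ_p$, injectivity is transparent from the fact that the class of $\cN_r\delta$ is determined by the characteristic polynomial of $N_r\delta$. The collapse of stable to ordinary classes on both sides upgrades this to an injection of ordinary classes; the argument for $G_r^*$ is identical.

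For part (2), I would first replace $\delta$ within its $\sigma$-conjugacy class so that $N_r\delta = \gamma$ exactly: if $N_r\delta = h\gamma h^{-1}$, then $\delta' := h^{-1}\delta\sigma(h)$ satisfies $N_r\delta' = \gamma$. A direct computation shows $\sigma(N_r\delta) = \delta^{-1}(N_r\delta)\delta$, so $\sigma(\gamma)=\gamma$ forces $\delta$ to commute with $\gamma$; by regular semisimplicity of $\gamma$, $\delta$ lies in $T(\bbQ_{p^r})$, where $T := Z_{\bfG}(\gamma)$ is the centralizer torus. Let $T^* := Z_{\bfG^*}(\gamma^*)$ with $\gamma^* := \cN_r\delta\in G^*$. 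Both $T$ and $T^*$ are canonically $\mathrm{Res}_{E/\bbQ_p}\mathrm{GL}_1$ for the étale $\bbQ_p$-algebra $E = \bbQ_p[\gamma]\cong\bbQ_p[\gamma^*]$, and this identification matches $\gamma$ with $\gamma^*$. Transferring $\delta\in T(\bbQ_{p^r})$ to $\delta^*\in T^*(\bbQ_{p^r})\subset G_r^*$ along this isomorphism yields $N_r^*\delta^* = \gamma^*$, hence $\cN_r^*\delta^* = \cN_r\delta$.

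The main obstacle I expect is verifying that the canonical identification of $T$ with $T^*$ as $\bbQ_p$-tori is compatible with the definition of $\cN_r$ via the inner twist, i.e.\ that $\gamma$ genuinely corresponds to $\gamma^*$ under the algebra isomorphism $\bbQ_p[\gamma]\cong\bbQ_p[\gamma^*]$ given by the shared minimal polynomial. This reduces to carefully tracking Kottwitz's construction of the norm, and once done the torus transfer is immediate.
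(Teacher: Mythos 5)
Your treatment of part (2) is essentially the paper's argument specialized to the regular semisimple case: after arranging $N_r\delta = \gamma \in G$ exactly, one observes that $\delta$ commutes with $\gamma$ and hence lies in the centralizer, which for regular $\gamma$ is the $\bbQ_{p^r}$-points of the torus $T = \mathrm{Res}_{\bbQ_p[\gamma]/\bbQ_p}\bbG_m$; one then transports $\delta$ along the canonical isomorphism $T \cong T^*$. The paper phrases this via an embedding of the centralizer algebra $L = Z_B(\gamma)$ into $B^*$ rather than an isomorphism of tori, which is the same thing here since $L = \bbQ_p[\gamma]$ for regular $\gamma$; the algebra formulation avoids the compatibility-with-inner-twist bookkeeping you flag as an obstacle, because one simply checks $N^*_r(\delta^*) = N_{L_r/L}(\delta^*) = u^*$ directly inside $B^*_r$. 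So part (2) is sound.

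Part (1), however, has a genuine gap. You invoke ``the remark just before the statement'' that stable and ordinary $\sigma$-conjugacy coincide, and otherwise only establish injectivity at the level of $\bfG_r(\overline{\bbQ}_p)$-$\sigma$-conjugacy classes (i.e.\ stable classes). But in the paper that remark is itself a consequence of the present Lemma's proof; appealing to it here is circular. The nontrivial content of part (1) is precisely showing that two $\sigma$-semisimple elements of $G_r = \bfG(\bbQ_{p^r})$ which are $\sigma$-conjugate over $\overline{\bbQ}_p$ are already $\sigma$-conjugate over $\bbQ_{p^r}$. Since $\bfG$ can be a nontrivial inner form, this is not a characteristic-polynomial triviality: it requires showing $\ker\bigl(H^1(\bbQ_p,\bfG_{x\sigma}) \to H^1(\bbQ_p,\bfG_r)\bigr)$ is trivial. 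The paper does this by proving $H^1(\bbQ_p,\bfG_{x\sigma}) = 1$ outright: using \cite[Lem.\,5.8]{Ko-conj}, the $\sigma$-centralizer $\bfG_{x\sigma}$ is a $\bbQ_p$-inner form of $\bfG^*_{\cN_r x}$, which (since $\cN_r x$ is semisimple and $\bfG^*_{\rm der}$ is simply connected) is the unit group of a semisimple $\bbQ_p$-algebra and so has trivial $H^1$ by Hilbert~90; then one uses that $H^1$ over $\bbQ_p$ is invariant under inner twist (\cite[Thm.\,1.2]{Ko-ellsing}). Without this cohomological step, the passage from stable to rational injectivity is unjustified. You should also note that the same kind of argument is needed on the target side for $G^*$ (trivial there because $G^*$ is a product of $\mathrm{GL}_n$'s, but worth saying).
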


\begin{proof}
The first part follows from \cite[Lemma 1.1]{AC} (which is stated for $G_r^*$ but the proof also works for $G_r$). In more detail, by \cite[Cor.\,5.3]{Ko-conj}, the map $\delta \mapsto N_r\delta$ induces an injection from $\sigma$-$\bfG_r(\overline{\bbQ}_p)$ conjugacy classes in $\bfG_r(\overline{\bbQ}_p)$ represented by 
$\bfG_r(\bbQ_p) = \bfG(\bbQ_{p^r})$ to conjugacy classes in $\bfG^*(\overline{\bbQ}_p)$ represented by $\bfG^*(\bbQ_p)$. It is enough to prove that if $x, y \in \bfG(\bbQ_{p^r})$ are $\sigma$-$\bfG_r(\overline{\bbQ}_p)$-conjugate and are $\sigma$-semisimple, then they are $\sigma$-$\bfG(\bbQ_{p^r})$-conjugate (see the end of \cite[$\S5$]{Ko-conj}). Viewing the $\sigma$-centralizer $\bfG_{x\sigma}$ as a $\bbQ_p$-subgroup of $\bfG_r$, it is enough to show that $${\rm ker}(H^1(\bbQ_p,\bfG_{x\sigma}) \rightarrow H^1(\bbQ_p, \bfG_r))$$ is trivial. In fact we will show that $H^1(\bbQ_p,\bfG_{x\sigma})$ is trivial. Recall that $\mathbf H \mapsto H^1(\bbQ_p,\mathbf H)$ is invariant under inner twists of a connected reductive $\bbQ_p$-group $\mathbf H$; see \cite[Thm.\,1.2]{Ko-ellsing}.  By \cite[Lem.\,5.8]{Ko-conj}, $\bfG_{x\sigma}$ is a $\bbQ_p$-inner form of the centralizer $\bfG^*_{\cN_rx}$, and the latter is connected and reductive (using that $\cN_r x$ is semisimple and $\bfG^*_{\rm der} = \bfG^*_{\rm sc}$).  Hence is it enough to prove that $H^1(\bbQ_p, \bfG^*_{\cN_rx})$is trivial. But as $\cN_rx$ is semisimple, the group $\bfG^*_{\cN_rx}$ is the group of units of a semisimple $\bbQ_p$-algebra, and so by a version of Hilbert's Theorem 90, $H^1(\bbQ_p, \bfG^*_{\cN_rx})$ is trivial. This proves the assertion for $G_r$, and the proof for $G^*_r$ is similar.\par
For the second part, we argue as in \cite[Lemma 1.3]{AC}. By construction $G$ (resp. $G^*$) is the unit group of a semisimple $\bbQ_p$-algebra $B$ (resp. $B^*$). Note that $B^*$ is a product of matrix algebras. 
Let $B_r:=B\otimes_{\bbQ_p}\bbQ_{p^r}$ and $B_r^*:=B^*\otimes_{\bbQ_p}\bbQ_{p^r}$. Let $u=N_r\delta=\delta\sigma(\delta)\dotsm\sigma^{r-1}(\delta)$. 
By assumption, there exists $h\in G_r$ such that $huh^{-1}\in G$. Note that $huh^{-1}=N_r(h\delta\sigma(h)^{-1})$ so replacing $\delta$ by $h\delta\sigma(h)^{-1}$ we may assume that $u\in G$. Let $L$ be the centralizer of $u$ in $B$. Then $L_r:=L\otimes_{\bbQ_p}\bbQ_{p^r}$ is the centralizer of $u$ in $B_r$. 
Since $\delta^{-1} u\delta=\sigma(u)=u$, we have $\delta\in L_r$. Choose an embedding of $\bbQ_p$-algebras $L\into B^*$ and let $u^*$ be the image of $u$. Let $\delta^*$ be the image of $\delta$ under the resulting $\bbQ_{p^r}$-algebra embedding $L_r\into B_r^*$. 
Then $u^*\in G^*$ is stably conjugate to $u$ and since $N_{L_r/L}(\delta)=N_r(\delta)=u$ we get $N_r^*(\delta^*)=N_{L_r/L}(\delta^*)=u^*$.
\end{proof}

\begin{defn}\label{st_bc_defn}
For any $\phi\in C_c^\infty(G_r)$, we say that a function $\phi^*\in C_c^\infty(G^*)$ is a \emph{stable base change transfer} of $\phi\in C_c^\infty(G_r)$ if for any $\delta\in G_r$ with semisimple norm $\cN_r\delta =: \gamma^* \in G^*$ we have $e(G_{\delta \sigma})\,TO_{\delta\sigma}(\phi)= e(G^*_{\gamma^*})\,O_{\gamma^*}(\phi^*)$, whereas if a semisimple element $\ga^*\in G^*$ is not conjugate to a norm from $G_r$, then $O_{\ga^*}(\phi^*)=0$. Here $e(G_{\delta \sigma})$ is the Kottwitz sign of the $\sigma$-centralizer of $\delta$, viewed as a group over $\bbQ_p$. (See $\S\ref{sec:kottwitz-sign}$ for some reminders about the Kottwitz sign.)
\end{defn}

\begin{rem}
In making this definition, we choose Haar measures on $G_r$, $G_{\delta \sigma}$, $G^*$, $G^*_{\gamma^*}$, such that the measures on $G_{\delta\sigma}$ and its inner form $G^*_{\gamma^*}$ are compatible. The notion of matching depends on these choices.  
\end{rem}

\begin{rem}
    Since our group $G$ is an inner form of a product of Weil restrictions of $\mathrm{GL}_n$, stable conjugacy coincides with conjugacy due to vanishing of Galois cohomology as in the proof of Lemma \ref{lem:transfer-twisted-conj-class}. This justifies that our notion of transfer is compatible with the general definition as in \cite[\S3.2]{La99}, for example. 
\end{rem}


The existence of stable base change in general is established in \cite[\S3.3.1]{La99}. It is a special instance of twisted endoscopic transfer.


The following definition plays a key role in this article.

\begin{defn}\label{defn:vanishing-property}
We say that a function $\phi\in C_c^\infty(G_r)$ has the \emph{vanishing property} if $TO_{\delta\sigma}(\phi)=0$ for any $\sigma$-semisimple element $\delta\in G_r$ such that $N_r\delta$ is not $G_r$-conjugate to an element in $G$.  
\end{defn}


\subsection{Reduction to matching at regular elements}
It is well-known principle that in order to check the matching of (twisted) orbital integrals on all semisimple elements, it should suffice to check the matching on \emph{regular} semisimple elements. We include a proof of this fact in our setting, following the arguments of \cite[Prop.\,7.2]{Clo90}.
\begin{prop} \label{red_to_reg_matching_prop}
Let $\phi\in C_c^\infty(G_r)$ and $\phi^*\in C_c^\infty(G^*)$. Suppose that for any \emph{regular} semisimple element $\ga^*\in G^*$ we have $O_{\ga^*}(\phi^*)=0$ if $\ga^*$ is not a norm from $G_r$, while if $\ga^*=\cN_r\delta$ we have
    \[O_{\ga^*}(\phi^*)=TO_{\delta\sigma}(\phi).\]
    Then $\phi^*$ is a stable base change transfer of $\phi$ at all semisimple elements $\ga^*$. \textup{(}Note that the usual sign $e(\bfG_{\delta \sigma})$ which would appear is trivial under our assumption that $\ga^*$ is regular semisimple, since $\bfG_{\delta \sigma}$ is a torus.\textup{)}
\end{prop}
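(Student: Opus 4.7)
The plan is to follow the standard descent strategy of Clozel's \cite[Prop.\,7.2]{Clo90}, adapted to the twisted-plus-inner-form setting here. Fix a semisimple $\ga^*\in G^*$ and let $M^*:=\bfG^*_{\ga^*}$; since $\bfG^*$ is a product of Weil restrictions of $\mathrm{GL}_n$, $M^*$ is connected reductive (the unit group of the centralizer subalgebra of $\ga^*$ in $B^*$). Two cases must be considered: (A) $\ga^*$ is not a norm from $G_r$, in which case I want $O^{G^*}_{\ga^*}(\phi^*)=0$; and (B) $\ga^*=\cN_r\delta$ for some $\delta\in G_r$, in which case I want $O^{G^*}_{\ga^*}(\phi^*)=e(G_{\delta\sigma})\,TO^{G_r}_{\delta\sigma}(\phi)$. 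In case (B) set $J:=\bfG_{\delta\sigma}$; by \cite[Lem.\,5.8]{Ko-conj} it is a connected reductive $\bbQ_p$-inner form of $M^*$.

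Next I apply Harish-Chandra descent around $\ga^*$ (resp.\ around $\delta\sigma$ in case (B)) to produce $\phi^*_{M^*}\in C_c^\infty(M^*)$ (resp.\ $\phi_J\in C_c^\infty(J)$), supported in a small neighborhood of $1$, such that
\begin{align*}
|D_{G^*/M^*}(u)|^{1/2}\,O^{G^*}_{\ga^* u}(\phi^*) &= O^{M^*}_u(\phi^*_{M^*}), \\
|D_{G_r/J}(v)|^{1/2}\,TO^{G_r}_{v\delta\sigma}(\phi) &= O^{J}_v(\phi_J),
\end{align*}
for $u\in M^*$ (resp.\ $v\in J$) regular semisimple close to $1$, together with $O^{G^*}_{\ga^*}(\phi^*)=O^{M^*}_1(\phi^*_{M^*})$ and $TO^{G_r}_{\delta\sigma}(\phi)=O^J_1(\phi_J)$ for compatible Haar measures. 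Using the defining identity $\delta\sigma(v)\delta^{-1}=v$ for $v\in J$, a short induction shows $N_r(v\delta)=v^r\cdot N_r\delta$, so the map $v\mapsto v\delta$ identifies $\sigma$-twisted classes near $\delta$ with classes $\ga^*u$ near $\ga^*$ in $G^*$, compatibly with the inner twist $J\cong M^*$ (under which $v^r\in J$ corresponds to $u\in M^*$); the Weyl discriminants agree across this identification since inner twists identify maximal tori.

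The hypothesis of the proposition then translates: in case (B), $\phi^*_{M^*}$ and $e(J)\phi_J$ have matching regular semisimple orbital integrals in a neighborhood of $1$; in case (A), $\phi^*_{M^*}$ has vanishing regular semisimple orbital integrals near $1$. In case (A), I must also check that regular semisimple $\ga^*u$ close to $\ga^*$ are still not norms from $G_r$; this follows from a continuity argument, since being a norm in the semisimple locus is detected by continuous invariants of the stable conjugacy class, and a hypothetical sequence $\ga^* u_n = \cN_r\delta_n$ with $u_n\to 1$ would (after extracting a convergent subsequence of $\sigma$-conjugacy classes, which can be arranged by passing to a bounded representative inside the $\sigma$-centralizer of $\gamma^*$) force $\ga^*$ itself to be a norm. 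Finally, I invoke the Shalika germ expansion of the orbital integral at $1$ in $M^*$ (resp.\ in $J$): this expansion expresses the value at $1$ as a functional of regular semisimple orbital integrals in an arbitrarily small neighborhood of $1$, and Shalika germs on connected reductive $p$-adic groups are compatible with inner twists. This yields $O^{M^*}_1(\phi^*_{M^*})=e(J)\,O^J_1(\phi_J)$ in case (B), and $O^{M^*}_1(\phi^*_{M^*})=0$ in case (A). Unwinding the descent relations concludes the proof.

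The principal obstacle will be the last step -- upgrading ``matching/vanishing at regular semisimple elements near $1$'' to ``matching/vanishing at $1$ itself''. This requires a careful invocation of the compatibility of the Shalika germ expansion with the inner twist $J\cong M^*$, which is standard but nontrivial, especially in the twisted setting. A secondary delicate point, already addressed above, is the openness-type argument in case (A); as we have to track the image of $\cN_r$ near a singular semisimple element, some care is needed in the continuity argument.
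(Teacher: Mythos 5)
Your overall architecture matches the paper's: both proofs use Shalika germ expansions plus Harish--Chandra descent to the ($\sigma$-)centralizer, reduce the singular matching to a regular matching near the identity, and then compare germs at $1$ between the inner forms $\bfG_{\delta\sigma}$ and $\bfG^*_{\ga^*}$. The part you identify as ``the principal obstacle'' -- upgrading the regular matching to a statement at $1$ -- is indeed exactly where the work lies, but the way you dispose of it contains a real gap.

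You assert that ``Shalika germs on connected reductive $p$-adic groups are compatible with inner twists.'' This is false as stated, and the falsity is not incidental: the germs at $1$ of the inner forms $J=\bfG_{\delta\sigma}$ and $M^*=\bfG^*_{\ga^*}$ differ precisely by the Kottwitz sign $e(\bfG_{\delta\sigma})$. That sign is the entire content of the conclusion you are trying to reach. Writing down the sign in the conclusion $O^{M^*}_1(\phi^*_{M^*})=e(J)\,O^J_1(\phi_J)$ while appealing to ``compatibility'' as the justification is circular. Proving the sign comparison requires a nontrivial chain of inputs, which is what the paper's proof supplies: (i) a choice of maximal torus $T \subset \bfG_{\delta\sigma}$ that is \emph{elliptic}, so that one can apply Rogawski's explicit formula $\Gamma_1^{G_{\delta\sigma}}(\exp X) = (-1)^{q(G_{\delta\sigma})}d(G_{\delta\sigma})^{-1}$ (constant in $X$, expressed via the formal degree of the Steinberg representation); (ii) the Kottwitz result [Ko-Tam] that formal degrees of Steinberg are equal across inner forms once Euler--Poincar\'e measures are used, together with Borel's computation $d(G^*_{\ga^*}) = (-1)^{q(G^*_{\ga^*})}$ for the quasi-split form; and (iii) the identity $e(\bfG_{\delta\sigma}) = (-1)^{q(G_{\delta\sigma})-q(G^*_{\ga^*})}$. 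Without restricting to an elliptic $T$, the Rogawski formula does not apply and the germ of the identity orbit along $T$ may vanish, so the comparison cannot even be set up; your descent to $M^*$ and $J$ gives you the full germ expansion at $1$, but to extract $O_1$ from the regular matching you still need the elliptic-torus germ computation, which you have not supplied.

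The secondary point you flag -- that in case (A) one must check regular elements near $\ga^*$ remain non-norms -- is handled in the paper by the identity $N_r(\exp(tX)\delta) = \exp(rtX)N_r\delta$ along the chosen torus, combined with the homogeneity of germs; your proposed compactness/limiting argument is plausible but would need more care than you give it (you are passing to convergent subsequences of $\sigma$-conjugacy classes of unbounded elements without establishing properness of the norm map). In any case this is a minor point compared to the missing germ comparison.
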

\begin{proof} 
We start by recalling some facts about Shalika germs. Suppose $\ga^*=\cN_r\delta$ is a semisimple element in $G^*$, not necessarily regular. By \cite[Lem.\,5.8]{Ko-conj}, once we fix a $\bbQ_p$-inner twisting $\psi: \bfG \rightarrow \bfG^*$ there is an $\bbQ_p$-inner twisting $\bfG_{\delta \sigma} \rightarrow \bfG^*_{\ga^*}$, canonical up to inner automorphisms by $\bfG^*_{\ga^*}(\bar{\bbQ}_p)$. For any maximal $\bbQ_p$-torus ${\mathbf T} \subset \bfG_{\delta\sigma}$ (${\mathbf T}$ need not be $\bbQ_p$-elliptic), we may assume our chosen inner twisting $\psi$ gives a $\bbQ_p$-embedding $\psi: {\mathbf T} \overset{\sim}{\rightarrow} {\mathbf T}^* \hookrightarrow \bfG^*_{\ga^*} \subset \bfG^*$, by \cite[Cor.\,2.2]{Ko-conj}. We can arrange (by conjugating $\psi$ by $\bfG^*(\bar{\bbQ}_p)$) that $\psi(N_r\delta) = \ga^*$.  Then we can further assume (conjugating $\psi$ if necessary by an element in $\bfG^*_{\ga^*}(\bar{\bbQ}_p)$ and using \cite[Cor.\,2.2]{Ko-conj}) that $\psi$ gives a $\bbQ_p$-isomorphism $\psi: {\mathbf T} \overset{\sim}{\rightarrow} {\mathbf T}^* \subset \bfG^*_{\ga^*}$. Then we have $N_r\delta \in T$ and $\psi(N_r\delta) = \ga^*$.\par
Fix a Haar measure $dt$ on $T := {\mathbf T}(\bbQ
_p)$. Let $\{u_j\}$ be a set of representatives of unipotent conjugacy classes in $G_{\delta\sigma}$. It follows from \cite[Proposition 8.1.1]{Ro90} that there is a sufficiently small neighborhood $0\in\Omega\subset\mathrm{Lie}(T)$ such that 
\begin{itemize}
    \item the exponential map defines an isomorphism from $\Omega$ to an open neighborhood of $1$ in $T$;
    \item Let $\Omega'\subset\Omega$ be the open subset consisting of elements $X\in\Omega$ such that $\exp(X)\in T$ is regular. Then $a\Omega'\subset\Omega'$ for any $0\ne a\in\bbZ_p$;
    \item The germ expansion holds on $\Omega'$, i.e.\,for all $0\ne t\in\bbZ_p$ and any $X\in\Omega'$, denote $x_t:=\exp(tX)$, then we have
    \[TO_{x_t\delta\sigma}(\phi)=\sum_j\Gamma_{u_j}^G(x_t\delta)\,TO_{u_j\delta\sigma}(\phi),\quad\forall \phi\in C_c^\infty(G_r)\]
    where the Shalika germ $\Gamma_{u_j}$ is a function on $\exp(\Omega')\delta$ that depends on the Haar measure on $T=G_{x_t\delta\sigma}=(G_{\delta\sigma})_{x_t}$ and the Haar measure on $G_{u_j\delta\sigma}$ used to define the orbital integrals. 
\end{itemize}
By \cite[Proposition 8.1.2(b)]{Ro90}, the function $\Gamma_{u_j}^G$ is homogeneous of degree $\frac{1}{2}d_{u_j}:=\frac{1}{2}\dim(G_{\delta\sigma}/G_{u_j\delta\sigma})$ in the sense that for all $X\in\Omega'$ and $0\ne t\in\bbZ_p$ we have
\[\Gamma_{u_j}^G(\exp(tX)\delta)=|t|^{-\frac{1}{2}d_{u_j}}\Gamma_{u_j}^G((\exp X)\delta).\]

There is a similar Shalika germ expansion 
\[O_{\psi(\exp(rtX))\ga^*}(\phi^*)=\sum_{j'}\Gamma_{u^*_{j'}}^{G^*}(\psi(\exp(rtX))\ga^*)\,O_{u^*_{j'}\ga^*}(\phi^*),\quad\forall  0 \neq t \in \mathbb Z_p.\]

Here $\{u^*_{j'}\}_{j'}$ ranges over the set of unipotent conjugacy classes in $G^*_{\ga^*}$, and the other notation is as before. Like the above, the Shalika germs here enjoy a similar homogeneity property.

Now we prove the vanishing statement. From the definition we verify easily that for any $X\in\Omega'$ and $0\ne t\in\bbZ_p$ we have $N_r(\exp(tX)\delta)=\exp(rtX)N_r\delta$ and hence under our embedding $T\into G^*_{\ga^*}$ we have $\cN_r(\exp(tX)\delta)=\psi(\exp(rtX))\ga^*$. Therefore if the semisimple element $\ga^*\in G^*$ is not a norm from $G_r$, then $\psi(\exp(rtX))\ga^*$ is also not a norm from $G_r$. Then by the definition of stable base change transfer we have $O_{\psi(\exp(rtX))\ga^*}(\phi^*)=0$. Looking at the degree $0$ term in the germ expansion we get that $O_{\ga^*}(\phi^*)=0$.\par
Next we again assume that $\ga^* = \cN_r\delta$ is a norm.  We consider $\psi, {\mathbf T}$, and ${\mathbf T}^*$ as above. We assume moreover that ${\mathbf T}$ is a $\bbQ_p$-elliptic torus in $\bfG_{\delta\sigma}$. Then ${\mathbf T}^*$ is also a $\bbQ_p$-elliptic torus in the inner form $\bfG_{\ga^*}^*$ of $\bfG_{\delta \sigma}$.\par
By Harish-Chandra descent of (twisted) orbital integrals, there exists $\phi_1\in C_c^\infty(G_{\delta\sigma})$ and $\phi_1^*\in C_c^\infty(G_{\ga^*}^*)$ such that for all $X\in\Omega'$ and $0\ne t\in\bbZ_p$ sufficiently close to $0$ we have
\begin{equation}\label{eq:descent-orb-int}
    TO^{G_r}_{\exp(tX)\delta\sigma}(\phi)=O_{\exp(tX)}^{G_{\delta\sigma}}(\phi_1),\quad O_{\psi(\exp(rtX))\ga^*}(\phi^*)=O_{\psi(\exp(rtX))}^{G^*_{\ga^*}}(\phi_1^*).
\end{equation}
See, for example, the proof of \cite[p.114, proof of Proposition 8.1.2]{Ro90}, or \cite[p.23-24]{AC}. \par 
By the descent property of Shalika germs (see \cite[Proposition 8.1.2(a)]{Ro90}) we have 
\begin{equation}\label{eq:germ}
    \Gamma_{1}^G((\exp(t X))\delta)=\Gamma_{1}^{G_{\delta\sigma}}(\exp (tX)),\quad\Gamma_1^{G^*}(\psi(\exp(rtX))\ga^*)=\Gamma_1^{G^*_{\ga^*}}(\psi(\exp(rtX))).
\end{equation}
Comparing the degree $0$ terms of the germ expansions in \eqref{eq:descent-orb-int} we get
\begin{equation}\label{eq:singular-orb-int}
    TO_{\delta\sigma}(\phi)=\phi_1(1),\quad O_{\ga^*}(\phi^*)=\phi_1^*(1).
\end{equation}
On the other hand, by assumption we have $TO^{G_r}_{\exp(tX)\delta\sigma}(\phi)=O_{\psi(\exp(rtX))\ga^*}(\phi^*)$ and hence by \eqref{eq:descent-orb-int} we get
\[O_{\exp(tX)}^{G_{\delta\sigma}}(\phi_1)=O_{\psi(\exp(rtX))}^{G^*_{\ga^*}}(\phi_1^*).\]
Then looking at the degree $0$ terms of the germ expansion and using \eqref{eq:germ}, \eqref{eq:singular-orb-int} we get
\[\Gamma_{1}^{G_{\delta\sigma}}(\exp X)\,TO_{\delta\sigma}(\phi)=\Gamma_1^{G^*_{\ga^*}}(\psi(\exp(rtX)))\,O_{\ga^*}(\phi^*)\]
We note that $\bfG^*_{\ga^*}$ is the quasi-split inner form of $\bfG_{\delta\sigma}$. Now we claim: since ${\mathbf T}$ is elliptic in $\bfG_{\delta\sigma}$, $\Gamma_{1}^{G_{\delta\sigma}}$ and $\Gamma_1^{G^*_{\ga^*}}$ are constant functions of $X$ and they differ by the factor $e(\bfG_{\delta\sigma})$. Then we would get that $O_{\ga^*}(\phi^*)=e(\bfG_{\delta\sigma})TO_{\delta\sigma}(\phi)$.\par
It remains to prove the claim above. Since ${\mathbf T}$ is $\bbQ_p$-elliptic in $\bfG_{\delta \sigma}$, the main theorem of \cite{Ro81} shows that
\[\Gamma_{1}^{G_{\delta\sigma}}(\exp X)=(-1)^{q(G_{\delta\sigma})}d(G_{\delta\sigma})^{-1},\quad\forall X\in\Omega'\] 
where $q(G_{\delta\sigma})$ is the $\bbQ_p$-rank of the derived group of $\bfG_{\delta\sigma}$ and $d(G_{\delta\sigma})$ is the formal degree of the Steinberg representation of $G_{\delta\sigma}$ (with respect to a chosen Haar measure on $G_{\delta\sigma}$). There is a similar result for $\Gamma_1^{G^*_{\ga^*}}$. In \cite{Ko-Tam}, the measures on the groups are determined from Euler-Poincar\'e measures on the adjoint groups, in which case $d(G^*_{\ga^*})=(-1)^{q(G^*_{\ga^*})}$ by \cite{Bo76} (see also \cite[\S7.1]{GR}). Then by \cite[Theorem 1]{Ko-Tam}, if the measures on $G_{\delta\sigma}$ and $G^*_{\ga^*}$ are chosen as in \cite{Ko-Tam}, we have $d(G_{\delta\sigma})=d(G^*_{\ga^*})$. Finally we note that since $\bfG^*_{\ga^*}$ is the quasi-split inner form of $\bfG_{\delta\sigma}$, we have $e(\bfG^*_{\ga^*})=1$ and $e(\bfG_{\delta\sigma})=(-1)^{q(G_{\delta\sigma})-q(G^*_{\ga^*})}$. This completes the proof of the claim, and hence of the proposition.
\end{proof}

\subsection{A reinterpretation of stable base change transfer}
When $\phi^* \in C^\infty_c(G^*)$ is the stable base change transfer of a function $\phi_r \in C^\infty_c(G_r)$, we will write this as $\phi_r \longleftrightarrow \phi^*$. When $\phi^*$ is the usual base-change of a function $\phi^*_r \in C^\infty_c(G^*_r)$, then we write ${\rm BC}(\phi^*_r) = \phi^*$. (By ``usual base-change'', we just mean what is described in Definition \ref{gen_BC_defn} below, when all groups in sight are quasi-split.) We now define a variant of the first relation.

\begin{defn}
For $\phi_r \in C^\infty_c(G_r)$ and $\phi^*_r \in C^\infty_c(G^*_r)$, we write $\phi_r \overset{\sigma-{\rm reg}}{\longleftrightarrow} \phi^*_r$ when the following property holds for every $\sigma$-regular semisimple element $\delta^* \in G^*_r$: 
\begin{itemize}
\item If there is a $\delta \in G_r$ such that $\mathcal N_r\delta = \mathcal N_r \delta^*$, then $TO^{G^*_r}_{\delta^* \sigma}(\phi^*_r) = TO^{G_r}_{\delta \sigma}(\phi_r)$.
\item If there is no such $\delta$, then $TO^{G^*_r}_{\delta^* \sigma}(\phi^*_r) = 0$.
\end{itemize}
\end{defn}

\begin{lem}\label{sbc_reinterpret}
Fix $\phi_r \in C^\infty_c(G_r)$ and $\phi^* \in C^\infty_c(G^*)$. 
\begin{enumerate}
\item[(1)] If $\phi_r \longleftrightarrow \phi^*$, then there exists a function $\phi^*_r \in C^\infty_c(G^*_r)$ such that $\phi_r \overset{\sigma-{\rm reg}}{\longleftrightarrow} \phi^*_r$ and ${\rm BC}(\phi^*_r) = \phi^*$.
\item[(2)] Conversely, if there exists a $\phi^*_r$ with $\phi_r \overset{\sigma-{\rm reg}}{\longleftrightarrow} \phi^*_r$ and ${\rm BC}(\phi^*_r) = \phi^*$, then $\phi_r \longleftrightarrow \phi^*$.
\item[(3)] If $G_r^* = (G^*)^r$, and if $\phi^* \in C^\infty_c(G^*)$ is a Jacquet-Langlands transfer of $\phi \in C^\infty_c(G)$, then $\phi^*$ is a stable base change transfer of $\phi_r := \phi \otimes e \otimes \cdots \otimes e \in C^\infty_c(G^r)$, where $e \in C^\infty_c(G)$ is an idempotent such that $\phi * e = \phi$.
\end{enumerate}
\end{lem}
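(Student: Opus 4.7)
For (1), the key input is a base-change lifting: since $\bfG^*$ is (the $\bbQ_p$-points of) a product of Weil restrictions of general linear groups, every $\phi^* \in C_c^\infty(G^*)$ admits a preimage $\phi^*_r \in C_c^\infty(G^*_r)$ under the ${\rm BC}$ map, by the existence theorem in \cite[\S3.3]{La99}. I then verify the $\sigma$-regular matching directly. For any $\sigma$-regular semisimple $\delta^* \in G^*_r$, all relevant centralizers are tori, so the Kottwitz signs are trivial. The BC relation gives $TO^{G^*_r}_{\delta^*\sigma}(\phi^*_r) = O^{G^*}_{\mathcal N_r\delta^*}(\phi^*)$, and the stable base change hypothesis $\phi_r \longleftrightarrow \phi^*$ gives $O^{G^*}_{\mathcal N_r\delta^*}(\phi^*) = TO^{G_r}_{\delta\sigma}(\phi_r)$ whenever $\mathcal N_r\delta^* = \mathcal N_r\delta$ for some $\delta \in G_r$, and zero otherwise. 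Combining these identities yields $\phi_r \overset{\sigma-{\rm reg}}{\longleftrightarrow} \phi^*_r$.

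Part (2) is essentially the reverse argument, organized via Proposition~\ref{red_to_reg_matching_prop}. Fix a regular semisimple $\gamma^* \in G^*$. If $\gamma^* = \mathcal N_r\delta$ for some $\delta \in G_r$, then Lemma~\ref{lem:transfer-twisted-conj-class}(2) produces a $\delta^* \in G^*_r$ with $\mathcal N_r\delta^* = \gamma^*$; chaining the BC relation with the $\sigma$-regular matching then yields $O^{G^*}_{\gamma^*}(\phi^*) = TO^{G^*_r}_{\delta^*\sigma}(\phi^*_r) = TO^{G_r}_{\delta\sigma}(\phi_r)$. If $\gamma^*$ is not a norm from $G_r$, then either $\gamma^*$ is a norm from $G^*_r$, in which case the $\sigma$-regular vanishing hypothesis forces $O^{G^*}_{\gamma^*}(\phi^*) = 0$, or $\gamma^*$ is not a norm from $G^*_r$ at all and the same conclusion follows directly from the defining vanishing property of ${\rm BC}$. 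Invoking Proposition~\ref{red_to_reg_matching_prop} upgrades this regular semisimple matching to the matching at all semisimple elements required for $\phi_r \longleftrightarrow \phi^*$.

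For (3), I work with the identifications $G_r = G^r$ and $G^*_r = (G^*)^r$, on which $\sigma$ acts by the cyclic shift. A standard manipulation, involving the substitution $g_{i+1} \mapsto \delta_i^{-1} g_i h_i$, produces the product-group identity
\[
TO^{G_r}_{\delta\sigma}(f_1 \otimes \cdots \otimes f_r) \;=\; O^G_{\delta_1 \delta_2 \cdots \delta_r}(f_1 \ast \cdots \ast f_r)
\]
for $\delta = (\delta_1, \ldots, \delta_r)$; specializing to $\phi_r = \phi \otimes e \otimes \cdots \otimes e$ and using $\phi \ast e = \phi$ gives $TO^{G_r}_{\delta\sigma}(\phi_r) = O^G_\gamma(\phi)$ with $\gamma := \delta_1 \cdots \delta_r$. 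A parallel calculation identifies the $\sigma$-centralizer $(G_r)_{\delta\sigma}$ with $G_\gamma$ as $\bbQ_p$-groups (via projection onto the first coordinate), and shows that the naive norm $N_r\delta$ is $G^r$-conjugate to the diagonal image of $\gamma$, so that $\mathcal N_r\delta$ is precisely the Jacquet-Langlands stable transfer of $\gamma$ from $G$ to $G^*$. The JL transfer hypothesis for $(\phi, \phi^*)$ then delivers the matching at regular semisimple elements, with vanishing when $\gamma^*$ has no JL partner in $G$, and a final appeal to Proposition~\ref{red_to_reg_matching_prop} completes the proof. The principal technical point is the base-change lifting step in (1); the rest is assembly of the definitions and the product-group calculation.
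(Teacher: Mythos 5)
Your treatment of parts (2) and (3) is sound. For (2) you follow essentially the same chain of reductions as the paper, and for (3) you unpack the product-group twisted orbital integral computation directly on $G_r = G^r$ rather than appealing to \cite[\S I.5]{AC} through an auxiliary function $\phi^*_r$ and the formalism of part (2), as the paper does. Both routes are valid; your version is more self-contained at the cost of carrying out the cyclic-substitution calculation explicitly, whereas the paper's is shorter but more reliant on the cited formalism.

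Part (1), however, opens with a false claim: you assert that ``every $\phi^* \in C_c^\infty(G^*)$ admits a preimage $\phi^*_r$ under the ${\rm BC}$ map.'' This is not so. The base-change map from $C^\infty_c(G^*_r)$ to $C^\infty_c(G^*)$ is far from surjective: its image is exactly the set of functions whose orbital integrals vanish at regular semisimple elements of $G^*$ that are not norms from $G^*_r$ (this is the content of \cite[Prop.\,I.3.1]{AC}). A function $\phi^*$ supported entirely on non-norm elements gives an immediate counterexample. The reference you give, \cite[\S3.3]{La99}, establishes the existence of a transfer \emph{in the other direction} (from $G_r$ to $G^*$), not surjectivity of ${\rm BC}$. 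The step you skipped is therefore essential: you must first deduce, from the hypothesis $\phi_r \longleftrightarrow \phi^*$ together with Lemma~\ref{lem:transfer-twisted-conj-class}(2), that $O^{G^*}_{\gamma^*}(\phi^*) = 0$ whenever $\gamma^*$ is regular semisimple and not a norm from $G^*_r$ (since such a $\gamma^*$ is then also not a norm from $G_r$, forcing the vanishing). Only after that does \cite[Prop.\,I.3.1]{AC} produce the desired $\phi^*_r$ with ${\rm BC}(\phi^*_r) = \phi^*$, and your verification of the $\sigma$-regular matching can then proceed as you describe. Without this step, part (1) does not go through.
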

\begin{proof}
(1). Suppose $\phi_r \longleftrightarrow \phi^*$.  We first claim that $\phi^* = {\rm BC}(\phi^*_r)$ for some $\phi^*_r \in C^\infty_c(G^*_r)$. Suppose $\ga^* \in G^*$ is regular semisimple. If $\ga^*$ is not a norm from $G^*_r$, then it is not a norm from $G_r$ (use Lemma \ref{lem:transfer-twisted-conj-class}(2)) and hence $O^{G^*}_{\ga^*}(\phi^*) = 0$.  This proves the claim thanks to \cite[Prop.\,I.3.1]{AC}.  Next we claim that $\phi_r \overset{\sigma-{\rm reg}}{\longleftrightarrow} \phi^*_r$.  Suppose that $\delta^* \in G^*_r$ is $\sigma$-regular semisimple, and write $\mathcal N_r \delta^* = \ga^* \in G^*$. If there is a $\delta \in G_r$ with $\mathcal N_r \delta = \mathcal N_r \delta^*$, then we have
$$
TO^{G_r}_{\delta \sigma}(\phi_r) = O^{G^*}_{\ga^*}(\phi^*) = TO^{G^*_r}_{\delta^* \sigma}(\phi^*_r),
$$
as desired.  Now suppose that there is no such $\delta$. Then we claim that $\ga^*$ is not a norm from $G_r$, since if $\ga^* = \mathcal N_r \delta$, then by Lemma \ref{lem:transfer-twisted-conj-class}(2), $\delta$ gives rise to $\delta^{**} \in G_r^*$ with $\mathcal N_r \delta = \mathcal N_r \delta^{**}$, and then by injectivity of the $\mathcal N_r$ for $G^*_r$, this forces $\delta^*$ to be $G^*_r$-$\sigma$-conjugate to $\delta^{**}$, which then implies that $\delta^*$ comes from $\delta$, a contradiction. The claim being proved, we see that $TO^{G^*_r}_{\delta^* \sigma}(\phi^*_r) = O^{G^*}_{\ga^*}(\phi^*) = 0$, the first equality following from $\phi^* = {\rm BC}(\phi^*_r)$.  This completes the proof of (1).\par
(2). Suppose that $\phi^*, \phi^*_r, \phi_r$ satisfy the given hypotheses.  We need to verify the condition defining $\phi_r \longleftrightarrow \phi^*$ at any semisimple element $\ga^* \in G^*$.  By Proposition \ref{red_to_reg_matching_prop}, we may assume $\ga^*$ is regular. Suppose $\ga^* = \mathcal N_r \delta$ for $\delta \in G_r$; let $\delta^* \in G^*_r$ be an element associated to $\delta$ as in Lemma \ref{lem:transfer-twisted-conj-class}(2). Then $\mathcal N_r \delta^* = \ga^*$ and our hypotheses give
$$
O^{G^*}_{\ga^*}(\phi^*) = TO^{G^*_r}_{\delta^* \sigma}(\phi^*_r) = TO^{G_r}_{\delta \sigma}(\phi_r).
$$
Now suppose that $\ga^*$ is not a norm from $G_r$. We claim that $O^{G^*}_{\ga^*}(\phi^*) = 0$.  If $\ga^*$ is not a norm from $G^*_r$, we are done using $\phi^* = {\rm BC}(\phi^*_r)$.  If $\ga^* = \mathcal N_r \delta^*$, then this $\delta^*$ does not come from any $\delta \in G_r$, and hence $O^{G^*}_{\ga^*}(\phi^*) = TO^{G^*_r}_{\delta^*\sigma}(\phi^*_r) = 0$, as desired.\par
(3).  Define $\phi^*_r = \phi^* \otimes e^* \otimes \cdots \otimes e^*$, where $e^* \in C^\infty_c(G^*)$ is an idempotent with $\phi^* * e^* = \phi^*$. By the proof of \cite[$\S I.5$]{AC}, we see that $\phi^*_r \overset{\sigma-{\rm reg}}{\longleftrightarrow} \phi_r$ in the obvious sense adapted to functions in $C^\infty_c(G^{*r})$ resp.,\, $C^\infty_c(G^r)$. By \cite[$\S I.5$]{AC} again, we also have ${\rm BC}(\phi^*_r) = \phi^*$, again in the obvious sense for the groups in question.  Then by the same formal argument as in (2), the assertion follows.
\end{proof}

\begin{cor} \label{phi*_is_bc_cor}
If a function $\phi^* \in C^\infty_c(G^*)$ is a stable base change transfer of a function on $G_r$, it is also a stable base change transfer of a function on $G^*_r$.    
\end{cor}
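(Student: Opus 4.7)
The plan is to deduce this formally from Lemma \ref{sbc_reinterpret}(1) together with a rerun of Proposition \ref{red_to_reg_matching_prop} in the quasi-split setting. Starting from $\phi_r \longleftrightarrow \phi^*$, I would first apply Lemma \ref{sbc_reinterpret}(1) to produce a function $\phi^*_r \in C^\infty_c(G^*_r)$ satisfying ${\rm BC}(\phi^*_r) = \phi^*$ and $\phi_r \overset{\sigma-{\rm reg}}{\longleftrightarrow} \phi^*_r$. What remains is to upgrade the relation ${\rm BC}(\phi^*_r) = \phi^*$ to the full stable base change transfer $\phi^*_r \longleftrightarrow \phi^*$ in the sense of Definition \ref{st_bc_defn} adapted to the pair $(G^*_r, G^*)$, namely matching at all semisimple elements (with the correct Kottwitz-sign normalization) together with vanishing of $O^{G^*}_{\ga^*}(\phi^*)$ at semisimple $\ga^* \in G^*$ that are not norms from $G^*_r$.

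By its very definition, ${\rm BC}(\phi^*_r) = \phi^*$ encodes the identities $TO^{G^*_r}_{\delta^* \sigma}(\phi^*_r) = O^{G^*}_{\mathcal N_r \delta^*}(\phi^*)$ at every $\sigma$-regular semisimple $\delta^* \in G^*_r$, together with the vanishing of $O^{G^*}_{\ga^*}(\phi^*)$ at regular semisimple non-norms $\ga^* \in G^*$. This is precisely the hypothesis of Proposition \ref{red_to_reg_matching_prop} in the case where the twisted group is $G^*_r$ in place of $G_r$. I would then argue that the proof of that proposition (Shalika germ expansion on both sides, Harish-Chandra descent, and a Kottwitz-sign identity obtained from the main result of \cite{Ro81} and the Euler--Poincar\'e measure normalization of \cite{Ko-Tam}) is insensitive to this change, and in fact simplifies in our situation. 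Indeed the $\sigma$-centralizer $\bfG^*_{\delta^* \sigma}$ is a $\bbQ_p$-inner form of the centralizer $\bfG^*_{\mathcal N_r\delta^*}$, which is the unit group of a semisimple $\bbQ_p$-algebra and so has vanishing $H^1(\bbQ_p,-)$ by the Hilbert 90 argument recorded in the proof of Lemma \ref{lem:transfer-twisted-conj-class}; consequently $\bfG^*_{\delta^*\sigma} \cong \bfG^*_{\mathcal N_r \delta^*}$ and both Kottwitz signs are equal to $1$. The matching conclusion of the adapted Proposition \ref{red_to_reg_matching_prop} therefore is literally the statement $\phi^*_r \longleftrightarrow \phi^*$.

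I do not anticipate a serious obstacle, since the substantive content has already been packaged into Lemma \ref{sbc_reinterpret} and Proposition \ref{red_to_reg_matching_prop}. The one point to double-check carefully is that the proof of Proposition \ref{red_to_reg_matching_prop} is genuinely uniform in whether the twisted group is $G_r$ or $G^*_r$, the sole input that changes being the calculation of the sign factor, which collapses to $1 = 1$ in the fully quasi-split setting treated here.
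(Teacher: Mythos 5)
Your first step --- invoking Lemma \ref{sbc_reinterpret}(1) to extract $\phi^*_r \in C^\infty_c(G^*_r)$ with ${\rm BC}(\phi^*_r) = \phi^*$ --- already yields the Corollary outright, because of the paper's convention. In the paragraph preceding Lemma \ref{sbc_reinterpret}, the relation ${\rm BC}(\phi^*_r) = \phi^*$ is \emph{defined} via Definition \ref{gen_BC_defn} specialized to the pair $(G^*_r, G^*)$: it asserts $e(G^*_{\ga^*})\,O_{\ga^*}(\phi^*) = e(G^*_{\delta^*\sigma})\,TO_{\delta^*\sigma}(\phi^*_r)$ at \emph{all} semisimple $\ga^* = N_r\delta^*$ together with vanishing of $O_{\ga^*}(\phi^*)$ at non-norms, which is word-for-word the condition that $\phi^*$ be a stable base change transfer of $\phi^*_r$ in the sense of Definition \ref{st_bc_defn} adapted to $(G^*_r, G^*)$. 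So there is nothing left to upgrade; the singular-element matching is already built into what ${\rm BC}$ means (with the relevant germ-expansion work subsumed in the citation to \cite[Prop.\,I.3.1]{AC} inside the proof of Lemma \ref{sbc_reinterpret}(1)).

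The ``upgrade'' step you describe also contains a genuine error. You argue that because $H^1(\bbQ_p, \bfG^*_{\cN_r\delta^*})$ vanishes by Hilbert 90, the inner form $\bfG^*_{\delta^*\sigma}$ must be isomorphic to $\bfG^*_{\cN_r\delta^*}$, so that both Kottwitz signs are $1$. This conflates the wrong cohomology set: inner forms of a reductive group $\mathbf H$ are classified by $H^1(\bbQ_p, \mathbf H_{\rm ad})$, not by $H^1(\bbQ_p, \mathbf H)$. The vanishing used in the proof of Lemma \ref{lem:transfer-twisted-conj-class} shows only that stable $\sigma$-conjugacy coincides with ordinary $\sigma$-conjugacy; it does not preclude nontrivial inner twists of the twisted centralizer. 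As a counterexample, take $G^* = GL_2$ over a $p$-adic field $F$ with $p$ odd, $r=2$, and $\delta^* = \sm{0 & 1 \\ \pi & 0} \in GL_2(F_2)$, so $N_2\delta^* = \pi I$; a direct computation identifies $\bfG^*_{\delta^*\sigma}$ with the unit group of the quaternion division algebra $(\epsilon,\pi)_F$ (with $\epsilon$ a nonsquare unit), a non-quasi-split inner form of $\bfG^*_{\cN_r\delta^*} = GL_2$, so $e(\bfG^*_{\delta^*\sigma}) = -1$. The sign does \emph{not} collapse. Fortunately this does not affect the Corollary --- the sign $e(\bfG^*_{\delta^*\sigma})$ appears consistently on both sides of Definitions \ref{gen_BC_defn} and \ref{st_bc_defn} and would reappear correctly in the conclusion of a quasi-split rerun of Proposition \ref{red_to_reg_matching_prop} --- but the claimed simplification ``$1=1$'' should be dropped.
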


\subsection{Local stable base change}
\begin{thm}\label{thm:twisted-JL}
Let $\pi^*$ be an irreducible tempered representation of $G^*$ and let $\Pi^*$ be the base change lift of $\pi^*$, which is by definition a $\sigma$-stable representation of $G_r^*$ with canonical intertwining operator $I_\sigma^*$, see \cite[$\S I.2$]{AC}.
\begin{enumerate}
    \item If $\Pi^*$ has a Langlands-Jacquet transfer to an irreducible tempered representation $\Pi$ of $G_r$, then $\Pi$ is $\sigma$-stable and we can choose an intertwining operator $I_\sigma$ on $\Pi$ such that for any $\phi\in C_c^\infty(G_r)$ with stable base change transfer $\phi^*\in C_c^\infty(G^*)$ we have
    \[\mathrm{Tr}(\phi I_\sigma | \Pi)=e(\bfG_r)\mathrm{Tr}(\phi^*|\pi^*).\]
    \item If $\Pi^*$ does not have a Langlands-Jacquet transfer to $G_r$, then for any $\phi\in C_c^\infty(G_r)$ with stable base change transfer $\phi^*\in C_c^\infty(G^*)$ we have 
    \[\mathrm{Tr}(\phi^*|\pi^*)=0.\]
\end{enumerate}
\end{thm}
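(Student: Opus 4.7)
The plan is to reduce the theorem to a twisted Jacquet--Langlands character identity and then establish the latter via a global comparison of twisted simple trace formulas, in the style of Deligne--Kazhdan and Arthur--Clozel. First, given $\phi \longleftrightarrow \phi^*$, Lemma \ref{sbc_reinterpret}(1) produces $\phi^*_r \in C^\infty_c(G^*_r)$ with $\phi \overset{\sigma-{\rm reg}}{\longleftrightarrow} \phi^*_r$ and $\mathrm{BC}(\phi^*_r) = \phi^*$. The Arthur--Clozel base change character identity for the quasi-split group $\bfG^*$ then gives $\mathrm{Tr}(\phi^*|\pi^*) = \mathrm{Tr}(\phi^*_r I^*_\sigma | \Pi^*)$. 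Both assertions of the theorem therefore reduce to the following twisted Jacquet--Langlands character identity: whenever $\phi \overset{\sigma-{\rm reg}}{\longleftrightarrow} \phi^*_r$,
\[
\mathrm{Tr}(\phi\, I_\sigma | \Pi) \,=\, e(\bfG_r)\,\mathrm{Tr}(\phi^*_r\, I^*_\sigma | \Pi^*)
\]
when $\Pi$ exists (for a suitable normalization of $I_\sigma$), and the right-hand side vanishes otherwise. By Weyl integration this reduces further to a pointwise identity, respectively vanishing, of the twisted characters $\chi^{I_\sigma}_\Pi$ and $e(\bfG_r)\chi^{I^*_\sigma}_{\Pi^*}$ on $\sigma$-regular semisimple elements paired by matching norms; the $\sigma$-stability of $\Pi$ is a formal consequence of the $\sigma$-invariance of the JL transfer.

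This local identity is extracted from a global argument. Globalize the local data: fix a CM field $\bbF$ with totally real subfield $\bbF_0$ and a cyclic degree-$r$ extension $\bbF'_0/\bbF_0$ realizing $\bbQ_{p^r}/\bbQ_p$ and the Galois generator $\sigma$ at the place below a chosen prime $\mathfrak p$ of $\bbF$; such $\bbF'_0$ exists by global class field theory. Construct a unitary similitude group $\bbG$ over $\bbF_0$ from a suitable division algebra with involution of the second kind so that $\bbG \otimes_{\bbF_0,\mathfrak p}\bbQ_p \cong \bfG$, and write $\bbG^*$ for its quasi-split inner form. Globalize $\pi^*$ to a cuspidal automorphic representation of $\bbG^*(\bbA_{\bbF_0})$ with the prescribed local component at $\mathfrak p$ and with an auxiliary Steinberg or supercuspidal component chosen to activate the simple trace formula; its cyclic base change $\Pi^*$ is then cuspidal and $\sigma$-stable on $\bbG^*(\bbA_{\bbF'_0})$, and similarly globalize $\Pi$ in the setting of (1). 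The argument now deploys three trace-formula comparisons: the Arthur--Clozel twisted trace formula relating $\bbG^*(\bbA_{\bbF_0})$ with $\bbG^*(\bbA_{\bbF'_0})$; the classical Jacquet--Langlands identity between the stable trace formulas on $\bbG^*(\bbA_{\bbF_0})$ and $\bbG(\bbA_{\bbF_0})$, available because $\bfG^*$ is a product of Weil restrictions of general linear groups; and a twisted simple trace formula for $\bbG(\bbA_{\bbF'_0})$. Equipping every place $v$ with a stable base change transfer $\phi_v \overset{\sigma-{\rm reg}}{\longleftrightarrow} \phi^*_{r,v}$, whose existence is ensured by \cite[\S3.3.1]{La99} (and which is automatic at almost all unramified places by the base change fundamental lemma), the geometric sides of the twisted trace formulas on $\bbG(\bbA_{\bbF'_0})$ and $\bbG^*(\bbA_{\bbF'_0})$ agree. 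Passing to spectral sides and varying the test function at $\mathfrak p$, linear independence of twisted characters isolates the contribution of $(\Pi,\Pi^*)$ and yields both the character identity in (1) and the vanishing in (2); the latter corresponds to the case where the globalized $\pi^*$ admits no global Jacquet--Langlands partner on $\bbG$, which forces the $\Pi^*$-contribution on the $\bbG^*(\bbA_{\bbF'_0})$-side to vanish.

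The principal obstacle is to develop and apply a twisted simple trace formula for the non-quasi-split group $\bbG(\bbA_{\bbF'_0})$ with enough flexibility to support this comparison. Three subsidiary difficulties require attention: a globally coherent normalization of the intertwining operators $I_\sigma$ on each $\sigma$-stable $\Pi$ compatible with the canonical local choices, so that the Kottwitz sign $e(\bfG_r)$ tracks correctly through the comparison; a choice of auxiliary local conditions (discrete series at an archimedean place, Steinberg at one auxiliary finite place) making the simple trace formula applicable without constraining the test function at $\mathfrak p$; and a globalization argument producing the required cuspidal $\pi^*$ (and $\Pi$ in case (1)) with prescribed local components, and in case (2) with genuinely no global Jacquet--Langlands partner on $\bbG$. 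Once these pieces are in place, the character identity is separated out by varying $\phi$ among stable base-change transfers at $\mathfrak p$ and invoking the existence of supercuspidal-type pseudocoefficients to probe individual $(\Pi,\Pi^*)$-isotypes.
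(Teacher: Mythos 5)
Your global framework is right in spirit — extract the local twisted character identity from a comparison of simple twisted trace formulas — but the globalization you choose is the wrong one, and the difficulty you flag as the ``principal obstacle'' is precisely what the paper's choice eliminates. The paper does \emph{not} globalize to unitary similitude groups. Instead it globalizes to the unit group $\mathbb G = \mathbb D^\times$ of a central division algebra $\mathbb D$ over a number field $\mathbb F$ with all archimedean places complex, together with a cyclic degree-$r$ extension $\widetilde{\mathbb F}/\mathbb F$; the comparison is then between the twisted trace formula for $\mathrm{Res}_{\widetilde{\mathbb F}/\mathbb F}(\mathbb D\otimes\widetilde{\mathbb F})^\times$ and the trace formula for $\mathrm{GL}_n/\mathbb F$. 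In that $\mathrm{GL}$-type world, the simple twisted trace formula, cyclic base change (Arthur--Clozel), and global Jacquet--Langlands for inner forms (Badulescu) are all standard. Your proposal instead requires a twisted simple trace formula, base change, and stable Jacquet--Langlands for non-quasi-split unitary similitude groups over a cyclic totally real extension $\mathbb F'_0/\mathbb F_0$ — substantially deeper input (effectively the full stabilization of the twisted trace formula for unitary groups), and with a concrete circularity hazard, since the paper's own base change for unitary groups (Theorem~\ref{thm:global-base-change}) is proved later, in Section 5, \emph{using} the Section 2 local results.

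There are two further gaps. First, you omit the reduction to the case $F/\mathbb Q_p$ totally ramified. In general $F\otimes_{\mathbb Q_p}\mathbb Q_{p^r}$ is not a field; the paper handles this by factoring the Weil restriction via Shapiro's lemma, reassembling the twisted traces via the Saito--Shintani formula, and tracking the Kottwitz signs through the decomposition $e(\bfG_r)=e(\bfG'_{r'})^d$. Without this, the eventual twisted trace formula comparison is not well-posed. Second, the paper's part (2) is a purely local argument (Lemma~\ref{Frob-transfer_lem}): if $\pi^*=i^{G^*}_{P^*}(\sigma^*)$ with the Levi $\mathbf M^*_{F_r}$ failing to transfer to $\bfG_{F_r}$, then a constant-term/Weyl-integration argument forces $\mathrm{Tr}(\phi^*|\pi^*)=0$ directly from the orbital-integral hypothesis, with no global input. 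Your plan to derive (2) as a by-product of the global character identity for (1) is circular in practice: the paper uses part (2) inside the reduction to the totally ramified case (to show $\Pi^*$ transfers iff $\Pi'^*$ does), so it must be available first. Finally, ``linear independence of twisted characters'' alone does not extend the identity from the globalized cuspidal local components to all tempered $\pi^*$; the paper needs a genuine density argument (Proposition~\ref{prop:density}), via the trace Paley--Wiener theorem, showing that the local components of the relevant cuspidal automorphic representations are Zariski dense in each component of the $L^2$-Bernstein variety.
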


\subsubsection{Remark on $\sigma$-stability} \label{sigma-stable-rem} Suppose $\Pi$ is any irreducible tempered representation of $G_r$ with Jacquet-Langlands transfer the irreducible tempered representation $\Pi^*$ of $G_r^*$. Then $\Pi$ is $\sigma$-stable if $\Pi^*$ is $\sigma^*$-stable (the latter being automatic if $\Pi^*$ is the base-change of an irreducible tempered representation $\pi^*$ of $G^*$).  To see this, we interpret $\sigma$-stability in terms of Harish-Chandra characters: $\Pi$ is $\sigma$-stable if and only if $\Theta_{\Pi}(\sigma(\ga_r)) = \Theta_{\Pi}(\ga_r)$, for every regular semisimple $\ga_r \in G_r$; similar statements hold for $\Pi^*$ and $\Theta_{\Pi^*}$ and regular semisimple elements $\ga^*_r \in G^*_r$.  Moreover, the transfer relation $\ga_r \mapsto \ga^*_r$ is equivalent to $\sigma(\ga_r) \mapsto \sigma^*(\ga^*_r)$.  So $\Pi^* \cong \Pi^* \circ \sigma^*$ implies the following for all $\ga \mapsto \ga^*_r$ as above: 
\begin{equation}
\Theta_{\Pi}(\ga_r) = e(\bfG_{F_r}) \Theta_{\Pi^*}(\ga^*_r) = e(\bfG_{F_r}) \Theta_{\Pi^*}(\sigma^* (\ga^*_r)) 
= \Theta_\Pi(\sigma(\ga_r)),
\end{equation}
in other words, it implies $\Pi \cong \Pi \circ \sigma$.

\subsubsection{Proof of part (2)}
We can write $\Pi^*$ as a normalized parabolic induction $\Pi^* = i^{G^*_r}_{P^*_r}(\pi_{M^*_r})$ for some $F_r$-rational parabolic subgroup with Levi decomposition ${\mathbf P}^*_r = {\mathbf M}^*_r {\mathbf N}^*_r$ and with $F_r$-points $P^*_r= M^*_r N^*_r$, and with $\pi_{M^*_r}$ a discrete series representation of $M^*_r$.  If ${\mathbf M}^*_r$ transfers to an $F_r$-Levi subgroup ${\mathbf M}_r$ of $\bfG_{F_r}$ (a Levi factor of an $F_r$-parabolic ${\mathbf P}_r \subset \bfG_{F_r}$ with $F_r$-points $P_r$), then $\pi_{M^*_r}$ has a Langlands-Jacquet transfer $\pi_{M_r}$ on $M_r$. The representation $i^{G_r}_{P_r}(\pi_{M_r})$ is irreducible and tempered by \cite[Cor.\,6.3]{Tad15}, and hence is a Langlands-Jacquet transfer of $\Pi^*$.  So the only obstruction to transferring $\Pi^*$ is the failure of ${\mathbf M}^*_r$ to transfer.

Moreover, note that a tempered irreducible $\pi^* = i^{G^*}_{P^*}(\sigma^*)$ has base-change lift of the form $\Pi^* = i^{G^*_r}_{P^*_r}(\pi_{M^*_r})$, where $\pi_{M^*_r}$ is the base-change lift of $\sigma^*$.

In light of the above, we need to prove the following twisted version of a result of Badulescu (see \cite[Lem.\,3.3]{Bad03}). For $H$ the rational points of $p$-adic group over a $p$-adic field, let $\Pi(H)$ denote the set of isomorphism classes of smooth irreducible representations of $H$.

\begin{lem} \label{Frob-transfer_lem}
Let $\psi \in \mathcal H(G^*)$.  Suppose ${\rm O}_{\gamma^*}(\psi) = 0$ for all semisimple elements $\gamma^* \in G^*$ which are not norms from $G_r$. Then ${\rm Tr}(\psi| \pi^*) = 0$ for all $\pi^* \in \Pi(G^*)$ which are normalized parabolically induced representations $\pi^* = i^{G^*}_{P^*}(\sigma^*)$ where $P^* = M^* N^*$ is a standard parabolic subgroup of $G^*$ such that ${\mathbf M}^*_{F_r} = {\mathbf M}^* \otimes_{F} F_r$ does not transfer to $\bfG_{F_r} = \bfG \otimes_F F_r$, and $\sigma^* \in \Pi(M^*)$.
\end{lem}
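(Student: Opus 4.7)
My plan for this twisted version of \cite[Lem.\,3.3]{Bad03} is to reduce, via Frobenius reciprocity and the Weyl integration formula, to a vanishing statement for $O^{G^*}_t(\psi)$ at regular semisimple elements lying in maximal tori of $\mathbf{M}^*$, and then to show that no such $t$ can be a norm from $G_r$ under the Levi non-transfer hypothesis. First I would invoke Frobenius reciprocity to rewrite $\mathrm{Tr}(\psi \,|\, i^{G^*}_{P^*}(\sigma^*)) = \mathrm{Tr}(\psi^{P^*}\,|\,\sigma^*)$, where $\psi^{P^*}\in C_c^\infty(M^*)$ is the normalized constant term of $\psi$ along $P^*$. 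Applying Weyl integration on $M^*$ together with the van Dijk descent formula relating $O^{M^*}_t(\psi^{P^*})$ to $O^{G^*}_t(\psi)$ for $t\in M^*$ that are regular semisimple in $G^*$, and using that the locus of $M^*$-regular but not $G^*$-regular elements has measure zero in each maximal torus of $\mathbf{M}^*$, I can express the trace, up to nonzero normalizing factors, as a sum over conjugacy classes of maximal $\bbQ_p$-tori $T\subset \mathbf{M}^*$ of integrals of $|D^{G^*}(t)|^{1/2}\,O^{G^*}_t(\psi)\,\Theta_{\sigma^*}(t)$ against $t$ ranging over the $G^*$-regular locus of $T$. It therefore suffices to prove $O^{G^*}_t(\psi)=0$ for every such $t$.

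The geometric core is to show that no such $t$ can be a norm from $G_r$. Suppose for contradiction $t=\mathcal N_r\delta$ for some $\delta\in G_r$, and set $T^*:=Z_{\mathbf{G}^*}(t)$, a maximal $\bbQ_p$-torus of $\mathbf{G}^*$ contained in $\mathbf{M}^*$. By \cite[Lem.\,5.8]{Ko-conj}, the $\sigma$-centralizer $\bfG_{\delta\sigma}$ is a $\bbQ_p$-inner form of $\mathbf{G}^*_t=T^*$; since tori admit no nontrivial inner twists, this yields a $\bbQ_p$-embedding $T^*\hookrightarrow \bfG_r=\mathrm{Res}_{F_r/F}\bfG_{F_r}$, which by adjunction is the same as an $F_r$-embedding $T^*_{F_r}\hookrightarrow \bfG_{F_r}$. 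Restricting to the subtorus $A_{\mathbf{M}^*_{F_r}}\subset T^*_{F_r}$ gives an $F_r$-embedding of the connected center of $\mathbf{M}^*_{F_r}$ into $\bfG_{F_r}$; taking centralizers then produces an $F_r$-Levi subgroup of $\bfG_{F_r}$ that is an $F_r$-inner form of $\mathbf{M}^*_{F_r}$, contradicting the hypothesis that $\mathbf{M}^*_{F_r}$ does not transfer to $\bfG_{F_r}$.

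This contradiction forces every $t$ appearing in the Weyl-integration expression to fail to be a norm from $G_r$, so the hypothesis on $\psi$ gives $O^{G^*}_t(\psi)=0$ throughout, and consequently $\mathrm{Tr}(\psi\,|\,\pi^*)=0$. The main obstacle I anticipate is the geometric step above: verifying that an $F_r$-embedding of the connected center $A_{\mathbf{M}^*_{F_r}}$ into $\bfG_{F_r}$ genuinely upgrades to a transfer of the Levi $\mathbf{M}^*_{F_r}$ itself. This relies on the classification of Levi subgroups of a reductive group by their connected centers via taking centralizers, together with the compatibility of this classification with inner twists -- a standard fact that can be verified directly in our case, where $\bfG$ is an inner form of a product of Weil restrictions of $\mathrm{GL}_n$, using that Levi subgroups on both sides are indexed by the same combinatorial data subject to an obvious divisibility constraint. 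A secondary bookkeeping point is to ensure that the descent and Weyl integration steps yield the cleanly supported expression claimed, which follows from Harish-Chandra's local integrability of $\Theta_{\sigma^*}$ and the density of the $G^*$-regular locus inside the $M^*$-regular locus.
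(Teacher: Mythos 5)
Your argument is correct and follows the same overall strategy as the paper's proof: use the constant-term identity and the Weyl integration formula to reduce to proving $O^{G^*}_t(\psi)=0$ for all $t\in M^*\cap G^{*,\rm rs}$, then rule out $t=\mathcal N_r\delta$ by showing that otherwise a maximal $F_r$-torus of $\mathbf{M}^*_{F_r}$, hence $A_{\mathbf{M}^*_{F_r}}$, hence $\mathbf{M}^*_{F_r}$ itself, would transfer to $\bfG_{F_r}$. You reach the torus transfer via the $F$-isomorphism $T^*\cong \bfG_{\delta\sigma}$ (using triviality of inner twists of tori), whereas the paper picks an explicit $\delta^*\in G^*_r$ from Lemma~\ref{lem:transfer-twisted-conj-class}(2) and arranges the inner twisting $\bfG_{F_r}\to\bfG^*_{F_r}$ to carry $\delta$ to $\delta^*$, hence $(\bfG_{F_r})_{N_r\delta}$ to $(\bfG^*_{F_r})_{m^*}$; both routes rest on \cite[Lem.\,5.8]{Ko-conj}, so this is the same argument in different packaging. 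The one step that needs a sentence of justification is ``by adjunction is the same as an $F_r$-embedding $T^*_{F_r}\hookrightarrow\bfG_{F_r}$'': the adjunction $\mathrm{Hom}_F(T^*,\mathrm{Res}_{F_r/F}\bfG_{F_r})\cong\mathrm{Hom}_{F_r}(T^*_{F_r},\bfG_{F_r})$ does not in general send monomorphisms to monomorphisms (the identity of $\mathrm{Res}_{F_r/F}\bfG_{F_r}$ adjoins to a non-injective projection onto one factor). What makes the adjoint morphism a closed embedding here is the further fact, part of the $\sigma$-centralizer formalism of \cite[$\S 5$]{Ko-conj}, that $(\bfG_{\delta\sigma})\times_F F_r$ identifies, via exactly the first-factor projection that adjunction produces, with the centralizer $(\bfG_{F_r})_{N_r\delta}$ inside $\bfG_{F_r}$. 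Once you invoke that identification the rest of your argument closes correctly.
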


\begin{proof}
Let $K^* \subset G^*$ be a hyperspecial maximal compact subgroup in good position with respect to $P^*$, so that an Iwasawa decomposition holds: $G^* = M^* N^* K^*$. Then define the normalized constant term $\psi^{P^*} \in \mathcal H(M^*)$ by the formula 
$$\psi^{P^*}(m^*) = \delta^{1/2}_{P^*}(m^*) \, \int_{N^*} \int_{K^*} \psi(k^{* -1}m^* n^* k^*) \, dk^* \, dn^*,$$ 
where $dk^*$ and $dn^*$ are Haar measures on $K^*$ and $N^*$ normalized by
$$
{\rm vol}_{dk^*}(K^*) = {\rm vol}_{dn^*}(N^* \cap K^*) = 1.
$$
Let $D_{G^*/M^*}(m^*)$ denote the normalized absolute value of ${\rm det}(1- {\rm Ad}(m^{* -1}) \, | \, {\rm Lie}(G^*)/{\rm Lie}(M^*))$. Then 
\begin{align} 
&{\rm Tr}(\psi \, | \, i^{G^*}_{P^*}(\sigma^*)) = {\rm Tr}(\psi^{P^*} \, | \, \sigma^*)  \label{const_term_eq1} \\
&{\rm O}^{G^*}_{m^*}(\psi) = D^{-1/2}_{G^*/M^*}(m^*) \, {\rm O}^{M^*}_{m^*}(\psi^{P^*}), \,\,\, \forall m^* \in G^{* \rm rs} \cap M^* \label{const_term_eq2}
\end{align}
where $G^{* \rm rs} \cap M^*$ is the set of elements of $M^*$ which are regular semisimple in $G^*$ (see e.g.\,\cite[Prop.\,4.3.11; Lem.\,7.5.7]{Lau96}).  

If (\ref{const_term_eq1}) is not zero, then applying the Weyl integration formula to $\psi^{P^*}$, we see that there must exist $m^* \in G^{* \rm rs} \cap M^*$ such that ${\rm O}_{m^*}(\psi^{P^*}) \neq 0$; hence by (\ref{const_term_eq2}), ${\rm O}^{G^*}_{m^*}(\psi) \neq 0$.  Then by assumption $m^*$ is a norm from $G_r$, say of $\delta \in G_r$. 

Using Lemma \ref{lem:transfer-twisted-conj-class}(2), choose $\delta^* \in G^*_r$ such that $N_r \delta^*$ is stably-conjugate to $N_r\delta$; then $m^*$ is $G^*_r$-conjugate to  $N_r\delta^*$; we may thus assume that $N_r \delta^* = m^*$. By replacing $M^*$ and $m^*$ by $G^*_r$-conjugates (which would mean our new $m^*$ might only be in $M^*_r$ instead of $M^*$, which is harmless) the proof of Lemma \ref{lem:transfer-twisted-conj-class}(2) shows that we may assume  the inner twisting ${\mathbf G}_{F_r}  \to {\mathbf G}^*_{F_r}$ takes $\delta$ to $\delta^*$. We get an inner twisting and hence an isomorphism between the $F_r$-tori ${\mathbf G}^*_{F_r, m^*}$ and ${\mathbf G}_{F_r,N_r\delta}$, which are maximal $F_r$-tori in ${\mathbf M}_{F_r}^*$ (resp.,\,${\mathbf G}_{F_r}$). Therefore the $F_r$-split component $A_{{\mathbf M}^*_{F_r}}$ of the center of ${\mathbf M}^*_{F_r}$ transfers to ${\mathbf G}_{F_r}$, which means that ${\mathbf M}^*_{F_r}$ itself transfers to $\bfG_{F_r}$, a contradiction.
\end{proof}

\subsubsection{Remark on a relative version of \textup{(}1\textup{)}} In various reductions coming into the proof of (1), we need to be able to change the group and the field of definition. In fact we will prove a more general version, where the base field $\bbQ_p$ is replaced by any finite extension $E$ thereof. We can consider $E$-inner forms $\bfG, \bfG^*$, giving rising as above to inner forms $\bfG_{r'}$ and $\bfG^*_{r'}$ corresponding to a degree $r'$ unramified extension field $E_{r'}$ of $E$. The statements are then the obvious analogues of (1) above, with $\sigma$ replaced by any generator $\sigma_E$ of $\mathrm{Gal}(E_{r'}/E)$. Note that $\sigma_E$ (not $\sigma$) is then used in the definition of twisted orbital integral and the notion of stable base change, when working relative to the extension $E_{r'}/E$. Although our proof will establish this more general version, to keep the notation simple we shall explain the proof over $E = \bbQ_p$, and after making the reductions in which $\bbQ_p$ is possibly replaced by a larger extension, we will then pretend that we are again working over $\bbQ_p$ and $r$ is once again $r'$.

\subsubsection{Reduction to a totally ramified Weil restriction}
First we may assume that $\bfG^*=\mathrm{Res}_{F/\bbQ_p}\mathrm{GL}_n$ where $F$ is a finite extension of $\bbQ_p$. By Shapiro's Lemma, any $\bbQ_p$-inner form of $\bfG^* = \mathrm{Res}_{F/\bbQ_p} \mathrm{GL}_n$ is of the form $\bfG = \mathrm{Res}_{F/\bbQ_p} \bfG_1$ where $\bfG_1$ is an $F$-inner form of $\mathrm{GL}_n$.\par
Let $F/E/\bbQ_p$ be the unique subextension with $F/E$ totally ramified and $E/\bbQ_p$ unramified.  For any field $K$ with $\bbQ_p \subseteq K \subset \overline{\bbQ}_p$, let $\Gamma_K = {\rm Gal}(\overline{\bbQ}_p/K)$ be its absolute Galois group, viewed as a subgroup of $\Gamma_{\bbQ_p}$. By transitivity of Weil restriction, we have
$$
\bfG = \mathrm{Res}_{F/\bbQ_p} \bfG_1 = \mathrm{Res}_{E/\bbQ_p} \Big( \mathrm{Res}_{F/E} \bfG_1 \Big) =: \mathrm{Res}_{E/\bbQ_p} \big(\bfG'\big)
$$
where this defines $\bfG'$. Let $\bfG'^*=\mathrm{Res}_{F/E} \mathrm{GL}_{n,F}$ be the quasi-split inner form of $\bfG'$ over $E$. Then $\bfG^* = \mathrm{Res}_{E/\bbQ_p}\bfG'^*$ is the quasi-split inner form of $\bfG$. \par
Suppose $E=\bbQ_{p^s}$ is the unique unramified extension of $\bbQ_p$ of degree $s\ge1$. Let $d:=\mathrm{gcd}(r,s)$ and write $r=dr', s=ds'$. Let $E_{r'}=\bbQ_{p^{sr'}}=\bbQ_{p^{rs'}}$ be the unique degree $r'$ unramified extension of $E=\bbQ_{p^s}$, which is also the unique degree $s'$ unramified extension of $\bbQ_{p^r}$. Choose $a,b\in\bbZ$ such that $as+br=d$, or in other words $as'+br'=1$.

\begin{lem}
There exists an isomorphism of algebras $E \otimes_{\bbQ_p} \bbQ_{p^r} \cong (E_{\tau'})^d$ under which $1 \otimes \sigma$ corresponds to 
$$
(x_1, \dots, x_d) \mapsto (x_2, \dots, x_d, \tau(x_1))
$$
where $\tau = \sigma^d = (\sigma^s)^a$.
\end{lem}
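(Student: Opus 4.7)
The plan is to realize the algebra decomposition explicitly via a generator of $E$, and then determine the semilinear action of $1 \otimes \sigma$ by transporting it through the resulting isomorphism.

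First I would pick $\alpha \in E$ with $E = \bbQ_p(\alpha)$, let $f(T) \in \bbQ_p[T]$ be its monic minimal polynomial of degree $s$, and write $E \otimes_{\bbQ_p} \bbQ_{p^r} \cong \bbQ_{p^r}[T]/(f(T))$. Since $E$ and $\bbQ_{p^r}$ are unramified over $\bbQ_p$ with $E \cap \bbQ_{p^r} = \bbQ_{p^d}$ and compositum $E_{r'}$, the polynomial $f$ factors over $\bbQ_{p^r}$ as a product $f = f_0 f_1 \cdots f_{d-1}$ of $d$ distinct monic irreducibles each of degree $s'$, and the Chinese Remainder Theorem yields
\[ E \otimes_{\bbQ_p} \bbQ_{p^r} \;\cong\; \prod_{j=0}^{d-1} \bbQ_{p^r}[T]/(f_j(T)) \;\cong\; E_{r'}^{\,d}, \]
each factor being the unique unramified extension of $\bbQ_{p^r}$ of degree $s'$.

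Next I would analyze the action of $1 \otimes \sigma$, which on $\bbQ_{p^r}[T]/(f(T))$ operates by $\sigma$ on coefficients while fixing $T$. Since $f \in \bbQ_p[T]$, this induces a permutation of the factors $\{f_j\}$; I claim it is a single $d$-cycle. Indeed, in a fixed algebraic closure the roots of $f$ form a single $\mathrm{Gal}(\overline{\bbQ}_p/\bbQ_p)$-orbit (since $f$ is irreducible over $\bbQ_p$), and the root-sets of the $f_j$'s are precisely the $\mathrm{Gal}(\overline{\bbQ}_p/\bbQ_{p^r})$-orbits on this set; so $\mathrm{Gal}(\bbQ_{p^r}/\bbQ_p) = \langle \sigma \rangle$ permutes the $d$ factors transitively, necessarily as a $d$-cycle because its image in the symmetric group is cyclic. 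After relabeling I may arrange that $\sigma$ carries $f_j$ to $f_{j-1}$ for all $j \in \bbZ/d\bbZ$.

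Finally I would fix an isomorphism $\iota_0 \colon \bbQ_{p^r}[T]/(f_0(T)) \xrightarrow{\sim} E_{r'}$ and transport it to the other factors via $\iota_j := \iota_0 \circ \sigma^j$, using that $\sigma^j$ on coefficients gives an isomorphism $\bbQ_{p^r}[T]/(f_j(T)) \xrightarrow{\sim} \bbQ_{p^r}[T]/(f_0(T))$. A direct computation then shows that in the resulting coordinates on $E_{r'}^{\,d}$ the automorphism $1 \otimes \sigma$ sends $(X_0, \ldots, X_{d-1})$ to $(X_1, X_2, \ldots, X_{d-1}, \tau(X_0))$, where $\tau := \iota_0 \circ \sigma^d \circ \iota_0^{-1}$. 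Since $\sigma^d$ preserves $(f_0)$ and hence defines an automorphism of $\bbQ_{p^r}[T]/(f_0(T))$ acting on coefficients and fixing $T$, and since $\iota_0(T)$ generates $E$ inside $E_{r'}$, this $\tau$ fixes $E$ pointwise while restricting to $\sigma^d$ on $\bbQ_{p^r}$; hence $\tau$ is the unique lift of $\sigma^d \in \mathrm{Gal}(\bbQ_{p^r}/\bbQ_{p^d})$ to $\mathrm{Gal}(E_{r'}/E)$. The identity $\sigma^d = (\sigma^s)^a$ in $\mathrm{Gal}(\bbQ_{p^r}/\bbQ_p)$ follows from $d - as = -br \equiv 0 \pmod r$. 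The only real obstacle is bookkeeping: one must orchestrate the relabeling of the $f_j$'s and the choice of identifications $\iota_j$ so that the cyclic shift appears cleanly, with the entire twist concentrated at the single wrap-around coordinate.
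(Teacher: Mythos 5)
Your proof is correct, and it reaches the same conclusion (the cyclic shift with wrap-around twist $\tau$ being the unique lift of $\sigma^d\in\mathrm{Gal}(\bbQ_{p^r}/\bbQ_{p^d})$ to $\mathrm{Gal}(E_{r'}/E)$), but it takes a genuinely different route from the paper. You factor the minimal polynomial $f$ of a generator of $E$ over $\bbQ_{p^r}$ into $d$ conjugate irreducibles, apply the Chinese Remainder Theorem, and use a Galois-orbit argument to see that $\sigma$ permutes the factors as a single $d$-cycle; the twist is then extracted by carefully choosing identifications $\iota_j=\iota_0\circ\sigma^j$ of each factor with $E_{r'}$. The paper instead first isolates the intersection $\bbQ_{p^d}=E\cap\bbQ_{p^r}$, writes down the completely explicit isomorphism $\bbQ_{p^d}\otimes_{\bbQ_p}\bbQ_{p^r}\xrightarrow{\sim}(\bbQ_{p^r})^d$, $x\otimes y\mapsto(\sigma^{-i+1}(x)y)_{i=1}^d$, reads off the action of $1\otimes\sigma$ directly, post-composes with the automorphism $(x_i)\mapsto(\sigma^{i-1}(x_i))$ to push the entire twist to the last coordinate, and then applies $\bbQ_{p^s}\otimes_{\bbQ_{p^d}}(-)$ using that the resulting shift is $\bbQ_{p^d}$-linear. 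The paper's computation is thus more explicit and avoids the Galois-orbit counting and the relabeling bookkeeping that your argument needs; your argument is more conceptual and would generalize more readily to non-Galois or non-cyclotomic situations, at the cost of the care you rightly flag in choosing the labels and the $\iota_j$ so that the twist concentrates at a single coordinate. Both are valid; just be sure, if you write yours up fully, that you fix a convention for whether $\sigma(f_j)=f_{j-1}$ or $f_{j+1}$ and carry it consistently through the transport, since that is where a sign-of-index error would sneak in.
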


\begin{proof}
There is an obvious isomorphism $\bbQ_{p^s} \otimes_{\bbQ_p} \bbQ_{p^r} \cong \bbQ_{p^s} \otimes_{\bbQ_{p^d}} \big( \bbQ_{p^d} \otimes_{\bbQ_p} \bbQ_{p_r})$. There is an isomorphism 
\begin{align}
\bbQ_{p^d} \otimes_{\bbQ_p} \bbQ_{p^r} &\overset{\sim}{\rightarrow} (\bbQ_{p^r})^d \label{sigma_goes_over}\\
x \otimes y &\mapsto (\sigma^{-i+1}(x)\,y)_{i= 1}^d \notag
\end{align}
under which ${\rm id} \otimes \sigma$ goes over to the automorphism
$$
(x_1, x_2, \dots, x_d) \mapsto (\sigma(x_2), \sigma(x_3), \dots, \sigma(x_d), \sigma(x_1)).
$$
If we post-compose (\ref{sigma_goes_over}) with the automorphism of the algebra $(\bbQ_{p^r})^d$ given by 
$$
(x_1, x_2, \dots, x_d) \mapsto (x_1, \sigma(x_2), \dots, \sigma^{d-1}(x_d)),
$$
then ${\rm id} \otimes \sigma$ on $\bbQ_{p^d} \otimes_{\bbQ_p} \bbQ_{p^r}$ is identified with the automorphism of $(\bbQ_{p^r})^d$ given by
$$
(x_1, x_2, \dots, x_d) \mapsto (x_2, x_3, \dots, \tau(x_1)).
$$
This map is now $\bbQ_{p^d}$-linear and furthermore since $\sigma^r = {\rm id}$, we see $\sigma^d = (\sigma^s)^a$.  Therefore ${\rm id} \otimes \sigma$ on 
$$
E \otimes_{\bbQ_p} \bbQ_{p^r} = \bbQ_{p^s} \otimes_{\bbQ_{p^d}} (\bbQ_{p^r})^d = (\bbQ_{p^{s'r'd}})^d
$$
is given by the same formula.
\end{proof}
We will view $\tau:=\sigma^d=(\sigma^s)^a$ as a generator of $\mathrm{Gal}(E_{r'}/E)$ (note that since $\sigma^s$ is a generator and $a$ is relatively prime to $r'$, $\tau$ is also a generator).\par 
This leads to the decompositions
\[\bfG_r:=\mathrm{Res}_{\bbQ_{p^r}/\bbQ_p}\bfG_{\bbQ_{p^r}}\cong(\mathrm{Res}_{E/\bbQ_p}\bfG'_{r'})^d,\quad\bfG_r^*\cong(\mathrm{Res}_{E/\bbQ_p}\bfG'^*_{r'})^d\]
where $\bfG'_{r'}:=\mathrm{Res}_{E_{r'}/E}(\bfG'\otimes_EE_{r'})$ and $\bfG'^*_{r'}:=\mathrm{Res}_{E_{r'}/E}(\bfG'^*\otimes_EE_{r'})$. The $\sigma$-actions on the right hand sides are given by the same formula as above. \par 

The $\sigma$-stable representation $\Pi^*$ of $G_r^*=\bfG_r^*(\bbQ_p)$ is decomposed into $\Pi^*=(\Pi'^*)^{\otimes d}$, where $\Pi'^*$ is a $\tau$-stable representation of $\bfG'^*_{r'}(E) = \bfG'^*(E_{r'})$. Let $I_\tau^*$ be the  canonical intertwining operator on $\Pi'^*$. Then the canonical intertwining operator $I_\sigma^*$ for $\Pi^*$ is given by
\[I_\sigma^*(v_1\otimes\dotsm\otimes v_d)=v_2\otimes\dotsm\otimes v_d\otimes I_\tau^*v_1,\quad\forall v_1,\dotsc,v_d\in\Pi'.\] 

For any function $\phi=\phi_1\otimes\dotsm\otimes\phi_d\in C_c^\infty(G_r^*)=C_c^\infty(\bfG'^*(E_{r'}))^{\otimes d}$, let $\phi':=\phi_1*\dotsm*\phi_d$ be the convolution product of the factors. Then by \cite[\S I.5]{AC}, the functions $\phi$ and $\phi'$ have matching twisted orbital integrals and hence they have the same base change transfer $\phi^*\in C_c^\infty(G^*)$. On the other hand, by the Saito-Shintani formula (see for example \cite[Lemma 6.12]{Feng}) we deduce that
\[\mathrm{Tr}(\phi I_\sigma^*|\Pi^*)=\mathrm{Tr}(\phi' I_\tau^*|\Pi'^*).\]
Indeed, to show this we are reduced to the case where for each $1\le i\le d$, $\phi_i$ is the characteristic function of a coset $g_iK$ for $g_i\in\bfG'^*(E_{r'})$ where $K\subset\bfG'^*(E_{r'})$ is a normal open compact subgroup. Let $V^K$ be the space of $K$-invariants in $\Pi'^*$. Then after dividing the volume factor $\vol(K)^d$ the identity above becomes 
\[\mathrm{Tr}((g_1\otimes\dotsm\otimes g_d)I_\sigma^* \mid (V^K)^{\otimes d}) = \mathrm{Tr}(g_1\dotsm g_d I_\tau^* | V^K)\]
and this follows from \cite[Lemma 6.12]{Feng}.\par

In particular, we see that $\Pi'^*$ is the base change lift of $\pi^*$ with respect to the extension $E_{r'}/E$. Moreover $\Pi^*$ has a Langlands-Jacquet transfer to $G_r$ if and only if $\Pi'^*$ has a Langlands-Jacquet transfer to $\bfG'_{r'}(E)=\bfG'(E_{r'})$ by part (2), which we already proved above.\par
Now suppose that the theorem is true for the data $(\bfG'_{r'}(E),\tau\in\mathrm{Gal}(E_{r'}/E))$.\par 
Suppose $\Pi^*$ has a Langlands-Jacquet transfer to $G_r$. Then $\Pi'^*$ has a Langlands-Jacquet transfer $\Pi'$, which is a representation of $\bfG'_{r'}(E)=\bfG'(E_{r'})$. By the validity of part (1) for the data $(\bfG'_{r'}(E),\tau\in\mathrm{Gal}(E_{r'}/E))$, the representation $\Pi'$ is $\tau$-stable and there is an intertwining operator $I_\tau$ such that  
\[\mathrm{Tr}(\phi' I_\tau | \Pi')=e(\bfG'_{r'})\mathrm{Tr}(\phi^*| \pi^*)\]
for all $\phi' \in  C_c^\infty(\bfG'(E_{r'}))$ with stable base change transfer $\phi^*\in C_c^\infty(G^*)$. Let $\Pi:=(\Pi')^{\otimes d}$ be the representation of $G_r=\bfG'_{r'}(E)^d$ and define an endomorphism $I_\sigma$ on $\Pi$ by the formula
\[I_\sigma(v_1\otimes\dotsm\otimes v_d)=v_2\otimes\dotsm\otimes v_d\otimes I_\tau v_1.\] 
Then one checks that $\Pi$ is $\sigma$-stable and $I_\sigma$ is an intertwining operator. Since $\Pi'$ is the Langlands-Jacquet transfer of $\Pi'^*$, it follows that $\Pi = (\Pi')^{\otimes d}$ is the Langlands-Jacquet transfer of $\Pi^* = (\Pi'^*)^{\otimes d}$. For any $\phi=\phi_1\otimes\dotsm \otimes \phi_d\in C_c^\infty(G_r)$ with $\phi':=\phi_1*\dotsm*\phi_d$, by the Saito-Shintani formula again we get 
\[\mathrm{Tr}(\phi I_\sigma|\Pi)=\mathrm{Tr}(\phi' I_\tau|\Pi')=e(\bfG'_{r'})\mathrm{Tr}(\phi^*|\pi^*)\]
where in the first equality we use that $\phi$ and $\phi'$ have the same stable base change transfer $\phi^*$ (by the same reasoning as in \cite[\S I.5]{AC}).\par
From \cite[Corollary on page 295]{Ko-sign} we have the following identities of Kottwitz signs:
\begin{align*}
e(\bfG_r) &= e(\bfG)^r = e(\bfG')^r \\
e(\bfG'_{r'}) &= e(\bfG')^{r'}.
\end{align*}
In other words, we have $e(\bfG_r) = e(\bfG'_{ r'})^d$. In case $e(\bfG_r)\ne e(\bfG'_{r'})$, we must have $e(\bfG'_{r'})=-1$ and $d$ is even. Then $r=r'd$ is also even and we can replace $I_\sigma$ by $-I_\sigma$ to get  
\[\mathrm{Tr}(\phi I_\sigma|\Pi)=e(\bfG_r)\mathrm{Tr}(\phi^*\pi^*)\]
and hence part (1) for the data $(G_r,\sigma\in\mathrm{Gal}(\bbQ_{p^r}/\bbQ_p))$ follows. \par 
We have now reduced the proof of the Theorem for the data $(\bfG_r(\bbQ_p), \sigma \in \mathrm{Gal}(\bbQ_{p^r}/\bbQ_p))$ to the proof of the Theorem for the data $(\bfG'_{r'}(E), \tau \in \mathrm{Gal}(E_{r'}/E))$.  In this way we reduced from the group $\bfG$ over $\bbQ_p$ to the group $\bfG'$ over $E$, which is a Weil-restriction for a {\em totally ramified} extension $F/E$. So we now abuse notation and pretend we are in the situation where $\bfG = \mathrm{Res}_{F/\bbQ_p} \bfG_1$ and $F/\bbQ_p$ is {\em totally ramified}. The arguments below work the same way in the actual situation where we would be working over the base field $E$ instead of $\bbQ_p$.   

\subsubsection{Analysis of $F/\bbQ_p$ totally ramified case}
In view of our reduction above to the case where $F/\bbQ_p$ is totally ramified, we may assume $F \otimes_{\bbQ_p} \bbQ_{p^r} = F_r$ is the unique degree $r$ unramified extension of $F$. 
For the rest of this section we fix the following notations. Let $B=M_s(D)$ be a central simple algebra over $F$ of dimension $n^2$, where $D$ is a division algebra over $F$ of dimension $d^2$, so that $sd=n$. Let $B_r:=B\otimes_FF_r$ and write $B_r=M_{t}(\widetilde{D})$ where $\widetilde{D}$ is a division algebra over $F_r$ of dimension $\tilde{d}^2$ so that $t\tilde{d}=n$ and $\tilde{d}$ divides $d$. Then $G=B^\times$ and $G_r=B_r^\times$ are the unit groups. More precisely, we view $B^\times$ (resp.,\,$B^\times_r$) via Weil restriction as an algebraic group over $\bbQ_p$ (resp.\,$\bbQ_{p^r}$). \par
The proof of (1) will occupy the rest of this subsection. It will involve a global method and will rely on a comparison of twisted trace formulas. We first observe that in the current situation the sign in part (1) is
\[e(\bfG_r)=(-1)^{n-t}.\]
Indeed, this follows from part (6) of the Corollary on p.\,295 of \cite{Ko-sign} and the discussion at the bottom of p.\,296 of \emph{loc.\,cit.} Thus the identity to be proved becomes
\[\mathrm{Tr}(\phi I_\sigma | \pi)=(-1)^{n-t}\mathrm{Tr}(\phi^*|\pi^*)\]
For simplicity, in the proof we will view $G$ and $G_r$ as algebraic groups over $F$ without further comment. 

\subsubsection{Remark on central characters} \label{central_char_rem}

Our next task is to introduce the global method, relying on the Deligne-Kazhdan very simple trace formula. A technical point is that this formula (and its twisted version) is valid only for automorphic forms which transform by a unitary central character under translation of the argument by the center. The groups we work with are not adjoint, but have split connected centers. For these particular groups, by a twisting argument we can assume without loss of generality that the local representations $\pi^*, \Pi, \Pi^*$ we start with, as well as all global automorphic representations, have trivial central characters, and that the functions we encounter are smooth, transform by the trivial character, and have compact support modulo center. (For more details on the twisting argument, see \cite[\S5.3]{Hai09}, in particular \cite[Lem.\,5.3.1(i)]{Hai09}.) We will assume these conventions for the rest of this section without further comment. For any nonarchimedean local field $F$, we write $\mathcal H({\bbG}(F))$ for the Hecke algebra of smooth functions on $\bbG(F)$ having compact modulo center support, and which transform by the trivial character of the center.

\subsubsection{Approximation by automorphic representations}\label{sec:global-approx}

Let  $\widetilde{\bbF}/\bbF$ be a cyclic degree $r$ extension of number fields and $v_0$ a place of $\bbF$ such that
\begin{itemize}
    \item $v_0$ is inert in $\widetilde{\bbF}$ and the completion $\widetilde{\bbF}_{v_0}/\bbF_{v_0}$ is isomorphic to $F_r/F$;
    \item All archimedean places of $\bbF$ are complex.
\end{itemize}
We can choose a central division algebra $\bbD$ over $\bbF$ such that
\begin{itemize}
    \item $\bbD_{v_0}\cong B$;
    \item Let $S(\bbD)=\{v_0,v_1,\dotsm,v_m\}$ be the set of places where $\bbD$ is ramified. We may arrange that for all $1\le i\le m$, the place $v_i$ splits completely in $\widetilde{\bbF}$ and $\bbD_{v_i}$ is a division algebra.
\end{itemize}
The existence of $\bbD$ follows from the local-global exact sequence for Brauer groups in global class field theory, see for example \cite[\S21, p.196]{Mum-AV}.\par 

Denote $\widetilde{\bbD}:=\bbD\otimes_\bbF\widetilde{\bbF}$. Then by the second assumption above, $\widetilde{\bbD}$ is a central division algebra over $\widetilde{\bbF}$. Let $\bbG:=\bbD^\times$ be the unit group of $\bbD$ and $\widetilde{\bbG}:=\mathrm{Res}_{\widetilde{\bbF}/\bbF}\widetilde{\bbD}^\times$. Then $\bbG$ is an inner form of $\bbG^*:=\mathrm{GL}_n$ and $\widetilde{\bbG}$ is an inner form of $\widetilde{\bbG}^*:=\mathrm{Res}_{\widetilde{\bbF}/\bbF}\mathrm{GL}_n$. We fix an isomorphism $\widetilde{\bbG}_{v_0}\cong G_r$.\par 
Moreover we choose two non-archimedean places $v_{m+1}, v_{m+2}$ of $\bbF$ not in the set $\{v_0,\dotsm,v_m\}$ such that $v_{m+1}$ splits completely in $\widetilde{\bbF}$ and $v_{m+2}$ is inert in $\bbF$.\par 

Fix a supercuspidal representation $\rho$ of $\mathrm{GL}_n(\bbF_{v_{m+1}})$. For later purposes, choose a matrix coefficient $f_\rho$ of $\rho$ and an idempotent $e_\rho\in {\mathcal H}(\mathrm{GL}_n(\bbF_{v_{m+1}}))$ such that $f_\rho*e_\rho=f_\rho$.\par 

Let $\cA^*$ be the set of $\sigma$-stable cuspidal automorphic representations $\widetilde{\Pi}^*$ of $\widetilde{\bbG}^*(\bbA_\bbF)=\mathrm{GL}_n(\bbA_{\widetilde{\bbF}})$ satisfying the following conditions:
\begin{itemize}
    \item $\widetilde{\Pi}^*_{v_0}$ is $d$-compatible in the sense of \cite{Badu}: there exists a regular semisimple element $\ga^* \in \widetilde{\bbG}^*(\bbF_{v_0})$ which is a transfer of an element in $\widetilde{\bbG}(\bbF_{v_0})$, such that $\Theta_{\widetilde{\Pi}^*_{v_0}}(\ga^*) \neq 0$;
    \item For all $1\le i\le m$, at any place $w$ of $\widetilde{\bbF}$ above $v_i$, $\widetilde{\Pi}^*_w$ is isomorphic to the Steinberg representation;
    \item At any place $w$ above $v_{m+1}$, $\widetilde{\Pi}^*_w$ is isomorphic to the supercuspidal representation $\rho$ fixed above. 
    \item At the unique place of $\widetilde{\bbF}$ above $v_{m+2}$, $\widetilde{\Pi}^*$ is isomorphic to the Steinberg representation.
\end{itemize}
Note that, like any local component of a cuspidal automorphic representation, $\widetilde{\Pi}^*_{v_0}$ is unitary and generic. We remark here that since $\widetilde{\Pi}_{v_0}$ is generic and $\sigma$-stable, it is the base-change of a generic representation of $G^*$, characterized by Harish-Chandra character identities (see \cite[p.\,59-60]{AC} and \cite[Thm.\,9.7]{Zel80}). We will see in Proposition \ref{prop:density} below that the set $\cA^*$ is nonempty.\par 
By \cite[Thm.\,5.1 and Prop.\,5.5]{Badu}, such a $\widetilde{\Pi}^*$ has a global Langlands-Jacquet transfer to a discrete automorphic representation $\widetilde{\Pi}$ of $\widetilde{\bbG}$ and moreover $\widetilde{\Pi}$ is cuspidal. Let $\cA$ be the set of automorphic representations of $\widetilde{\bbG}$ that are global Langlands-Jacquet transfers (in Badulescu's sense) of representations in $\cA^*$. 

\begin{lem} \label{cA_characterization}
   The set $\cA$ consists of $\sigma$-stable cuspidal automorphic representations $\widetilde{\Pi}$ of $\widetilde{\bbG}(\bbA_\bbF)$ such that
\begin{itemize}
    \item For all $1\le i\le m$, at any place $w$ of $\widetilde{\bbF}$ above $v_i$, $\widetilde{\Pi}_w$ is the trivial representation;
    \item At any place $w$ above $v_{m+1}$, $\widetilde{\Pi}_w$ is isomorphic to the supercuspidal representation $\rho$.
    \item At the unique place of $\widetilde{\bbF}$ above $v_{m+2}$, $\widetilde{\Pi}$ is isomorphic to the Steinberg representation.
\end{itemize} 
\end{lem}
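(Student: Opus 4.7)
The plan is to establish both inclusions by tracking the local Jacquet--Langlands correspondence at every ramified place and using the transparent behavior at split places.

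For the inclusion of $\cA$ into the described set: let $\widetilde{\Pi}^* \in \cA^*$ and let $\widetilde{\Pi} \in \cA$ be its Badulescu global transfer. Cuspidality of $\widetilde{\Pi}$ is part of \cite[Prop.\,5.5]{Badu}. The $\sigma$-stability of $\widetilde{\Pi}$ would follow from that of $\widetilde{\Pi}^*$: since the global JL transfer is Galois-equivariant (because local JL is), both $\widetilde{\Pi}^*$ and its $\sigma$-twist transfer to the same representation of $\widetilde{\bbG}(\bbA_\bbF)$, and the injectivity of the global transfer on discrete automorphic representations then forces $\widetilde{\Pi}^\sigma \cong \widetilde{\Pi}$. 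The local conditions come from local-global compatibility of JL: at a place $w$ of $\widetilde{\bbF}$ above $v_i$ with $1 \le i \le m$, since $v_i$ splits completely in $\widetilde{\bbF}$ one has $\widetilde{\bbF}_w = \bbF_{v_i}$ and $\widetilde{\bbD}_w = \bbD_{v_i}$ is a division algebra of reduced degree $n$, and local JL sends the Steinberg representation to the trivial representation. At $v_{m+1}$ and $v_{m+2}$, $\bbD$ is split, so $\widetilde{\bbG}$ and $\widetilde{\bbG}^*$ coincide locally and the local transfer is the identity, yielding the prescribed local values of $\widetilde{\Pi}$.

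For the reverse inclusion: suppose $\widetilde{\Pi}$ is a $\sigma$-stable cuspidal automorphic representation of $\widetilde{\bbG}(\bbA_\bbF)$ satisfying the three listed conditions. By the inverse direction of Badulescu's global Jacquet--Langlands correspondence, $\widetilde{\Pi}$ is the global transfer of a discrete automorphic representation $\widetilde{\Pi}^*$ of $\widetilde{\bbG}^*(\bbA_\bbF) = \mathrm{GL}_n(\bbA_{\widetilde{\bbF}})$. The cuspidality of $\widetilde{\Pi}^*$ then follows from the Moeglin--Waldspurger classification of discrete automorphic representations of $\mathrm{GL}_n$: since $\widetilde{\Pi}^*_w = \widetilde{\Pi}_w = \rho$ is supercuspidal at any place $w$ above $v_{m+1}$, and non-cuspidal discrete representations have Speh-type local components that are never supercuspidal, $\widetilde{\Pi}^*$ must be cuspidal. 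The $\sigma$-stability of $\widetilde{\Pi}^*$ follows from that of $\widetilde{\Pi}$ by the uniqueness (injectivity) of global JL on cuspidal representations. To check membership in $\cA^*$: inverse local JL at places above $v_i$ sends the trivial representation back to Steinberg, and at $v_{m+1}$ and $v_{m+2}$ the local transfer is the identity; the $d$-compatibility of $\widetilde{\Pi}^*_{v_0}$ is automatic, since it lies in the image of local JL from $\widetilde{\bbG}_{v_0}$, and the Harish-Chandra character of the irreducible admissible $\widetilde{\Pi}_{v_0}$ is nonzero at some regular semisimple element, so the character of $\widetilde{\Pi}^*_{v_0}$ is nonzero at the corresponding transfer pair (up to the Kottwitz sign).

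The main obstacle I anticipate is ensuring cuspidality of $\widetilde{\Pi}^*$ in the reverse direction, rather than obtaining merely a discrete (possibly residual) representation; this is what the supercuspidal local constraint at $v_{m+1}$ is designed to guarantee via Moeglin--Waldspurger. The Steinberg conditions at $v_1,\dots,v_m,v_{m+2}$ play a subsidiary role, mainly to pin down the shape of the representation and enable the application of the simple trace formula in the density argument that follows.
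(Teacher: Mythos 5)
Your proof is essentially correct and follows the same overall strategy as the paper (both inclusions via Badulescu's global Jacquet–Langlands correspondence, relying on the Appendix of \cite{Badu} for surjectivity onto cuspidals in the reverse direction), but it deviates in a few concrete details that are worth noting.

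In the forward direction, you derive $\sigma$-stability of $\widetilde{\Pi}$ from the $\sigma$-equivariance of the global transfer together with its injectivity. The paper instead appeals to Remark \ref{sigma-stable-rem} (a $\sigma$-stable representation of $G^*_r$ has $\sigma$-stable Langlands–Jacquet transfer, proved by a Harish-Chandra character argument) combined with Flath's tensor-product theorem, thereby reducing to a purely local statement it has already carefully established. Both routes are valid and ultimately rest on the compatibility of local JL with the Galois twist.

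In the reverse direction, the paper cites the Appendix of \cite{Badu} directly for the existence of a \emph{cuspidal} preimage $\widetilde{\Pi}^*$ whose local factors are \emph{all} $d$-compatible — these properties come bundled in that result. You instead invoke the ``inverse direction'' of the global transfer to obtain merely a discrete preimage $\widetilde{\Pi}^*$, then deduce cuspidality of $\widetilde{\Pi}^*$ from the supercuspidal constraint at $v_{m+1}$ via the M{\oe}glin–Waldspurger classification of the $\mathrm{GL}_n$ discrete spectrum (Speh local components are never supercuspidal), and argue $d$-compatibility at $v_0$ via a character non-vanishing argument. These alternate derivations are correct; the M{\oe}glin–Waldspurger argument in particular is a clean way to see what the condition at $v_{m+1}$ buys you, as your closing remark anticipates. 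One caveat on wording: ``the inverse direction of Badulescu's global Jacquet–Langlands correspondence'' should not be read as a formality — that every cuspidal automorphic representation of $\widetilde{\bbG}$ lies in the image of Badulescu's map $\bfG$ is a genuine theorem (the content of the Appendix), not a consequence of the construction of $\bfG$ itself, and this is the point the paper's Remark after the Lemma is emphasizing. Your $d$-compatibility argument at $v_0$ is also slightly roundabout: once one knows $\widetilde{\Pi}_{v_0} = |\mathrm{LJ}^u|(\widetilde{\Pi}^*_{v_0})$ is a nonzero genuine representation, $d$-compatibility of $\widetilde{\Pi}^*_{v_0}$ follows directly from the definition of that map, without routing through the character identity.
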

\begin{proof}
    Let $\widetilde{\Pi} \in \cA$ be the global Jacquet-Langlands transfer of $\widetilde{\Pi}^*\in\cA^*$. Then clearly $\widetilde{\Pi}$ satisfies the stated conditions at the places $w$ above the places $v_i$, $1\le i\le m+1$. The fact that $\widetilde{\Pi}\in\cA$ is $\sigma$-stable follows using Remark \ref{sigma-stable-rem} along with Flath's article \cite{Fl79}.  In the other direction, \cite[Appendix]{Badu} shows that every cuspidal representation $\widetilde{\Pi}$ arises as a Badulescu-Langlands-Jacquet transfer of a cuspidal representation $\widetilde{\Pi}^*$ whose local factors are all $d$-compatible. If the local factors of $\widetilde{\Pi}$ satisfy the properties above, then $\widetilde{\Pi}^*$ must satisfy the four properties defining $\cA^*$. Further, the $\sigma$-invariance of $\widetilde{\Pi}$ forces, by injectivity of Badulescu's map $\bfG^{-1}$ in \cite[Prop.\,5.5]{Badu}, that $\widetilde{\Pi}^*$ is also $\sigma$-invariant.  Hence $\widetilde{\Pi}$ belongs to $\cA$.
\end{proof}

\begin{rem} 
Note we did not impose a condition at $v_0$ in Lemma \ref{cA_characterization}. In fact $\widetilde{\Pi} \in \cA$ forces $\widetilde{\Pi}_{v_0}$ to be an irreducible unitary representation which is in the image of a $d$-compatible representation of $\widetilde{\bbG}^*(\bbF_{v_0})$ under the map $|\mathrm{LJ}^u|_{v_0}$ of \cite[Prop.\,3.10]{Badu}.  But not every irreducible generic unitary representation of $\widetilde{\bbG}(\bbF_{v_0})$ is in the image of $|\mathrm{LJ}^u|_{v_0}$; see \cite[Lem.\,3.11]{Badu}. Hence this property of $\widetilde{\Pi}_{v_0}$ is highly non-obvious, and ultimately follows from the results of \cite[Appendix]{Badu}, by the argument above.
\end{rem}

\begin{lem}\label{lem:twisted-trace-global}
Let $\widetilde{\Pi}^*$ be a $\sigma$-stable cuspidal automorphic representation of $\widetilde{\bbG}^*$ in the set $\cA^*$. Let $\pi_{v_0}^*$ be an irreducible generic smooth representation of $\bbG^*(\bbF_{v_0})$ whose base change to $\bbG^*(\widetilde{\bbF}_{v_0})=\widetilde{\bbG}^*(\bbF_{v_0})$ is $\widetilde{\Pi}^*_{v_0}$. Let $\widetilde{\Pi}\in\cA$ be the Badulescu-Langlands-Jacquet transfer of $\widetilde{\Pi}^* \in \cA^*$. Then we can normalize the intertwining operator $I_\sigma$ on the $\sigma$-stable representation $\widetilde{\Pi}_{v_0}$ of $\widetilde{\bbG}(\bbF_{v_0})=\bbG(\widetilde{\bbF}_{v_0})$ such that 
    \[\mathrm{Tr}(\phi_{v_0} I_\sigma|\widetilde{\Pi}_{v_0})=e(\bfG_r)\mathrm{Tr}(\phi^*_{v_0}|\pi_{v_0}^*).\]
for every $\phi_{v_0}\in {\mathcal H}(\widetilde{\bbG}(\bbF_{v_0}))$ whose stable base change transfer is $\phi^*_{v_0}\in {\mathcal H}(\bbG^*(\bbF_{v_0}))$.
\end{lem}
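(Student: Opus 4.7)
The plan is to derive the local identity by isolating $\widetilde{\Pi}$ in a comparison of simple twisted trace formulas applied to globally matched test functions, and then pin down the sign $e(\bfG_r)$ via the product formula for Kottwitz signs.

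Fix the given $\phi_{v_0}$ and construct a global test function $\phi = \bigotimes_w \phi_w \in \mathcal H(\widetilde{\bbG}(\bbA_\bbF))$ whose other local components are chosen to pin down $\widetilde{\Pi}$: at $v_{m+1}$, take $f_\rho$ at one fixed place above $v_{m+1}$ and the idempotent $e_\rho$ at each of the remaining places; at $v_{m+2}$, take an Euler--Poincar\'e pseudocoefficient of the Steinberg representation, which is compactly supported on regular elliptic elements modulo center; at each $w \mid v_i$ for $1 \le i \le m$ (where $\widetilde{\bbG}_w = \widetilde{\bbD}_w^\times$ is compact modulo center), take a pseudocoefficient of the trivial representation; at the remaining non-archimedean places, take $\phi_w$ spherical, equal to the unit of the unramified Hecke algebra for all but finitely many $w$. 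Using the existence of stable base change (\cite[\S3.3.1]{La99}), pick $\phi^* = \bigotimes_v \phi_v^* \in \mathcal H(\bbG^*(\bbA_\bbF))$ with $\bigotimes_{w \mid v}\phi_w \longleftrightarrow \phi_v^*$ at each $v$.

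The supercuspidal component at $v_{m+1}$ kills the non-cuspidal spectrum, and the pseudocoefficients at $v_{m+2}$ (and $v_1, \dots, v_m$) restrict the geometric contributions to regular $\sigma$-elliptic conjugacy classes, so the Deligne--Kazhdan--Arthur--Clozel simple twisted trace formula applies to $(\widetilde{\bbG}, \sigma, \phi)$ and the simple Arthur--Selberg trace formula applies to $(\bbG^*, \phi^*)$. The stable-base-change matching
\[
e((\widetilde{\bbG}_v)_{\delta_v \sigma})\, TO_{\delta_v \sigma}(\phi_v) \;=\; O_{\gamma^*_v}(\phi^*_v)
\]
combined with the product formula $\prod_v e((\widetilde{\bbG}_v)_{\delta_v \sigma}) = 1$ for the Kottwitz sign of the global centralizer $I_\delta = \widetilde{\bbG}_{\delta \sigma}$ (trivial at the complex archimedean places) yields equality of the geometric sides. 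Equating spectral sides gives
\[
\sum_{\widetilde{\Pi}'} m(\widetilde{\Pi}')\, \mathrm{Tr}(\phi\, A_\sigma \mid \widetilde{\Pi}')
\;=\;
\sum_{\pi'^*} m(\pi'^*)\, \mathrm{Tr}(\phi^* \mid \pi'^*),
\]
with $A_\sigma$ the canonical intertwining operator of Arthur. By strong multiplicity one for $\mathrm{GL}_n$ and Badulescu's global Jacquet--Langlands correspondence, the pinning-down conditions at $v_1, \dots, v_{m+2}$ isolate $\widetilde{\Pi}$ on the left and (up to trivial central twist) the unique cuspidal $\pi^*$ base-changing to $\widetilde{\Pi}^*$ on the right. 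Since $\phi_{v_0}$ varies freely and the products of traces over $v \ne v_0$ are nonzero, dividing out yields $C \cdot \mathrm{Tr}(\phi_{v_0}\, I_\sigma \mid \widetilde{\Pi}_{v_0}) = \mathrm{Tr}(\phi^*_{v_0} \mid \pi^*_{v_0})$ for a single constant $C$ and the normalization of $I_\sigma$ inherited from $A_\sigma$.

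The main obstacle is identifying $C^{-1} = e(\bfG_r)$. To this end, at each place $v \ne v_0$ we invoke the already-established local character identity relating $\mathrm{Tr}(\phi_v\, A_\sigma \mid \widetilde{\Pi}_v)$ to $\mathrm{Tr}(\phi_v^* \mid \pi_v^*)$: at the unramified places and at $v_{m+1}$ (via the matrix-coefficient trick) the identity is sign-free; at the Steinberg/compact-modulo-center places $v_1, \dots, v_{m+2}$ the local Jacquet--Langlands character identity for Steinberg representations, combined with the explicit twisted Steinberg character computation, produces a local sign equal to $e(\widetilde{\bbG}_v)$ (cf.\,\cite[Cor.\,p.\,295]{Ko-sign} and \cite{Ro81}). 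Applying the product formula $\prod_v e(\widetilde{\bbG}_v) = 1$ for the global group $\widetilde{\bbG}$, valid because all archimedean places of $\bbF$ are complex and hence $e(\widetilde{\bbG}_v) = 1$ there, the local signs at $v \ne v_0$ collapse to $e(\widetilde{\bbG}_{v_0})^{-1} = e(\bfG_r)^{-1}$, giving $C = e(\bfG_r)^{-1}$. Absorbing this sign into the normalization of $I_\sigma$ yields the desired identity.
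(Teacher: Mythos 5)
Your overall strategy — globalize, apply the simple twisted trace formula to a carefully chosen test function, isolate $\widetilde{\Pi}$ on both spectral sides, and then reduce to a purely local sign computation — is exactly the paper's strategy, and most of the assembly is in order. However, the crucial sign analysis at the end contains a genuine error.

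You assert that the local character-identity ratio at each split place $v_i$ ($1\le i\le m$) equals $e(\widetilde{\bbG}_{v_i})$, and then invoke the Kottwitz product formula $\prod_v e(\widetilde{\bbG}_v)=1$ to collapse the product of local signs over $v\ne v_0$ to $e(\widetilde{\bbG}_{v_0})^{-1}=e(\bfG_r)^{-1}$. This is not correct. At a place $v_i$ that splits completely in $\widetilde{\bbF}$, we have $\widetilde{\bbG}_{v_i}\cong(\bbD_{v_i}^\times)^r$ and $\widetilde{\Pi}_{v_i}\cong\widetilde{\Pi}_{w_i}^{\otimes r}$; the canonical local intertwiner (i.e.\ the one compatible with the global $A_\sigma$) is cyclic permutation, and the Saito--Shintani formula collapses the twisted trace to an ordinary trace on a \emph{single} factor. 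The local Jacquet--Langlands character identity for that single factor then produces the sign $e(\bbD_{v_i}^\times)=(-1)^{n-1}$, not $e(\widetilde{\bbG}_{v_i})=\bigl((-1)^{n-1}\bigr)^{r}=(-1)^{r(n-1)}$. These two signs disagree whenever $r$ is even and $n$ is even. So the product of local signs over $v\ne v_0$ is $(-1)^{m(n-1)}$, \emph{not} $\prod_{v\ne v_0}e(\widetilde{\bbG}_v)$, and the Kottwitz product formula for $\widetilde{\bbG}$ — a statement about Galois cohomology of the global group $\widetilde{\bbG}$, not about local character-identity constants — does not directly compute this quantity.

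Because of this, the identification $C^{-1}=e(\bfG_r)$ does not follow from your argument. The paper instead computes $C^{-1}=(-1)^{m(n-1)}$ directly and then compares it with $e(\bfG_r)=(-1)^{n-t}$ via the arithmetic constraint $at+r(a_1+\dotsm+a_m)\equiv0\pmod n$ coming from the vanishing of the sum of local invariants of $\widetilde{\bbD}$; the upshot is that $(-1)^{m(n-1)}$ and $(-1)^{n-t}$ agree only up to an $r$-th root of unity (equality when $r$ is odd or $n$ is odd, possibly a discrepancy of $-1$ when $r$ and $n$ are both even). That $r$-th root of unity is what is absorbed into the normalization of $I_\sigma$ — and the absorption is legitimate precisely because $I_\sigma^r$ is scalar, so $I_\sigma$ is determined only up to an $r$-th root of unity. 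You mention a normalization step at the end, but with the sign you computed it would appear not to be needed, masking the genuine case analysis. Finally, you also elide the last step of the paper's proof: one must still identify the $v_0$-component $(\pi^*)_{v_0}$ coming out of the trace formula with the given $\pi^*_{v_0}$, which a priori differ by a character twist; the paper fixes this by twisting $\pi^*$ by a Hecke character using that $v_0$ is inert in $\widetilde{\bbF}$ and that every local factor of $\phi^*$ lies in the image of base change (Corollary \ref{phi*_is_bc_cor}).
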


\begin{proof}
We consider test functions $\phi=\otimes_w\phi_w\in {\mathcal H}(\widetilde{\bbG}(\bbA_\bbF))={\mathcal H}(\bbG(\bbA_{\widetilde{\bbF}}))$ that satisfy the following conditions (here $w$ runs over all places of $\widetilde{\bbF}$):
\begin{itemize}
    \item For any $1\le i\le m$, fix one place $w_i$ of $\widetilde{\bbF}$ above $v_i$. For any place $w|v_i$ different from $w_i$, $\phi_w=e_{v_i}$ is an idempotent such that $\phi_{w_i}*e_{v_i}=\phi_{w_i}$, where $e_{v_i}$ and $\phi_{w_i}$ are understood as functions on $\bbG(\bbF_{v_i})$ under fixed isomorphisms $\bbF_{v_i}\cong\widetilde{\bbF}_{w}\cong\widetilde{\bbF}_{w_i}$. We assume that the support of $\phi_{w_i}$ is contained in the elliptic regular semisimple locus of $\bbG(\bbF_{v_i})$.
    \item Fix a place $w_{m+1}$ above $v_{m+1}$, we require $\phi_{w_{m+1}}$ to be a matrix coefficient of the supercuspidal representation $\rho\cong\widetilde{\Pi}_{v_{m+1}}\cong\widetilde{\Pi}^*_{v_{m+1}}$ and for any $w|v_{m+1}$ different from $w_{m+1}$, $\phi_w$ is the idempotent $e_\rho$ so that $\phi_{w_{m+1}}*e_\rho=\phi_{w_{m+1}}$.
    \item At the unique place $w_{m+2}$ of $\widetilde{\bbF}$ above $v_{m+2}$, we require $\phi_{w_{m+2}}$ to be a pseudo-coefficient of the Steinberg representation of $\widetilde{\bbG}(\bbF_{v_{m+2}})=\bbG(\widetilde{\bbF}_{w_{m+2}})$.  
    \item For any non-archimedean place $v\notin\{v_i\}_{0\le i\le m+2}$, $\phi_v$ is in the unramified Hecke algebra and is the unit element for all but finitely many such $v$. 
\end{itemize}
For each place $v$ of $\bbF$, let $\phi_v^*\in {\mathcal H}(\bbG^*(\bbF_v))$ be the stable base change transfer of $\otimes_{w|v}\phi_w$ and let $\phi^*:=\otimes_{v}\phi_v^*$. (In particular, at $v = v_0$, the $\phi^*_{v_0}$ and $\phi_{v_0}$ range freely over all matching pairs of functions.) Then we can apply the simple twisted trace formula to the test functions $\phi,\phi^*$. Comparison of the geometric sides leads to the identity
\begin{equation}\label{eq:twisted-trace-formula}
    r\,\mathrm{Tr}(R_{\widetilde{\bbG},\mathrm{cusp}}(\phi) I_\sigma)=
    \mathrm{Tr}(R_{\bbG^*,\mathrm{cusp}}(\phi^*))
\end{equation}
for all pairs $\phi, \phi^*$ we consider. Here $I_\sigma$ on the left hand side refers to the canonical intertwining operator on $L^2$-functions given by the action of $\sigma$ on the argument. To prove (\ref{eq:twisted-trace-formula}) we follow the reasoning of \cite[p.44]{AC}, making use of Lemma \ref{lem:transfer-twisted-conj-class}. Then following the arguments in \cite[bottom of page 55]{AC}, we deduce\footnote{It is for this step that we need to use the inert place $v_{m+2}$. We also use Corollary \ref{phi*_is_bc_cor} in the course of the argument.} that there is a cuspidal automorphic representation $\pi^*$ of $\bbG^*$ satisfying
\begin{equation} \label{one_piece_eq}
\mathrm{Tr}(\phi I_\sigma | \widetilde{\Pi})=\mathrm{Tr}(\phi^*|\pi^*).
\end{equation}

For each $1\le i\le m$, we fix the local intertwining operator $I_{\sigma,v_i}$ on $\widetilde{\Pi}_{v_i}\cong\widetilde{\Pi}_{w_i}^{\otimes r}$ to be given by the cyclic permutation $I_{\sigma, v_i}(x_1 \otimes \cdots \otimes x_r) = x_2 \otimes x_3 \otimes \cdots x_r \otimes x_1$. Let $\phi^*_{w_i}$ be a Jacquet-Langlands transfer of $\phi_{w_i}$. Then by Lemma \ref{sbc_reinterpret}(3) -- rather by its trivial central character variant, see Remark \ref{sbc_reinterpret}--  $\phi^*_{w_i}$ is a stable base change for the local test function $\phi_{v_i}=\phi_{w_i}\otimes e_{v_i}\otimes\dotsm \otimes e_{v_i}$. Moreover, we have
\[\mathrm{Tr}(\phi_{v_i}I_{\sigma,v_i}|\widetilde{\Pi}_{v_i})=\mathrm{Tr}(\phi_{w_i}|\widetilde{\Pi}_{w_i})=(-1)^{n-1}\mathrm{Tr}(\phi_{w_i}^*|\pi^*_{v_i}).\]
The first equality follows from the Saito-Shintani formula. For the second equality we have used that $\widetilde{\Pi}$ is the Badulescu-Langlands-Jacquet transfer of a unique  $\widetilde{\Pi}^* \in \cA^*$, and by construction $\pi^*_{v_i}\cong\widetilde{\Pi}^*_{w_i}$ is the local Jacquet-Langlands transfer of $\widetilde{\Pi}_{w_i}$.\par 
For any place $v$ of $\bbF$ not in the set $\{v_i,0\le i\le m\}$, we have $\widetilde{\bbG}(\bbF_v)\cong\mathrm{GL}_n(\widetilde{\bbF}\otimes_{\bbF}\bbF_{v})$ and we fix the intertwining operator $I_{\sigma,v}$ on $\widetilde{\Pi}_v$ as in \cite{AC}. We claim that at the place $v_{m+2}$ we have 
\[\mathrm{Tr}(\phi_{w_{m+2}}I_{\sigma,w_{m+2}}|\widetilde{\Pi}_{w_{m+2}})=\mathrm{Tr}(\phi_{v_{m+2}}^*|\pi_{v_{m+2}}^*)\]
where $\widetilde{\Pi}_{w_{m+2}}$ is isomorphic to the Steinberg representation of $\bbG(\widetilde{\bbF}_{w_{m+2}})\cong\mathrm{
GL}_n(\widetilde{\bbF}_{w_{m+2}})$ by assumption. 
By our assumption on $\phi_{w_{m+2}}$, its stable base change transfer $\phi^*_{v_{m+2}}$ is a pseudo-coefficient of the Steinberg representation of $\bbG(\bbF_{v_{m+2}})=\mathrm{GL}_n(\bbF_{v_{m+2}})$. Therefore as the factor $\mathrm{Tr}(\phi_{v_{m+2}}^*|\pi_{v_{m+2}}^*)$ appearing in (\ref{one_piece_eq}) is not zero, we must have that $\pi^*_{v_{m+2}}$ is the Steinberg representation. Then the Steinberg representation $\widetilde{\Pi}_{v_{m+2}}$ is its base-change lift, and we get the identity above.


On the other hand, at any place $v$ of $\bbF$ not in $\{v_i\}_{1\le i\le m+2}$ such that $\widetilde{\Pi}_v$ is unramified, $I_{\sigma,v}$ is the identity on the one dimensional space of spherical vectors $\widetilde{\Pi}_v^{\widetilde{\bbG}(\cO_{\bbF,v})}$. These choices determine uniquely an intertwining operator $I_{\sigma,v_0}$ on $\widetilde{\Pi}_{v_0}$ so that 
\begin{equation*}
\mathrm{Tr}(\phi_{v_0}I_{\sigma,v_0}|\widetilde{\Pi}_{v_0})=(-1)^{m(n-1)}\mathrm{Tr}(\phi_{v_0}^*|(\pi^*)_{v_0}),
\end{equation*}
where $(\pi^*)_{v_0}$ is the $v_0$-component of $\pi^*$, which despite our notation is {\em a priori} different from the representation $\pi^*_{v_0}$ we started with.


Next we show that the sign $(-1)^{m(n-1)}$ differs from the desired sign $e(\bfG_r)=(-1)^{n-t}$ by an $r$-th root of unity.
Recall the notation from \S\ref{sec:global-approx}. In particular we have $n=t\tilde{d}$. First suppose that $n$ is odd, so $t$ is also odd. Then $m(n-1)-(n-t)$ is even and we are done.\par 
We assume from now on that $n$ is even. Recall that $\bbD_{v_0}\cong\widetilde{\bbD}_{v_0}\cong B$. Write its invariant as $\mathrm{inv}(B)=\frac{a}{\tilde{d}}=\frac{at}{n}$ where $a$ is coprime to $\tilde{d}=n/t$. For each $1\le i\le m$, write $\mathrm{inv}(\bbD_{v_i})=\frac{a_i}{n}$ where $a_i$ is coprime to $n$. In particular, $a_i$ is odd for all $1\le i\le m$. The sum of local invariants of $\widetilde{\bbD}$ vanishes, which implies that
\[at+r(a_1+\dotsm+a_m)\equiv 0\mod n.\]
Suppose $t$ is even, then we get that $rm$ is even. If $r$ is odd then $m$ must be even and hence $m(n-1)-(n-t)$ is even. If $r$ is even, then $-1$ is an $r$-th root of unity and we are done. \par
Finally if $t$ is odd, then $\tilde{d}$ is even and $a$ is odd (since it is coprime to $\tilde{d}$). Then the congruence above implies that $rm$ is odd, so in particular $m$ is odd. Then $m(n-1)-(n-t)$ is even and we are done.\par

Consequently after multiplying by an $r$-th root of unity if necessary, we get an intertwining operator $I_{\sigma,v_0}$ on $\widetilde{\Pi}_{v_0}$ such that 

\begin{equation} \label{local_intertwiner_forced}
\mathrm{Tr}(\phi_{v_0}I_{\sigma,v_0}|\widetilde{\Pi}_{v_0})=e(\bfG_r)\mathrm{Tr}(\phi_{v_0}^*|(\pi^*)_{v_0}).
\end{equation}

We regard (\ref{local_intertwiner_forced}) as equation in all matching pairs $\phi_{v_0}, \phi^*_{v_0}$.  By the (twisted) Weyl integration formula and Lemma \ref{sbc_reinterpret}, this implies that $\widetilde{\Pi}^*_{v_0}$ is a base-change lift of $(\pi^*)_{v_0}$. Therefore $\pi^*_{v_0}$ and $(\pi^*)_{v_0}$ differ by a character of $\bbF_{v_0}^\times$. Using that $v_0$ is inert in $\widetilde{\bbF}$, we replace $\pi^*$ by its twist by the corresponding character of $\bbA_{\widetilde{\bbF}}^\times$. By the fact that every component of the global $\phi^*$ is in the image of the base-change map (see Corollary \ref{phi*_is_bc_cor}), this twist does not affect the validity of (\ref{one_piece_eq}). We thereby arrange that the $v_0$-component of $\pi^*$ is exactly the given $\pi^*_{v_0}$, and the Lemma is proved.
\end{proof}

Let $\cA_{v_0}^*$ (resp. $\cA_{v_0}$) be the subset of $\mathrm{Irr}(G_r^*)$ (resp. $\mathrm{Irr}(G_r)$) consisting of $v_0$ components of the automorphic representations in $\cA^*$ (resp. $\cA$).\par

Let $\mathfrak{S}_2(G_r)$ be the set of equivalence classes of pairs $(L,D)$ where $L$ is a Levi subgroup of $\bfG_{F_r}$ defined over $F_r$ and $D$ is an $X_{ur}^*(L)$-orbit of a discrete series representation of $L$. For each $(L,D)\in\mathfrak{S}_2(G_r)$, let $V(L,D)$ be the corresponding component in the $L^2$-Bernstein variety of $G_r$.
By the local Jacquet-Langlands correspondence, we also view $V(L,D)$ as a component in the $L^2$-Bernstein variety of $G_r^*$. We refer to \cite[\S2]{Shin-AJM} for more details.

\begin{prop}\label{prop:density}
For each $\sigma$-stable pair $(L,D)\in\mathfrak{S}_2(G_r^*)$, the set $\cA^*_{v_0}\cap V(L,D)$ is Zariski dense in $V(L,D)$.
\end{prop}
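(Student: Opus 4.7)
The strategy is to combine the twisted simple trace formula for $(\widetilde{\bbG}^*,\sigma)$ developed in the proof of Lemma \ref{lem:twisted-trace-global} with the twisted trace Paley-Wiener theorem of Rogawski (and Delorme--Mezo) for the local group $\widetilde{\bbG}^*(\bbF_{v_0}) = G_r^*$. This is essentially an instance of the automorphic density theorem of Sauvageot and Shin \cite{Shin-AJM}, adapted to the twisted setting with the local constraints defining $\cA^*$. The $\sigma$-stability of $(L,D)$ is exactly what is needed for the component $V(L,D)$ to support $\sigma$-invariant tempered representations, hence to be able to contain a $v_0$-component of a $\sigma$-stable cuspidal automorphic representation.

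I would argue by contradiction: suppose $\cA^*_{v_0} \cap V(L,D)$ is contained in the zero locus of some non-zero regular function $P$ on $V(L,D)$. By the twisted trace Paley-Wiener theorem one can find $\phi^*_{v_0} \in \mathcal H(\widetilde{\bbG}^*(\bbF_{v_0}))$ whose twisted trace character (taken with respect to the fixed normalization of $I_\sigma$ on each $\sigma$-stable component) realizes $P$ on $V(L,D)$ and vanishes on every other $\sigma$-stable tempered component. At the auxiliary places I would fix the test functions exactly as in the proof of Lemma \ref{lem:twisted-trace-global}: matrix coefficients of Steinberg at $v_1,\dots,v_m$, a matrix coefficient of $\rho$ at $v_{m+1}$, a pseudo-coefficient of Steinberg at $v_{m+2}$, and units of the spherical Hecke algebra at all other non-archimedean places, except at one chosen auxiliary unramified place $v^\dagger$ where the test function is allowed to vary over the full spherical Hecke algebra. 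These choices force only $\widetilde{\Pi}^* \in \cA^*$ to contribute on the spectral side of the twisted simple trace formula, and by our assumption on $P$ every such contribution vanishes.

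To obtain a contradiction I would then examine the geometric side, which is a finite sum of twisted orbital integrals over the $\sigma$-elliptic semisimple $\bbF$-rational conjugacy classes in the support. Varying the spherical Hecke operator at $v^\dagger$ produces, via the Satake transform, functions on the finite set of Satake parameters of these classes, and these functions are linearly independent because distinct semisimple classes have distinct Satake parameters. A standard linear-algebra/Fourier argument then forces each individual twisted orbital integral to vanish identically; but with the Steinberg (pseudo)coefficients at $v_1,\dots,v_{m+2}$ and the matrix coefficient of $\rho$ one can arrange (exactly as in the proof of existence of $\cA^*$) for at least one such twisted orbital integral to be non-zero, contradicting the vanishing of the spectral side.

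The main obstacle is the linear-independence/separation step that converts non-vanishing of one geometric contribution into a contradiction with the spectral vanishing, while controlling that the $\sigma$-twist does not accidentally kill the relevant orbital integrals; this is handled in the style of \cite{Shin-AJM}. A secondary technical point is the need for the twisted trace Paley-Wiener theorem to construct $\phi^*_{v_0}$ with the prescribed behaviour on the $\sigma$-stable component $V(L,D)$, but this is available in the literature for our $\widetilde{\bbG}^*$ (Weil restriction of $\mathrm{GL}_n$).
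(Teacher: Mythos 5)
Your overall strategy — contradiction, twisted trace Paley–Wiener to produce a test function at $v_0$ with the required vanishing pattern on tempered $\sigma$-stable components, then feeding this into the simple twisted trace formula with the fixed local constraints at $v_1,\dots,v_{m+2}$ — agrees with the paper. But the key step of converting the vanishing of the spectral side into a contradiction is handled by a different device in your write-up, and that device has a gap.

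You propose to fix a single auxiliary unramified place $v^\dagger$, vary the spherical Hecke operator there, and then argue by linear independence of ``Satake-parameter evaluations'' that each geometric term must vanish. The problem is that two distinct $\bbF$-rational $\sigma$-conjugacy classes $\delta_1 \neq \delta_2$ can perfectly well become $\sigma$-conjugate over $\bbF_{v^\dagger}$; in that case their twisted orbital integrals against any spherical function at $v^\dagger$ agree, and the corresponding ``evaluation functionals'' are not linearly independent. There is also a secondary concern that the set of contributing $\delta$ is only finite for a fixed $\phi_{v^\dagger}$, and grows as the support of $\phi_{v^\dagger}$ grows, so the ``finite set of Satake parameters'' you invoke is not uniform over the Hecke algebra. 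The Sauvageot/Shin density argument does overcome issues of this kind, but it crucially uses a positivity structure on the spectral side that is not available in the twisted setting here. The paper sidesteps all of this: instead of varying a Hecke operator, it introduces a finite set $S$ of auxiliary finite places (including a split place $v_{m+3}$ at which the matching pair is required to be supported in the $\sigma$-elliptic locus) where the test functions are supported near $\sigma$-regular elements, and then shrinks these supports (following Clozel and Kottwitz--Rogawski) until the geometric side has exactly one surviving term, namely the one coming from a global $\delta$ chosen to approximate $\delta_0$ at $v_0$ and to be strongly $\sigma$-regular $\sigma$-elliptic at the $v_i$. This bypasses any separation of Satake parameters entirely.

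One more issue: at the end you say one can arrange a nonzero term ``exactly as in the proof of existence of $\cA^*$''. That is circular. The paper explicitly states that the nonemptiness of $\cA^*$ is itself a consequence of Proposition \ref{prop:density}; there is no independent existence argument you can appeal to. The nonvanishing of the chosen geometric term has to be produced directly, as the paper does, by first extracting a $\delta_0 \in \widetilde{\bbG}^*(\bbF_{v_0})$ with $TO_{\delta_0\sigma}(f_0) \neq 0$ from the Weyl integration formula (this is where nonvanishing of the twisted character of $f_0$ on $V(L,D)$ is used), then approximating $\delta_0$ by a suitable global $\delta$ and checking the local factors at the other places are nonzero.
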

\begin{proof}
Suppose on the contrary that the intersection is not Zariski dense. Then by the twisted version of the trace Paley-Wiener theorem \cite{Ro88} (see also \cite[Prop.\,2.9, Cor.\,2.10]{AC}), there exists a function $f_0\in \mathcal H(G_r^*)$ whose twisted character vanishes on $\cA^*_{v_0}\cap V(L,D)$ and all other Bernstein components, but does not vanish identically on $V(L,D)$. In particular, there exists $\delta_0\in\widetilde{\bbG}^*({\bbF}_{v_0})$ such that 
\[TO_{\delta_0\sigma}(f_0)\ne0\]
We consider test functions $\phi=\otimes_w\phi_w\in \mathcal H(\widetilde{\bbG}^*(\bbA_{\bbF}))$ satisfying the following conditions:
\begin{itemize}
    \item $\phi_{v_0}=f_0$;
    \item For any $1\le i\le m$, at one place $w$ above $v_i$, $\phi_w=f_{\mathrm{St}}$ is a matrix coefficient of the Steinberg representation and at the remaining $r-1$ places above $v_i$, the corresponding local factor of $\phi$ equals to an idempotent $e_{\mathrm{St}}$ in the local Hecke algebra such that $f_{\mathrm{St}}*e_{\mathrm{St}}=f_{\mathrm{St}}$;
    \item At one place $w$ above $v_{m+1}$, $\phi_{w}=f_\rho$ is a matrix coefficient of the supercuspidal representation $\rho$ and at the remaining $r-1$ places above $v_{m+1}$, the local factor of $\phi$ equals to the idempotent $e_\rho$;
    \item At the unique place $w_{m+2}$ above $v_{m+2}$, $\phi_{w_{m+2}}$ is a pseudo-coefficient of the Steinberg representation.
    \item Let $S$ be a finite set of finite primes of $\bbF$ disjoint from all the places fixed above, and containing an additional prime $v_{m+3}$ of $\bbF$ distinct from $\{ v_i \, | \, 0 \leq i \leq m+2\}$ and totally split $\widetilde{\bbF}$. For every $v \in S$ we consider a pair of matching functions $\phi_v, f_v$ which are supported on the set of ($\sigma$-)regular elements, and assume that $\phi_{v_{m+3}}, f_{v_{m+3}}$ are supported on the ($\sigma$-)elliptic elements. 
\end{itemize}

By \cite[Lemma 2.5]{AC}, we have the simple twisted trace formula
\begin{equation} \label{AC89_simple_TF}
\sum_{\widetilde{\Pi}^*}\mathrm{Tr}(\phi I_\sigma|\widetilde{\Pi}^*)=\sum_\delta c_\delta TO_{\delta\sigma}(\phi)
\end{equation}
where the left hand side runs over $\sigma$-stable cuspidal automorphic representations of $\widetilde{\bbG}^*$, the right hand side runs over semisimple conjugacy classes $\delta$ in $\widetilde{\bbG}^*(\bbF)=\mathrm{GL}_n(\widetilde{\bbF})$ such that $\cN_r\delta$ is regular elliptic in $\bbG^*(\bbF)=\mathrm{GL}_n(\bbF)$ and $c_\delta$ is a certain nonzero volume factor. \par 
Note that any $\widetilde{\Pi}^*$ which contributes to the left hand side belongs to $\cA^*$. By our choice of $\phi$, the left hand side vanishes. We can choose $\delta\in\bbG^*(\widetilde{\bbF})$ satisfying:
\begin{itemize}
    \item At $v_0$, $\delta$ is sufficiently close to $\delta_0$ in $\widetilde{\bbG}(\bbF_{v_0})$ so that $TO_{\delta\sigma}(\phi_{v_0})\ne0$;
    \item For $1\le i\le m$ or $i=m+2$, at any place $w$ above $v_i$, $\delta$ is strongly $\sigma$-regular and $\sigma$-elliptic, and is sufficiently close to $1$ above $v_{m+1}$ so that $TO_{\delta\sigma}(\phi_{v_i})\ne0$ (by our choice of $\phi_{v_i}$, $TO_{\delta\sigma}(\phi_{v_i})$ equals to the character of the Steinberg or the supercuspidal representation $\rho$ at $\cN_r\delta$).
    
\end{itemize}
For functions having given support, the number of terms appearing on the right hand side of (\ref{AC89_simple_TF}) is finite. By enlarging the set $S$ if necessary and by choosing the functions $\phi_v, f_v$ for $v \in S$ appropriately, we can arrange that the only term which appears is our chosen $\delta$ (see \cite[p.\,295]{Clo90} and \cite[p.\,812-813]{KR00}).
Then the right hand side of the simple twisted trace formula has precisely one nonzero term corresponding to $\delta$ and we reach a contradiction. 
\end{proof}

\subsubsection{Finishing the proof of Theorem~\ref{thm:twisted-JL}}
Let $(L^*,D^*)\in\mathfrak{S}(G_r^*)$ be a $\sigma$-stable pair corresponding to $(L,D)\in\mathfrak{S}(G_r)$ under the Jacquet-Langlands transfer. Let $\pi_L$ be an irreducible discrete series representation in the orbit $D$. We know from Proposition~\ref{prop:density} and Lemma \ref{lem:twisted-trace-global} that the set of $\sigma$-stable irreducible generic representations in the component $V(L,D)$ of the $L^2$-Bernstein variety for $G_r$ is Zariski dense. This implies that all irreducible generic representations in $V(L,D)$ are $\sigma$-stable. Indeed, for any $\phi\in C_c^\infty(G_r)$, the trace of $\phi$ and $\phi^\sigma$ define regular function on the variety $V(L,D)$, where $\phi^\sigma(g):=\phi(\sigma^{-1}(g))$. Since they coincide on a Zariski dense subset, they are equal on $V(L,D)$. This means that the character of any irreducible generic representation in $V(L,D)$ is $\sigma$-stable and hence the representation is $\sigma$-stable by Remark \ref{sigma-stable-rem}.\par 
Consequently the $G_r$-conjugacy class of $(L,D)$ is $\sigma$-stable. Let $P$ be a parabolic subgroup of $G_r$ containing $L$. Then there exists $w\in G_r$ such that $w\sigma^{-1}(P)w^{-1}=P$, $w\sigma^{-1}(L)w^{-1}=L$, together with an operator $I_L^\sigma$ on $\pi_L$ such that 
\[I_L^\sigma \pi_L(w\sigma^{-1}(h)w^{-1})=\pi_L(h)I_L^\sigma,\quad\forall h\in P.\]
Define an intertwining operator $I_\sigma$ on the induced representation $\pi=i_P^G\pi_L$ by
\[(I_\sigma f)(g):=I_L^\sigma f(w\sigma^{-1}(g)),\quad\forall g\in G_r.\]
Then we have $I_\sigma\pi(g)=\pi(\sigma(g))I_\sigma$ for all $g\in G_r$. Moreover, the same formula defines an intertwining operator, still denoted $I_\sigma$, on (the Jacquet-Langlands transfers of) any irreducible generic representation in the component $V(L,D)$, since such representations are full induced modules by Zelevinsky's theorem (\cite[9.7]{Zel80}).

For any $\phi\in C_c^\infty(G_r)$, the trace of $\phi I_\sigma$ defines a regular function on the variety $V(L,D)$. Therefore the twisted character identity for any representation in $V(L,D)$ can be reduced to representations in the Zariski dense subset $\cA_{v_0}\cap V(L,D)$ that we proved in Lemma \ref{lem:twisted-trace-global}. Therefore we get
\[\mathrm{Tr}(\phi I_\sigma|\Pi)=e(\bfG_r)\mathrm{Tr}(\phi^*|\pi^*)\] 
for any irreducible tempered representation $\pi^*$ of $G^*$ whose base change $\Pi^*$ has a Langlands-Jacquet transfer $\Pi$ (in the sense of Badulsecu), which is an irreducible admissisble representation of $G_r$.

\subsection{Transfer of the Bernstein center}
We first recall some standard facts about the (stable) Bernstein center from \cite{Ha14}. In what follows, we work over a $p$-adic field $F$, freely using the analogues of the statements above which were proved for the case $F = \bbQ_p$. In this context $F_r$ denotes the unique degree $r$ unramified extension of $F$ in a fixed algebraic closure of $F$. To simplify notation, we will use regular roman letters to denote both algebraic groups and their $F$-points (bearing in mind $G_r$ is also the set of $F$-points for a Weil-restricted group over $F$).\par
Let ${}^LG=\hat{G}\rtimes W_F$ be the Langlands dual group of $G$. A \emph{semisimple $L$-parameter} for $G$ is a continuous homomorphism
$\la:W_F\to{}^LG$ whose image consists of semisimple elements and whose projection to the second factor is the identity map. The set of $\hat{G}$-conjugacy classes of semisimple $L$-parameters has the structure of an infinite disjoint union of algebraic varieties, which we refer to as the \emph{stable Bernstein variety} of $G$ and denote by $\mathfrak{Y}_G^{\mathrm{st}}$. The \emph{stable Bernstein center} $\mathfrak{Z}_G^{\mathrm{st}}$ is defined to be the ring of regular functions on $\mathfrak{Y}_G^{\mathrm{st}}$. In particular, the stable Bernstein center for $G$ is canonically identified with the stable Bernstein center for its quasi-split inner form $G^*$. For any $Z\in\mathfrak{Z}_G^{\mathrm{st}}$, we sometimes denote by $Z^*$ the corresponding element in $\mathfrak{Z}_{G^*}^{\mathrm{st}}$.\par
There is a morphism of algebraic varieties $\mathfrak{Y}_G^{\mathrm{st}}\to\mathfrak{Y}_{G_r}^{\mathrm{st}}$ defined by restricting a semisimple $L$-parameter to the subgroup $W_{F_r}\subset W_F$. This induces the base change homomorphism of stable Bernstein centers:
\begin{equation}\label{eq:BC-center}
b:\mathfrak{Z}_{G_r}^{\mathrm{st}}\to\mathfrak{Z}_G^{\mathrm{st}}
\end{equation}

In our setting, since $G^*$ is a product of Weil restrictions of general linear groups, the local Langlands correspondence is known and there is a quasi-finite map from the stable Bernstein center $\mathfrak{Z}_G^{\mathrm{st}}$ to the Bernstein center $\mathfrak{Z}_G$ which is an isomorphism if $G=G^*$. A good reference for these facts is \cite{Coh18}. In particular, we can view an element $Z\in\mathfrak{Z}_G^{\mathrm{st}}$ as a distribution on the group $G$ acting on any irreducible smooth representation $\pi$ of $G$ by a scalar $Z(\pi)$. In particular, if $\pi^*$ is the Jacquet-Langlands transfer of $\pi$, then $Z^*(\pi^*)=Z(\pi)$. Moreover, for any $Z\in\mathfrak{Z}_{G_r}^{\mathrm{st}}$ and any tempered irreducible representation $\pi^*$ of $G^*$ we have \[Z(BC(\pi^*))=b(Z)(\pi^*)\]
where $BC(\pi^*)$ is the base change of $\pi^*$ whose existence is established in \cite{AC}.\par

\begin{thm}\label{thm:center-stable-base-change}
Let $\phi\in C_c^\infty(G_r)$ and let $\phi^*\in C_c^\infty(G^*)$ be its stable base change transfer. Then for any $Z\in\mathfrak{Z}^{\mathrm{st}}_{G_r}$,  $b(Z)^**\phi^*$ is a stable base change transfer of $Z*\phi$.
\end{thm}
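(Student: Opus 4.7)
My approach passes through the quasi-split inner form $G^*_r$ by means of Lemma~\ref{sbc_reinterpret}. By part (1) of that lemma, one can write $\phi^* = \mathrm{BC}(\phi^*_r)$ for some $\phi^*_r \in C_c^\infty(G^*_r)$ with $\phi \overset{\sigma-\mathrm{reg}}{\longleftrightarrow} \phi^*_r$. Let $Z^*_r$ denote $Z$ viewed in $\mathfrak{Z}^{\mathrm{st}}_{G^*_r}$ via the canonical identification $\mathfrak{Z}^{\mathrm{st}}_{G_r} = \mathfrak{Z}^{\mathrm{st}}_{G^*_r}$ of the stable Bernstein centers of inner forms, and set $\psi^*_r := Z^*_r * \phi^*_r$. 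By Lemma~\ref{sbc_reinterpret}(2), the theorem reduces to the two identities
\[
  \mathrm{BC}(\psi^*_r) = b(Z)^* * \phi^* \qquad\text{and}\qquad Z * \phi \overset{\sigma-\mathrm{reg}}{\longleftrightarrow} \psi^*_r.
\]

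For the first identity, the key input is the compatibility of base change with the stable Bernstein center: by the very definition of $b$ (via restriction of semisimple $L$-parameters from $W_F$ to $W_{F_r}$), we have $Z^*_r(\mathrm{BC}(\pi^*)) = b(Z)^*(\pi^*)$ for every irreducible tempered $\pi^*$ of $G^*$. Combined with the defining character identity for base change, this gives $\mathrm{Tr}(\mathrm{BC}(\psi^*_r)\mid \pi^*) = \mathrm{Tr}(b(Z)^* * \phi^* \mid \pi^*)$ for all such $\pi^*$; by Harish-Chandra's Plancherel formula, the two functions then have equal regular semisimple orbital integrals on $G^*$, which is the content of the first identity since $\mathrm{BC}$ is defined via orbital integral matching.

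For the second identity, by the (twisted) Weyl integration formula on $G_r$ and $G^*_r$ it suffices to verify the following twisted character identities at every $\sigma$-stable tempered irreducible $\Pi^*$ of $G^*_r$. If $\Pi^*$ admits a Langlands-Jacquet transfer $\Pi$ to $G_r$, then for appropriately chosen intertwining operators one should have
\[
  \mathrm{Tr}((Z*\phi)\,I_\sigma \mid \Pi) = e(\bfG_r)\, \mathrm{Tr}(\psi^*_r\,I_\sigma \mid \Pi^*),
\]
and otherwise $\mathrm{Tr}(\psi^*_r\,I_\sigma \mid \Pi^*) = 0$. Since $\Pi$ and $\Pi^*$ share the same semisimple $L$-parameter under the Jacquet-Langlands correspondence, the stable Bernstein center element $Z$ acts on both by the same scalar $Z(\Pi) = Z^*_r(\Pi^*)$. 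Factoring out this common scalar reduces the displayed identity to the twisted local Jacquet-Langlands character identity for the pair $(\phi,\phi^*_r)$, which is the substance of Theorem~\ref{thm:twisted-JL}(1); and the vanishing case follows at once from Theorem~\ref{thm:twisted-JL}(2) applied to $\phi^*_r$, again after factoring out the scalar $Z^*_r(\Pi^*)$.

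The main technical point in the plan is the spectral-to-geometric translation -- passing from trace identities on tempered representations to matching of (twisted) orbital integrals at regular semisimple elements. This is standard, relying on the (twisted) trace Paley-Wiener theorem and Kazhdan-type orthogonality of characters, but in the twisted setting and on inner forms it requires careful bookkeeping; all the necessary ingredients, however, are already present in the proof of Theorem~\ref{thm:twisted-JL} (compare Proposition~\ref{prop:density}).
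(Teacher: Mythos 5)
Your proposal is correct but takes a genuinely different route from the paper's. You factor the problem through the quasi-split Weil restriction $G^*_r$ by means of Lemma~\ref{sbc_reinterpret}, splitting the assertion into a quasi-split base-change piece (your first identity, which is exactly the $G=G^*$ case of the theorem) and a twisted Jacquet--Langlands piece (your second identity, relating $G_r$ to $G^*_r$). The paper never leaves $G^*$: it fixes an arbitrary stable base change transfer $(Z*\phi)^*$ of $Z*\phi$ (which exists by \cite[\S3.3.1]{La99}), proves $\mathrm{Tr}(b(Z)^**\phi^*\mid\pi^*)=\mathrm{Tr}((Z*\phi)^*\mid\pi^*)$ for every tempered irreducible $\pi^*$ of $G^*$ --- using Theorem~\ref{thm:twisted-JL} twice, once for $(\phi,\phi^*)$ and once for $(Z*\phi,(Z*\phi)^*)$, together with the compatibilities $Z(\Pi)=Z(\Pi^*)=b(Z)^*(\pi^*)$ --- and then invokes Kazhdan's density theorem once, on the single group $G^*$, followed by Proposition~\ref{red_to_reg_matching_prop}. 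Your route uses the same fundamental inputs but requires two density arguments (untwisted on $G^*$ for your first identity, twisted on $G^*_r$ for your second), and the second one --- passing from the twisted character identities you write to the $\overset{\sigma-\mathrm{reg}}{\longleftrightarrow}$-matching of twisted orbital integrals, including the vanishing clause at non-transferring $\delta^*$ --- is the weakest link: comparing functions on two \emph{different} groups $G_r$ and $G^*_r$ is not an off-the-shelf density statement. A clean way to close it is to fix any $\psi^{**}_r\in C_c^\infty(G^*_r)$ with $Z*\phi\overset{\sigma-\mathrm{reg}}{\longleftrightarrow}\psi^{**}_r$ (such a partner exists by Lemma~\ref{sbc_reinterpret}(1) applied to any stable base change transfer of $Z*\phi$), verify by your character computation that $\psi^{**}_r$ and $\psi^*_r$ have equal twisted traces on every $\sigma$-stable tempered irreducible of $G^*_r$, and then apply twisted Kazhdan density \emph{on the single group} $G^*_r$ to conclude they have equal twisted orbital integrals. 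You also elide the base-change character identity $\mathrm{Tr}(\phi^*_r I^*_\sigma\mid\Pi^*)=\mathrm{Tr}(\phi^*\mid\pi^*)$, which is needed to turn the $(\phi,\phi^*)$-statement of Theorem~\ref{thm:twisted-JL} into the $(\phi,\phi^*_r)$-statement you invoke. Your decomposition has the conceptual merit of cleanly isolating the quasi-split step from the inner-form step, mirroring the structure of Lemma~\ref{sbc_reinterpret}, but the paper's direct argument on $G^*$ is tighter and avoids the extra twisted-density step.
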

\begin{proof}
Let $\phi^*$ be a stable base change transfer of $\phi$ and let $(Z*\phi)^*$ be a stable base change transfer of $Z*\phi$. By Kazhdan's density theorem it suffices to show that 
\[\mathrm{Tr}(b(Z)^**\phi^*|\pi^*)=\mathrm{Tr}((Z*\phi)^*|\pi^*)\]
for any irreducible tempered representation $\pi^*$ of $G^*$.\par
Fix such a $\pi^*$. Let $\Pi^*$ be the base change lifting of $\pi^*$ to $G_r^*$ as in \cite[Theorem 6.2]{AC}. If $\Pi^*$ does not have a Langlands-Jacquet transfer to $G_r$, then both sides are $0$ by Theorem \ref{thm:twisted-JL}(2).\par
Now suppose that $\Pi^*$ has a Langlands-Jacquet transfer $\Pi$, which is an irreducible admissible representation of $G_r$. By part (1) of Theorem~\ref{thm:twisted-JL} we know that $\Pi$ is $\sigma$-stable and we can choose the intertwining operator $I_\sigma$ so that 
\begin{align*}
\mathrm{Tr}((Z*\phi)^*|\pi^*)&=e(G_r)\mathrm{Tr}((Z*\phi)I_\sigma|\Pi)=e(G_r)Z(\Pi)\mathrm{Tr}(\phi I_\sigma|\Pi)\\
&=e(G_r) Z(\Pi^*)\mathrm{Tr}(\phi I_\sigma |\Pi) =b(Z)^*(\pi^*)\mathrm{Tr}(\phi^*|\pi^*)=\mathrm{Tr}(b(Z)^**\phi^*|\pi^*).
\end{align*}
\end{proof}

\begin{lem}\label{lem:vanishing-criterion}
Let $\phi\in C_c^\infty(G(F_r))$ and let $\phi^*\in C_c^\infty(G^*(F))$ be its stable base change transfer. Then $\phi$ has the vanishing property if and only if $\mathrm{Tr}(\phi^*|\pi^*)=0$ for any irreducible tempered representation $\pi^*$ of $G^*(F)$ that does not have a Langlands-Jacquet transfer to $G(F)$.
\end{lem}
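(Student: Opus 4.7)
The plan is to reformulate both sides of the claimed equivalence in terms of $\phi^*$, and then to combine Weyl integration, the constant-term formula, and Kazhdan's density theorem on $G^*$. By Definition~\ref{st_bc_defn} together with Lemma~\ref{lem:transfer-twisted-conj-class}, one sees that $\phi$ has the vanishing property if and only if $O_{\gamma^*}(\phi^*)=0$ for every semisimple $\gamma^* \in G^*$ that is not conjugate to a transfer from $G$: the stable base change definition forces this vanishing when $\gamma^*$ is not a norm from $G_r$, while for $\gamma^*=\cN_r\delta$ a norm it follows from the matching identity $e(G_{\delta\sigma})\,TO_{\delta\sigma}(\phi)=e(G^*_{\gamma^*})\,O_{\gamma^*}(\phi^*)$ together with the equivalence ``$N_r\delta$ is $G_r$-conjugate to an element of $G$'' $\Longleftrightarrow$ ``$\cN_r\delta$ is conjugate to a transfer from $G$'', which comes from the inner twisting and from the coincidence of stable and ordinary conjugacy in our setting.

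The key geometric input is the following claim: if $\bfM^* \subseteq \bfG^*$ is an $F$-rational Levi subgroup that does not transfer to a Levi of $\bfG$, then every regular semisimple $t^* \in M^*$ fails to be a transfer from $G$, and every $F$-rational Levi $\bfL^* \subseteq \bfM^*$ also fails to transfer. Both assertions follow from the same observation: should the smaller object transfer, then $A_{\bfM^*}$ (contained in the split center of the smaller object, since $\bfM^*=Z_{\bfG^*}(A_{\bfM^*})$) would also transfer to an $F$-split subtorus of $\bfG$, whose centralizer in $\bfG$ would be an inner form of $\bfM^*$, contradicting non-transferability. With this in hand, the forward direction is straightforward: writing a non-transferring tempered irreducible $\pi^* = i^{G^*}_{P^*}(\sigma^*)$ with $\sigma^*$ a discrete series on the necessarily non-transferring Levi $M^*$, the constant-term formula (as in the proof of Lemma~\ref{Frob-transfer_lem}), Weyl integration on $M^*$, and the descent formula $O^{M^*}_{t^*}(\phi^{*,P^*})=D_{G^*/M^*}(t^*)^{1/2}\,O^{G^*}_{t^*}(\phi^*)$ express $\mathrm{Tr}(\phi^*|\pi^*)$ as an integral of $O^{G^*}_{t^*}(\phi^*)$ over maximal tori $T^* \subseteq M^*$, and this vanishes because every regular semisimple $t^* \in M^*$ is non-transferring.

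For the converse I would argue by contradiction. By Proposition~\ref{red_to_reg_matching_prop} it suffices to show $O^{G^*}_{\gamma^*_0}(\phi^*)=0$ for every regular semisimple non-transferring $\gamma^*_0$. If this failed, setting $T^* = G^*_{\gamma^*_0}$ and $\bfM^* = Z_{\bfG^*}(A_{T^*})$ would produce a non-transferring Levi (since $A_{\bfM^*}=A_{T^*}$ would otherwise transfer), and the descent formula would give $O^{M^*}_{\gamma^*_0}(\phi^{*,P^*})\ne 0$; Kazhdan's density theorem applied on $M^*$ would then furnish a tempered irreducible $\sigma^*$ of $M^*$ with $\mathrm{Tr}(\phi^{*,P^*}|\sigma^*)\ne 0$, and the parabolic induction $\pi^* := i^{G^*}_{P^*}(\sigma^*)$ would be tempered irreducible (using irreducibility of tempered induction for inner forms of products of Weil restrictions of $\mathrm{GL}_n$) with $\mathrm{Tr}(\phi^*|\pi^*)\ne 0$. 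By the second part of the geometric claim, the Langlands cuspidal support of $\pi^*$ lies inside $M^*$ and hence fails to transfer, making $\pi^*$ non-transferring -- a contradiction. The main obstacle will be the careful verification of the geometric claim over the base field $F$ rather than over $\bar F$; once that is in place, the remaining harmonic-analytic manipulations are standard.
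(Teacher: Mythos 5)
Your proof is correct and follows essentially the same approach as the paper's: in both, the heart of the matter is that a non-transferring tempered irreducible of $G^*$ has discrete-series inducing data on a non-transferring Levi $M^*$, and that all regular semisimple elements of $M^*$ are then non-transferring; one then combines the constant-term formula, Weyl integration on $M^*$, descent of orbital integrals, and Kazhdan's density theorem on $M^*$. Your converse is phrased as an argument by contradiction (produce a non-transferring tempered $\pi^*$ with $\mathrm{Tr}(\phi^*|\pi^*)\ne 0$), whereas the paper works directly: given $\delta$, it sets $\gamma^*=\mathcal N_r\delta$, descends to the non-transferring Levi $M^*$ in which $\gamma^*$ is elliptic, and applies Kazhdan density to conclude $O^{M^*}_{\gamma^*}((\phi^*)^{P^*})=0$. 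The two are logically equivalent. The one place where your write-up is imprecise is the invocation of Proposition~\ref{red_to_reg_matching_prop} to pass from regular to arbitrary semisimple non-transferring elements: that proposition is about matching of pairs $(\phi,\phi^*)$, not about propagating vanishing at non-transferring elements. What you actually need is to re-run the Shalika germ/homogeneity argument inside its proof, replacing ``non-norm'' by ``non-transferring''; this works because, by your geometric claim, regular elements near a singular non-transferring $\gamma^*$ along an elliptic torus in $G^*_{\gamma^*}$ lie in a Levi $L^*\subseteq M^*$ that is also non-transferring. The paper implicitly faces the same point when it applies Kazhdan density at the possibly singular $\gamma^*$, so this is a shared subtlety rather than a gap specific to your argument.
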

\begin{proof}
We start with some preliminary remarks. Any irreducible tempered representation $\pi$ of $G(F)$ (resp.\,$\pi^*$ of $G^*(F)$) can be written as a normalized induction $i_{P(F)}^{G(F)}(\sigma)$ (resp.\,$i_{P^*(F)}^{G^*(F)}(\sigma^*)$) of an essentially discrete series representation $\sigma$ (resp.\,$\sigma^*$) of $M(F)$ (resp.\,$M^*(F)$) for a Levi factor $M \subset P$ (resp.\,$M^* \subset P^*$); see \cite{Tad90}.  The property that $\pi^*$ does not have a Langlands-Jacquet transfer to $G(F)$ is equivalent to the condition that $M^*$ does not transfer to $G$ (comp.\,\cite[p.\,105]{Bad03}). Indeed, if $M^*$ does transfer to an $F$-Levi $M \subset G$, then $\sigma^*$ has a discrete series transfer $\sigma$ on $M(F)$, in which case $\pi^*$ is the Jacquet-Langlands transfer of the tempered representation $i_{P(F)}^{G(F)}(\sigma)$ for any $F$-parabolic $P$ having $M$ as $F$-Levi factor (see \cite[Prop.\,7.1(b)]{Bad18}).  Conversely, if $M^*$ does not transfer, then $\pi^*$ cannot be the Jacquet-Langlands transfer of a tempered irreducible representation $\pi = i_{Q(F)}^{G(F)}(\sigma)$, where $Q = LU$ is a Levi factorization of an $F$-parabolic and $\sigma$ is a discrete series on $L(F)$.  If $\pi^*$ were the transfer of such a $\pi$, then we would have $\pi^* = i_{Q^*(F)}^{G^*(F)}(\sigma^*)$ where $\sigma^*$ is a discrete series transfer of $\sigma$, and where $Q^* = L^*U^*$ is a transfer of $Q= LU$. By uniqueness of the discrete series inducing data of tempered representations of $G^*$ (\cite[Thm.\,9.7]{Zel80}), we must have $M = L^*$, contradicting our assumption that $M$ does not transfer. 

In a slightly different direction, Badulescu defined an injective map on representation rings
$$
\mathrm{JL}: R(G) \rightarrow R(G^*),
$$
and termed the condition that $\pi^*$ not lie in the image by $\pi^* \in \mathbf{S}_{G^*, G}$. Badulescu also proved that, for $\phi^* \in \mathcal{H}(G^*(F))$, the property that
$$
{\rm Tr}(\phi^* | \pi^*) = 0, \hspace{.5in} \forall \pi^* \in \mathbf{S}_{G^*, G}
$$
is equivalent to the property that
$$
O_{\gamma^*}(\phi^*)=0, \hspace{.5in} \forall \gamma^* \in G^{*,\rm rs}(F)\backslash G(F) 
$$
that is, for all regular semisimple $\gamma^*$ which do not transfer to $G(F)$;
see \cite[Lem.\,3.3]{Bad03}. This statement is clearly related to what we wish to prove, but it is not quite sufficient.

We now give the proof, which is quite analogous to \cite[Lem.\,8.2]{Coh18}.
First suppose that $\phi$ has the vanishing property. Then the orbital integral of $\phi^*$ vanishes on any regular semisimple conjugacy class $\ga^*$ of $G^*$ that does not transfer to $G$. In fact, if such an element $\ga^*$ is not a norm from $G_r$ then $O_{\ga^*}(\phi^*)=0$ by the definition of stable base change transfer; while if $\ga^*=\cN_r\delta$ for some $\delta\in G_r$ then $\cN_r\delta$ is not $G_r$-conjugate to any element of $G$ (since otherwise $\ga^*$ would transfer to $G$), so that $O_{\ga^*}(\phi^*)=TO_{\delta\sigma}(\phi)=0$ by the vanishing property of $\phi$. By the Weyl integration formula, this implies that $\mathrm{Tr}(\phi^*|\pi^*)=0$ for any tempered irreducible representation $\pi^*$ of $G^*$ that does not have a Langlands-Jacquet transfer to $G$. In fact, by the preliminary discussion above, any such representation $\pi^*$ is a representation induced from a Levi subgroup $M^*\subset G^*$ that does not transfer to $G$, and the character of such a representation has (its regular-semisimple) support consisting of regular semisimple elements $\ga^*$ that are conjugate to elements in $M^*$ and hence do not transfer to $G(F)$. Here we use the observation in \cite[p.\,105]{Bad03} that if $M^*$ does not transfer to $G(F)$, then no element of $G^{*, \rm rs}(F) \cap M^*$ can transfer. (Alternatively, one can use the argument at the end of the proof of Lemma \ref{Frob-transfer_lem}.) \par 
Next we prove the implication in the reverse direction. Assume that $\mathrm{Tr}(\phi^*|\pi^*)=0$ for any representation $\pi^*$ of $G^*$ that does not have a Langlands-Jacquet transfer to $G$. Suppose $\delta\in G(F_r)$ is a $\sigma$-semisimple element such that $N_r\delta\in G(F_r)$ is not conjugate to any element in $G(F)$. There is a semisimple element $\ga^*\in G^*(F)$ that has the same characteristic polynomial as $N_r\delta$. Then the conjugacy class of $\ga^*$ does not transfer to $G(F)$. Note that $\ga^*$ is an elliptic element in an $F$-Levi subgroup $M^*$ of $G^*$, and $M^*$ does not transfer to an $F$-Levi subgroup in $G$ (because elliptic tori transfer to any inner form of $M^*$, see for example \cite[\S10]{Ko-ellsing}). \par
Let $P^*$ be an $F$-parabolic subgroup of $G^*$ with $F$-Levi factor $M^*$.
Any irreducible tempered representation $\pi^*$ of $M^*$ is induced from a discrete series representation of a Levi subgroup $L^*$ of $M^*$. By induction in stages we see that $i_{P^*(F)}^{G^*(F)}\pi^*$ is tempered, and it is irreducible by a result of Bernstein-Zelevinsky. It does not have a Langlands-Jacquet transfer to $G(F)$: this is because $M^*$ does not transfer to $G(F)$; use \cite[Prop.\,7.1(a)]{Bad18}. Hence 
$$\mathrm{Tr}((\phi^*)^{P^*})|\pi^*)=\mathrm{Tr}(\phi^*|i_{P^*(F)}^{G^*(F)}\pi^*)=0$$ by assumption and the character descent formula. This implies that $O^{M^*}_{\ga^*}((\phi^*)^{P^*}))=0$ by Kazhdan's density theorem and hence $O_{\ga^*}(\phi^*)=0$ by the descent formula for orbital integrals. Then we get
\[TO_{\delta\sigma}(\phi)= \pm O_{\ga^*}(\phi^*)=0\]
from which we conclude that $\phi$ has the vanishing property.
\end{proof}

\begin{defn}\label{gen_BC_defn}
Let $\phi\in C_c^\infty(G_r)$. A function $f\in C_c^\infty(G)$ is called a \emph{base change transfer of $\phi$} if for any semisimple element $\ga\in G$, we have $ e(G_\ga)O_\ga(f)=e(G_{\delta \sigma})TO_{\delta\sigma}(\phi)$ if $\ga$ is $G_r$-conjugate to the naive norm $N_r\delta$ of some $\sigma$-semisimple element $\delta\in G_r$, and $O_\ga(f) = 0$ otherwise.

\end{defn}

\begin{lem}\label{lem:base-change-exist}
    If $\phi\in C_c^\infty(G_r)$ has the vanishing property, then it has a base change transfer $f\in C_c^\infty(G)$. 
\end{lem}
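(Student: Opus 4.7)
The plan is to obtain $f$ as an inverse Jacquet--Langlands transfer of the stable base change $\phi^* \in C_c^\infty(G^*)$ of $\phi$, and then to verify the required orbital integral identities by a case analysis combining the stable base change matching for $\phi \leftrightarrow \phi^*$, the Jacquet--Langlands matching for $f \leftrightarrow \phi^*$, and the vanishing property of $\phi$.

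First I would use the forward implication in Lemma~\ref{lem:vanishing-criterion} (together with its proof, which records a concrete geometric consequence on $\phi^*$): since $\phi$ has the vanishing property, the orbital integrals $O_{\gamma^*}(\phi^*)$ vanish on every regular semisimple $\gamma^* \in G^*$ which does not transfer to $G$. By the standard characterization of the image of the Jacquet--Langlands map on Hecke algebras for inner forms of products of Weil restrictions of $\mathrm{GL}_n$ (a result of Deligne--Kazhdan--Vigneras, reproved and extended by Badulescu, cf.\ \cite[Prop.\,3.1]{Bad03}), this vanishing condition characterizes the image of $C_c^\infty(G) \to C_c^\infty(G^*)$. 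Hence there exists $f \in C_c^\infty(G)$ such that $e(G_\gamma) O_\gamma(f) = O_{\gamma^*}(\phi^*)$ for every semisimple $\gamma \in G$ transferring to $\gamma^* \in G^*$, while $O_{\gamma^*}(\phi^*) = 0$ whenever $\gamma^*$ does not transfer. (It is enough to produce this at regular semisimple elements; compatibility at arbitrary semisimple elements then follows by the Shalika germ descent of Proposition~\ref{red_to_reg_matching_prop}, adapted to this non-twisted setting.)

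Next I would verify that this $f$ is a base change transfer of $\phi$ in the sense of Definition~\ref{gen_BC_defn}. Let $\gamma \in G$ be semisimple with image $\gamma^* \in G^*$ under the inner twisting. Suppose $\gamma$ is $G_r$-conjugate to $N_r\delta$ for some $\sigma$-semisimple $\delta \in G_r$; then $\gamma$ and $N_r\delta$ transfer to the common class $\gamma^* = \cN_r\delta$, and combining the two matchings gives
\[
e(G_\gamma)\,O_\gamma(f) \;=\; O_{\gamma^*}(\phi^*) \;=\; e(G_{\delta\sigma})\,TO_{\delta\sigma}(\phi),
\]
using $e(G^*_{\gamma^*}) = 1$ (the centralizer $G^*_{\gamma^*}$ is a product of general linear groups over field extensions, hence quasi-split). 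If instead $\gamma$ is not $G_r$-conjugate to any $N_r\delta$, I need $O_\gamma(f) = 0$, equivalently $O_{\gamma^*}(\phi^*) = 0$. If $\gamma^*$ is not a norm from $G_r$, this is the definition of stable base change. Otherwise $\gamma^* = \cN_r\delta_0$ for some $\delta_0 \in G_r$; the injectivity of the map from $G$-conjugacy classes of semisimple elements to $G^*$-conjugacy classes (which in our setting follows from the vanishing of $H^1(\bbQ_p, G^*_{\gamma^*})$, as in the proof of Lemma~\ref{lem:transfer-twisted-conj-class}) prevents $N_r\delta_0$ from being $G_r$-conjugate to any element of $G$, since otherwise such an element would be $G$-conjugate to $\gamma$. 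The vanishing property of $\phi$ therefore forces $TO_{\delta_0\sigma}(\phi) = 0$, whence $O_{\gamma^*}(\phi^*) = 0$ by the stable base change identity.

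The main obstacle is the existence step: producing $f$ as an inverse Jacquet--Langlands transfer of $\phi^*$. This is a substantive input, but it is exactly the classical inverse transfer theorem for inner forms of products of Weil restrictions of $\mathrm{GL}_n$, which is available in our setting. Once $f$ is in hand, the verification of the orbital integral identities is essentially a formal three-way cross-check of the matchings, the only subtlety being the book-keeping of conjugacy classes on the $G^*$ side and the interplay between the vanishing property and the non-norm case, both of which are handled by the injectivity and vanishing statements already established.
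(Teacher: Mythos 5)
Your proposal is correct and follows essentially the same route as the paper's proof: produce the stable base change transfer $\phi^*$, use the vanishing property together with the definition of stable base change to check that the orbital integrals of $\phi^*$ vanish on the classes that do not transfer to $G$, invoke the inverse Jacquet--Langlands transfer (the paper cites \cite{DKV}) to produce $f \in C_c^\infty(G)$, and conclude $f$ is a base change transfer. The paper compresses the final verification into the sentence ``then $f$ is a base change transfer of $\phi$ by definition''; you spell out the three-way cross-check and the injectivity argument explicitly, which is a reasonable expansion of the same step rather than a genuinely different approach.
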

\begin{proof}
Let $\phi^*\in C_c^\infty(G^*)$ be a stable base change transfer of $\phi$. Let $\ga^*\in G^*$ be a semisimple element that is not stably conjugate to any element of $G$. If $\ga^*$ is not the norm of any element in $G_r$, then by definition $O_{\ga^*}(\phi^*)=0$. If $\ga^*=\cN_r\delta$ for some $\delta\in G_r$, then by the vanishing property we have $O_{\ga^*}(\phi^*)=\pm TO_{\delta\sigma}(\phi)=0$. Thus $\phi^*$ has a Jacquet-Langlands transfer $f\in C_c^\infty(G)$ by \cite{DKV}. Then $f$ is a base change transfer of $\phi$ by definition.
\end{proof}

\begin{prop}\label{prop:vanishing-property}
If $\phi\in C_c^\infty(G_r)$ has the vanishing property \textup{(}in the sense of Definition~\ref{defn:vanishing-property}\textup{)}, then so does $Z*\phi$ for any $Z\in\mathfrak{Z}^{\mathrm{st}}_{G_r}$.
\end{prop}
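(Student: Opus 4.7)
The plan is to combine Theorem~\ref{thm:center-stable-base-change} with the spectral criterion of Lemma~\ref{lem:vanishing-criterion}. Together these package the vanishing property into a condition on traces against tempered representations, on which elements of the (stable) Bernstein center act by scalars, so multiplication by $b(Z)^*$ preserves the vanishing of those traces.

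First, I will fix a stable base change transfer $\phi^* \in C_c^\infty(G^*)$ of $\phi$ (existence guaranteed by \cite[\S3.3.1]{La99}). Applying Theorem~\ref{thm:center-stable-base-change} to $\phi$ and the Bernstein center element $Z$ immediately identifies $b(Z)^* * \phi^*$ as a stable base change transfer of $Z * \phi$. Thus, to verify that $Z * \phi$ has the vanishing property, Lemma~\ref{lem:vanishing-criterion} reduces the task to showing
\[
\mathrm{Tr}(b(Z)^* * \phi^* \mid \pi^*) = 0
\]
for every irreducible tempered representation $\pi^*$ of $G^*$ that does not admit a Langlands-Jacquet transfer to $G$.

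The final step is to exploit that $b(Z)^* \in \mathfrak{Z}^{\mathrm{st}}_{G^*}$ acts on the irreducible smooth representation $\pi^*$ as a scalar $b(Z)^*(\pi^*)$, so
\[
\mathrm{Tr}(b(Z)^* * \phi^* \mid \pi^*) = b(Z)^*(\pi^*) \cdot \mathrm{Tr}(\phi^* \mid \pi^*).
\]
The hypothesis that $\phi$ has the vanishing property, together with the forward direction of Lemma~\ref{lem:vanishing-criterion}, gives $\mathrm{Tr}(\phi^* \mid \pi^*) = 0$ for every such $\pi^*$. Hence the displayed product vanishes, and the reverse direction of Lemma~\ref{lem:vanishing-criterion} applied to $Z * \phi$ concludes the argument.

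There is no essential obstacle: the two preceding results carry the content, and what remains is the purely formal observation that the Bernstein center acts on irreducibles by scalars. The only point worth flagging is that Lemma~\ref{lem:vanishing-criterion} is used in both directions -- first to translate the hypothesis on $\phi$ into the spectral vanishing of $\mathrm{Tr}(\phi^* \mid \pi^*)$, and then to translate the resulting spectral vanishing of $\mathrm{Tr}(b(Z)^* * \phi^* \mid \pi^*)$ back into the geometric vanishing property for $Z * \phi$.
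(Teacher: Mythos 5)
Your proof is correct. The key observation---that $b(Z)^*$ lies in the stable Bernstein center of $G^*$ and therefore acts by the scalar $b(Z)^*(\pi^*)$ on each irreducible $\pi^*$, so that $\mathrm{Tr}(b(Z)^**\phi^*\mid\pi^*)=b(Z)^*(\pi^*)\,\mathrm{Tr}(\phi^*\mid\pi^*)$---together with both directions of Lemma~\ref{lem:vanishing-criterion}, closes the argument once Theorem~\ref{thm:center-stable-base-change} identifies $b(Z)^**\phi^*$ as a stable base change transfer of $Z*\phi$.

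The paper's own proof travels a slightly more circuitous path: after invoking Lemma~\ref{lem:vanishing-criterion} in the same way, it splits cases according to whether the base change lift $\Pi^*$ of $\pi^*$ admits a Langlands--Jacquet transfer $\Pi$ to $G_r$, handling the negative case via Theorem~\ref{thm:twisted-JL}(2) and the positive case via the twisted character identity of Theorem~\ref{thm:twisted-JL}(1) combined with the scalar action of $Z$ on $\Pi$. That case-split is precisely the content already carried out once in the proof of Theorem~\ref{thm:center-stable-base-change}. By citing Theorem~\ref{thm:center-stable-base-change} as a black box you avoid re-running the twisted Jacquet--Langlands dichotomy, and you replace the scalar action of $Z$ on $\Pi$ (on the $G_r$ side, where the intertwining operator $I_\sigma$ has to be tracked) with the scalar action of $b(Z)^*$ on $\pi^*$ (on the quasi-split $G^*$ side, where no twisting appears). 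Both routes lean on the same ultimate ingredient, Theorem~\ref{thm:twisted-JL}; yours simply packages it more efficiently and is, in my view, the cleaner way to present the argument.
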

\begin{proof}
Let $\phi^*$ be the stable base change transfer of $\phi$ and let $(Z*\phi)^*\in C_c^\infty(G(F))$ be the stable base change transfer of $Z*\phi$. By Lemma~\ref{lem:vanishing-criterion}, it suffices to show that \[\mathrm{Tr}((Z*\phi)^*|\pi^*)=0\]
for any irreducible tempered representation $\pi^*$ of $G^*$ that does not have a Jacquet-Langlands transfer to $G$.\par
Fix such a $\pi^*$. Let $\Pi^*$ be the base change of $\pi^*$ to $G_r^*$ as in \cite[Theorem 6.2]{AC}. If $\Pi^*$ does not have a Langlands-Jacquet transfer to $G_r$, then we are done by Theorem \ref{thm:twisted-JL}.\par 
Now suppose that $\Pi^*$ has a Langlands-Jacquet transfer $\Pi$, which is an irreducible tempered representation of $G_r$. By part (1) of Theorem~\ref{thm:twisted-JL} we know that $\Pi$ is $\sigma$-stable and we choose the intertwining operator $I_\sigma$ as in \emph{loc. cit} so that
\[\mathrm{Tr}((Z*\phi)^*|\pi^*)=\mathrm{Tr}((Z*\phi)I_\sigma|\Pi).\]
This vanishes since the right hand side is a scalar multiple of
\[\mathrm{Tr}(\phi I_\sigma|\Pi)=e(G_r)\mathrm{Tr}(\phi^*|\pi^*)=0.\]
Here the first equality follows from Theorem~\ref{thm:twisted-JL} and the second equality follows from Lemma~\ref{lem:vanishing-criterion}.
\end{proof}
We mention the following Corollary.
\begin{cor}
    Let $\cG$ be a smooth group scheme over $\cO$ with geometrically connected fibers such that the generic fiber of $\cG$ is isomorphic to $G$. Then $Z*1_{\cG(\cO_r)}$ has the vanishing property for any $Z\in\mathfrak{Z}^{\mathrm{st}}_{G_r}$.
\end{cor}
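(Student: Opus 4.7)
My plan is to reduce to showing that $\phi_0 := 1_{\cG(\cO_r)}$ itself has the vanishing property, after which Proposition~\ref{prop:vanishing-property} yields the corollary for every convolution $Z*\phi_0$. So suppose $\delta\in G_r$ is $\sigma$-semisimple with $TO_{\delta\sigma}(\phi_0)\ne 0$; the goal is to show $N_r\delta$ is $G_r$-conjugate to an element of $G$.

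Since $TO_{\delta\sigma}(\phi_0)\ne 0$, some $G_r$-$\sigma$-conjugate of $\delta$ lies in $\cG(\cO_r)$; a direct computation shows that $\sigma$-conjugation by $g\in G_r$ replaces $N_r\delta$ by $g^{-1}(N_r\delta)g$, so the property to be proved is preserved, and I may assume $\delta\in\cG(\cO_r)$. Let $\breve{F}$ denote the completion of the maximal unramified extension of $F$ with ring of integers $\breve{\cO}$, and let $\sigma$ also denote the Frobenius of $\breve{F}/F$. Because $\cG$ is smooth over $\cO$ with geometrically connected fibers, Lang's theorem on the special fiber $\cG\otimes_\cO\bar{k}$ together with Hensel's lemma implies that the Lang map $c\mapsto c^{-1}\sigma(c)$ is surjective on $\cG(\breve{\cO})$. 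Pick $c\in\cG(\breve{\cO})$ with $c^{-1}\sigma(c)=\delta$; a telescoping computation then gives
\[
N_r\delta \;=\; \delta\,\sigma(\delta)\cdots\sigma^{r-1}(\delta) \;=\; c^{-1}\sigma^r(c).
\]

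Now set $w := c\,\sigma^r(c)^{-1}\in\cG(\breve{\cO})$. The hypothesis $\sigma^r(\delta)=\delta$ rearranges to $\sigma(c)\,\sigma^{r+1}(c)^{-1} = c\,\sigma^r(c)^{-1}$, i.e.\ $\sigma(w)=w$, so $w\in\cG(\breve{\cO})^\sigma=\cG(\cO)\subset G$. The identity $N_r\delta = c^{-1}w^{-1}c$ then exhibits $N_r\delta$ as $G(\breve{F})$-conjugate to $w^{-1}\in G$. To promote this to $G_r$-conjugacy, I invoke the fact used throughout the paper that $G$ is an inner form of a product of Weil restrictions of general linear groups: for such groups, semisimple conjugacy classes in $G_r$ are determined by reduced characteristic polynomial, and stable conjugacy coincides with ordinary conjugacy. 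Hence the $\breve{F}$-conjugate elements $N_r\delta,\,w^{-1}\in G_r$ are already $G_r$-conjugate, proving the vanishing property for $\phi_0$.

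The only delicate step is the Lang-theorem step in the second paragraph, where the geometric connectedness of the fibers of $\cG$ is essential: without it, Lang's theorem on the special fiber can fail and one cannot produce the crucial cocycle-splitting $c$. Everything else---the rearrangement witnessing $\sigma$-invariance of $w$, and the upgrade from $\breve{F}$- to $F_r$-conjugacy via reduced characteristic polynomials---is routine given the setup already in place.
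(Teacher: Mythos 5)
Your reduction to the vanishing property for $\phi_0 = 1_{\cG(\cO_r)}$ followed by Proposition~\ref{prop:vanishing-property} matches the paper's structure exactly. Where you diverge is in the proof of the vanishing property for $\phi_0$ itself: the paper dispatches this with a citation to Kottwitz's base-change paper \cite{Ko-BC} (verifying his axioms (a),(b),(c) on p.~240 for $\cG(\breve{F})$ and invoking the discussion on p.~244), whereas you give a direct, self-contained argument via Lang's theorem plus Hensel lifting. Your argument is correct: the surjectivity of $c\mapsto c^{-1}\sigma(c)$ on $\cG(\breve{\cO})$ follows from smoothness and geometrically connected fibers exactly as you say; the telescoping $N_r\delta = c^{-1}\sigma^r(c)$, the $\sigma$-invariance of $w = c\,\sigma^r(c)^{-1}$ (which rearranges correctly from $\sigma^r(\delta) = \delta$), and the conclusion $w\in\cG(\cO)$ all check out. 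The final upgrade from $\breve{F}$- to $F_r$-conjugacy is also sound; one can phrase it as you do, via determination by reduced characteristic polynomial, or equivalently (as the paper's own Lemma~\ref{lem:transfer-twisted-conj-class} does) by observing that the transporter $\{g : g^{-1}(N_r\delta)g = w^{-1}\}$ is a torsor under the centralizer $G_{N_r\delta}$, whose $H^1$ over $F_r$ vanishes by Hilbert~90 for unit groups of semisimple algebras. Either formulation works. What you have done, in effect, is unpack the content of the \cite{Ko-BC} citation into an explicit argument — a reasonable trade: the paper's route is shorter on the page, while yours makes the role of the hypotheses on $\cG$ (smoothness, geometric connectedness) transparent and keeps the proof self-contained.
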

\begin{proof}
    This follows from Proposition \ref{prop:vanishing-property} and a result in \cite{Ko-BC}. More precisely the group $\cG(\Breve{F})$, where $\Breve{F}$ is the completion of the maximal unramified extension of $F$, satisfies the properties (a),(b), (c) on page 240 of \emph{loc.\,cit.} Hence by the discussion after the Corollary on page 244 of \emph{loc.\,cit.} we get that $1_{\cG(\cO_r)}$ has the vanishing property.
\end{proof}
\begin{thm}\label{thm:center-base-change}
If $\phi\in C_c^\infty(G_r)$ satisfies the vanishing property and has a base change transfer $f\in C_c^\infty(G)$, then $b(Z)*f$ is a base change transfer of $Z*\phi$ for any $Z\in\mathfrak{Z}^{\mathrm{st}}_{G_r}$. 
\end{thm}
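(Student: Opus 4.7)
The plan is to combine Theorem \ref{thm:center-stable-base-change} with the characterization of base change transfer as a Jacquet-Langlands transfer of the stable base change, reducing the required identity to a character identity on $G$ via Kazhdan's density theorem.

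First, by Proposition \ref{prop:vanishing-property}, the convolution $Z * \phi$ also satisfies the vanishing property, so by Lemma \ref{lem:base-change-exist} some base change transfer $g \in C_c^\infty(G)$ of $Z*\phi$ exists. Inspecting the proof of that lemma, $g$ is constructed as a Jacquet-Langlands transfer (in the sense of \cite{DKV}) of a stable base change transfer of $Z*\phi$. By Theorem \ref{thm:center-stable-base-change}, we may take this stable base change transfer to be $b(Z)^* * \phi^*$, where $\phi^* \in C_c^\infty(G^*)$ is a stable base change transfer of $\phi$; comparing Definitions \ref{st_bc_defn} and \ref{gen_BC_defn} shows that $\phi^*$ is automatically a Jacquet-Langlands transfer of $f$. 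The desired statement therefore reduces to the following sublemma: \emph{if $f \in C_c^\infty(G)$ is a Jacquet-Langlands transfer of $\phi^* \in C_c^\infty(G^*)$, then $b(Z) * f$ is a Jacquet-Langlands transfer of $b(Z)^* * \phi^*$.}

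By Kazhdan's density theorem, this sublemma is equivalent to the trace identity
\[ \mathrm{Tr}\bigl(b(Z) * f \,\bigm|\, \pi \bigr) = c(\pi) \, \mathrm{Tr}\bigl(b(Z)^* * \phi^* \,\bigm|\, \pi^* \bigr) \]
for every irreducible tempered representation $\pi$ of $G$ with Jacquet-Langlands transfer $\pi^*$ on $G^*$, where $c(\pi)$ is the constant appearing in the Jacquet-Langlands character identity $\mathrm{Tr}(f \mid \pi) = c(\pi)\,\mathrm{Tr}(\phi^* \mid \pi^*)$ (a constant depending only on $\pi$, not on the chosen $f$ and $\phi^*$). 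Since $b(Z)$ and $b(Z)^*$ correspond under the canonical identification $\mathfrak{Z}^{\mathrm{st}}_G = \mathfrak{Z}^{\mathrm{st}}_{G^*}$, they act on any pair of Jacquet-Langlands related tempered representations by the same scalar: $b(Z)(\pi) = b(Z)^*(\pi^*)$, as recalled just before Theorem \ref{thm:center-stable-base-change}. Both sides of the displayed equation then expand to $b(Z)^*(\pi^*) \cdot c(\pi) \, \mathrm{Tr}(\phi^* \mid \pi^*)$, completing the proof modulo the sublemma.

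The main subtlety lies in the sublemma, namely in verifying that convolution by the stable Bernstein center commutes with Jacquet-Langlands transfer of functions. This is a formal consequence of the spectral description of $\mathfrak{Z}^{\mathrm{st}}_G$ via semisimple $L$-parameters --- which are invariant under inner twisting --- combined with the Jacquet-Langlands character identity of \cite{DKV}; in contrast, no stabilization or endoscopic transfer beyond Jacquet-Langlands is required, in parallel with the structure of the proof of Theorem \ref{thm:center-stable-base-change}.
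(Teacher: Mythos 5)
Your argument is essentially the paper's proof with the one substantive step spelled out rather than cited: the paper simply invokes the case $r=1$ of Theorem~\ref{thm:center-stable-base-change} for what you call the sublemma (when $r=1$, ``stable base change transfer'' is exactly Jacquet--Langlands transfer and $b$ is the identity), and then notes, as you do, that $b(Z)^**\phi^*$ is simultaneously a Jacquet--Langlands transfer of $b(Z)*f$ and a stable base change transfer of $Z*\phi$, with Proposition~\ref{prop:vanishing-property} ensuring $Z*\phi$ has the vanishing property so that all the required transfers exist. Your re-derivation of the sublemma is correct in spirit and does clarify that no twisted Jacquet--Langlands input (Theorem~\ref{thm:twisted-JL}) is needed at that point, only ordinary Jacquet--Langlands.

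There is, however, a small gap in your Kazhdan-density step. You assert that the sublemma ``is equivalent to the trace identity $\mathrm{Tr}(b(Z)*f\mid\pi)=c(\pi)\,\mathrm{Tr}(b(Z)^**\phi^*\mid\pi^*)$ for every irreducible tempered $\pi$ of $G$ with Jacquet--Langlands transfer $\pi^*$.'' This is not the whole story: to conclude that $b(Z)*f$ is a Jacquet--Langlands transfer of $b(Z)^**\phi^*$, you must also verify that $O_{\gamma^*}(b(Z)^**\phi^*)=0$ for regular semisimple $\gamma^*\in G^*$ that do not transfer to $G$, equivalently (by the Badulescu criterion recalled in the proof of Lemma~\ref{lem:vanishing-criterion}) that $\mathrm{Tr}(b(Z)^**\phi^*\mid\pi^*)=0$ for every tempered $\pi^*$ without a Jacquet--Langlands transfer to $G$. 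This does hold: since $\phi^*$ is a Jacquet--Langlands transfer of $f$, its orbital integrals vanish on non-transferring classes, so $\mathrm{Tr}(\phi^*\mid\pi^*)=0$ for such $\pi^*$, and convolution by $b(Z)^*$ merely multiplies this by the scalar $b(Z)^*(\pi^*)$. You should add this line to close the argument; as written, the stated ``equivalence'' is not one.
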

\begin{proof}
Let $\phi^*\in C_c^\infty(G^*)$ be the stable base change of $\phi$. Then it is a Jacquet-Langlands transfer of $f$,  as follows from the definitions. Thus $b(Z)^**\phi^*$ is a Jacquet-Langlands transfer of $b(Z)*f$, by the case $r=1$ of Theorem \ref{thm:center-stable-base-change}. On the other hand, $b(Z)^**\phi^*$ is a stable base change transfer of $Z*\phi$ by Theorem \ref{thm:center-stable-base-change}.  Keeping in mind that $Z * \phi$ satisfies the vanishing property by Proposition \ref{prop:vanishing-property}, this implies that $b(Z)*f$ is a base change transfer of $Z*\phi$. 
\end{proof}

\section{$p$-divisible groups with EL structure and test functions}\label{sec:p-div}
In this section we follow \cite[\S3,\S4]{Sch-deformation} and generalize some results there to our situation. 

\subsection{EL data}\label{sec:EL-data}
We start with a list of data in the local setting. Let $B$ be a semisimple $\bbQ_p$-algebra and $V$ a finitely generated left $B$-module. We get a semisimple $\bbQ_p$-algebra $C:=\mathrm{End}_B(V)$ and an algebraic group $G$ over $\bbQ_p$ whose points in any $\bbQ_p$-algebra $R$ is defined by $G(R)=(C\otimes_{\bbQ_p}R)^\times$, the group of units of the algebra $C\otimes_{\bbQ_p}R$.\par 
Let $\bar{\mu}$ be a $G(\bar{\bbQ}_p)$-conjugacy class of cocharacters $\mu:\bbG_m\to G_{\bar{\bbQ}_p}$. The local reflex field $E\subset\bar{\bbQ}_p$ is defined as the field of definition of the conjugacy class $\bar{\mu}$. Let $\cO_E\subset E$ be the ring of integers and $\kappa_E$ the residue field. We assume that $\bar{\mu}$ is \emph{minuscule}. In other words, after choosing a representative $\mu$ of $\bar{\mu}$, only weight $0$ or $1$ occurs in the corresponding weight decomposition of $V_{\bar{\bbQ}_p}$. Fix such a representative $\mu$ and write $V_{\bar{\bbQ}_p}=V_1\oplus V_2$ where $\mu(t)$ acts by $t$ (resp. $1$) on $V_1$ (resp. $V_2$) for each $t\in\bar{\bbQ}_p$.\par 

We also fix integral data consisting of a maximal $\bbZ_p$-order $\cO_B\subset B$ and a periodic chain $\cL=\{\Lambda_i,i\in\bbZ\}$ of $\cO_B$-stable $\bbZ_p$-lattices in $V$. These allow us to define a $\bbZ_p$-model $\cG_\cL$ of $G$ whose points in any $\bbZ_p$-algebra $R$ is $\cG_\cL(R):=\mathrm{Aut}_{\cO_B}\cL_R$. Here $\cL_R:=\{\Lambda\otimes_{\bbZ_p}R, \Lambda\in\cL\}$ and $\mathrm{Aut}_{\cO_B}\cL_R$ consists of a family of automorphisms of the $\cO_B\otimes_{\bbZ_p}R$-modules $\Lambda\otimes_{\bbZ_p}R$ for each $\Lambda\in\cL$ that preserves the natural morphisms among the lattices in $\cL$. In particular, $G$ is a product of Weil restrictions of inner forms of general linear groups and $\bfP_\cL:=\cG_\cL(\bbZ_p)$ is a parahoric subgroup of $G(\bbQ_p)$. More generally for each integer $r\ge1$, we denote $\bfP_{\cL,r}:=\cG_\cL(\bbZ_{p^r})$ the corresponding parahoric subgroup of $G(\bbQ_{p^r})$. We let $\cD=(B,V,\cO_B,\cL,\bar{\mu})$ denote this list of data. 

\subsection{Deformation spaces of $p$-divisible groups}
\begin{defn}
    For any scheme $S$ on which $p$ is locally nilpotent, a $p$-\emph{divisible $\cO_B$-module} over $S$ is a pair $(H,i)$ consisting of a $p$-divisible group $H$ over $S$ together with a $\bbZ_p$-algebra homomorphism $i:\cO_B\to\mathrm{End}(H)$. An isogeny or quasi-isogeny between $p$-divisible $\cO_B$-modules is required to commute with the $\cO_B$-action. 
\end{defn}

\begin{defn}
Let $S$ be an $\cO_E$-scheme on which $p$ is locally nilpotent. A \emph{$p$-divisible group with $\cD$-structure} $H_\bullet$ over $S$ consists of the data of
\begin{itemize}
    \item A collection of $p$-divisible $\cO_B$-modules $H_\Lambda$ for each $\Lambda\in\cL$. Denote the Lie algebra of the universal vector extension of $H_\La$ by $M_\Lambda$;
    \item For each inclusion $\Lambda\subset\Lambda'$ of lattices in $\cL$, an isogeny $\rho_{\Lambda',\Lambda}:H_{\Lambda}\to H_{\Lambda'}$
\end{itemize}
satisfying the following conditions
\begin{enumerate}
    \item For any triple of lattices $\Lambda\subset\Lambda'\subset\La''$ we have $\rho_{\La'',\La'}\circ\rho_{\La',\La}=\rho_{\La'',\La}$;
    \item For any inclusion of lattices $\La\subset\La'$, the cokernel of the induced map $M_\La\to M_{\La'}$ is locally on $S$ isomorphic to $(\La'/\La)\otimes_{\bbZ_p}\cO_S$ as $\cO_B\otimes_{\bbZ_p}\cO_S$-modules;
    \item For each $\Lambda$, we have an equality of polynomial functions on $\cO_B$:
    \[\mathrm{det}_{\cO_S}(b;\mathrm{Lie}(H_\La))=\det(b;V_1).\]
    \item For any $b\in B^\times$ that normalizes $\cO_B$ and any $\La\in\cL$, a periodicity isomorphism $\theta_b:H_\Lambda^b\to H_{b\La}$. Here $H_\Lambda^b$ is the $p$-divisible $\cO_B$-module with the same underlying $p$-divisible group as $H_\lambda$ but with the action map $i_\lambda:\cO_B\to\mathrm{End}(H_\La)$ replaced by the composition of $i_\lambda$ with the automorphism of $\cO_B$ defined by $x\mapsto bxb^{-1}$. 
\end{enumerate}
\end{defn}

\begin{rem}
    By the discussion in \cite[\S3.23 c)]{RZ}, condition (3) implies that for each $\La\in\cL$, $M_\La$ is locally on $S$ isomorphic to $\La\otimes_{\bbZ_p}\cO_S$ as an $\cO_B\otimes_{\bbZ_p}\cO_S$-module.
\end{rem}

Let $\kappa$ be a perfect field of characteristic $p$, viewed as an $\cO_E$-algebra via a homomorphism $\cO_E\to\kappa$. Let $H_\bullet$ be a $p$-divisible group with $\cD$-structure over $\kappa$.

\begin{defn}
The deformation space $\cX_{H_\bullet}$ of $H_\bullet$ is the functor that associates to any Artin local $\cO_E$-algebra $R$ with residue field $\kappa$ the set of isomorphism classes of $p$-divisible groups with $\cD$-structure $\tilde{H}_\bullet$ over $\spec(R)$, together with an isomorphism  $\tilde{H}_\bullet\otimes_R\kappa\xrightarrow{\cong} H_\bullet$, i.e. an isomorphism of $p$-divisible $\cO_B$-modules $\tilde{H}_\La\otimes_R\kappa\xrightarrow{\cong} H_\La$ for all $\La\in\cL$ that commute with the isogenies corresponding to the inclusions among various $\La$'s.
\end{defn}

\begin{thm}
    The functor $\cX_{H_\bullet}$ is representable by a complete noetherian local $\cO_E$-algebra $R_{H_\bullet}$ with residue field $\kappa$. 
\end{thm}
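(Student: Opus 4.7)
The plan is to follow the strategy of Rapoport--Zink \cite{RZ}, adapted to EL-data as in \cite{Sch-deformation}: translate the deformation problem into one about compatible Hodge filtrations via Grothendieck--Messing theory, identify the resulting functor with an infinitesimal neighborhood in a local model, and then invoke Schlessinger's representability criterion together with finiteness of the tangent space.

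First I would use the periodicity data, condition~(4), to reduce to a finite subchain. Since for $b\in B^\times$ normalizing $\cO_B$ the isomorphism $\theta_b$ identifies $H_\La^b$ with $H_{b\La}$, the full collection $(H_\La)_{\La \in \cL}$ is determined by its restriction to a finite set of representatives of $\cL$ modulo this normalizer action, together with the transition isogenies among them. Thus $\cX_{H_\bullet}$ coincides with a deformation functor indexed by this finite subchain.

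Next, for an Artin local $\cO_E$-algebra $R$ with residue field $\kappa$ and a square-zero surjection $R \twoheadrightarrow R_0$, Grothendieck--Messing theory identifies deformations of each $p$-divisible $\cO_B$-module $H_\La$ from $R_0$ to $R$ with liftings of its Hodge filtration to an $\cO_B\otimes_{\bbZ_p}R$-stable direct summand of $M_\La \otimes_{R_0} R$ of the type prescribed by the determinant condition~(3). Because isogenies between $p$-divisible groups extend uniquely across infinitesimal thickenings, the maps $\rho_{\La',\La}$ lift canonically; the only constraint is that the chosen filtrations $\mathrm{Fil}_\La$ correspond to one another under the transition maps $M_\La \otimes R \to M_{\La'}\otimes R$ (and are equivariant for the periodicity isomorphisms). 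The resulting moduli problem is precisely (an infinitesimal neighborhood in) the local model $M^{\mathrm{loc}}_{\cG_\cL,\bar\mu}$ attached to the integral data $\cD$, which is a projective scheme of finite type over $\cO_E$.

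From this identification representability is formal: deformations of $H_\bullet$ over $R$ correspond to $R$-points of $M^{\mathrm{loc}}_{\cG_\cL,\bar\mu}$ lifting the fixed $\kappa$-point given by the Hodge filtration of $H_\bullet$ itself, so Schlessinger's conditions (H1)--(H4) are immediate, and the tangent space coincides with the Zariski tangent space of the local model at this point, hence is finite-dimensional over $\kappa$. This produces a complete noetherian local $\cO_E$-algebra $R_{H_\bullet}$ with residue field $\kappa$ pro-representing $\cX_{H_\bullet}$. The main obstacle is the combinatorial bookkeeping: one must carefully match Grothendieck--Messing deformations of the individual $H_\La$ with both the chain compatibility and the periodicity in order to see that the deformation functor is cut out inside the relevant product of Grassmannians by exactly the local-model equations. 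This translation is carried out in \cite{RZ} in the classical cases and extended in \cite{Sch-deformation} to the general EL setting we need here, so in our situation it is a matter of checking that nothing new arises from allowing $G$ to be an inner form.
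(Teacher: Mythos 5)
Your proposal is correct in outline, but it takes a substantially longer route than the paper does. The paper's proof is a two-liner: Scholze's Theorem~3.6 in \cite{Sch-deformation} already gives representability of the deformation functor of a single $H_\Lambda$ by a complete noetherian local $\cO_E$-algebra $R_{H_\Lambda}$; by the periodicity condition~(4) only finitely many of these matter; and the full functor $\cX_{H_\bullet}$ is then a closed subfunctor of their product, cut out by the conditions that the transition isogenies $\rho_{\Lambda',\Lambda}$ lift compatibly, hence is represented by a quotient of the completed tensor product $\widehat{\bigotimes}_\Lambda R_{H_\Lambda}$. You instead reopen the single-factor problem and re-derive it via Grothendieck--Messing plus Schlessinger, and then additionally identify the answer with a formal neighborhood of the local model $M^{\mathrm{loc}}_{\cG_\cL,\bar\mu}$. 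That identification is true and illuminating, but it is precisely the content of the local-model diagram: a deformation is a filtration in the value of the Dieudonn\'e crystal over $R$, not literally in $\Lambda\otimes R$, and matching the two requires choosing a compatible system of trivializations (possible by formal smoothness, but not canonical). For the purpose of proving pro-representability you do not need that identification; Grothendieck--Messing applied square-zero extension by square-zero extension already lets you verify Schlessinger's (H1)--(H4) and bounds the tangent space. So your argument is sound but contains an avoidable detour, whereas the paper leans on the already-proved single-factor case and only needs to observe that chain compatibility is a closed condition. Your closing remark about inner forms is also not an issue the paper worries about: $\cite[{\rm Thm.}\ 3.6]{Sch-deformation}$ is stated for general EL data, so nothing new is needed when $B$ involves division algebras.
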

\begin{proof}
    By \cite[Theorem 3.6]{Sch-deformation}, the deformation functor of each single $H_\La$ is representable by a complete noetherian local $\cO_E$-algebra $R_{H_\La}$ with residue field $\kappa$. Then by the periodicity condition (4), the functor $\cX_{H_\bullet}$ is representable by a quotient of the completed tensor product of finitely many such $\cO_E$-algebras.
\end{proof}

Let $X_{H_\bullet}$ be the rigid generic fiber of $\cX_{H_\bullet}$. Then 
$X_{H_\bullet}$ is a rigid analytic space over the discrete valued field $W_{\cO_E}(\kappa)[\frac{1}{p}]$, where $W_{\cO_E}(\kappa):=W(\kappa)\otimes_{\bbZ_p}\cO_E$. Moreover, it is smooth by the same reasoning in \cite[Theorem 3.8]{Sch-deformation}. For each compact open subgroup $K\subset\cG_\cL(\bbZ_p)$, we have an \'etale cover $X_{H_\bullet,K}$ of $X_{H_\bullet}$ parametrizing level-$K$ structures of the universal $p$-adic Tate module.\par 
For each lattice $\La\in\cL$, the covariant Dieudonn\'e module $M_\La$ is an $\cO_B\otimes_{\bbZ_p}W(\kappa)$-module that is free of finite rank as $W(\kappa)$-module. Moreover, the $B\otimes_{\bbQ_p}W(\kappa)[\frac{1}{p}]$-modules  $M_\La[\frac{1}{p}]$ for various $\La$'s are canonically identified by the isogenies among them and we denote this common object by $V_{H_\bullet}$. Now suppose $X_{H_\bullet}\ne\varnothing$; then $V_{H_\bullet}$ is isomorphic to $V\otimes_{\bbZ_p}W(\kappa)$ as $B$-module. We can choose an isomorphism $V_{H_\bullet}\cong V\otimes_{\bbZ_p}W(\kappa)$ such that the $\cO_B$-module $M_\La$ is mapped isomorphically to $\La\otimes_{\bbZ_p}W(\kappa)$ for each $\La\in\cL$. Under this identification, the Frobenius operator $F$ on $V_{H_\bullet}$ has the form $F=p\delta\sigma$ where $\sigma$ denotes the Frobenius automorphism of $W(\kappa)$ and $\delta\in G(W(\kappa)[\frac{1}{p}])$ satisfies $p\La\subset p\delta\La\subset\La$ for all $\La\in\cL$. 
A different choice of isomorphism changes $\delta$ by a $\sigma$-conjugate under $\cG_\cL(W(\kappa))$.\par 
To state a further condition satisfied by $\delta$ we introduce more notations. Let $B(G)$ be the set of $\sigma$-conjugacy classes in $G(W(\bar\kappa)[\frac{1}{p}])$. In \cite[\S6]{Ko-lambda} Kottwitz constructs a map $\kappa_G:B(G)\to X^*(Z(\hat{G})^{\Gamma})$ where $\Gamma=\mathrm{Gal}(\bar{\bbQ}_p/\bbQ_p)$. We choose a representative $\mu$ of the conjugacy class $\bar{\mu}$ and view it as a character of the dual group. Then the restriction $\mu_1\in X^*(Z(\hat{G})^{\Gamma})$ of $\mu$ depends only on the conjugacy class $\bar{\mu}$. 
\begin{prop}\label{prop:delta-condition}
    For any perfect field $\kappa$ of characteristic $p$ equipped with a ring homomorphism $\cO_E\to\kappa$, the association $H_\bullet\mapsto\delta\in G(W(\kappa)[\frac{1}{p}])$ described above defines an injection from the set of $p$-divisible groups with $\cD$-structures $H_\bullet$ over $\kappa$ such that $X_{H_\bullet}\ne\varnothing$ to the set of $\cG_\cL(W(\kappa))$-$\sigma$-conjugacy classes $\delta\in G(W(\kappa)[\frac{1}{p}])$ such that $\kappa_G(p\delta)=-\mu_1$ and $p\La_{W(\kappa)}\subset p\delta\La_{W(\kappa)}\subset\La_{W(\kappa)}$ for all $\La\in\cL$, where $\La_{W(\kappa)}:=\La\otimes_{\bbZ_p}W(\kappa)$.
\end{prop}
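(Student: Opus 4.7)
The plan is to invoke covariant Dieudonné theory to translate the classification of $p$-divisible groups with $\cD$-structure over $\kappa$ into a classification of suitable $\cO_B$-stable lattice chains in a fixed isocrystal, and then to verify that the two stated conditions on $\delta$ are exactly the ones picked out by this translation.

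First I would recall that the functor $H \mapsto M(H)$ of covariant Dieudonné theory gives an anti-equivalence (or, in the covariant setup, an equivalence) between $p$-divisible groups over $\kappa$ and Dieudonné modules over $W(\kappa)$. Under this, for each $\La \in \cL$, the module $M_\La$ inherits the structure of an $\cO_B \otimes_{\bbZ_p} W(\kappa)$-module with Frobenius $F$ (and Verschiebung $V$ satisfying $FV = VF = p$), the quasi-isogenies $\rho_{\La',\La}$ translate into Frobenius-equivariant inclusions $M_\La \hookrightarrow M_{\La'}$, and condition (2) in the definition of $\cD$-structure says precisely that the cokernels of these inclusions realize the combinatorics of $\cL$. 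Inverting $p$ glues the $M_\La[\tfrac{1}{p}]$ into a single $B \otimes_{\bbQ_p} W(\kappa)[\tfrac{1}{p}]$-isocrystal $V_{H_\bullet}$.

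Next I would show that if $X_{H_\bullet} \neq \varnothing$, then $V_{H_\bullet} \cong V \otimes_{\bbQ_p} W(\kappa)[\tfrac{1}{p}]$ as $B \otimes W(\kappa)[\tfrac{1}{p}]$-modules: non-emptiness of the generic fiber produces a $p$-adic Tate module with $B$-action of the required type after base change, which forces the isomorphism class of the underlying $B$-module. Combined with the local-freeness of each $M_\La$ over $\cO_B \otimes W(\kappa)$ coming from condition (3) (compare the discussion in \cite[\S3.23]{RZ} as already used in the remark above), this implies that one can trivialize $V_{H_\bullet} \overset{\sim}{\to} V \otimes_{\bbZ_p} W(\kappa)[\tfrac{1}{p}]$ so that $M_\La$ maps onto $\La_{W(\kappa)}$ for every $\La \in \cL$. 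Any two such trivializations differ by an element of $\cG_\cL(W(\kappa))$. Writing the Frobenius as $F = p\delta\sigma$ then produces a well-defined $\cG_\cL(W(\kappa))$-$\sigma$-conjugacy class of $\delta \in G(W(\kappa)[\tfrac{1}{p}])$.

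The lattice condition $p\La_{W(\kappa)} \subset p\delta \La_{W(\kappa)} \subset \La_{W(\kappa)}$ is then simply the Dieudonné-theoretic statement $VM_\La \subset M_\La$ and $FM_\La \subset M_\La$ translated via our trivialization. The Kottwitz condition $\kappa_G(p\delta) = -\mu_1$ I would deduce from the determinant condition (3): on each $M_\La$ the Hodge filtration $0 \to V M_\La / p M_\La \to M_\La / p M_\La \to \mathrm{Lie}(H_\La) \to 0$ is of type $\mu$, and, via Kottwitz's description of $\kappa_G$ (see \cite[\S6]{Ko-lambda}), the class of $F = p\delta\sigma$ on the isocrystal must have image $-\mu_1$ in $X^*(Z(\hat G)^\Gamma)$ (the sign being the usual one relating covariant Dieudonné theory with the cocharacter of the Hodge filtration). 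This is the step I expect to be the most delicate, as it requires matching the sign conventions between the Hodge filtration determined by condition (3) and the Kottwitz map on $B(G)$.

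Finally, injectivity: if two data $H_\bullet, H'_\bullet$ produce $\cG_\cL(W(\kappa))$-$\sigma$-conjugate $\delta$'s, then by construction one obtains a $B$-linear, Frobenius-equivariant isomorphism $V_{H_\bullet} \xrightarrow{\sim} V_{H'_\bullet}$ carrying $M_\La$ to $M'_\La$ for every $\La \in \cL$ and compatibly with all periodicity isomorphisms $\theta_b$. By covariant Dieudonné theory, this is the same as an isomorphism of $p$-divisible groups with $\cD$-structure $H_\bullet \xrightarrow{\sim} H'_\bullet$, yielding injectivity and completing the argument.
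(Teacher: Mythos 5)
Your proposal is correct and follows essentially the same route as the paper, whose own proof is just a two-line citation: injectivity from classical Dieudonné theory, and the Kottwitz identity $\kappa_G(p\delta)=-\mu_1$ from \cite[Prop.\,1.20]{RZ} and \cite[Prop.\,3.9]{Sch-deformation}. You have simply unpacked the citation: translate $H_\bullet$ into a chain of Dieudonné lattices in a common isocrystal, use non-emptiness of $X_{H_\bullet}$ and the local-freeness from condition (3) to trivialize so that $M_\La \mapsto \La_{W(\kappa)}$ for all $\La$ simultaneously (the crucial point being that any two such trivializations differ by $\cG_\cL(W(\kappa))$), read off the lattice condition from $FM_\La \subset M_\La$ and $VM_\La \subset M_\La$, and cite RZ for the Kottwitz identity. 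Your injectivity argument is what the paper means by ``follows from classical Dieudonné theory,'' though one could wish for a sentence noting that compatibility with the periodicity isomorphisms $\theta_b$ comes for free because, after trivialization, the $\theta_b$ on the Dieudonné side are identified with fixed maps $\La_{W(\kappa)} \to b\La_{W(\kappa)}$ determined by the lattice chain — a point the paper also leaves implicit.
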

\begin{proof}
The injectivity follows from classical Dieudonn\'e theory. The identity $\kappa_G(p\delta)=-\mu_1$ follows from \cite[Proposition 1.20]{RZ}, see also \cite[Proposition 3.9]{Sch-deformation}.
\end{proof}

\subsection{Test functions}
Fix an integer $j\ge1$ and an element $\tau\in\mathrm{Frob}^jI_E\subset W_E$ in the Weil group of $E$ (here $I_E$ is the inertia subgroup of $W_E$ and $\mathrm{Frob}\in W_E$ is a geometric Frobenius element). Set $r=j[\kappa_E:\bbF_p]$ and view $\bbF_{p^r}$ as a degree $j$ extension of $\kappa_E$. Let $h\in C_c^\infty(\cG_\cL(\bbZ_p))$ be a $\bbQ$-valued function. Our goal is to define a test function $\phi_{\tau, h}$ in $C_c^\infty(G(\bbQ_{p^r}))$ that enters the Lefschetz trace formula for the special fiber at $p$ of the relevant Shimura varieties, and also a test function $\phi^{(r)}_h$ that is relevant for the description of semi-simple local factor of the Shimura varieties.\par
Let $H_\bullet$ be a $p$-divisible group with $\cD$-structure over a perfect field $\kappa$ of characteristic $p$ equipped with a ring homomorphism $\cO_E\to\kappa$. Recall that for any compact open subgroup $K\subset\cG_\cL(\bbZ_p)$, we defined a rigid analytic space $X_{H_\bullet,K}$ over the field $k:=W_{\cO_E}(\kappa)[\frac{1}{p}]$. 
Following \cite[Definition 3.11]{Sch-deformation}, we say that $H_\bullet$ \emph{has controlled cohomology} if $X_{H_\bullet,K}$ has controlled $\ell$-adic \'etale cohomology for any normal pro-$p$ open subgroup $K\subset\cG(\bbZ_p)$ and any prime $\ell\ne p$. We refer to \cite[\S2]{Sch-deformation} for the notion of rigid analytic spaces with controlled cohomology.

\begin{defn}
Let $\delta\in G(\bbQ_{p^r})$. If $\delta$ is associated to some $p$-divisible group with $\cD$-structure $H_\bullet$ over $\bbF_{p^r}$ as in Proposition~\ref{prop:delta-condition} such that $H_\bullet$ has controlled cohomology, we define
\[\phi_{\tau,h}(\delta)=\mathrm{tr}(\tau\times h| H^*(X_{H_\bullet,K}\otimes_k\hat{\bar{k}},\bbQ_\ell))\]
and
\[\phi_{h}^{(r)}(\delta)=\mathrm{tr}^{\mathrm{ss}}(\mathrm{Frob}^j\times h| H^*(X_{H_\bullet,K}\otimes_k\hat{\bar{k}},\bbQ_\ell))\]
where $K\subset\cG_\cL(\bbZ_p)$ is any normal pro-$p$ open compact subgroup such that $h$ is $K$-biinvariant and $\ell\ne p$. For any other $\delta\in G(\bbQ_{p^r})$, define $\phi_{\tau,h}(\delta)$ and $\phi_{h}^{(r)}(\delta)$ to be $0$. 
\end{defn}
Here $\mathrm{tr}^{ss}$ denotes semi-simple trace, for whose definition and properties we refer to \cite[\S3]{HN02}. In particular, it follows by definition that for all $\delta\in G(\bbQ_{p^r})$ we have

\begin{equation}\label{eq:test-function-integral}
    \phi_{h}^{(r)}(\delta)=\int_{\mathrm{Frob}^jI_E}\phi_{\tau,h}(\delta)d\tau
\end{equation}
where $d\tau$ is the translate of the Haar measure on $I_E$ having total volume $1$; comp.,\,\cite[$\S8.2$]{HaRi20}. Since the alternating sum $H^*(X_{H_\bullet,K}\otimes_k\hat{\bar{k}},\bbQ_\ell))$ is a virtual continuous $\ell$-adic representation of the Weil group $W_E$, the action of the inertia $I_E$ on its semi-simplification factors through a finite quotient and the above integral reduces to a finite sum multiplied by a nonzero rational number. 
\begin{prop}
    The functions $\phi_{\tau,h}$ and $\phi^{(r)}_h$ are well-defined locally constant $\bbQ$-valued functions with compact support on $G(\bbQ_{p^r})$ and they are independent of $\ell$. Moreover, $\phi_{\tau,h}$ and $\phi^{(r)}_h$ are $\cG_\cL(\bbZ_{p^r})$-$\sigma$-conjugation invariant and their supports are contained in the set of elements $\delta\in G(\bbQ_{p^r})$ such that $\kappa_G(p\delta)=-\mu_1$ and $p\La_r\subset p\delta\La_r\subset\La_r$ for all $\La\in\cL$, where $\La_r:=\La\otimes_{\bbZ_p}\bbZ_{p^r}$. 
\end{prop}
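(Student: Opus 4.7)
The plan is to establish the listed assertions in stages, following and mildly generalizing \cite[\S3-4]{Sch-deformation}. The support condition and the $\cG_\cL(\bbZ_{p^r})$-$\sigma$-conjugation invariance are immediate consequences of Proposition~\ref{prop:delta-condition}. Indeed, if $\delta\in G(\bbQ_{p^r})$ does not satisfy the stated lattice and $\kappa_G$-conditions, no $H_\bullet$ over $\bbF_{p^r}$ produces it, so $\phi_{\tau,h}(\delta)=\phi_h^{(r)}(\delta)=0$ by definition; while if $\delta'=g\delta\sigma(g)^{-1}$ for some $g\in\cG_\cL(\bbZ_{p^r})=\cG_\cL(W(\bbF_{p^r}))$, the Proposition shows that $\delta$ and $\delta'$ correspond to the same $\sigma$-conjugacy class, hence to the same isomorphism class of $H_\bullet$ and thus to the same $X_{H_\bullet,K}$ with compatible $W_E$-action. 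Compactness of the support then follows from the observation that the set of $\delta\in G(\bbQ_{p^r})$ with $p\La_r\subset p\delta\La_r\subset\La_r$ for every $\La\in\cL$ is compact.

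To show independence of the auxiliary $K$ used in the definition, take normal pro-$p$ open subgroups $K'\subset K\subset\cG_\cL(\bbZ_p)$ such that $h$ is biinvariant under both. The transition $X_{H_\bullet,K'}\to X_{H_\bullet,K}$ is an \'etale Galois cover with group $K/K'$, giving $H^*(X_{H_\bullet,K},\bbQ_\ell)=H^*(X_{H_\bullet,K'},\bbQ_\ell)^{K/K'}$. Since $h=e_K*h*e_K$ for the idempotent $e_K=1_K/\vol(K)$, the Hecke operator $h$ on $H^*(X_{H_\bullet,K'},\bbQ_\ell)$ has image in the $K/K'$-invariants and acts there as it does on $H^*(X_{H_\bullet,K},\bbQ_\ell)$. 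With the commuting $W_E$-action this produces equal traces, and equal semisimple traces, at either level.

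Local constancy near a fixed $\delta_0$ in the support is the main technical point. A Lang--Steinberg-type argument applied to the smooth $\bbZ_p$-group scheme $\cG_\cL$ shows that for $\delta$ sufficiently $p$-adically close to $\delta_0$, one can write $\delta=g\delta_0\sigma(g)^{-1}$ for some $g\in\cG_\cL(W(\bar{\bbF}_p))$; hence the associated $p$-divisible groups with $\cD$-structure become isomorphic after base change to $\bar{\bbF}_p$, yielding a Frobenius-compatible isomorphism between the deformation spaces $X_{H_\bullet,K}\otimes_k\hat{\bar k}$ and thus equal traces. Finally, the rationality of the traces and their $\ell$-independence follow from the controlled cohomology hypothesis on $H_\bullet$ exactly as in \cite[\S2-3]{Sch-deformation}, while the corresponding statements for $\phi_h^{(r)}$ reduce to those for $\phi_{\tau,h}$ via the integral identity \eqref{eq:test-function-integral}. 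The main obstacle in this plan is the local constancy step, where one must carefully track the integral parahoric structure and the $W_E$-action under the Lang--Steinberg adjustment; once this is in place, everything else is either formal or a reference to Scholze's framework.
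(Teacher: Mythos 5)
Your proposal takes the same route as the paper, which reduces $\phi^{(r)}_h$ to $\phi_{\tau,h}$ via the integral identity \eqref{eq:test-function-integral}, cites \cite[Props.\,4.2, 4.3]{Sch-deformation} for well-definedness, rationality, $\ell$-independence and local constancy, and deduces the support/invariance statements from Proposition~\ref{prop:delta-condition}. The support, compactness, $\sigma$-conjugation invariance, and $K$-independence steps in your write-up are correct and are the expected unpackings of Scholze's argument.

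However, the local constancy paragraph contains a real gap, one that you partially acknowledge at the end and which your earlier assertion contradicts. The Lang--Steinberg argument produces $g\in\cG_\cL(W(\bar{\bbF}_p))$ close to $1$ with $\delta=g\delta_0\sigma(g)^{-1}$, but this $g$ is generally \emph{not} $\sigma^r$-invariant, i.e.\ not in $\cG_\cL(\bbZ_{p^r})$. (Already for $\cG=\bbG_m$ the $\cG(\bbZ_{p^r})$-$\sigma$-conjugation orbit of $\delta$ is a coset of the norm-one units, which is not open.) So the isomorphism $H_\bullet\otimes\bar{\bbF}_p\cong H'_\bullet\otimes\bar{\bbF}_p$ does not descend to $\bbF_{p^r}$, and after identifying $X_{H_\bullet,K}\otimes_k\hat{k}^{\mathrm{nr}}$ with $X_{H'_\bullet,K}\otimes_k\hat{k}^{\mathrm{nr}}$ via $g$, the two $\mathrm{Frob}_k$-descent data differ by the automorphism $\sigma^r(g)g^{-1}$ of $H'_\bullet\otimes\bar{\bbF}_p$. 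Asserting a ``Frobenius-compatible isomorphism\dots and thus equal traces'' therefore skips precisely the content of \cite[Prop.\,4.3]{Sch-deformation}: one must show that, with $h$ biinvariant under a fixed $K$ and $g$ taken sufficiently congruent to $1$ modulo a high power of $p$, the discrepancy automorphism $\sigma^r(g)g^{-1}$ acts trivially on the finite-level tower $X_{H'_\bullet,K'}$ (and hence on the Hecke-weighted Lefschetz number), so that the two traces nevertheless coincide. To make the write-up correct, replace the Frobenius-compatibility claim by this vanishing-of-the-discrepancy step rather than listing it as a remaining obstacle after already concluding.
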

\begin{proof}
    By \eqref{eq:test-function-integral}, it suffices to prove the results for $\phi_{\tau,h}$. The same proof in \cite[Proposition 4.2]{Sch-deformation} shows that $\phi_{\tau,h}$ is well-defined, has value in $\bbQ$ and independent of $\ell$. The local constancy is proved in the same way as \cite[Proposition 4.3]{Sch-deformation}. The second statement follows from Proposition~\ref{prop:delta-condition}. 
\end{proof}

\begin{prop}\label{prop:independence-of-EL-data}
The function $\phi_{\tau,h}$ depends only on the data $(\cG_\cL, \Bar{\mu}, \tau,h)$, and is independent of the choice of EL-data $\cD$ giving rise to them.
\end{prop}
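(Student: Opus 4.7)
The plan is to show that if two EL-data $\cD_1=(B_1,V_1,\cO_{B_1},\cL_1,\bar\mu)$ and $\cD_2=(B_2,V_2,\cO_{B_2},\cL_2,\bar\mu)$ give rise to the same smooth group scheme $\cG_\cL$ and the same conjugacy class $\bar\mu$, then the entire rigid analytic tower $\{X_{H_\bullet,K}\}_K$ used to define $\phi_{\tau,h}$ depends only on $(\cG_\cL,\bar\mu)$ (and the $\sigma$-conjugacy class $[\delta]$), not on the choice of $\cD$. Once this is established, taking the trace of $\tau\times h$ on $\ell$-adic \'etale cohomology immediately yields $\phi^{\cD_1}_{\tau,h}=\phi^{\cD_2}_{\tau,h}$, since the supports coincide by Proposition~\ref{prop:delta-condition} (the conditions $\kappa_G(p\delta)=-\mu_1$ and $p\La_r\subset p\delta\La_r\subset \La_r$ depend only on $(\cG_\cL,\bar\mu)$).

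The principal tool is Morita equivalence. The identification $G_1 \cong G_2$ coming from $\cG_\cL$ forces $B_1$ and $B_2$ to have the same center and Brauer class, hence to be Morita equivalent as semisimple $\bbQ_p$-algebras. I would choose an $(\cO_{B_1},\cO_{B_2})$-bimodule $P$, Morita on both sides, such that the functor $M\mapsto P\otimes_{\cO_{B_1}}M$ takes $V_1$ to $V_2$ and $\cL_1$ to $\cL_2$ compatibly with the given identification $\cG_{\cL_1}=\cG_{\cL_2}=\cG_\cL$; this is possible precisely because $\cG_\cL$ is the same on both sides. The induced functor on $p$-divisible $\cO_{B_1}$-modules, $H\mapsto P\otimes_{\cO_{B_1}}H$, is an equivalence of categories onto $p$-divisible $\cO_{B_2}$-modules; it preserves isogenies, Lie algebras, covariant Dieudonn\'e modules, and the $p$-adic Tate module. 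It takes a $p$-divisible group with $\cD_1$-structure to one with $\cD_2$-structure because the determinant condition (3) and the periodicity condition (4) are intrinsic to $\bar\mu$ and to the $B^\times$-action on $\cL$, both shared between $\cD_1$ and $\cD_2$. Finally, by Proposition~\ref{prop:delta-condition} this bijection is compatible with the $\delta$-invariant.

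Given this equivalence, for any $H_\bullet^1$ over $\kappa$ with invariant $\delta$ the functor above attaches some $H_\bullet^2$ with the same $\delta$. The equivalence being natural in the base scheme, it identifies the two deformation functors, yielding canonical isomorphisms of complete local rings $R_{H_\bullet^1}\cong R_{H_\bullet^2}$, of their rigid generic fibers, and of the level-$K$ covers $X_{H_\bullet^i,K}$ for any $K\subseteq\cG_\cL(\bbZ_p)$; the latter because level structures are phrased in terms of the Tate module, which is preserved by Morita. The whole construction is $W_E$-equivariant by functoriality, so $\ell$-adic cohomology and the trace of $\tau\times h$ agree for both sides, proving the proposition. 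The one step that will require genuine care is the first: verifying that the Morita bimodule $P$ can indeed be arranged to transport the full periodic chain $\cL_1$ to $\cL_2$ compatibly with the prescribed identification $\cG_{\cL_1}=\cG_{\cL_2}$ and the $B^\times$-periodicity. After Morita reduction to a product of division algebras over $\bbQ_p$-fields, however, the two chains are conjugate under $G(\bbQ_p)$, and the inner automorphism can be absorbed by replacing $H_\bullet^2$ by an isomorphic copy; everything else is routine functoriality.
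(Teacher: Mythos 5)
Your proposal is correct and follows essentially the same route as the paper: the paper also establishes an equivalence of categories of $p$-divisible groups with structure by Morita theory, though it phrases this as a reduction to the normal form $B' = D$ a division algebra via the explicit idempotent $\epsilon \in M_m(\cO_D)$ (so the Morita bimodule is $\epsilon B$, and the functors are $H_\bullet \mapsto \epsilon H_\bullet$ with quasi-inverse $H'_\bullet \mapsto (H'_\bullet)^m$). The subtlety you flag at the end---arranging the transport to match the lattice chains and the identification $\cG_{\cL_1} = \cG_{\cL_2}$---is exactly the point the paper handles by fixing the normal form $\cL' = \epsilon\cL$ and observing $\cG_{\cL'} \cong \cG_\cL$ canonically.
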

\begin{proof}
We may assume that $B$ is a simple algebra whose center $F$ is a finite extension of $\bbQ_p$. Then we can choose an isomorphism $B\cong M_m(D)$ sending $\cO_B$ to $M_m(\cO_D)$ where $D$ is a division algebra and $\cO_D$ is the unique maximal order in $D$. Let $W=D^m$ be the unique simple left $B$-module. There is an isomorphism $V\cong W^n$ of $B$-modules which induces isomorphisms $C\cong M_n(D^{op})$ and  $G\cong\mathrm{Res}_{F/\bbQ_p}\mathrm{GL}_n(D^{op})$. Let $\epsilon\in\cO_B$ be the idempotent corresponding to the matrix whose $(1,1)$ entry equals to $1$ and all other entries equal to $0$. Let $B'=D$ and $V'=\epsilon V\cong D^n$ viewed as a left $B'$-module. Then the group of $B'$-automorphisms of $V'$ is canonically isomorphic to $G$. Let $\cO_{B'}=\cO_D$ be the unique maximal order and $\cL'=\epsilon\cL:=\{\epsilon\La,\La\in\cL\}$ be the $\cO_{B'}$-lattice chain in $V'$ corresponding to $\cL$. Then the $\cO$-group scheme $\cG_{\cL'}$ constructed from the data $(\cO_{B'},\cL')$ is canonically isomorphic to $\cG_{\cL}$. Let $\cD':=(B',V',\cO_{B'},\cL',\bar{\mu})$. Then there is an equivalence between the category of $p$-divisible groups with $\cD$-structure and those with $\cD'$-structures. Indeed, for any $H_\bullet$ in the former category, $H_\bullet':=\epsilon H_\bullet$ is an object in the latter category. Conversely, for any $H'_\bullet$ in the latter category, $H_\bullet:=(H'_\bullet)^m$ defines an object in the former category. In particular, corresponding objects defined over a perfect field $\kappa$ of characteristic $p$ are parametrized by the same invariant $\delta\in G(W(\kappa)[\frac{1}{p}])$, defined up to $\cG_\cL(W(\kappa))$-$\sigma$-conjugacy and have isomorphic deformation spaces. Therefore the function $\phi'_{\tau,h}$ defined from the data $\cD'$ is the same as the function $\phi_{\tau,h}$ and the claim follows. 
\end{proof}

\subsection{Statement of the main local theorem}\label{sec:main-local-thm-statements}
Choose a representative $\mu$ of the conjugacy class $\bar{\mu}$ of cocharacters of $G_{\bar{\bbQ}_p}$ and view it as a character of the standard maximal torus $\hat{T}$ in $\hat{G}$. Let $r_{-\mu}:\Hat{G}\to\mathrm{GL}(V_{-\mu})$ be the irreducible algebraic representation of $\hat{G}$ with extreme $\hat{T}$-weight $-\mu$. In other words, the extreme weight of $r_{-\mu}$ is the dominant representative in the Weyl group orbit of $-\mu$. By \cite[Lemma 2.1.2]{Ko-twisted}, for any choice of $W_E$-invariant splitting of $\hat{G}$, $r_{-\mu}$ extends uniquely to a representation
\[r_{-\mu}:{}^{L}G_E:=\hat{G}\rtimes W_E\to\mathrm{GL}(V_{-\mu})\] 
such that $W_E$ acts as the identity on the corresponding highest weight space.\par   
Fix $\tau\in\mathrm{Frob}_E^jI_E\subset W_E$ and set $r:=j[\kappa_E:\bbF_p]$ as before. 
\begin{defn}\label{def:z_{tau,mu}}
Let $z_{\tau,-\mu}$ be the element in the stable Bernstein center of $G$ whose value at any semisimple Langlands parameter $\varphi:W_{\bbQ_p}\to{}^{L}G=\hat{G}\rtimes\mathrm{Gal}(\bar{\bbQ}_p/\bbQ_p)$ is given by
\[\tr(\tau|(r_{-\mu}\circ\varphi|_{W_E})|\cdot|_{E}^{-\langle\rho,\mu\rangle})\]
where $\rho$ is half the sum of all positive roots of $G$. By \cite[\S5.3]{Ha14}\footnote{Due to printing errors in the published version, all citations for this article refer to the online version, arXiv:1304.6293.}, this defines a regular function on the stable Bernstein variety. 
\end{defn}
Next we define the semisimple version of the element $z_{\tau,-\mu}$. Let $E_j\subset\bar{\bbQ}_p$ be the degree $j$ unramified extension of $E$. Then we have $\tau\in W_{E_j}\subset W_E$. After identifying $\bbQ_{p^r}$ with the maximal unramified extension of $\bbQ_p$ in $E_j$, we get a natural inclusion $W_{E_j}\subset W_{\bbQ_{p^r}}$ which allows us to view $\tau$ also as an element in $W_{\bbQ_{p^r}}$. By \cite[\S5.4]{Ha14} there is an element $z_{\tau,-\mu}^{(r)}$ in the stable Bernstein center of $G_{\bbQ_{p^r}}$ such that $b(z_{\tau,-\mu}^{(r)})=z_{\tau,-\mu}$. In fact, for any irreducible smooth representation $\Pi$ of $G(\bbQ_{p^r})$ with semisimple $L$-parameter 
\[\varphi_{\Pi}:W_{\bbQ_{p^r}}\to\hat{G}\rtimes W_{\bbQ_{p^r}},\] 
the element $z_{\tau,-\mu}^{(r)}$ acts on $\Pi$ by the scalar
\[\tr(\tau|(r_{-\mu}\circ\varphi_{\Pi}|_{W_{E_j}})|\cdot|_{E}^{-\langle\rho,\mu\rangle}).\]
We also define a closely related element $z_{-\mu}^{(r)}$ in the stable Bernstein center of $G(\bbQ_{p^r})$ that acts on any irreducible smooth representation $\Pi$ of $G(\bbQ_{p^r})$ with semisimple $L$-parameter $\varphi_\Pi$ by the scalar
\[\mathrm{tr}(\mathrm{Frob}_E^j|(r_{-\mu}\circ\varphi_{\Pi}|_{W_{E_j}})^{I_E}|\cdot |_E^{-\langle\rho,\mu\rangle}).\]
Then we have
\[z_{-\mu}^{(r)}=\int_{\mathrm{Frob}_E^jI_E} z_{\tau,-\mu}^{(r)}d\tau\]
where $d\tau$ is the Haar measure of total volume $1$ and the integral is actually a finite sum. When $r=1$, we also simply denote $z_{-\mu}:=z_{-\mu}^{(1)}$.\par 

\begin{rem}
Above we are implicitly using the ring homomorphism from the stable Bernstein center to the usual Bernstein center; see \cite[$\S5.5$]{Ha14}. In order to make sense of the Bernstein center (which requires a notion of normalized parabolic induction) and also in order to make sense of the ring homomorphism, we must fix a choice of $\sqrt{p} \in \bar{\bbQ}_\ell$.  We use the same choice to define the symbol $|\cdot|_{E}^{-\langle \rho, \mu \rangle}$ appearing above.  In this way we see that $z_{\tau, -\mu}$ and $z^{(r)}_{-\mu}$ are independent of the choice of $\sqrt{p}$. See \cite[Thm.\,7.15]{HaRi21}.
\end{rem}

Now we can state our main local results.
\begin{thm}\label{thm:vanishing-property}
The functions $\phi_{\tau,h}$ and $\phi_{h}^{(r)}$ have the vanishing property in the sense of Definition~\ref{defn:vanishing-property}.
\end{thm}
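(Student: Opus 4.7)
My first move would be to reduce the statement for $\phi_h^{(r)}$ to the one for $\phi_{\tau,h}$, using the identity $\phi_h^{(r)} = \int_{\mathrm{Frob}^j I_E} \phi_{\tau,h}\, d\tau$, which is effectively a finite sum since inertia acts through a finite quotient on the relevant semisimplified cohomology. So the task reduces to showing $TO_{\delta\sigma}(\phi_{\tau,h}) = 0$ for every $\tau$ and every $\sigma$-semisimple $\delta \in G_r$ whose naive norm $N_r\delta$ is not $G_r$-conjugate to an element of $G$. A purely local attack --- for example, trying to show directly from the definition that the support of $\phi_{\tau,h}$ is contained in the subset where $N_r\delta$ descends to $G$ --- seems to be blocked precisely by the Kottwitz-triple obstruction in the non-quasi-split case, so I would argue globally, following the strategy outlined in the introduction.

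The plan is to globalize: extend the local EL-data $\cD = (B, V, \cO_B, \cL, \bar\mu)$ to a global unitary-similitude Shimura datum $(\bbG_\beta, X_\beta)$ with $\bbG_\beta(\bbQ_p) \cong G$ and $p$-component of the cocharacter datum equal to $\bar\mu$, and simultaneously introduce an auxiliary global group $\bbG'$ (from another division algebra with involution of the second kind) which is \emph{quasi-split} at $p$ and isomorphic to $\bbG_\beta$ at all other places. The point of $\bbG'$ is that, being quasi-split at $p$, every $\delta$ arising from its Shimura variety admits a genuine Kottwitz triple, so its geometric trace formula can be stabilized and compared directly with an Arthur-Selberg / simple trace formula.

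Next I would apply the Langlands-Kottwitz-Scholze method to $\mathrm{Sh}(\bbG_\beta, X_\beta)$, obtaining
\[\mathrm{Tr}(\tau \times h f^p \mid H^*_\xi) = \sum_{(A,u,\lambda)} \mathrm{vol}(I(\bbQ) \backslash I(\bbA_f))\, O_\gamma(f^p)\, TO_{\delta\sigma}(\phi_{\tau,h})\, \mathrm{tr}\,\xi(\gamma_\ell).\]
I would split the RHS into the ``good'' part (those $(A,u,\lambda)$ with $N_r\delta$ $G_r$-conjugate to an element of $G$, producing a genuine Kottwitz triple) and the ``bad'' part (the remaining terms, which only appear because $G$ is not quasi-split). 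On the other hand, the LHS admits an independent description: via the base-change theory developed for $\bbG_\beta$ and the Galois representations attached to cuspidal automorphic representations, the cohomology $H^*_\xi$ can be pinned down in terms of automorphic data, and using the twisted Jacquet-Langlands correspondence of Theorem \ref{thm:twisted-JL} together with the isomorphism $\bbG' \cong \bbG_\beta$ outside $\{p,\infty\}$, this description matches a simple trace formula on $\bbG'$ whose geometric side sees only Kottwitz-triple contributions. Comparing the two expressions for the LHS, the ``bad'' terms must sum to zero for every choice of $(\tau, h, f^p)$.

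Finally, to pass from a single global identity to vanishing of each individual twisted orbital integral, I would vary $f^p$: by concentrating $f^p$ near a regular semisimple element of $\bbG_\beta(\bbA_f^p)$ at some auxiliary place, one can isolate a single conjugacy class $\gamma$ and hence pin down the corresponding $\delta$ at $p$, forcing $TO_{\delta\sigma}(\phi_{\tau,h}) = 0$ individually whenever $N_r\delta$ is not $G_r$-conjugate to a $\bbQ_p$-element. The main obstacle I expect is precisely the comparison step: matching the geometric trace sum on $\bbG_\beta$ (non-quasi-split at $p$) with a simple trace formula on $\bbG'$ requires a delicate use of the twisted Jacquet-Langlands transfer, careful control of Kottwitz signs and volume factors, and exactly the sort of local harmonic-analysis input developed in Section \ref{sec:prep-local}.
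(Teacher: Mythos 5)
Your overall globalization strategy — globalize to $(\bbG_\beta,X_\beta)$, introduce the auxiliary quasi-split-at-$p$ group $\bbG'$, run the Langlands--Kottwitz--Scholze method, and compare with a spectrally-described LHS — is the right frame and matches the paper in broad strokes. But the two concluding steps contain a genuine gap.

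First, the decomposition into ``good'' and ``bad'' terms, with the claim that comparison of the two expressions for the Lefschetz number forces the bad terms to sum to zero, does not go through as stated. The bad terms do not disappear when the trace is rewritten via base change and twisted Jacquet--Langlands to a trace formula for $\bbG'$: the test function on the $\bbG'$-side is the stable base change transfer $f^*_{\tau,h}$, and by definition the orbital integrals $O_{\gamma^*}(f^*_{\tau,h})$ at $\gamma^*=\cN_r\delta$ with $N_r\delta$ not descending to $G$ are exactly $\pm TO_{\delta\sigma}(\phi_{\tau,h})$, i.e.\ the very quantities you want to show vanish. So equating the two expressions for the LHS yields only the tautology ``good $+$ bad $=$ good $+$ bad,'' not ``bad $= 0$.'' What the paper actually does is avoid the good/bad split entirely: Lemma~\ref{lem:generalized-Kott-triple} shows that \emph{every} term of the fixed-point expansion, including the bad ones, is parametrized by a \emph{generalized} Kottwitz triple $(\gamma_0';\gamma,\delta)$ using $\bbG'$. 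This uniform parametrization is what lets the whole geometric side be pseudo-stabilized to a simple trace formula for $\bbG'$ (Theorem~\ref{thm:Lefschetz-trace-formula} and Corollary~\ref{cor:trace-G'}).

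Second, the idea of concentrating $f^p$ near a regular semisimple element to ``pin down $\delta$ at $p$'' and so force individual vanishing $TO_{\delta\sigma}(\phi_{\tau,h})=0$ is not viable: fixing $\gamma$ at the away-from-$p$ places does not uniquely determine $\delta$, and one would also need to know that any given bad $\delta$ actually occurs in the geometric expansion for some choice of global data. The paper circumvents this by working spectrally: comparing the $\bbG_\beta$-spectral description (Corollary~\ref{cor:trace-G}, from Galois representations and reciprocity) with the $\bbG'$-trace formula (Corollary~\ref{cor:trace-G'}), together with Shin's density argument on the Bernstein variety and strong multiplicity one, one deduces that $\mathrm{tr}\,\pi_p'(f^*_{\tau,h})=0$ for every irreducible generic $\pi_p'$ of $\bbG'(\bbQ_p)$ not transferring from $\bbG_\beta(\bbQ_p)$. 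The final ingredient you are missing is Lemma~\ref{lem:vanishing-criterion}, which says precisely that this spectral vanishing is \emph{equivalent} to the geometric vanishing property of $\phi_{\tau,h}$. Without that lemma, the passage from a global trace identity to vanishing of individual twisted orbital integrals has no bridge.
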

As a consequence, by Lemma~\ref{lem:base-change-exist}, both functions have base change transfers to $C_c^\infty(G(\bbQ_p))$. The next result provides a particular base change transfer with specified spectral information.

\begin{thm}\label{thm:main-local}
    For any $h\in C_c^\infty(\cG_\cL(\bbZ_p))$, the function $z_{\tau,-\mu}*h\in C_c^\infty(G^*(\bbQ_p))$ is a base change transfer of $\phi_{\tau,h}\in C_c^\infty (G(\bbQ_{p^r}))$. Similarly, $z_{-\mu}*h$ is a base change transfer of $\phi_h^{(r)}$.
\end{thm}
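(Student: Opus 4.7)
The second statement follows from the first by integration in $\tau$. Indeed, \eqref{eq:test-function-integral} gives $\phi_h^{(r)} = \int_{\mathrm{Frob}_E^j I_E} \phi_{\tau,h}\,d\tau$, and applying the base-change homomorphism $b$ of \eqref{eq:BC-center} to the identity $z_{-\mu}^{(r)} = \int z_{\tau,-\mu}^{(r)}\,d\tau$ yields $z_{-\mu} = \int z_{\tau,-\mu}\,d\tau$, both being finite sums since $I_E$ acts through a finite quotient. Since the defining condition in Definition \ref{gen_BC_defn} is linear in the pair $(\phi,f)$, integrating the first statement in $\tau$ produces the second. Thus I concentrate on the first statement.

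The plan is global. First I would embed the local EL datum $(G,\cG_\cL,\bar\mu)$ into a global Shimura datum $(\bbG_\beta, X_\beta)$ by way of Lemma \ref{lem:global-group}, so that $\bbG_\beta\otimes_\bbQ\bbQ_p\cong G$ together with its integral model $\cG_\cL$ and its cocharacter. Running the Langlands--Kottwitz--Scholze method on the associated simple Shimura variety produces, as in \eqref{eq:tr-intro}, the Lefschetz trace expression
\[
\mathrm{Tr}(\tau\times hf^p\,|\,H_\xi^*)=\sum_{(A,u,\lambda)}\mathrm{vol}(I(\bbQ)\backslash I(\bbA_f))\,O_\gamma(f^p)\,TO_{\delta\sigma}(\phi_{\tau,h})\,\mathrm{tr}\,\xi(\gamma_\ell),
\]
in which $\phi_{\tau,h}$ enters only via its twisted orbital integrals. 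The vanishing property (Theorem \ref{thm:vanishing-property}), applied together with Proposition \ref{prop:Kott-triple}, cuts the sum down to isogeny classes producing genuine Kottwitz triples $(\gamma_0;\gamma,\delta)$.

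In parallel I would compute the left hand side automorphically. For this particular Shimura datum, the cohomology can be described directly by global methods, bypassing Langlands--Kottwitz--Scholze: base change to the ambient linear group $\widetilde{\bbG}_\beta$, the Galois representations attached to regular cuspidal automorphic representations of $\widetilde{\bbG}_\beta$, Shapiro's lemma, and the Kottwitz description of the cohomology at good primes together with Chebotarev density, together prove Theorem \ref{thm:main-global-intro} for $\mathrm{Sh}(\bbG_\beta,X_\beta)$; this is the content of Corollary \ref{cor:reciprocity}. By the very definition of $z_{\tau,-\mu}$ (Definition \ref{def:z_{tau,mu}}) this rewrites $\mathrm{Tr}(\tau\times hf^p\,|\,H_\xi^*)$ as
\[
\sum_{\pi_f} a(\pi_f)\,\mathrm{tr}(f^p\,|\,\pi_f^p)\,\mathrm{tr}\bigl((z_{\tau,-\mu}*h)\,|\,\pi_p\bigr).
\]

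Comparison of the two expressions should yield the desired identity, but only after one passes through a trace formula that isolates the contribution of each $\pi_f$. Here the fact that $\bbG_\beta$ is non-quasi-split at $p$ creates the main obstacle: direct pseudostabilization is not available, and without it one cannot even match a global $\gamma_0\in\bbG_\beta(\bbQ)$ to a given $\delta$. The key innovation is to introduce an auxiliary group $\bbG'$, arising from another division algebra with involution of the second kind, which is quasi-split at $p$ and isomorphic to $\bbG_\beta$ at all other finite places. For $\bbG'$ one can produce replacement triples $(\gamma_0';\gamma,\delta)$ for each geometric summand, convert the reduced geometric side into the geometric side of a simple trace formula for $\bbG'$, and equate the result with the automorphic expression. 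Kazhdan density, the freedom to vary $(f^p,h)$, and reduction to matching at regular semisimple elements via Proposition \ref{red_to_reg_matching_prop}, then promote this trace identity to the pointwise matching of (twisted) orbital integrals required by Definition \ref{gen_BC_defn}. The hard part will be the bookkeeping of Kottwitz signs, Tamagawa volumes, and transfer factors in the passage from $\bbG_\beta$-triples to $\bbG'$-triples; this is where the twisted Jacquet--Langlands correspondence (Theorem \ref{thm:twisted-JL}) and the stability and base-change results of Section \ref{sec:prep-local} are crucial.
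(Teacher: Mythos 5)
Your outline captures the right global skeleton (globalize via $\bbG_\beta$, compute the trace automorphically via Corollary \ref{cor:reciprocity}/\ref{cor:trace-G}, pass through a simple trace formula for an auxiliary $\bbG'$ quasi-split at $p$), but it contains a genuine circularity that the paper is carefully constructed to avoid.

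You invoke Theorem \ref{thm:vanishing-property} to ``cut the sum down to isogeny classes producing genuine Kottwitz triples $(\gamma_0;\gamma,\delta)$.'' This is circular: Theorems \ref{thm:vanishing-property} and \ref{thm:main-local} are proved together in \S\ref{sec:proof-local-thm}, and neither may be assumed in proving the other. Indeed this is exactly the obstruction that forces the auxiliary group into the argument. The paper never reduces the geometric side to honest Kottwitz triples in $\bbG_\beta(\bbQ)$. Instead, it introduces \emph{generalized} Kottwitz triples $(\gamma_0';\gamma,\delta)$ with $\gamma_0'\in\bbG'(\bbQ)$ (Definition \ref{def:generalized-Kott-triple}), and proves unconditionally in Lemma \ref{lem:generalized-Kott-triple} that \emph{every} fixed point in the closure of the generic fiber yields such a generalized triple, with no vanishing hypothesis. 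This is possible precisely because $\bbG'$ is quasi-split at $p$ and split away from $w,w^c$, so the Honda--Tate/Kottwitz embedding argument (the modification of \cite[\S 14]{Ko-points}) goes through. The resulting expression (Theorem \ref{thm:Lefschetz-trace-formula}) is a sum over generalized triples; pseudostabilization for $\bbG'$ then gives Corollary \ref{cor:trace-G'}, and comparing it with Corollary \ref{cor:trace-G} yields \emph{both} the vanishing property and the matching property simultaneously. Your paragraph 4 gestures at the $\bbG'$ mechanism, but in a way that presupposes you already have a reduced geometric side from the vanishing step, which is not available. You should remove the vanishing step entirely and derive both local theorems from the comparison of the two trace expressions.

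A smaller inaccuracy: in the endgame, the passage from trace identities to orbital-integral identities does not go through Proposition \ref{red_to_reg_matching_prop}. The paper first shows, via a Zariski-density argument on the $L^2$-Bernstein variety (quoting \cite[Prop.\,3.1]{Shin-AJM}) together with the freedom in $f^p$, that (i) $\mathrm{tr}\,\pi_p'(f_{\tau,h}^*)=0$ whenever $\pi_p'$ is not a Jacquet--Langlands transfer from $\bbG_\beta(\bbQ_p)$, and (ii) $\mathrm{tr}(z_{\tau,-\mu}*h\,|\,\pi_p)=\mathrm{tr}(f_{\tau,h}^*\,|\,\pi_p')$ on transfers. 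Together these say $f^*_{\tau,h}$ is a Jacquet--Langlands transfer of $z_{\tau,-\mu}*h$ in the sense of \cite{DKV}, after which the orbital-integral matching of Definition \ref{gen_BC_defn} follows formally from the defining property of stable base change transfer (Definition \ref{st_bc_defn}) for $(\phi_{\tau,h},f^*_{\tau,h})$, with no further descent argument needed.
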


These results will be proved by global methods in \S\ref{sec:proof-local-thm}. Here we record the following consequence that will not be used in what follows but is of independent interest. 
\begin{cor}\label{cor:TO-equality}
Suppose that $h\in C_c^\infty(\cG_\cL(\bbZ_p))$ is a base change transfer of a function $\tilde{h}\in C_c^\infty(G(\bbQ_{p^r}))$ that has the vanishing property. Then for any $\delta\in G(\bbQ_{p^r})$ such that $N_r\delta$ is semisimple, we have $TO_{\delta\sigma}(\phi_{\tau,h})=TO_{\delta\sigma}(z_{\tau,-\mu}^{(r)}*\tilde{h})$ and $TO_{\delta\sigma}(\phi_{h}^{(r)})=TO_{\delta\sigma}(z_{-\mu}^{(r)}*\tilde{h})$.
\end{cor}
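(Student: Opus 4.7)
The plan is to realize $z_{\tau,-\mu}*h$ as a base change transfer of two different source functions, observe that both source functions share the vanishing property, and then read off the identity of twisted orbital integrals directly from Definition~\ref{gen_BC_defn}.

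First I would produce the two base change transfer relations. On the one hand, Theorem~\ref{thm:main-local} asserts that $z_{\tau,-\mu}*h$ is a base change transfer of $\phi_{\tau,h}$. On the other hand, $\tilde h$ has the vanishing property by assumption and admits $h$ as a base change transfer by hypothesis, so Theorem~\ref{thm:center-base-change} applied with $Z = z_{\tau,-\mu}^{(r)}$ — together with the identity $b(z_{\tau,-\mu}^{(r)}) = z_{\tau,-\mu}$ recorded in $\S\ref{sec:main-local-thm-statements}$ — gives that $b(z_{\tau,-\mu}^{(r)})*h = z_{\tau,-\mu}*h$ is also a base change transfer of $z_{\tau,-\mu}^{(r)}*\tilde h$.

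Next I would note that both source functions satisfy the vanishing property: $\phi_{\tau,h}$ does so by Theorem~\ref{thm:vanishing-property}, while $z_{\tau,-\mu}^{(r)}*\tilde h$ does so by the permanence statement Proposition~\ref{prop:vanishing-property} applied to $\tilde h$. Now fix $\delta \in G(\bbQ_{p^r})$ with $N_r\delta$ semisimple. Either $N_r\delta$ is $G_r$-conjugate to some (necessarily semisimple) $\gamma \in G$, in which case Definition~\ref{gen_BC_defn} applied to the two base change transfer relations yields
\[
e(G_{\delta\sigma})\,TO_{\delta\sigma}(\phi_{\tau,h}) \;=\; e(G_\gamma)\,O_\gamma(z_{\tau,-\mu}*h) \;=\; e(G_{\delta\sigma})\,TO_{\delta\sigma}(z_{\tau,-\mu}^{(r)}*\tilde h),
\]
after which the common sign $e(G_{\delta\sigma})$ cancels; or $N_r\delta$ is not $G_r$-conjugate to any element of $G$, in which case both twisted orbital integrals vanish by Definition~\ref{defn:vanishing-property}. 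The semisimple variant is verbatim the same, replacing the triple $(z_{\tau,-\mu}^{(r)}, \phi_{\tau,h}, z_{\tau,-\mu})$ by $(z_{-\mu}^{(r)}, \phi_h^{(r)}, z_{-\mu})$ and invoking the second half of Theorem~\ref{thm:main-local}.

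The only place where some care is required is the dovetailing of the two regimes: the base change transfer identity alone covers only those $\delta$ whose naive norm $N_r\delta$ is conjugate into $G$, and it is precisely the vanishing property — supplied simultaneously for $\phi_{\tau,h}$ by Theorem~\ref{thm:vanishing-property} and for $z_{\tau,-\mu}^{(r)}*\tilde h$ by Proposition~\ref{prop:vanishing-property} — that handles the remaining semisimple naive norms. Beyond this bookkeeping, the proof is essentially a formal chain of the results in \S\ref{sec:prep-local} together with Theorems~\ref{thm:vanishing-property} and \ref{thm:main-local}, and I do not anticipate any further obstacle.
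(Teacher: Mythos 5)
Your proof is correct and follows exactly the paper's approach, invoking Theorems~\ref{thm:vanishing-property}, \ref{thm:main-local}, \ref{thm:center-base-change}, and Proposition~\ref{prop:vanishing-property} to equip both $\phi_{\tau,h}$ and $z_{\tau,-\mu}^{(r)}*\tilde h$ with the vanishing property and the common base change transfer $z_{\tau,-\mu}*h$. The only difference is that you spell out the final case split (norm conjugate into $G$ versus not) which the paper compresses into the single sentence ``Thus the two functions have the same twisted orbital integrals''; that extra bookkeeping is exactly right and needed, since Definition~\ref{gen_BC_defn} alone constrains $TO_{\delta\sigma}$ only when $N_r\delta$ transfers to $G$.
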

\begin{proof}
By Theorem \ref{thm:vanishing-property} and Proposition \ref{prop:vanishing-property}, both functions $\phi_{\tau,h}$ and $z_{\tau,-\mu}^{(r)}*\tilde{h}$ have the vanishing property. By Theorem \ref{thm:main-local} and Theorem \ref{thm:center-base-change}, both functions have base change transfer $z_{\tau,-\mu}*h$, in the sense of Lemma \ref{lem:base-change-exist}. Thus the two functions have the same twisted orbital integrals. The identity for $\phi_{h}^{(r)}$ and $z_{-\mu}^{(r)}*\tilde{h}$ is proved similarly.
\end{proof}

\section{Simple Shimura varieties of Kottwitz type}\label{sec:shimura}
In this section we prove our main global result (Theorem \ref{thm:cohomology-isom}) on the description of the cohomology of the simple Shimura varieties studied by Kottwitz in \cite{Ko-simple}. Our result generalizes the main result from \emph{loc. cit.} to many cases of bad reduction. 
\subsection{Basic setup}\label{sec:global-setup}
First we recall the basic setting from \cite{Ko-simple}, cf. also \cite[\S5.2]{Ha05}. 
\subsubsection{Shimura data}\label{sec:Shimura-data}
Let $\bbF$ be a CM field with totally real subfield $\bbF_0$. Denote by $c\in\mathrm{Gal}(\bbF/\bbF_0)$ the complex conjugation. Let $\bbD$ be a central division algebra of dimention $n^2$ over $\bbF$ and $*$ an anti-involution of the second kind on $\bbD$. Let $\bbG$ be the $\bbQ$-group whose points in any commutative $\bbQ$-algebra $R$ are given by
\[\bbG(R)=\{x\in\bbD\otimes_\bbQ R | xx^*\in R^\times\}\]
The map  $x\mapsto xx^*$ defines the similitude homomorphism $c:\bbG\to\bbG_m$ whose kernel $\bbG_0$ is an inner form of (the restriction of scalars of) a unitary group associated to the quadratic extension $\bbF/\bbF_0$.\par 
Let $h_0:\bbC\to\bbD\otimes_\bbQ\bbR$ be an $\bbR$-algebra homomorphism such that $h_0(\bar{z})=h_0(z)^*$ for all $z\in\bbC$. Assume moreover that $x\mapsto h_0(i)^{-1}x^*h_0(i)$ defines a positive involution on $\bbD\otimes_\bbQ\bbR$. Let $h:\mathrm{Res}_{\bbC/\bbR}\bbG_m\to\bbG_{\mathbb R}$ be the \emph{inverse} of the restriction of $h_0$ to $\bbC^\times$ and let $X_\infty:=\bbG(\bbR)/K_\infty$ be the conjugacy class of $h$ in $\bbG(\bbR)$, where $K_\infty$ is the centralizer of $h$ in $\bbG(\bbR)$. Fix an isomorphism $\bbC\otimes_\bbR\bbC\cong\bbC\times\bbC$ where the first (resp. second) copy of $\bbC$ on the right hand side corresponds to the identity (resp. complex conjugation). Restricting $h_{\bbC}$ to the first factor, we obtain a cocharacter $\mu:\bbG_{m,\bbC}\to\bbG_\bbC$.
Then the pair $(\bbG,X_\infty)$ is a Shimura datum. Let $\bbE$ be its reflex field, i.e. the field of definition of the $\bbG(\bbC)$-conjugacy class of $\mu$. Then we get a tower of Shimura varieties $\mathrm{Sh}_K$ for neat compact open subgroups $K\subset\bbG(\bbA_f)$, each of which is a projective smooth algebraic variety over $\bbE$.

\subsubsection{PEL data}\label{sec:PEL-data}
The Shimura variety $\Sh_K$ (more precisely a finite disjoint union of copies of it) can be realized as a moduli space of abelian varieties with extra structures involving polarizations, endomorphisms and level structures. To achieve this we first define the corresponding PEL datum.\par 
Let $\bbB:=\bbD^{\mathrm{op}}$ and $\bbV=\bbD$ the left $\bbB$-module via right multiplication. As explained in \cite[\S5.2.1]{Ha05}, we can find $\xi\in\bbD^\times$ such that $\xi^*=-\xi$ and the involution $\dagger$ on $\bbB=\bbD^{\mathrm{op}}$ defined by $x^\dagger:=\xi x^*\xi^{-1}$ is positive. Then the pairing $(\cdot\,,\cdot):\bbV\times\bbV\to\bbQ$ defined by
\[(x,y):=\mathrm{tr}_{\bbD/\bbQ}(x\xi y^*)\]
gives $\bbV$ the structure of a non-degenerate $\dagger$-Hermitian $\bbB$-module. Moreover, replacing $\xi$ by $-\xi$ if necessary, we may assume that $(\cdot,h_0(i)\cdot)$ is positive definite.\par   
The cocharacter $\mu$ induces a decomposition of $\bbB$-modules $\bbV_\bbC=\bbV_1\oplus\bbV_2$ where $\mu(z)$ acts by $z^{-1}$ (resp. $1$) on $\bbV_1$ (resp. $\bbV_2$) for all $z\in\bbC^\times$. The reflex field $\bbE$ is also the field of definition of the $\bbB$-module $\bbV_1$. 
\begin{rem}
    Our convention on the cocharacter $\mu$ is consistent with \cite{Ko-points}, \cite[$\S 5.2.2$]{Ha05}, \cite{Sch-Shin} and is opposite to \cite{RZ}, \cite{Sch-deformation}.
\end{rem}

\subsubsection{Integral data}\label{sec:integral-data}
Fix a prime $p$ such that $\bbF/\bbF_0$ is split above any prime of $\bbF_0$ over $p$. Let $\fp$ be a prime of $\bbE$ over $p$ and let $\bbE_\fp$ be the $\fp$-adic completion of $\bbE$. To define integral models of the Shimura varieties over $\spec\,\cO_{\bbE_\fp}$ we fix integral PEL data as follows. Let $\cO_\bbD$ be a maximal $\bbZ_{(p)}$-order in $\bbD$ that is stable under the involution $\dagger$ and let $\cO_\bbB:=\cO_\bbD^{\mathrm{op}}$. Denote $B=\bbB\otimes_\bbQ\bbQ_p$, $V=\bbV\otimes_\bbQ\bbQ_p$ and $\cO_B:=\cO_\bbB\otimes_\bbZ\bbZ_p$. Then $\cO_B$ is a maximal $\bbZ_p$-order in $B$. Let $v_1,\dotsc,v_s$ be the places of $\bbF_0$ above $p$. For each $1\le i\le s$, there are two places $w_i,w_i^c$ of $\bbF$ above $v_i$. Let $B_i:=\bbB\otimes_\bbF\bbF_{w_i}$, $V_i:=\bbV\otimes_\bbF\bbF_{w_i}$ and $\cO_{B_i}:=\cO_\bbB\otimes_{\cO_\bbF}\cO_{\bbF_{w_i}}$. Then the involution $\dagger$ induces isomorphisms $\cO_{B_i}^{\mathrm{op}}\cong\cO_\bbB\otimes_{\cO_\bbF}\cO_{\bbF_{w_i^c}}$ and $B_i^{\mathrm{op}}\cong\bbB\otimes_\bbF\bbF_{w_i^c}$. 
The alternating form $(\cdot,\cdot)$ on $\bbV$ induces isomorphisms $\bbV\otimes_\bbF\bbF_{w_i^c}\cong V_i^*$ where $V_i^*$ is the $\bbQ_p$-linear dual of $V_i$. Consequently we get isomorphisms
\[\bbG_0(\bbQ_p)\cong\prod_{i=1}^sB_i^\times,\quad\bbG(\bbQ_p)=\bbG_0(\bbQ_p)\times\bbQ_p^\times.\]

For each $1\le i\le s$, let $\cL_i=\{\La_{i,j},j\in\bbZ\}$ be an $\cO_{B_i}$-lattice chain in $V_i$ where we label the lattices so that $\La_{i,j}\subset\La_{i,j+1}$ for each $j\in\bbZ$. We extend this to  a self-dual multi-chain of $\cO_B$-lattices $\cL$ in $V$ as follows. For each $i$, we extend $\cL_i$ to a $(\cdot\,, \cdot)$-self dual lattice chain $\cL_i^+$ in $V_i \oplus V^*_i$ using the method of \cite[$\S5.2.3$]{Ha05}; this depends explicitly on the choice of the element $\xi$ above.
We let $\cL$ be the set of $\cO_B$-lattices in $V=\prod_{i=1}^s (V_i\oplus V_i^*)$ of the form $\prod_{i=1}^s\La_i^+$ where $\La_i^+\in\cL_i^+$ for each $i=1,\dotsc,s$.\par 
The stabilizer $\bfP_\cL$ of $\cL$ in $\bbG_0(\bbQ_p)$ is a parahoric subgroup and $K_\cL:=\bfP_\cL\times\bbZ_p^\times$ is a parahoric subgroup of $\bbG(\bbQ_p)$. The corresponding $\bbZ_p$-model of $\bbG$ (resp. $\bbG_0$) will be denoted by $\bfG_\cL$ (resp. $\bfG_{\cL,0}$). Finally we fix a neat open compact subgroup $K^p\subset\bbG(\bbA_f^p)$.
\subsection{Integral models}
We will define integral models of Shimura varieties with level $K=K^pK_\cL$ structure.\par  
For any scheme $S$, let $AV_{\cO_{\bbB}}(S)$ be the $\bbZ_{(p)}$-linear category in which:
\begin{itemize}
    \item The objects are pairs $(A,i)$ where $A$ is an abelian scheme over $S$ and $i:\cO_\bbB\to\mathrm{End}(A)\otimes\bbZ_{(p)}$ is a $\bbZ_{(p)}$-algebra homomorphism;
    \item The morphisms between two objects $(A,i)$ and $(A',i')$ form the $\bbZ_{(p)}$-module $\mathrm{Hom}_{\cO_\bbB}(A,A')$ consisting of elements in $\mathrm{Hom}(A,A')\otimes\bbZ_{(p)}$ that commute with $\cO_\bbB$.
\end{itemize}
For each object $(A,i)$ in $AV_{\cO_{\bbB}}(S)$, let $A^\vee$ be the dual abelian scheme and $i^\vee(b):=i(b^\dagger)^\vee$ for all $b\in\cO_\bbB$. Then $(A^\vee,i^\vee)$ is also an object in $AV_{\cO_{\bbB}}(S)$. A \emph{polarization} of $(A,i)$ is an isomorphism $\lambda:A\to A^\vee$ in $AV_{\cO_{\bbB}}(S)$ such that $n\lambda$ is induced by an ample line bundle on $A$ for some positive integer $n$. In particular, $\lambda$ induces the Rosati involution $\iota_\la$ on $\mathrm{End}(A)\otimes\bbZ_{(p)}$ and $i(b^\dagger)=i(b)^{\iota_\la}$ for all $b\in\cO_\bbB$. Note that what we call a \emph{polarization} is called \emph{principal polarization} in \cite[\S6]{RZ}.\par 
To define the moduli problem, recall from \cite[Definition 6.5]{RZ} that an \emph{$\cL$-set of abelian varieties  $A_\bullet=\{A_\La,\La\in\cL\}$ in $AV_{\cO_{\bbB}}(S)$} is a functor from $\cL$ to $AV_{\cO_{\bbB}}(S)$ satisfying a periodicity condition and a condition on the height of the isogenies among the $A_\La$'s. For each $\cL$-set $A_\bullet$, there is a dual $\cL$-set $\tilde{A}_\bullet$ and we have the notion of a polarization $\lambda:A_\bullet\to\tilde{A}_\bullet$, cf. \cite[Definition 6.6]{RZ}. \par 

We define the functor $\mathfrak{M}_{K_\cL K^p}$ on the category of $\cO_{\bbE_\fp}$-schemes that associates to a test scheme $S$ the set of isomorphism classes of the following objects:
\begin{itemize}
    \item An $\cL$-set of abelian varieties $A_\bullet = (A_\bullet, i_\bullet)$ in $AV_{\cO_{\bbB}}(S)$;
    \item A polarization $\la$ of $A_\bullet$;
    \item A $K^p$-level structure on $A_\bullet$, i.e. a $K^p$-orbit of isomorphisms of skew-Hermitian $\bbB$-modules
    \[\bar{\eta}:H_1(A_\bullet,\bbA_f^p)\cong\bbV\otimes\bbA_f^p\mod K^p\]
    that respect the bilinear forms on both sides up to a constant in $(\bbA_f^p)^\times$. More precisely, we consider such isomorphisms $\bar{\eta}: H_1(A_{\bullet, s}, \bbA^p_f) \cong V_{\bbA^p_f}$ for every geometric point $s$ of $S$, and we require that $\bar{\eta}$ is fixed by $\pi_1(S,s)$ (see \cite[p.\,390]{Ko-points}).
\end{itemize}
Moreover we require $A_\bullet$ to satisfy the usual Kottwitz determinant condition, i.e.,\,an equality of polynomial functions
\[\det(b;\mathrm{Lie}A_\La)=\det(b;\bbV_1)\]
for all $\La\in\cL$ and all $b\in\cO_{\bbB}$.

\begin{prop}
The functor $\fM_{K_\cL K^p}$ is representable by a projective scheme over $\spec\,\cO_{\bbE_\fp}$ whose generic fiber is isomorphic to a disjoint union of $|\ker^1(\bbQ,\bbG)|$ copies of $\mathrm{Sh}_{K_\cL K^p}$.
\end{prop}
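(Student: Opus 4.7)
The plan is to proceed in three stages, following the general strategy established by Rapoport-Zink and Kottwitz in this context. First, I would invoke \cite[Thm.\,6.9]{RZ} for the representability of the moduli problem of abelian varieties with $\cO_\bbB$-action, polarization, $\cL$-set structure and $K^p$-level structure. Since $\cL$ has been constructed from a self-dual multichain and $\cO_\bbB$ is a maximal order at $p$, the hypotheses of \emph{loc.\,cit.} apply directly and yield representability of $\fM_{K_\cL K^p}$ by a quasi-projective $\cO_{\bbE_\fp}$-scheme. Neatness of $K^p$ ensures that the moduli problem is in fact a scheme (rather than merely a stack).

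For projectivity, the key input is that $\bbD$ is a division algebra, so that $\bbG^{\mathrm{ad}}$ is $\bbQ$-anisotropic (the only $\bbQ$-parabolics of $\bbG_0$ correspond to isotropic flags of the $\bbB$-module $\bbV$, which do not exist when $\bbD$ is division). By Baily-Borel, this gives that the generic fiber $\mathrm{Sh}_{K_\cL K^p}$ is already proper over $\bbE$. I would then check the valuative criterion for $\fM_{K_\cL K^p}$ over $\cO_{\bbE_\fp}$ using Néron-Ogg-Shafarevich: any point of $\fM_{K_\cL K^p}$ over the fraction field of a DVR extending $\cO_{\bbE_\fp}$ gives an abelian variety whose $\ell$-adic Tate modules (for $\ell\ne p$) have trivial inertia action through the $K^p$-level structure, forcing good reduction; the polarization and $\cO_\bbB$-action extend uniquely to the Néron model, and the $\cL$-set structure is recovered as the corresponding chain of isogenous reductions. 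Combined with quasi-projectivity this yields projectivity over $\spec\,\cO_{\bbE_\fp}$.

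For the generic fiber comparison, I would follow the PEL-to-Shimura dictionary of \cite[$\S$8]{Ko-simple} (see also \cite[$\S$5]{Ko-points}). Over $\bbE$, one classifies the $\bbC$-points of $\fM_{K_\cL K^p}\otimes_{\cO_{\bbE_\fp}}\bbE_\fp$ via rational Hodge structures: pick a base point in each $\bbQ$-isogeny class, choose a polarization of the requisite type, and express the full moduli set as a double coset space. The isogeny classes of triples $(A,\la,i)$ over $\bbC$ compatible with $(\bbV,(\cdot,\cdot),\bbB)$ are parametrized, by Kottwitz's analysis of $\bbQ$-inner forms of $\bbG$ that become isomorphic to $\bbG$ locally everywhere, precisely by $\ker^1(\bbQ,\bbG)$. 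For each such class, the corresponding double coset space is identified, via the $K^p$-level structure and the $p$-adic lattice chain data, with a copy of $\mathrm{Sh}_{K_\cL K^p}(\bbC)$. Taking the union over $\ker^1(\bbQ,\bbG)$ produces the required $|\ker^1(\bbQ,\bbG)|$ copies. Descent from $\bbC$ to $\bbE_\fp$ is automatic since both sides are moduli-theoretically defined over $\bbE_\fp$.

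The main obstacle will be the careful bookkeeping in the last step: one must verify that the Kottwitz isogeny-class parametrization exactly matches the parametrization coming from the failure of Hasse principle for Hermitian forms over $\bbB$, and that the resulting copies of the Shimura variety are separated in the moduli space (rather than being artificially duplicated via the polarization equivalence). Everything else is essentially formal, relying on general Rapoport-Zink theory and standard anisotropy arguments.
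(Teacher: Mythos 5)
Your proposal takes a genuinely different route from the paper, and roughly half of it is fine, but the properness step as you wrote it contains a gap.

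The paper takes a shortcut that avoids the issues you wrestle with. It first observes that each individual lattice $\La \in \cL$ determines a maximal parahoric moduli problem $\fM_{K_\La K^p}$ parametrizing a single abelian variety with polarization, $\cO_\bbB$-action and $K^p$-level structure; representability and projectivity of those are cited from \cite[Theorem 5.2]{Sch-deformation}. Then $\fM_{K_\cL K^p}$ is realized as a closed subscheme of a finite product of the $\fM_{K_\La K^p}$, whence projectivity is inherited for free. The generic fiber identification is then delegated to \cite[\S 8]{Ko-points}, exactly as in your last paragraph. You instead invoke \cite[Thm.\,6.9]{RZ} directly for the $\cL$-set moduli problem and then attempt to verify the valuative criterion by hand. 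Both routes are admissible in principle, and your approach has the merit of not needing Scholze's intermediate theorem; but it forces you to actually prove properness, which is where things go wrong.

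The specific gap is the claim that the $K^p$-level structure forces \emph{trivial} inertia action on $T_\ell A$. What the $\pi_1$-invariance of the $K^p$-orbit $\bar{\eta}$ actually gives is that, after transporting via a representative $\eta$, the inertia action on $T_\ell A$ lands in the image of $K^p$ at $\ell$. That image is compact, not finite, so N\'eron--Ogg--Shafarevich does not apply as stated, and you cannot yet conclude good (or even potentially good) reduction. The inertia action on $T_\ell A$ of an abelian variety over a local field is quasi-unipotent, and its unipotent part is non-trivial precisely when a semi-abelian degeneration with non-trivial toric part occurs; nothing about $K^p$-level structure alone rules this out. The correct ingredient is the $\cO_\bbB$-action together with the anisotropy of $\bbG$ forced by $\bbD$ being a division algebra: one passes to a semi-abelian N\'eron model, observes that the character group of the toric part of the special fiber is an $\cO_\bbB$-module, and shows that a non-trivial such module together with the polarization would produce a $\bbQ$-rational parabolic of $\bbG$, contradicting anisotropy. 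This is the degeneration argument of Faltings--Chai/Lan, and it is what the proof of \cite[Theorem 5.2]{Sch-deformation} (hence the paper's proof, by reduction) ultimately relies on. Your final paragraph on the generic fiber and $\ker^1(\bbQ,\bbG)$ agrees with the paper.
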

\begin{proof}
Each lattice $\La\in\cL$ determines a maximal parahoric subgroup $K_\La$ whose corresponding moduli functor $\fM_{K_\La K^p}$ parametrizes a single object in $AV_{\cO_\bbB}(S)$ with polarization and level $K^p$-structure, satisfying the determinant condition. By \cite[Theorem 5.2]{Sch-deformation}, $\fM_{K_\La K^p}$ is representable by a projective scheme $\cO_{\bbE_\fp}$. Note that $\fM_{K_\cL K^p}$ embeds as a closed subscheme of a product of finitely many $\fM_{K_\La K^p}$'s, hence is also a projective scheme over $\cO_{\bbE_\fp}$. The description of the generic fibre of $\fM_{K_\cL K^p}$ follows from \cite[\S8]{Ko-points}.
\end{proof}

\subsection{Description of cohomology}
Let $\xi$ be an irreducible finite dimensional algebraic representation of $\bbG$ that is defined over a number field $L$. Fix a place $\la$ of $L$ above a rational prime $\ell\ne p$, then by the construction in \cite[\S6]{Ko-points} we obtain $\ell$-adic local systems $\cF_{\xi,K}$ (resp.,\,$\cF_{\xi,K_\cL K^p}$) on $\Sh_K$ (resp.,\,the integral model $\fM_{K_\cL K^p}$) together with Hecke correspondences among them. Taking cohomology in degree $i\ge0$, we get $\mathrm{Gal}(\overline{\bbQ}/\bbE)\times\bbG(\bbA_f)$-representations
\[H_\xi^i:=\varinjlim_K H_{\acute{e}t}^i(\Sh_K\otimes_{\bbE}\overline{\bbQ},\cF_{\xi,K})\]
where $\overline{\bbQ}$ is the algebraic closure of $\bbQ$ in $\bbC$. Here $K \subset \bbG(\bbA_f)$ ranges over all neat compact open subgroups of the form $K = K_p K^p$, where $K_p \subset K_\cL$ is a normal compact open subgroup. We also form the virtual $\mathrm{Gal}(\overline{\bbQ}/\bbE)\times\bbG(\bbA_f)$-representation  
\[H_\xi^*:=\sum_i(-1)^iH_\xi^i\]
where the alternating sum is taken in the corresponding Grothendieck group. Then we have a decomposition
\[H_\xi^*=\bigoplus_{\pi_f}\pi_f\otimes\sigma_{\xi}(\pi_f)\]
where $\pi_f$ ranges over irreducible admissible representations of $\bbG(\bbA_f)$ and $\sigma_\xi(\pi_f)$ is a virtual finite dimensional representation of $\mathrm{Gal}(\overline{\bbQ}/\bbE)$.  Now we can state our global main result, which gives a nearly complete description of the restriction of $\sigma_\xi(\pi_f)$ to the local Weil group at $\fp$.
\begin{thm}\label{thm:cohomology-isom}
For any self-dual $\cO_B$-lattice chain $\cL$ in $V$ with associated parahoric subgroup $K_\cL\subset\bbG(\bbQ_p)$ as in \S\ref{sec:integral-data}, we have an equality in the Grothendieck group of $\bbG(\bbA_f^p)\times K_\cL\times W_{\bbE_\fp}$ representations
\[H^*_\xi=\sum_{\pi_f}a(\pi_f)\pi_f\otimes(r_{-\mu}\circ\varphi_{\pi_p}|_{W_{\bbE_\fp}})|\cdot|_{\bbE_\fp}^{-\dim\mathrm{Sh}/2}\]
where $a(\pi_f)\in\bbZ$ is defined in \cite[p.657]{Ko-simple}.
\end{thm}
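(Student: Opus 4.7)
The plan is to apply the Langlands--Kottwitz--Scholze method to the integral model $\fM_{K_\cL K^p}$, using the two local results of \S\ref{sec:p-div} (the vanishing property \thmref{thm:vanishing-property} and the matching \thmref{thm:main-local}) as black boxes. First I would write down the Lefschetz trace formula \eqref{eq:tr-intro} for $\mathrm{Tr}(\tau\times hf^p\,|\,H^*_\xi)$, where the sum runs over isomorphism classes of virtual polarized abelian varieties indexing isogeny classes in $\fM_{K_\cL K^p}(\overline{\bbF}_p)$; the local factor at $p$ in each summand is the twisted orbital integral $TO_{\delta\sigma}(\phi_{\tau,h})$ for the $\sigma$-conjugacy class $\delta\in\bbG(\bbQ_{p^r})$ attached to the isogeny class.

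Next I would invoke \thmref{thm:vanishing-property} to discard all summands for which $N_r\delta$ is not $\bbG(\bbQ_{p^r})$-conjugate to an element of $\bbG(\bbQ_p)$; by \propref{prop:Kott-triple} the surviving summands are exactly those admitting an associated Kottwitz triple $(\gamma_0;\gamma,\delta)$, so the formula collapses to
\[\mathrm{Tr}(\tau\times hf^p|H^*_\xi)=\sum_{(\gamma_0;\gamma,\delta)}c(\gamma_0;\gamma,\delta)\,O_\gamma(f^p)\,TO_{\delta\sigma}(\phi_{\tau,h})\,\mathrm{tr}\xi(\gamma_0).\]
I would then use \thmref{thm:main-local}, which says $z_{\tau,-\mu}*h\in C_c^\infty(\bbG(\bbQ_p))$ is a base change transfer of $\phi_{\tau,h}$, to replace each twisted orbital integral $TO_{\delta\sigma}(\phi_{\tau,h})$ by the ordinary orbital integral $O_{\gamma_0}(z_{\tau,-\mu}*h)$ (absorbing the Kottwitz sign and compatible volume factors). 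The result is an expression in which the local test data at $p$ has been moved entirely onto $\bbG(\bbQ_p)$, and the right-hand side now has the shape of the elliptic part of the Arthur--Selberg trace formula for $\bbG$ against the test function $(z_{\tau,-\mu}*h)\cdot f^p\in C_c^\infty(\bbG(\bbA_f))$.

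At this point I would apply Kottwitz's pseudostabilization procedure from \cite{Ko-simple}, which is available in this setting because $\bbG$ has the same pseudostable form as in \emph{loc.\,cit.} and because we are working with functions of the form $z*h$ that are amenable to this formalism (see the remarks in \S\ref{sec:main-local-thm-statements} and the discussion of pseudostabilization in the introduction). This rewrites the geometric sum as a spectral sum
\[\mathrm{Tr}(\tau\times hf^p|H^*_\xi)=\sum_{\pi}a(\pi_f)\,\mathrm{tr}\bigl((z_{\tau,-\mu}*h)\cdot f^p\,\big|\,\pi_f\bigr),\]
with $a(\pi_f)$ as in \cite[p.657]{Ko-simple}. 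By the very definition of $z_{\tau,-\mu}$ (Definition \ref{def:z_{tau,mu}}), the stable Bernstein element $z_{\tau,-\mu}$ acts on $\pi_p$ by the scalar $\mathrm{tr}(\tau\,|\,(r_{-\mu}\circ\varphi_{\pi_p}|_{W_{\bbE_\fp}})|\cdot|_{\bbE_\fp}^{-\langle\rho,\mu\rangle})$, and since $\langle\rho,\mu\rangle=\dim\mathrm{Sh}/2$ this gives
\[\mathrm{Tr}(\tau\times hf^p|H^*_\xi)=\sum_{\pi_f}a(\pi_f)\,\mathrm{tr}(hf^p|\pi_f)\cdot\mathrm{tr}\bigl(\tau\,\big|\,(r_{-\mu}\circ\varphi_{\pi_p}|_{W_{\bbE_\fp}})|\cdot|_{\bbE_\fp}^{-\dim\mathrm{Sh}/2}\bigr).\]

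To finish, I would vary $\tau\in\mathrm{Frob}^jI_{\bbE_\fp}$ (for all $j\ge1$), $h\in C_c^\infty(K_\cL)$, and $f^p\in C_c^\infty(\bbG(\bbA_f^p))$, and use linear independence of characters of irreducible admissible representations of $\bbG(\bbA_f^p)\times K_\cL$ together with Chebotarev density/continuity to upgrade the above trace identity to the stated equality in the Grothendieck group of $\bbG(\bbA_f^p)\times K_\cL\times W_{\bbE_\fp}$-representations. The hard part is not this last bookkeeping step nor the pseudostabilization \emph{per se}, which runs as in the quasi-split case once the preceding substitutions have been made; rather, the real obstacle is the pair of local theorems \thmref{thm:vanishing-property} and \thmref{thm:main-local}, whose proofs (in \S\ref{sec:proof-local-thm}) require the twisted Jacquet--Langlands correspondence (\thmref{thm:twisted-JL}) and the permanence results for the stable Bernstein center developed in \S\ref{sec:prep-local}. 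Once those local inputs are in hand, the non-quasi-split nature of $\bbG_{\bbQ_p}$ no longer presents any global obstruction.
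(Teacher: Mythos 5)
Your proposal follows the paper's proof essentially step for step: Lefschetz trace formula (Corollary \ref{cor:trace-coarse-expansion}), reduction to Kottwitz triples via \thmref{thm:vanishing-property} and \propref{prop:Kott-triple} (giving \eqref{eq:trace-Kott-triple}), substitution of $z_{\tau,-\mu}*h$ for the twisted orbital integral via \thmref{thm:main-local} with the Kottwitz signs handled by the product formula (\secref{sec:kottwitz-sign}), pseudostabilization as in \cite{Ko-simple}, and matching against the spectral side via Definition \ref{def:z_{tau,mu}} and $\langle\rho,\mu\rangle=\dim\mathrm{Sh}/2$. The paper extracts the Grothendieck-group identity directly from the trace identity holding for all $\tau,h,f^p$ without explicitly invoking Chebotarev at this last step (that device is used earlier for Corollary \ref{cor:reciprocity}), but this is a cosmetic difference; your approach is the same.
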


\begin{rem}
As remarked in \cite[\S5]{Sch-Shin}, we expect this to be an equality in the Grothendieck group of $\bbG(\bbA_f)\times W_{\bbE_\fp}$-representations. Under some extra assumptions the expectation is true. We refer to \cite[Theorem 5.2, Corollary 5.3]{Sch-Shin} for details.
\end{rem}

The proof of Theorem~\ref{thm:cohomology-isom} will be finished in \S\ref{sec:pseudo-stabilization}, as a consequence of the local results Theorem~\ref{thm:vanishing-property} and Theorem~\ref{thm:main-local}.\par  
First we record its implications for the semisimple local $L$-factors of the Shimura varieties $\mathrm{Sh}_K$ at $\fp$. We introduce more notations, following \cite[\S6.3]{Ha14}. Let $r_\fp:\hat{G}\rtimes W_{\bbQ_p}\to\mathrm{GL}(V_{r_\fp})$ be the algebraic representation defined by 
\[r_\fp:=\mathrm{Ind}^{\hat{G}\rtimes W_{\bbQ_p}}_{\hat{G}\rtimes W_{\bbE_\fp}}r_{-\mu}\]
where $r_{-\mu}:\hat{G}\rtimes W_{\bbE_\fp}\to\mathrm{GL}(V_{-\mu})$ is the representation defined in \S\ref{sec:main-local-thm-statements}. For an irreducible smooth representation $\pi_p$ of $\bbG(\bbQ_p)=G(\bbQ_p)$, let $\varphi_{\pi_p}:W_{\bbQ_p}\to\hat{G}\rtimes W_{\bbQ_p}$ be its \emph{semisimple $L$-parameter}. Then the semisimple local $L$-factor of $\pi_p$ with respect to the representation $r_\fp$ is defined by
\[L^{ss}(s,\pi_p,r_\fp)=\det(\mathrm{Id}-p^{-s}r_\fp(\Phi_p);V_{r_\fp}^{I_{\bbQ_p}})\]
where $I_{\bbQ_p}\subset W_{\bbQ_p}$ is the inertia subgroup and $\Phi_p\in W_{\bbQ_p}$ is a geometric Frobenius element.
\begin{cor}
Let $K\subset\bbG(\bbA_f)$ be any sufficiently small compact open subgroup  which factorizes in the form $K = K_p K^p$ specified above. Then the semisimple local Hasse-Weil factor of $\mathrm{Sh}_K$ at the place $\fp$ of $\bbE$ is given by 
\[\zeta_\fp^{\mathrm{ss}}(\mathrm{Sh}_K,s)=\prod_{\pi_f}L^{\mathrm{ss}}(s-\frac{\dim\mathrm{Sh}_K}{2},\pi_p,r_\fp)^{a(\pi_f)\dim\pi_f^K}.\]
\end{cor}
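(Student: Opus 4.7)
The plan is to derive the corollary from Theorem \ref{thm:cohomology-isom} by specializing to the trivial coefficient system, extracting $K$-invariants, and then matching with the standard formula for the semisimple Hasse-Weil factor.

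First, I would apply Theorem \ref{thm:cohomology-isom} to the trivial representation $\xi$ of $\bbG$, so that $\cF_{\xi,K} = \overline{\bbQ}_\ell$, and take $K$-invariants on both sides of the resulting identity. Since the action on the right hand side is compatible with the factorization $K = K_p K^p$ and $K_p \subseteq K_\cL$, this yields an equality in the Grothendieck group of $W_{\bbE_\fp}$-representations
\[
H^*(\mathrm{Sh}_K\otimes_\bbE\overline\bbQ,\overline\bbQ_\ell) = \sum_{\pi_f} a(\pi_f)\,\dim(\pi_f^K)\,(r_{-\mu}\circ\varphi_{\pi_p}|_{W_{\bbE_\fp}})\,|\cdot|_{\bbE_\fp}^{-\dim\mathrm{Sh}_K/2},
\]
in which only finitely many $\pi_f$ contribute since the cohomology is finite-dimensional.

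Next, by definition of the semisimple Hasse-Weil factor,
\[
\zeta_\fp^{\mathrm{ss}}(\mathrm{Sh}_K,s) = \prod_{i\geq 0}\det\bigl(1-(N\fp)^{-s}\Phi_\fp \mid H^i(\mathrm{Sh}_K\otimes_\bbE\overline\bbQ,\overline\bbQ_\ell)^{\mathrm{ss},\,I_{\bbE_\fp}}\bigr)^{(-1)^{i+1}}.
\]
Because semisimplification is additive on the Grothendieck group and because each representation $r_{-\mu}\circ\varphi_{\pi_p}|_{W_{\bbE_\fp}}$ is already semisimple by construction (as $\varphi_{\pi_p}$ is the semisimple $L$-parameter), the alternating product above can be rewritten directly as a product of determinants indexed by $\pi_f$, using the previous step.

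Finally, I would invoke the standard compatibility of local $L$-factors with induction to match this expression with the right hand side of the corollary. By the definition $r_\fp = \mathrm{Ind}^{\hat{G}\rtimes W_{\bbQ_p}}_{\hat{G}\rtimes W_{\bbE_\fp}}r_{-\mu}$ and Frobenius reciprocity for local $L$-factors,
\[
L^{\mathrm{ss}}(s,\pi_p,r_\fp) = \det\bigl(1-(N\fp)^{-s}\Phi_\fp\mid (r_{-\mu}\circ\varphi_{\pi_p}|_{W_{\bbE_\fp}})^{I_{\bbE_\fp}}\bigr)^{-1}.
\]
The unramified character twist $|\cdot|_{\bbE_\fp}^{-\dim\mathrm{Sh}_K/2}$ translates into the shift $s\mapsto s-\dim\mathrm{Sh}_K/2$, producing exactly the stated formula. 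The main difficulty is purely bookkeeping: tracking the Tate twist as a shift of the complex variable, distinguishing $p$ from $N\fp$ on the $W_{\bbQ_p}$ versus $W_{\bbE_\fp}$ sides, and aligning sign and normalization conventions. No input beyond Theorem \ref{thm:cohomology-isom} and standard facts about semisimple $L$-factors is required.
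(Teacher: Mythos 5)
Your proposal is correct and follows essentially the same route as the paper: specialize $\xi$ to the trivial representation, apply Theorem \ref{thm:cohomology-isom}, take $K$-invariants, and match the resulting alternating trace of Frobenius against the definition of $L^{\mathrm{ss}}$. The one place where you compress the argument is the invocation of ``Frobenius reciprocity for local $L$-factors'' relating $L^{\mathrm{ss}}(s,\pi_p,r_\fp)$ (defined with $\Phi_p$, $p^{-s}$, and $I_{\bbQ_p}$-invariants of the induced representation $V_{r_\fp}$) to the determinant over $\bbE_\fp$ (defined with $\Phi_\fp$, $(N\fp)^{-s}$, and $I_{\bbE_\fp}$-invariants of $r_{-\mu}\circ\varphi_{\pi_p}$); this is exactly the content of the inductivity of local $L$-factors, and the paper does not take it as a black box but verifies it explicitly via \cite[Lemma 6.3.1]{Ha14} together with the observation that the relevant extension $E/E_0$ (with $E_0$ the maximal unramified subextension) is totally ramified, so that $(r_{-\mu}^{E_0})^{I_{E_0}} = r_{-\mu}^{I_E}$. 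You should either cite a reference for inductivity of Weil-group $L$-factors in the semisimple setting, or unwind it by hand as the paper does, since the passage through the intermediate field $E_0$ is precisely where the factor $[E_0:\bbQ_p]$ and the replacement $p^{-s}\leadsto (N\fp)^{-s}$ come from, and this is not pure bookkeeping.
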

\begin{proof}
We denote $E=\bbE_\fp$ and $d:=\dim\mathrm{Sh}_K$. Let $E_0$ be the maximal unramified extension of $\bbQ_p$ in $E$. Write $K=K_pK^p$ where $K^p\subset\bbG(\bbA_f^{p})$ and $K_p$ is contained in a parahoric subgroup $K_\cL$ in $\bbG(\bbQ_p)$ associated to some $\cO_B$-stable lattice chain $\cL$ in $V$. Recall from \cite[Definition 9.4]{Ha05} that
\[\log\zeta_\fp^{\mathrm{ss}}(\mathrm{Sh}_K,s)=\sum_{j=1}^\infty\mathrm{Tr}^{ss}(\Phi_\fp^j,H^*(\mathrm{Sh}_{K,\overline{\bbQ}},\overline{\bbQ}_\ell))\frac{N\fp^{-js}}{j}\]
where the semisimple trace $\mathrm{Tr}^{ss}$ is (by definition) the trace of $\Phi_\fp^j$ on the $I_{E}$-invariant subspace of the \emph{semisimplification} of the corresponding $W_{E}$-representations.\par  
We take $\xi$ to be the trivial representation and apply Theorem~\ref{thm:cohomology-isom}. Then for any $\tau\in\Phi_\fp^jI_{E}\subset W_{E}$, where $\mathrm{\Phi}_\fp\in W_{E}$ is a geometric Frobenius element, we get that
\[\mathrm{Tr}(\tau,H^*(\mathrm{Sh}_{K,\overline{\bbQ}},\overline{\bbQ}_\ell))=\sum_{\pi_f}a(\pi_f)\dim(\pi_f^K)\mathrm{Tr}(r_{-\mu}\circ\varphi_{\pi_p}(\tau))(N\fp)^{-jd/2}.\]

Integrating both sides over the coset $\tau\in\Phi_\fp^jI_{E}$ (against the Haar measure on $I_{E}$ with total volume $1$) and taking the sum over $j$ we get
\begin{equation}\label{eq:log-ss-zeta}
    \log\zeta_\fp^{\mathrm{ss}}(\mathrm{Sh}_K,s-\frac{d}{2})=\sum_{\pi_f}a(\pi_f)\dim(\pi_f^K)\sum_{j=1}^\infty\mathrm{Tr}^{ss}(\varphi_{\pi_p}(\Phi_\fp^j),r_{-\mu})\frac{(N\fp)^{-js}}{j}.
\end{equation}
The semisimple trace on the right hand side of \eqref{eq:log-ss-zeta} coincides with the trace on the inertia invariant subspace since $\varphi_{\pi_p}$ is the \emph{semisimple} $L$-parameter so that the representation $r_{-\mu}\circ\varphi_{\pi_p}|_{W_{E}}$ is such that inertia acts through a finite quotient. In other words we have
\[\mathrm{Tr}^{ss}(\varphi_{\pi_p}(\Phi_\fp^j),r_{-\mu})=\mathrm{Tr}(\varphi_{\pi_p}(\Phi_\fp^j)|r_{-\mu}^{I_{E}}).\]
On the other hand, recall from \cite[Definition 9.8]{Ha05} that
\[\log L^{\mathrm{ss}}(s,\pi_p,r_\fp)=\sum_{r=1}^\infty\mathrm{Tr}^{ss}(\varphi_{\pi_p}(\Phi_p^r);V_{r_\fp})\frac{p^{-rs}}{r}.\]
By \cite[Lemma 6.3.1]{Ha14} we have 
\[\mathrm{Tr}^{ss}(\varphi_{\pi_p}(\Phi_p^r);V_{r_\fp})=\begin{cases}
    [E_0:\bbQ_p]\mathrm{Tr}^{ss}(\varphi_{\pi_p}(\Phi_\fp^j),r_{-\mu}^{E_0})&\text{ if }r=j[E_0:\bbQ_p]\\
    0&\text{ if }[E_0:\bbQ_p]\nmid r
\end{cases}\]
where $r_{-\mu}^{E_0}:=\mathrm{Ind}_{\Hat{G}\rtimes W_E}^{\Hat{G}\rtimes W_{E_0}}r_{-\mu}$.
Since the extension $E/E_0$ is totally ramified, we have \[I_{E_0}/I_E=\frac{\Hat{G}\rtimes W_{E_0}}{\Hat{G}\rtimes W_{E}}\]
from which we deduce that $(r_{-\mu}^{E_0})^{I_{E_0}}=r_{-\mu}^{I_E}$ and hence
\[\mathrm{Tr}^{ss}(\varphi_{\pi_p}(\Phi_\fp^j),r_{-\mu}^{E_0})=\mathrm{Tr}(\varphi_{\pi_p}(\Phi_\fp^j),(r_{-\mu}^{E_0})^{I_{E_0}})=\mathrm{Tr}(\varphi_{\pi_p}(\Phi_\fp^j),r_{-\mu}^{I_E})=\mathrm{Tr}^{ss}(\varphi_{\pi_p}(\Phi_\fp^j),r_{-\mu}).\]
Therefore we get
\begin{equation}\label{eq:log-ss-L}
    \log L^{\mathrm{ss}}(s,\pi_p,r_\fp)=\sum_{j=1}^\infty\mathrm{Tr}^{ss}(\varphi_{\pi_p}(\Phi_\fp^j),r_{-\mu}^{E_0})\frac{p^{-j[E_0:\bbQ_p]s}}{j}=\sum_{j=1}^\infty\mathrm{Tr}^{ss}(\varphi_{\pi_p}(\Phi_\fp^j),r_{-\mu})\frac{(N\fp)^{-js}}{j}.
\end{equation}
Now the result follows by comparing \eqref{eq:log-ss-zeta} and \eqref{eq:log-ss-L}.
\end{proof}

\subsection{Counting points}
Let $E=\bbE_\fp$. Fix an integer $j\ge1$ and an element $\tau\in\mathrm{Frob}_E^jI_E\subset W_E$. Denote $r:=j[\kappa_E:\bbF_p]$ where $\kappa_E$ is the residue field of $E$ and set $k_r:=\bbF_{p^r}$, which is a degree $j$ extension of $\kappa_E$. Let $h\in C_c^\infty(K_\cL)$ and $f^p\in C_c^\infty(\bbG(\bbA_f^p))$. Our goal is to calculate $\mathrm{Tr}(\tau\times hf^p|H_\xi^*)$ by the Grothendieck-Lefschetz trace formula. 
We mostly follow the exposition of \cite{Sch-deformation}. Without loss of generality, we may assume that $f^p$ is the characteristic function of a double coset $K^pg^pK^p$ divided by the volume of $K^p$ for a neat compact open subgroup $K^p\subset\bbG(\bbA_f^p)$, and $h$ is the characteristic function of $K_pg_pK_p$ divided by the volume of $K_p$ for some normal open subgroup $K_p\subset K_\cL$ and $g_p\in K_\cL$.

\subsubsection{Frobenius-Hecke correspondence}
We will analyze the cohomological correspondence on $\Sh_{K_pK^p}$ that gives rise to the action of $\tau\times hf^p$ on $H^*_\xi$. The first step is to consider the induced correspondence on the ``base level" $\Sh_{K_\cL K^p}$, which extends to the integral model $\fM_{K_\cL K^p}$, and describe its fixed points in the special fiber. 
More precisely the action of $hf^p$ is defined by a correspondence
\begin{equation}\label{eq:Hecke-corr-base-level}
    \fM_{K_\cL K^p}\xleftarrow{p_1}\fM_{K_\cL K^p_{g^p}}\xrightarrow{p_2}\fM_{K_\cL K^p}
\end{equation}

where 
\begin{itemize}
    \item $K^p_{g^p}:=K^p\cap g^pK^p(g^p)^{-1}$;
    \item $p_1$ is the composition of the natural projection $\fM_{K_\cL K^p_{g^p}}\to\fM_{K_\cL,g^pK^p(g^p)^{-1}}$ with the isomorphism $\fM_{K_\cL,g^pK^p(g^p)^{-1}}\xrightarrow{\cong}\fM_{K_\cL K^p}$ induced by (``right multiplication by") $g^p$;
    \item $p_2$ is the natural projection.
\end{itemize}
Our convention is slightly different from both \cite{Sch-deformation} and \cite{Ko-points}.

\subsubsection{Fixed points of correspondence}\label{sec:fixed-points-of-correspondence}
Let $\mathrm{Fix}_{j,\cL}(g^p)$ denote the set of fixed points of $\tau\times hf^p$ in $\fM_{K_\cL K^p_{g^p}}(\overline{\bbF}_p)$. The notation is meant to suggest that it is independent of $g_p\in K_\cL$ and the choice of $\tau$ in the coset $\mathrm{Frob}_E^jI_E$. To describe it we consider the following correspondence of the special fiber
\begin{equation}\label{eq:Frob-Hecke-corr}
    \fM_{K_\cL K^p}\otimes\kappa_E\xleftarrow{\mathrm{Fr}_E^j\circ p_1}\fM_{K_\cL K^p_{g^p}}\otimes\kappa_E\xrightarrow{p_2}\fM_{K_\cL K^p}\otimes\kappa_E
\end{equation}
where $p_1, p_2$ are the special fibers of the corresponding maps in \eqref{eq:Hecke-corr-base-
level} and
$\mathrm{Fr}_E$ denotes the geometric Frobenius endomorphism of $\fM_{K_\cL K^p}\otimes\kappa_E$. Then $\mathrm{Fix}_{j,\cL}(g^p)$ is the fixed point set of the above correspondence, which consists of points $(A_\bullet,\la,\bar{\eta})$ in $\fM_{K_\cL K^p_{g^p}}(\overline{\bbF}_p)$ such that $(A_\bullet,\la,\bar{\eta})$ and $\sigma^r(A_\bullet,\la,\overline{\eta g^p})$ define the same point in $\fM_{K_\cL K^p}(\overline{\bbF}_p)$. Here $\sigma^r(\cdot)$ means base change along $p^r$-th power map of $\overline{\bbF}_p$.
Then for each $i\in\bbZ$, there exists an isomorphism $u_i:\sigma^r(A_i)\to A_i$ in $AV_{\cO_\bbB}(\overline{\bbF}_p)$ sending $\sigma^r(\overline{\eta g^p})$ to $\bar{\eta}$ and compatible with the isogenies among the $A_i$'s. Moreover, there is a number $c_0\in\bbZ_{(p)}^\times\cap\bbR_{>0}$ such that $u_0^*\la_0=c_0\sigma(\la_0)$. Then $(A_0,u_0,\la_0)$ is a $p^rc_0$-polarized virtual $\cO_{\bbB}$-abelian variety over $\bbF_{p^r}$ up to $\bbZ_{(p)}$-isogeny in the sense of \cite{Ko-points}. The inverse of the Frobenius endomorphisms on $A_j$ define a conjugacy class $\gamma=(\gamma_l)_{l\ne p}\in\bbG(\bbA_f^p)$ and a $\sigma$-conjugacy class $\delta=(\delta_1,\delta_0)\in\bbG(\bbQ_{p^r})=G(\bbQ_{p^r})\times\bbQ_{p^r}^\times$ such that $\delta_0\sigma(\delta_0)\dotsm\sigma^{r-1}(\delta_0)=p^rc_0$.\par 
Via $u_i$, the $p$-divisible groups $A_i[p^\infty]$ over $\overline{\bbF}_p$ descend to $\bbF_{p^r}$ and the resulting $p$-divisible $\cO_\bbB$-module decomposes according to the isomorphism $\cO_{\bbB}\otimes\bbZ_p\cong\cO_B\times\cO_B^{op}$. Let $H_i$ be the factor of $A_i[p^\infty]$ corresponding to $\cO_B$. Then $H_i$ is a $p$-divisible $\cO_B$-module over $\bbF_{p^r}$. The isogenies between $A_i$'s induce isogenies between $H_i$'s so that they form a $p$-divisible group with $(\cO_B,\mu_1,\cL)$-structure over $\bbF_{p^r}$, where $\bbF_{p^r}$ is given the strucuture of $\cO_E$ algebra by viewing it as a degree $j$ extension of the residue field $\kappa_E$. Here $\mu_1$ is the coweight of $\bbG_{0,\overline{\bbQ}_p}$ defined as follows: we choose a field isomorphism $\overline{\bbQ}_p\cong\bbC$ and interpret $\mu$ as a coweight of $\bbG_{\overline{\bbQ}_p}$. Then $\mu_1$ is the first factor of $\mu$ under the canonical isomorphism $\bbG_{\overline{\bbQ}_p}\cong\bbG_{0,\overline{\bbQ}_p}\times\bbG_{m,\overline{\bbQ}_p}$.

\subsubsection{Fixed points in an isogeny class}\label{sec:fixed-points-in-isogeny-class}
Let $(A,u,\la)$ be a $p^rc_0$-polarized virtual $\cO_\bbB$-abelian variety over $\bbF_{p^r}$. Let $I_{(A,u,\la)}$ be the $\bbQ$-algebraic group of self quasi-isogenies of $A$ compatible with $u,\la$. \par 
Let $\mathrm{Isog}(A,u,\la)$ be the subset of $\mathrm{Fix}_{j,\cL}(g^p)$ consisting of points $(A_\bullet,\la,\bar{\eta})$ lying in the closure of the generic fiber\footnote{ Note that the results of Pappas-Rapoport \cite{PR03} show that the parahoric-level integral models we use, sometimes called naive, need not be flat.} whose associated polarized virtual $\cO_{\bbB}$-abelian variety (described as above) is isomorphic to $(A,u,\la)$.

\begin{prop}\label{prop:isog-injection}
There is an injection
\[\mathrm{Isog}(A,u,\la)\into I_{(A,u,\la)}(\bbQ)\backslash(Y_p\times Y^p)\]
where
\begin{itemize}
    \item $Y_p:=\{g\in\bbG(\bbQ_{p^r})/K_{\cL,r}~|~ pg\Lambda\subset p\delta\sigma g\Lambda \subset g\Lambda,\forall \Lambda \in \cL\}$
     \item $Y^p:=\{x\in\bbG(\bbA_f^p)/K^p_{g^p}~|~x^{-1}\ga x\in g^pK^p\}$.
\end{itemize}
\end{prop}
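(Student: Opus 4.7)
The plan is to construct the map by transporting data from $(A_\bullet,\la,\bar\eta)$ to the fixed model $(A,u,\la)$ via a quasi-isogeny, in the style of Kottwitz's point-counting for PEL Shimura varieties (cf.\,\cite[$\S14$-$\S16$]{Ko-points} and the adaptation to parahoric level in \cite[$\S4$]{RZ}). First I would fix once and for all rigidifications of $(A,u,\la)$: an isomorphism of skew-Hermitian $\bbB$-modules $\eta_0 : H_1(A,\bbA_f^p) \xrightarrow{\sim} \bbV\otimes\bbA_f^p$ and, at $p$, an isomorphism of the rational Dieudonné module of $A[p^\infty]$ with $V\otimes_{\bbQ_p}\bbQ_{p^r}$ taking the lattice chain determined by the polarized $p$-divisible $\cO_\bbB$-module attached to $(A,u,\la)$ into the chain $\cL$; this is the identification used in \S\ref{sec:fixed-points-of-correspondence} to produce the pair $(\ga,\delta)$, so in particular the Frobenius on the rational $p$-adic Dieudonn\'e module of $A$ takes the form $F=p\delta_0\sigma$ on the $G$-factor.

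Given a point $(A_\bullet,\la,\bar\eta)\in\mathrm{Isog}(A,u,\la)$, the definition of $\mathrm{Isog}$ provides an $\cO_{\bbB}$-quasi-isogeny $\varphi:A_0\to A$ (well-defined up to precomposition by elements of $I_{(A,u,\la)}(\bbQ)$) compatible with polarizations and with the descent data $u,u_0$. For the away-from-$p$ component I would form $x:=\eta_0\circ H_1(\varphi,\bbA_f^p)\circ\bar\eta^{-1}\bmod K^p_{g^p}\in\bbG(\bbA_f^p)/K^p_{g^p}$. The fixed-point condition $(A_\bullet,\la,\bar\eta)\equiv\sigma^r(A_\bullet,\la,\overline{\eta g^p})$ translates, after transport via $\varphi$ and the rigidification $\eta_0$, into $x^{-1}\gamma x\in g^p K^p$, where $\gamma$ is the conjugacy class attached to the inverse Frobenius of $A$; this puts $x\in Y^p$.

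For the $p$-component I would use the $p$-adic Dieudonn\'e module of each $A_{\La,0}[p^\infty]$. Via $\varphi$ these form a lattice chain inside the rational Dieudonn\'e module of $A[p^\infty]$, and under the fixed rigidification they become a chain of $\cO_B$-lattices in $V\otimes\bbQ_{p^r}$ of the same combinatorial type as $\cL$. Such a chain is a single $\cG_\cL(\bbZ_{p^r})$-orbit, hence gives $g\in\bbG(\bbQ_{p^r})/K_{\cL,r}$. The Frobenius on the rational Dieudonn\'e module is $p\delta_0\sigma$, while the fact that each $A_{\La,0}[p^\infty]$ satisfies the usual Rapoport--Zink lattice condition forces $p(g\La)\subset (p\delta\sigma)(g\La)\subset g\La$ for every $\La\in\cL$; this is exactly the condition cutting out $Y_p$.

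Well-definedness modulo $I_{(A,u,\la)}(\bbQ)$ is then automatic: changing $\varphi$ to $i\varphi$ for $i\in I_{(A,u,\la)}(\bbQ)$ multiplies $(g,x)$ on the left by the diagonal image of $i$ in $\bbG(\bbQ_{p^r})\times\bbG(\bbA_f^p)$, while a different choice of representative $(A_\bullet,\la,\bar\eta)$ within its $K^p_{g^p}$-equivalence class alters $x$ only within $K^p_{g^p}$. For injectivity, suppose two points are sent to the same $I_{(A,u,\la)}(\bbQ)$-orbit; after adjusting by an element of $I_{(A,u,\la)}(\bbQ)$ I may assume the corresponding pairs $(g,x)$ agree, and then the identity of the chains of Dieudonn\'e lattices and of the rational Tate module level structures forces the two chains of abelian varieties to be $\cO_{\bbB}$-isomorphic compatibly with polarization and $K^p_{g^p}$-level, as in \cite[Lem.\,16.2]{Ko-points} and \cite[Prop.\,6.7-6.8]{RZ}. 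The main obstacle I anticipate is the $p$-adic bookkeeping: one must ensure that the chain-theoretic condition defining $Y_p$ really matches the determinant condition and the inclusions imposed on the $A_\La[p^\infty]$, and that this matching is preserved under the quasi-isogeny $\varphi$; this requires the full formalism of $\cL$-sets of $p$-divisible groups from \cite[$\S3$]{RZ} together with the Frobenius normalization chosen in \S\ref{sec:fixed-points-of-correspondence}.
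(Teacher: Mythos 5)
Your proposal is correct and takes essentially the same approach as the paper: the paper's proof is much terser, deferring to \cite[Prop.\,6.5]{Sch-deformation} for the construction of the map and its injectivity, and only spelling out the modification of the away-from-$p$ step (that the fixed-point condition translates into $x^{-1}\gamma x\in g^pK^p$) that is needed because of the slightly different normalization of the Frobenius--Hecke correspondence. Your proposal simply unwinds the Kottwitz--Scholze argument that the paper cites.
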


\begin{proof}
The proof is similar to \cite[Proposition 6.5]{Sch-deformation}, but we take into account our slightly different convention for the Frobenius-Hecke correspondence. A fixed point for the Frobenius-Hecke correspondence is a point $(A_\bullet, i_\bullet, \la, \bar{\eta}) =: (A_\bullet, \la, \bar{\eta}) \in \mathfrak{M}_{K_\cL\, K^p_{g^p}}(\overline{\bbF}_p)$ such that there is an isomorphism $(A_\bullet,\la, \bar{\eta}) \cong \sigma^r(A_\bullet, \la, \overline{\eta g^p})$ in $\mathfrak{M}_{K_\cL\, K^p}(\overline{\bbF}_p)$.  The condition that $\sigma^r(\overline{\eta g^p}) = \bar{\eta}$ as $K^p$-level structures translates to the condition $ \gamma^{-1} x g^p K^p = x K^p$, or $x^{-1} \gamma x \in g^pK^p$. The rest of the argument is as in \cite[Proposition 6.5]{Sch-deformation}.
\end{proof}

\begin{cor}\label{cor:trace-coarse-expansion}
We have the following equality
\[\mathrm{Tr}(\tau\times hf^p| H_\xi^*)=\sum_{(A,u,\la)}
\mathrm{vol}(I(\bbQ)\backslash I(\bbA_f))\,O_{\gamma}(f^p)\,TO_{\delta\sigma}(\phi_{\tau,h})\,\mathrm{tr}\,\xi(\gamma_\ell)\]
where the sum runs over isogeny classes of polarized virtual $\cO_\bbB$-abelian varieties $(A,u,\la)$ over $\bbF_{p^r}$ and $I=I_{(A,u,\la)}$ is the automorphism group of $(A,u,\la)$.
\end{cor}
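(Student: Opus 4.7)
The plan is to apply the Grothendieck-Lefschetz trace formula on the special fiber of the integral model $\mathfrak{M}_{K_\cL K^p_{g^p}}$, then organize the resulting sum over fixed points by isogeny class, and finally identify each summand with the expected product of (twisted) orbital integrals. I will assume throughout that $h, f^p$ have been reduced to indicators of double cosets as in the setup before \S\ref{sec:fixed-points-of-correspondence}, and that the level $K_p \subset K_\cL$ is sufficiently small so that the étale local system $\cF_{\xi, K_p K^p}$ extends (via nearby cycles from the generic fiber) to the integral model and so that the Hecke correspondence lifts to a cohomological correspondence.

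First I would reduce $\mathrm{Tr}(\tau \times hf^p \mid H^*_\xi)$ to a sum over $\mathrm{Fix}_{j,\cL}(g^p)$. The key input is Scholze's analysis (as in \cite{Sch-deformation}): the semisimple trace of $\tau$ on the nearby cycles at a fixed point with associated invariant $\delta \in G(\bbQ_{p^r})$ (obtained from the $p$-divisible group descent in \S\ref{sec:fixed-points-of-correspondence}) equals $\phi_{\tau, h}(\delta)$ by construction in \S\ref{sec:p-div}. The trace of $hf^p$ on the local system away from $p$ contributes $\mathrm{tr}\, \xi(\gamma_\ell)$ for any $\ell \neq p$ via Kottwitz's identification of the local system with a $\bbG(\bbA_f)$-equivariant sheaf, so in each fixed-point contribution one isolates a factor $\mathrm{tr}\,\xi(\gamma_\ell)$.

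Next I would group fixed points by isogeny class $(A,u,\la)$ of polarized virtual $\cO_\bbB$-abelian varieties over $\bbF_{p^r}$. By Proposition~\ref{prop:isog-injection}, the contribution of each isogeny class becomes a sum over $I(\bbQ)\backslash(Y_p \times Y^p)$ (the proof that this injection is a bijection is standard and proceeds by producing, from a pair $(x_p, x^p) \in Y_p \times Y^p$ modulo $I(\bbQ)$, an $\cL$-set of abelian varieties isogenous to $A$ with compatible level structure, using the dictionary between lattice chains and quasi-isogenies of $p$-divisible groups at $p$ and between $\bbA_f^p$-cosets and prime-to-$p$ level structures away from $p$). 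The sum over $Y^p$, by definition, is $O_\gamma(f^p)$ after dividing out $I(\bbA_f^p)$, and the sum over $Y_p$ produces the twisted orbital integral $TO_{\delta\sigma}(\phi_{\tau, h})$ after dividing out $G_{\delta\sigma}(\bbQ_p)$; this is where the definition of $Y_p$ via $p\, g\Lambda \subset p\delta\sigma g\Lambda \subset g\Lambda$ matches precisely with the support condition on $\phi_{\tau,h}$ established just before \S\ref{sec:main-local-thm-statements}. The quotient by $I(\bbQ)$ produces the volume factor $\mathrm{vol}(I(\bbQ)\backslash I(\bbA_f))$, upon choosing compatible Haar measures on $I(\bbA_f)$, $\bbG_\gamma(\bbA_f^p)$, and $G_{\delta\sigma}(\bbQ_p)$, and using that $I$ is an inner form of $\bbG_\gamma = \bbG_{\delta\sigma}$ after adèlic localization.

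The main obstacle I expect is the careful bookkeeping of measures and signs (Kottwitz signs do not enter here because in the PEL setting of Kottwitz-type the relevant signs are absorbed into the matching of Haar measures, but this must be verified), and the verification that Proposition~\ref{prop:isog-injection} is actually a bijection with the correct $I(\bbQ)$-orbit structure — in particular, that no ``extra'' orbits appear on the right-hand side which fail to come from an actual fixed point. This uses the Honda-Tate style classification of isogeny classes over $\bbF_{p^r}$ in the PEL context, together with the existence of the polarization $\la$, which forces the naive norm $N_r\delta$ to be conjugate (in an appropriate sense) to a global element; at this stage no global element $\gamma_0 \in \bbG(\bbQ)$ is asserted, only the isogeny invariants $(\gamma, \delta)$, which is all that is needed for the statement.
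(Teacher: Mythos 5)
Your overall strategy coincides with the paper's: apply a Lefschetz-type trace formula to the special fiber, factor the local term at each fixed point into a $p$-part (nearby cycles, giving $\phi_{\tau,h}$) and a prime-to-$p$ part (giving $\mathrm{tr}\,\xi(\gamma_\ell)$ and the Hecke action), group fixed points by isogeny class, and use Proposition~\ref{prop:isog-injection} to rewrite each isogeny class contribution as a product of (twisted) orbital integrals times a volume factor.

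There is, however, a concrete gap in your treatment of the passage from $I(\bbQ)\backslash(Y_p\times Y^p)$ back to $\mathrm{Isog}(A,u,\la)$. You assert that Proposition~\ref{prop:isog-injection} is ``actually a bijection'' and attribute the needed verification to a Honda-Tate classification of isogeny classes. This is not the right mechanism, and the map is in general \emph{not} surjective: starting from $(g,x)\in Y_p\times Y^p$ and transporting structure via quasi-isogeny, the resulting tuple $(A'_\bullet,\lambda',\bar\eta')$ need not satisfy the Kottwitz determinant condition, so it need not be a point of the moduli functor $\fM_{K_\cL K^p_{g^p}}$ (in particular, need not lie in $\mathrm{Isog}(A,u,\la)$). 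Honda-Tate is irrelevant here — it concerns the existence of abelian varieties in a given isogeny class, not whether a given transported tuple satisfies the determinant condition. The argument that actually closes the gap, and which you need, is: the summand for a coset $(g,x)$ contains the factor $\phi_{\tau,h}(g^{-1}\delta\sigma(g))$, and whenever this is nonzero, the defining support condition on $\phi_{\tau,h}$ (the lattice relative-position constraint $p\,g\Lambda\subset p\delta\sigma(g)\Lambda\subset g\Lambda$ and the $\kappa_G$ condition from Proposition~\ref{prop:delta-condition}) forces the transported $\cL$-set to satisfy the determinant condition; this equivalence of lattice-position and determinant conditions is the content of \cite[\S11.1.1]{Ha05}. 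Thus the ``extra'' orbits in $I(\bbQ)\backslash(Y_p\times Y^p)$ that do not correspond to actual fixed points contribute zero to the twisted orbital integral, which is what makes the identity correct even though the map is only an injection.

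Two smaller omissions: you should cite a Lefschetz formula valid without smoothness/flatness of the integral model (the paper uses Varshavsky's \cite[Thm.\,2.3.2(b)]{Var07} precisely because the naive parahoric models need not be flat), and your identification of the two factors at a fixed point implicitly requires the canonical tensor decomposition $R\psi\pi_*\cF_\xi\cong\cF_\xi\otimes R\psi\pi_*\bbQ_\ell$ of \cite[Prop.\,5.5]{Sch-deformation}, which deserves to be stated explicitly since it is what lets the cohomological correspondence split into the $g^p$-part and the $g_p$-part.
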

\begin{proof}
Let $\pi:\Sh_{K_p K^p}\to\Sh_{K_\cL K^p}$ be the natural projection. By the proper base change theorem, we have
\[H^*(\Sh_{K_p K^p}\otimes_{\bbE}\overline{\bbE},\cF_\xi)\cong H^*(\Sh_{K_\cL K^p}\otimes_{\bbE}\overline{\bbE},\pi_*\cF_\xi)\cong H^*(\fM_{K_\cL K^p}\otimes_{\kappa_E}\overline{\bbF}_p,R\psi\pi_*\cF_\xi).\]
Under this isomorphism, the action of $\tau\times hf^p$ is defined from the composition
\[(\mathrm{Fr}_E^j\circ p_1)^*R\psi\pi_*\cF_\xi=p_1^*(\mathrm{Fr}_E^j)^*R\psi\pi_*\cF_\xi\xrightarrow{\tau}p_1^*R\psi\pi_*\cF_\xi\xrightarrow{g_pg^p} p_2^!R\psi\pi_*\cF_\xi\]
By \cite[Proposition 5.5]{Sch-deformation}, there is a canonical isomorphism $R\psi\pi_*\cF_\xi\cong \cF_\xi\otimes R\psi\pi_*\bbQ_\ell$, under which the second map above factorizes as the tensor product of the cohomological correspondence on $\cF_\xi$ induced by $g^p$ and the cohomological correspondence on $R\psi\pi_*\bbQ_\ell$ induced by $g_p$. For a more thorough discussion of these points, we refer the reader to the forthcoming work of the second author with Rong Zhou and Yihang Zhu \cite{HZZ}.\par 
Then the (naive) local trace of $\tau\times hf^p$ on $R\psi\pi_*\cF_\xi$ at a fixed point $x=(A_\bullet,\la,\bar{\eta})$ is given by
\begin{equation}\label{eq:naive-local-term}
    \mathrm{Tr}(\tau\times hf^p|(R\psi\pi_*\cF_\xi)_x)=\mathrm{Tr}(\xi(\ga_\ell))\mathrm{Tr}(\tau\times hf^p|(R\psi\pi_*\bbQ_\ell)_x).
\end{equation}
By the version of Lefschetz trace formula in \cite[Theorem 2.3.2 (b)]{Var07} and \eqref{eq:naive-local-term}, we get
\[\mathrm{Tr}(\tau\times hf^p| H_\xi^*)=\sum_{x\in\mathrm{Fix}_{j,\cL}(g^p)}\mathrm{Tr}(\tau\times hf^p|(R\psi\pi_*\cF_\xi)_x).\]
We break the sum according to the partition $\mathrm{Fix}_{j,\cL}(g^p)=\bigsqcup\mathrm{Isog}(A,u,\la)$ into isogeny classes labeled by isomorphism classes of polarized virtual $\cO_\bbB$-abelian varieties $(A,u,\la)$ over $\bbF_{p^r}$.\par 

Following the discussion in \cite[\S6.6]{Sch-deformation},  The contribution of $\mathrm{Isog}(A,u,\la)$ to the summation above is equal to 
\[\mathrm{vol}(I(\bbQ)\backslash I(\bbA_f))\,O_{\gamma}(f^p)\,\,TO_{\delta\sigma}(\phi_{\tau,h})\,\mathrm{tr}\,\xi(\gamma_\ell).\]
Indeed, this follows from \eqref{eq:naive-local-term} and Proposition~\ref{prop:isog-injection} by noting that if $(g,x)\in Y_p\times Y^p$ satisfies 
\[\phi_{\tau,h}(g^{-1}\delta\sigma(g))\ne0,\]
then $(g,x)$ lies in $\mathrm{Isog}(A,u,\la)$. Let us explain this last assertion.  We denote by $(A_\bullet, \lambda, \bar{\eta})$ a point in $\mathrm{Fix}_{j,\cL}(g^p)$ giving rise to the isogeny class $(A,u, \la)$ and the pair $(\ga, \delta)$; we may assume that it maps to the pair $(1, 1) \in Y_p \times Y^p$ under the injection of Proposition \ref{prop:isog-injection}. Now the data $(g,x)$ such that $\phi_{\tau, h}(g^{-1}\delta \sigma(g)) \neq 0$ give us an $\cL$-set of polarized abelian varieties $A'_\bullet$, and a quasi-isogeny $A_\bullet \rightarrow A'_\bullet$ which we use to transport the $\mathcal O_{\bbB}$-action $i$ on $A_\bullet$ to an action $i'$ on $A'_\bullet$. Similarly, the $K^p_{g^p}$-level structure is transported using $x$. By construction we get $(A'_\bullet, \lambda', \bar{\eta}')$. Once we know it satisfies the Kottwitz determinant condition, we will know it is a point in our moduli problem, and thus a point in  $\mathrm{Isog}(A,u, \la)$, which maps to $(g, x)$ under the injection of Proposition \ref{prop:isog-injection}. But in this situation, the determinant condition is reflected exactly by the relative position of the lattices required in the definition of $Y_p$, as explained in \cite[$\S$11.1.1]{Ha05}.
\end{proof}

\subsection{Pseudo-stabilization}\label{sec:pseudo-stabilization}
\subsubsection{Kottwitz triples}
To proceed further, we first recall the notion of Kottwitz triple as stated in \cite[Definition 5.6]{Sch-deformation}. 
\begin{defn}\label{def:Kott-triple}
Let $j\in\bbZ_{\ge1}$ and set $r:=j[\kappa_{\bbE_\fp}:\bbF_p]$. A degree $j$ Kottwitz triple $(\ga_0;\ga,\delta)$ consists of 
\begin{itemize}
    \item A semisimple stable conjugacy class $\ga_0\in\bbG(\bbQ)$ that is elliptic in $\bbG(\bbR)$,
    \item a conjugacy class $\ga\in\bbG(\bbA_f^p)$ that is stably conjugate to $\ga_0$,
    \item a $\sigma$-conjugacy class $\delta\in\bbG(\bbQ_{p^r})$ whose naive norm $N_r\delta=\delta\sigma(\delta)\dotsm\sigma^{r-1}(\delta)$ is stably conjugate to $\ga_0$, such that $\kappa_{\bbG_{\bbQ_p}}(p\delta)=\mu^{\#}$ in $X^*(Z(\hat{\bbG})^{\Gamma(p)})$, where $\Gamma(p):=\mathrm{Gal}(\overline{\bbQ}_p/\bbQ_p)$.
\end{itemize}
\end{defn}

\begin{prop}\label{prop:Kott-triple}
Let $x\in\mathrm{Fix}_{j,\cL}(g^p)$ with associated polarized virtual $\cO_\bbB$-abelian variety $(A,u,\la)$ over $\bbF_{p^r}$, giving rise to conjugacy class $\ga\in\bbG(\bbA_f^p)$ and $\sigma$-conjugacy class $\delta\in\bbG(\bbQ_{p^r})$. Suppose that the naive norm $N_r\delta=\delta\sigma(\delta)\dotsm\sigma^{r-1}(\delta)\in\bbG(\bbQ_{p^r})$ is $\bbG(\bbQ_{p^r})$-conjugate to an element in $\bbG(\bbQ_p)$. Then there exists a unique semisimple conjugacy class $\ga_0\in\bbG(\bbQ)$ such that $(\ga_0,\ga,\delta)$ is a degree $j$-Kottwitz triple. 
\end{prop}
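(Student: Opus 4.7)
The plan is to construct $\ga_0$ from the $p^r$-power Frobenius endomorphism of the descended abelian variety, following the strategy of Kottwitz in \cite{Ko-simple, Ko-points}, but using the hypothesis at $p$ to overcome the obstruction that arises in the non-quasi-split setting. First, the isomorphism $u:\sigma^r(A)\to A$ descends $(A,i,\la)$ to an $\cO_\bbB$-abelian variety $A_0$ over $\bbF_{p^r}$. Its $p^r$-power Frobenius endomorphism $\pi_0\in\mathrm{End}^0_\bbB(A_0)$ is a semisimple element by Tate's theorem and satisfies $\pi_0\pi_0^\dagger=p^r c_0\in\bbQ^\times$ with respect to the Rosati involution induced by $\la$. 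The commutative semisimple $\bbQ$-subalgebra $M:=\bbQ[\pi_0]\subset\mathrm{End}^0_\bbB(A_0)$ is realized locally as follows: at $\ell\ne p$, the action of $\pi_0$ on the rational $\ell$-adic Tate module of $A_0$ via the level structure $\bar\eta$ recovers the conjugacy class $\ga_\ell\in\bbG(\bbQ_\ell)$; at $p$, the action on the rational Dieudonn\'e module recovers the naive norm $N_r\delta\in\bbG(\bbQ_{p^r})$; and by Weil's Riemann hypothesis $\pi_0$ has eigenvalues of absolute value $p^{r/2}$.

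Next, I would embed $M$ into $\bbD$ as a $\bbQ$-algebra. By the local-global principle for embeddings of commutative semisimple $\bbQ$-algebras into a central simple algebra with positive involution of the second kind \textup{(}cf.\,\cite[$\S 2$, $\S 7$]{Ko-points}\textup{)}, the existence of such an embedding reduces to the existence of compatible local embeddings $M_v := M\otimes_\bbQ\bbQ_v\hookrightarrow\bbD\otimes_\bbQ\bbQ_v$ at every place $v$ of $\bbQ$. At any $v=\ell\ne p$, the existence is immediate from the fact that $\pi_0$ is conjugate to $\ga_\ell\in\bbG(\bbQ_\ell)\subset(\bbD\otimes\bbQ_\ell)^\times$. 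At $v=\infty$, positivity of the Rosati involution together with $\pi_0\pi_0^\dagger=p^r c_0$ supplies the embedding and shows that the image is elliptic, i.e.,\,its centralizer in $\bbG(\bbR)$ is compact modulo center. The crucial case is $v=p$: the subalgebra $M_p$ embeds into $\bbD\otimes\bbQ_p$ if and only if the semisimple element $N_r\delta\in\bbG(\bbQ_{p^r})$ is $\bbG(\bbQ_{p^r})$-conjugate to an element of $\bbG(\bbQ_p)$, which is exactly the hypothesis of the proposition. Setting $\ga_0\in\bbG(\bbQ)$ to be the image of $\pi_0$ under the resulting global embedding yields a semisimple element; the condition $\kappa_{\bbG_{\bbQ_p}}(p\delta)=\mu^{\#}$ follows from Proposition \ref{prop:delta-condition} after reconciling sign conventions, and the stable conjugacy with $\ga$ away from $p$ and with $N_r\delta$ at $p$ is built into the construction.

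For uniqueness, recall that $\bbG^*$ is a product of Weil restrictions of general linear groups, so Galois cohomology of the relevant tori vanishes and stable conjugacy coincides with ordinary conjugacy in $\bbG(\bbQ_v)$ for every $v$; consequently $\ga_0$ is determined up to $\bbG(\bbQ)$-conjugacy by its stable conjugacy class at any single $\ell\ne p$, applied to $\ga_\ell$. The main obstacle is precisely the local embedding at $v=p$ in the second step: in the quasi-split setting of \cite{Ko-simple} this is automatic, because every semisimple element of $\bbG^*(\bbQ_{p^r})$ has a $\bbG^*(\bbQ_p)$-representative, while in the non-quasi-split setting treated here it fails in general and the hypothesis of the proposition is exactly what is needed. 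This is why the vanishing property of Theorem~\ref{thm:vanishing-property} is crucial downstream: it guarantees that terms in the geometric side of the Lefschetz trace formula corresponding to isogeny classes that do not admit a Kottwitz triple vanish identically, so that the sum in Corollary~\ref{cor:trace-coarse-expansion} can be reindexed by Kottwitz triples.
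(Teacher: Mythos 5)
Your proposal follows the right general template (Kottwitz, \cite[$\S$14]{Ko-points}: build $\gamma_0$ from the Frobenius endomorphism $\pi_0 = \pi_A$ by constructing a $*$-compatible algebra embedding into $\bbD$), and you correctly identify that the hypothesis on $N_r\delta$ is what rescues the argument at $p$. However, the central technical content of the proposition is being asserted rather than proved, and the place where you hand-wave is exactly where the non-quasi-split difficulty lives.

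First, the ``local-global principle for embeddings of commutative semisimple $\bbQ$-algebras into a central simple algebra with positive involution of the second kind'' is not an off-the-shelf tool one can cite from \cite[$\S$2, $\S$7]{Ko-points}; it is precisely the content of \cite[Lemma 14.1]{Ko-points}, whose proof is a nontrivial argument in Galois cohomology of tori. More importantly, that proof uses the quasi-splitness of $\bbG_{\bbQ_p}$ in two places: to show that the centralizer $N$ of a maximal torus $T\subset I$ embeds in $C=\mathrm{End}_{\bbB}(\bbV)$ locally at $p$, and to show that the auxiliary torus $T_0$ transfers to the unitary group $A_\alpha$ at $p$. Neither of those steps is automatic once quasi-splitness fails. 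The paper's proof replaces quasi-splitness by a carefully arranged ellipticity: it chooses the maximal $\bbQ$-torus $T\subset I$ so that $T_{\bbQ_p}$ is elliptic in $I_{\bbQ_p}$ (existence by \cite[Lemma 1.2.2]{KPS}), observes that the hypothesis on $N_r\delta$ makes $I_{\bbQ_p}$ an inner form of a subgroup $\bbG_{\gamma_p}\subset\bbG_{\bbQ_p}$, and then uses the transfer of elliptic tori to inner forms, together with a commuting square of $H^1$'s and a Cayley--Hamilton argument, to produce the required local algebra embedding. You neither construct nor mention such a torus, so the crux of the non-quasi-split extension is missing.

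Second, you work with $M = \bbQ[\pi_0]$ rather than the larger algebra $N = Z_{\mathrm{End}_{\bbB}(A)}(T)$, the centralizer of a chosen maximal torus. The torus-cohomology argument requires the maximal $N$: it is the unit group of $N$ (the torus $\widetilde{T}=\mathrm{Res}_{N/\bbQ}\bbG_m$) whose transfer is controlled by the cohomology diagram in the paper's proof. Deducing the local $*$-embedding at $p$ directly from ``$\bbQ_p[\pi_0]$ embeds in $\bbD_{\bbQ_p}$'' skips this and is not obviously sufficient; the ``iff'' you assert at $p$ is a statement about the small algebra $M_p$, whereas what the argument actually needs is the transfer of the maximal torus $T$ (equivalently the embedding of $N_{\bbQ_p}$), which is a stronger statement that follows only after the ellipticity choice. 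In short, the strategy is right but the key step---which distinguishes this from the quasi-split case treated by Kottwitz---is absent.
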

\begin{proof}
This is claimed in \cite[Lemma 5.3]{shen-1}, but the proof given there contains a gap, and in fact does not use the strong hypothesis on $\delta$.   
We will prove this by generalizing the argument of Kottwitz in \cite[\S14]{Ko-points}. Let $I$ be the reductive $\bbQ$-group of automorphisms of $(A,u,\la)$. Kottwitz chose an arbitrary maximal $\bbQ$ torus $T\subset I$ and showed that it can be embedded in $G$, and he then took $\ga_0$ to be the inverse of the Frobenius endomorphism $\pi_A$. To make the Kottwitz argument work in our situation, we make the further assumption that $T_{\bbQ_p}$ is elliptic in $I_{\bbQ_p}$. One can see that such $\bbQ$-tori exist by \cite[Lemma 1.2.2]{KPS} (and the references there). The assumption that the conjugacy class of $N_r\delta$ contains an element defined over $\bbQ_p$ implies that $I_{\bbQ_p}$ is an inner form of a subgroup of $G_{\bbQ_p}$. Then we use the fact that an elliptic torus in $I_{\bbQ_p}$ transfers to any of its inner forms to remove the assumption in \cite[\S14]{Ko-points} that $G_{\bbQ_p}$ is quasi-split. More precisely, this assumption is used at two places: first on p.420, paragraph above Lemma 14.1, to show that the centralizer $N$ of $T$ in $\mathrm{End}_B(A)$ can be embedded in $C$ locally at $p$; then on p.421 where one shows that $T_0$ transfers to $A_\alpha$ at $p$.\par 
Let us elaborate more on the first point. We let $\tilde{I}=\mathrm{End}_B(A)^\times$ (resp.,\,$\tilde{G}=C^\times$) be the $\bbQ$-group of units of the semisimple algebra $\mathrm{End}_B(A)$ (resp.,\,$C=\mathrm{End}_B(V)$). Then $I$ (resp.,\,$G$) is a closed subgroup of $\tilde{I}$ (resp.,\,$\tilde{G}$). By assumption there is an element $\ga_p\in G(\bbQ_p)$ that is stably conjugate to $N_r\delta$. Then the centralizer $G_{\ga_p}$ (resp.,\,$\tilde{G}_{\ga_p}$) is an inner form of $I_{\bbQ_p}=G_{\delta\sigma}$ (resp.,\,$\tilde{I}_{\bbQ_p}=\tilde{G}_{\delta\sigma}$). Moreover, the class of $\tilde{G}_{\ga_p}$ in $H^1(\bbQ_p,\tilde{I}_{\mathrm{ad}})$ is equal to the image of the class of $G_{\ga_p}$ under the natural map $H^1(\bbQ_p,I_{{\rm ad},\bbQ_p})\to H^1(\bbQ_p,\tilde{I}_{{\rm ad},\bbQ_p})$; see the proof of \cite[Lem.\,5.8]{Ko-conj}. Let $\tilde{T}=\mathrm{Res}_{N/\bbQ}\bbG_m$ be the centralizer of $T$ in $\tilde{I}$. Let $T_{\mathrm{ad}}$ (resp.,\,$\tilde{T}_{\mathrm{ad}}$) be the image of $T$ (resp.,\,$\tilde{T}_{\mathrm{ad}}$) in the adjoint groups $I_{\mathrm{ad}}$ (resp.,\,$\tilde{I}_{\mathrm{ad}}$). Then we have a commutative diagram
\[\xymatrix{
H^1(\bbQ_p,T_{\mathrm{ad}})\ar[r]\ar[d] & H^1(\bbQ_p,I_{\mathrm{ad}})\ar[d] \\
H^1(\bbQ_p,\tilde{T}_{\mathrm{ad}})\ar[r] & H^1(\bbQ_p,\tilde{I}_{\mathrm{ad}}).
}\]
Since $T_{\bbQ_p}$ is elliptic in $I_{\bbQ_p}$, it transfers to the inner form $G_{\ga_p}$ and hence the class of $G_{\ga_p}$ lies in the image of the upper horizontal arrow. From the commutativity of the diagram we see that the class of $\tilde{G}_{\ga_p}$ lies in the image of the lower horizontal arrow, which means that $\tilde{T}_{\bbQ_p}$ transfers to the inner form $\tilde{G}_{\bbQ_p}=C_{\bbQ_p}^\times$. \par 
Now we get an embedding $\tilde{T}_{\bbQ_p}\to\tilde{G}_{\bbQ_p}$ which on $\bbQ_p$-points gives rise to a group homomorphism $\varphi:N_{\bbQ}^\times\to C_{\bbQ_p}^\times$. We want to show that $\varphi$ extends to an embedding of $\bbQ_p$-algebras $N_{\bbQ_p}\to C_{\bbQ_p}$. Recall that $N$ is a finite product of fields (cf.\,\cite[p.420]{Ko-points}). Take an element $\ga\in N_{\bbQ_p}^\times$ such that $N_{\bbQ_p}=\bbQ_p[\ga]$, and note that $\ga$ is automatically a regular semisimple element in $\tilde{I}(\bbQ_p)$: its centralizer in $\tilde{I}$ is precisely the maximal torus $\tilde{T}$, thanks to the double centralizer theorem applied to $N$ in ${\rm End}_B(A)$. Let $f_\ga(X)\in\bbQ_p[X]$ be the characteristic polynomial of $\ga$, viewed as an element in the semisimple $\bbQ_p$-algebra $\mathrm{End}_B(A)_{\bbQ_p}$. Then $f_\ga(X)$ is also the minimal polynomial of $\ga$ and we have $N_{\bbQ_p}=\bbQ_p[X]/(f_\ga(X))$. By the definition of transfer of semisimple elements, $f_\ga(X)$ is the characteristic polynomial of $\varphi(\ga)\in C_{\bbQ_p}$ and hence $f_\ga(\varphi(\ga))=0$ by the Cayley-Hamilton theorem. Thus $\varphi$ extends to an embedding of $\bbQ_p$-algebras $N_{\bbQ_p}=\bbQ_p[X]/(f_\ga(X))\to C_{\bbQ_p}$ and we are done.
\end{proof}

Conversely, the construction in \cite[\S18]{Ko-points} shows that every Kottwitz triple comes from a polarized virtual $\cO_\bbB$-abelian variety, keeping in mind that in our situation the invariant $\alpha(\ga_0;\ga,\delta)$ is automatically trivial (see \cite{Ko-simple};  one checks that the arguments in \cite[\S18]{Ko-points} go through without the assumption made in \cite[\S5]{Ko-points} that $\bbB_{\bbQ_p}$ is a product of matrix algebras over unramified extension of $\bbQ_p$). Altogether, combined with the vanishing property Theorem~\ref{thm:main-local} we can follow the procedure in \cite[\S19]{Ko-points} and convert the sum in Corollary~\ref{cor:trace-coarse-expansion} to a sum over equivalence classes of degree $j$ Kottwitz triples:
\begin{equation}\label{eq:trace-Kott-triple}
    \mathrm{Tr}(\tau\times hf^p| H_\xi^*)=\sum_{(\ga_0;\ga,\delta)}c(\ga_0;\ga,\delta)\,O_\ga(f^p)\,TO_{\delta\sigma}(\phi_{\tau,h})\,\mathrm{tr}\,\xi(\ga_0)
\end{equation}
where $c(\ga_0;\ga,\delta)$ is the product of $\mathrm{vol}(I(\bbQ)\backslash I(\bbA_f))$ and the order of the finite group
\[\ker[\ker^1(\bbQ,I_0)\to\ker^1(\bbQ,\bbG)].\]
Note that we do not need to multiply the right hand side of (\ref{eq:trace-Kott-triple}) by the order of $\ker^1(\bbQ,\bbG)$, as this group is trivial for our choice of $\bbG$. (See, for example \cite[proof of Proposition 3.2]{Sch-LK-simple}.) By the discussion in \cite[\S4]{Ko-lambda} and the fact that in our situation $|\mathfrak{K}(I_0/\bbQ)|=1$ in the notation of \emph{loc.cit.} (cf.\cite{Ko-simple}), we have
\begin{equation}\label{eq:c-formula}
    c(\ga_0;\ga,\delta)=\tau(\bbG)~\mathrm{vol}(A_\bbG(\bbR)^0\backslash I(\bbR))^{-1}.
\end{equation}

\subsubsection{Pseudo-coefficients}\label{sec:pseudo-coefficients}
Recall from \cite{Ko-simple} that the algebraic representation $\xi$ of $\bbG(\bbC)$ determines a discrete series $L$-packet of $\bbG(\bbR)$. We let $f^{\bbG}_{\xi,\infty}$ be $(-1)^{q(\bbG)}$ times the sum of pseudo-coefficients for representations in this $L$-packet, divided by the cardinality of the $L$-packet. Here $q(\bbG)$ is half the real dimension of the symmetric space associated to $\bbG(\bbR)$. By \cite[Lemma 3.1]{Ko-simple}, for all semisimple elements $\ga_\infty\in\bbG(\bbR)$, we have $SO_{\ga_\infty}(f^{\bbG}_{\xi,\infty})=0$ unless $\ga_\infty$ is elliptic, in which case 
\begin{equation}\label{eq:SO-infty}
    SO_{\ga_\infty}(f^{\bbG}_{\xi,\infty})=\mathrm{tr\xi(\ga_\infty)}\mathrm{vol}(A_{\bbG}(\bbR)^0\backslash I(\bbR))^{-1}e(I_{\ga_\infty}^c)
\end{equation}
where $I_{\ga_\infty}^c$ is the inner form of the centralizer of $\ga_\infty$ in $\bbG(\bbR)$ that is anisotropic modulo the center of $\bbG$ and $e(I_{\ga_\infty}^c)$ is its Kottwitz sign whose definition we recall next. Actually we have $I_{\ga_\infty}^c=I_\bbR$. Indeed, $I_\bbR$ is an inner form of $\bbG_{\bbR, \gamma_0}$ (see \cite[p.\,423]{Ko-points}) and we point out below that $I_\bbR$ is anisotropic mod the center of $G_\bbR$.

\subsubsection{Kottwitz sign}\label{sec:kottwitz-sign}
In \cite{Ko-sign}, Kottwitz defines a sign $e(H)\in\{\pm1\}$ for any connected reductive group $H$ over a local field, which equals to $1$ if $H$ is quasi-split. Moreover, by the main theorem of \emph{loc. cit.}, if $H$ is a reductive group over a global field $F$, then there is a product formula $\prod_ve(H_v)=1$, where the product is over all places of $F$.\par 
In our situation, for any Kottwitz triple $(\ga_0;\ga,\delta)$ there is a reductive group $I$ over $\bbQ$ such that 
\begin{itemize}
    \item for any prime $l\ne p$, $I_l\cong\bbG_{\ga_l}$, the centralizer of $\ga_l$ in $\bbG(\bbQ_l)$;
    \item $I_p\cong\bbG_{\delta\sigma}$, the $\sigma$-centralizer of $\delta$ in $\bbG(\bbQ_{p^r})$;
    \item $I_\bbR$ is anisotropic mod center.
\end{itemize}
The existence of $I$ is proved in \cite[\S2]{Ko-lambda} (the condition $\alpha(\ga_0;\ga,\delta)=1$ in \emph{loc. cit.} is automatic in our situation, cf. \cite{Ko-simple}). In fact, $I$ is the automorphism group of a polarized virtual $\cO_\bbB$-abelian variety whose Kottwitz triple is equivalent to $(\ga_0;\ga,\delta)$. Let $e(\ga):=\prod_{l\ne p}e(\bbG_{\ga_l})$ (product over all finite primes different from $p$) and $e(\delta):=e(\bbG_{\delta\sigma})$. Then by the product formula, we have 
\[e(\ga)e(\delta)=e(I_\bbR).\] 

\subsubsection{Finishing the proof of Theorem \ref{thm:cohomology-isom}}
By Theorem~\ref{thm:main-local} we have
\begin{equation}
    e(\delta)TO_{\delta\sigma}(\phi_{\tau,h})=SO_{\ga_p}(f_{\tau,h})
\end{equation}
where $\ga_p\in\bbG(\bbQ_p)$ is stably conjugate to $\cN\delta=\delta\sigma(\delta)\dotsm\sigma^{r-1}(\delta)$ and $f_{\tau,h}:=z_{\tau,-\mu}*h$.\par
Combining all the discussions above we get
\[\mathrm{Tr}(\tau\times hf^p|H_\xi^*)=\tau(\bbG)\sum_{\ga_0}SO_{\ga_0}(f_{\tau,h}f^pf_{\xi,\infty}^{\bbG})\]
where the sum runs over \emph{stable} conjugacy classes in $\bbG(\bbQ)$ (which are automatically semisimple since $\bbG$ is anisotropic mod center). Then by \cite[Lemma 4.1]{Ko-simple} and the simple trace formula for $\bbG$ we get 
\begin{equation}
    \begin{split}
        \mathrm{Tr}(\tau\times hf^p|H_\xi^*)&=\sum_{\pi}m(\pi)\,\mathrm{tr}\,\pi(f_{\tau,h}f^pf_{\xi,\infty}^{\bbG})=\sum_{\pi_f}a(\pi_f)\,\mathrm{tr}\,\pi_f(f_{\tau,h}f^p)\\
        &=\sum_{\pi_f}a(\pi_f)\,\mathrm{tr}(\tau|(r_{-\mu}\circ\varphi_{\pi_p}|_{W_{\bbE_\fp}})|\cdot|^{-\dim\mathrm{Sh}/2})\,\mathrm{tr}(hf^p|\pi_f)
    \end{split}
\end{equation}
where the first sum runs over automorphic representations $\pi$ of $\bbG$, $m(\pi)$ denotes the automorphic multiplicity, while the second and third sums run over irreducible admissible representations of $\bbG(\bbA_f)$. The second equality uses the definition of $a(\pi_f)$ from \cite[p.657]{Ko-simple}:
\begin{equation}\label{eq:pi-f}
    a(\pi_f)=\sum_{\pi_\infty}m(\pi_f\otimes\pi_\infty)\,\mathrm{tr}\,\pi_\infty(f^{\bbG}_{\xi,\infty}).
\end{equation}

The third equality uses the definition of the element $z_{\tau,-\mu}$ in the stable Bernstein center of $\bbG(\bbQ_p)$. This finishes the proof of Theorem~\ref{thm:cohomology-isom}.

\subsection{Lefschetz number via central functions}\label{sec:Lefschetz-number-central-function}
In this section we record some results of independent interest. We express the Lefschetz number in \eqref{eq:trace-Kott-triple} and its semisimple variant in terms of elements in the Bernstein center instead of the function $\phi_{\tau,h}$.\par 
As usual, we fix an integer $j\ge1$, an element $\tau\in\mathrm{Frob}_E^jI_E\subset W_E$ and let $r=j[\kappa_E:\bbF_p]$. Recall from \S\ref{sec:main-local-thm-statements} the element $z_{\tau,-\mu}^{(r)}$ in the (stable) Bernstein center of $\bbG(\bbQ_{p^r})$. 

\begin{prop}\label{prop:tr-central-function}
Suppose that $h\in C_c^\infty(\bfG_\cL(\bbZ_p))$ is a base change transfer of a function $\tilde{h}\in C_c^\infty(\bbG(\bbQ_{p^r}))$ that satisfies the vanishing property in the sense of Definition \ref{defn:vanishing-property}. Then we have the following formula for the Lefschetz number
\[\mathrm{Tr}(\tau\times hf^p| H_\xi^*)=\sum_{(\ga_0;\ga,\delta)}c(\ga_0;\ga,\delta)\,O_\ga(f^p)\,TO_{\delta\sigma}(z_{\tau,-\mu}^{(r)}*\tilde{h})\,\mathrm{tr}\,\xi(\ga_0)\]
where the sum runs over equivalence classes of degree $j$-Kottwitz triples.
\end{prop}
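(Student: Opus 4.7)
The plan is to combine the Lefschetz trace formula expression \eqref{eq:trace-Kott-triple} with the twisted orbital integral identity of Corollary \ref{cor:TO-equality}. Starting from
\[\mathrm{Tr}(\tau\times hf^p| H_\xi^*)=\sum_{(\ga_0;\ga,\delta)}c(\ga_0;\ga,\delta)\,O_\ga(f^p)\,TO_{\delta\sigma}(\phi_{\tau,h})\,\mathrm{tr}\,\xi(\ga_0),\]
which was already established via the Langlands-Kottwitz-Scholze method using the vanishing property (Theorem \ref{thm:vanishing-property}) together with Proposition \ref{prop:Kott-triple} to assign a Kottwitz triple to each contributing isogeny class, it suffices to replace $TO_{\delta\sigma}(\phi_{\tau,h})$ by $TO_{\delta\sigma}(z_{\tau,-\mu}^{(r)}*\tilde h)$ termwise.

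For each degree $j$ Kottwitz triple $(\ga_0;\ga,\delta)$, Definition \ref{def:Kott-triple} requires that the naive norm $N_r\delta$ be stably conjugate to the semisimple element $\ga_0\in\bbG(\bbQ)$; in particular $N_r\delta$ is semisimple. Hence the hypothesis of Corollary \ref{cor:TO-equality} applies to every $\delta$ appearing in the sum, provided $h$ is a base change transfer of a function $\tilde h\in C_c^\infty(\bbG(\bbQ_{p^r}))$ satisfying the vanishing property. This is exactly the assumption we are given.

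Applying Corollary \ref{cor:TO-equality} therefore yields, for each such $\delta$, the identity
\[TO_{\delta\sigma}(\phi_{\tau,h})=TO_{\delta\sigma}(z_{\tau,-\mu}^{(r)}*\tilde h).\]
Substituting this into \eqref{eq:trace-Kott-triple} gives the desired formula. The substantive input is thus entirely supplied by Corollary \ref{cor:TO-equality}, which in turn rests on Theorem \ref{thm:main-local}, Theorem \ref{thm:center-base-change}, and Proposition \ref{prop:vanishing-property}; there is no further obstacle at this stage.
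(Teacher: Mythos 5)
Your proof is correct and follows exactly the same route as the paper: combine the Lefschetz trace expansion \eqref{eq:trace-Kott-triple} with Corollary \ref{cor:TO-equality} to replace $TO_{\delta\sigma}(\phi_{\tau,h})$ termwise. Your explicit remark that $N_r\delta$ is automatically semisimple for any Kottwitz triple (so Corollary \ref{cor:TO-equality} really applies to every term in the sum) is a useful detail the paper leaves implicit.
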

\begin{proof}
This is an immediate consequence of Equation \eqref{eq:trace-Kott-triple} and Corollary \ref{cor:TO-equality}. 
\end{proof}

In \cite{Ko-BC}, for certain pairs of compact open subgroups $K_p\subset\bbG(\bbQ_p)$, $K_{p^r}\subset\bbG(\bbQ_{p^r})$ satisfying the axioms in \emph{loc. cit.} p.240, Kottwitz shows that the idempotent $e_{K_p}=\frac{1}{\mathrm{vol}(K_p)}1_{K_p}$ is a base change transfer of the idempotent $e_{K_{p^r}}=\frac{1}{\mathrm{vol}(K_{p^r})}1_{K_{p^r}}$ and that the latter has the vanishing property (see the discussion on \emph{loc. cit.} p.244, the paragraph above \S2). For example, if $\mathcal{G}$ is a smooth connected group scheme over $\bbZ_p$ with generic fibre $\bbG_{\bbQ_p}$, then $K_p=\mathcal{G}(\bbZ_p)$ and $K_{p^r}=\mathcal{G}(\bbZ_{p^r})$ satisfy the axioms of Kottwitz.\par 
The following consequence verifies a conjecture of the second author and Kottwitz (cf. \cite[Conjecture 6.1.1]{Ha14}) in our situation. It involves the element $z^{(r)}_{-\mu}=\int_{\mathrm{Frob}_E^jI_E}z_{\tau,-\mu}^{(r)}d\tau$ in the (stable) Bernstein center of $\bbG(\bbQ_{p^r})$ defined in \S\ref{sec:main-local-thm-statements}.

\begin{cor} \label{cor:Lefschetz-number-central-function}
Let $K_p\subset\bbG(\bbQ_p)$ and $K_{p^r}\subset\bbG(\bbQ_p)$ be compact open subgroups satisfying Kottwitz's axiom as above. Let $K^p\subset\bbG(\bbA_f^p)$ be a sufficiently small compact open subgroup and $f^p\in C_c^\infty(K^p\backslash\bbG(\bbA_f^p)/K^p)$ a test function bi-invariant under $K^p$. Then we have
\[\mathrm{Tr}^{\mathrm{ss}}(\mathrm{Frob}_\fp^j\times f^p| H^*(\Sh_{K_pK^p}\otimes_\bbE\bar{\bbQ}, \mathcal{F}_{K^pK_p})=\sum_{(\ga_0;\ga,\delta)}c(\ga_0;\ga,\delta)\,O_\ga(f^p)\,TO_{\delta\sigma}(z^{(r)}_{-\mu}*e_{K^{p^r}})\,\mathrm{tr}\xi(\gamma_0)\]
where the sum runs over equivalence classes of degree $j$-Kottwitz triples. 
\end{cor}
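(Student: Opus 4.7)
The plan is to deduce this corollary from Proposition~\ref{prop:tr-central-function} by integrating both sides over the coset $\mathrm{Frob}_E^j I_E$ against the Haar measure $d\tau$ of total volume $1$, exploiting the defining relations $\phi^{(r)}_h(\delta) = \int \phi_{\tau,h}(\delta)\,d\tau$ and $z^{(r)}_{-\mu} = \int z^{(r)}_{\tau,-\mu}\,d\tau$, together with the compatibility of semisimple trace with such integration.

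Concretely, I would set $h := e_{K_p}$ and $\tilde h := e_{K_{p^r}}$. The hypotheses of Proposition~\ref{prop:tr-central-function} are satisfied: by the results of Kottwitz recalled just before the statement of the corollary (\cite{Ko-BC}), the pair $(K_p, K_{p^r})$ satisfies the required axioms, so $e_{K_p}$ is a base change transfer of $e_{K_{p^r}}$, and the latter has the vanishing property. Applying Proposition~\ref{prop:tr-central-function} thus gives, for every $\tau \in \mathrm{Frob}_E^j I_E$,
\[
\mathrm{Tr}(\tau\times e_{K_p}f^p\,|\, H_\xi^*) = \sum_{(\ga_0;\ga,\delta)} c(\ga_0;\ga,\delta)\,O_\ga(f^p)\,TO_{\delta\sigma}(z^{(r)}_{\tau,-\mu}*e_{K_{p^r}})\,\mathrm{tr}\,\xi(\ga_0).
\]

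For the left-hand side, since $f^p$ is $K^p$-biinvariant and $e_{K_p}$ is the idempotent for $K_p$, the operator $e_{K_p}f^p$ factors the action through the cohomology $H^*(\mathrm{Sh}_{K_pK^p}\otimes_\bbE\bar\bbQ, \cF_{K_pK^p})$. By the definition of semisimple trace as in \cite[\S3]{HN02} (and identity~\eqref{eq:test-function-integral}, applied here on the spectral side), integration of the trace over $\tau \in \mathrm{Frob}_E^j I_E$ yields
\[
\int_{\mathrm{Frob}_E^j I_E} \mathrm{Tr}(\tau\times e_{K_p}f^p\,|\, H_\xi^*)\,d\tau = \mathrm{Tr}^{\mathrm{ss}}\bigl(\mathrm{Frob}_\fp^j\times f^p \,\big|\, H^*(\mathrm{Sh}_{K_pK^p}\otimes_\bbE\bar{\bbQ}, \cF_{K_pK^p})\bigr),
\]
since the inertia $I_E$ acts through a finite quotient on the semisimplification of the Galois action. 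For the right-hand side, for fixed $f^p$ and fixed $K_{p^r}$ only finitely many Kottwitz triples contribute a nonzero summand (the support of $z^{(r)}_{\tau,-\mu}*e_{K_{p^r}}$ is uniformly bounded in $\tau$, since $z^{(r)}_{\tau,-\mu}$ only depends on $\tau$ through a finite quotient of $I_E$), so one may interchange the finite sum with the integral over $\tau$. Linearity of convolution and twisted orbital integrals in the first argument, combined with the definition $z^{(r)}_{-\mu} = \int z^{(r)}_{\tau,-\mu}\,d\tau$, gives
\[
\int_{\mathrm{Frob}_E^j I_E} TO_{\delta\sigma}(z^{(r)}_{\tau,-\mu}*e_{K_{p^r}})\,d\tau = TO_{\delta\sigma}(z^{(r)}_{-\mu}*e_{K_{p^r}}).
\]
Assembling the two computations yields the claimed identity.

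The only subtlety is the passage of integration inside the twisted orbital integral and inside the trace defining the semisimple Frobenius trace, but both reduce to finite sums once one uses that inertia acts through a finite quotient on both the Galois representation at level $K_pK^p$ and on the relevant Bernstein-center parameters. Thus I do not anticipate any serious obstacle beyond careful bookkeeping; the real content of the corollary lies in Proposition~\ref{prop:tr-central-function}, which in turn relies on the main local results (Theorems~\ref{thm:vanishing-property} and~\ref{thm:main-local}).
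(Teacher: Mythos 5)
Your proposal is correct and follows the same route as the paper: take $h = e_{K_p}$ and $\tilde h = e_{K_{p^r}}$, invoke Kottwitz's result from \cite{Ko-BC} to verify the hypotheses of Proposition~\ref{prop:tr-central-function}, and integrate both sides over $\tau \in \mathrm{Frob}_E^j I_E$ using the defining relations $z^{(r)}_{-\mu} = \int z^{(r)}_{\tau,-\mu}\,d\tau$ and the integral characterization of $\mathrm{Tr}^{\mathrm{ss}}$. Your additional remarks on the interchange of integration with the finite sum and the finiteness of the inertia action are correct and simply make explicit what the paper leaves implicit.
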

\begin{proof}
This follows by integrating the equation in the previous Proposition \ref{prop:tr-central-function} over $\tau\in\mathrm{Frob}_E^jI_E$ and taking $h=e_{K_p}$ (so that $\tilde{h}$ can be taken to be $e_{K^{p^r}}$ by Kottwitz's result in \cite{Ko-BC}). 
\end{proof}

\section{Proof of the local results}\label{sec:proof-local-thm}
In this section we prove Theorem~\ref{thm:vanishing-property} and Theorem~\ref{thm:main-local} by a local-global argument. It suffices to prove the local result for the group $G=\mathrm{Res}_{F/\bbQ_p}\mathrm{GL}_{m}(D)$ where $F$ is a finite extension of $\bbQ_p$ and $D$ a division algebra with center $F$. Then $G$ is an inner form of the quasi-split group $G^*=\mathrm{Res}_{F/\bbQ_p}\mathrm{GL}_n$ where $n=m(\dim_FD)^{\frac{1}{2}}$. By Proposition \ref{prop:independence-of-EL-data}, we may assume $B:=M_m(D)^{\mathrm{op}}$ and $V=M_m(D)$ viewed as a left $B$-module via right multiplication. Then $C\cong M_m(D)$ acts on $V$ by left multiplication.\par 
Let $\cO_D$ be the unique maximal order in $D$ and take $\cO_B:=M_m(\cO_D)^{\mathrm{op}}$. As in \S\ref{sec:EL-data} we are given a chain of $\cO_B$-lattices $\cL$ in $V$ giving rise to a $\bbZ_p$-model $\cG_\cL$ of $G$ so that $\cG_\cL(\bbZ_p)$ is a parahoric subgroup of $G(\bbQ_p)$.\par
Let $\Phi_F:=\mathrm{Hom}_{\bbQ_p}(F,\bar{\bbQ}_p)$. 
We can choose a representative $\mu$ of the conjugacy class $\bar{\mu}$ of the form
\begin{equation}\label{eq:local-cocharacter}
    \xymatrix@R=1pt{
    \mu:\bbG_{m,\bar{\bbQ}_p}\ar[r] & G_{\bar{\bbQ}_p}=\prod_{\Phi_F}\mathrm{GL}_{n,\bar{\bbQ}_p}\\
    t\ar@{|->}[r] & (\mu_{\tau}(t))_{\tau\in\Phi_F}
    }
\end{equation}
where for each $\tau\in\Phi_F$, $\mu_{\tau}(t)=\mathrm{diag}(t,\dotsc,t,1,\dotsc,1)$ in which $t$ occurs $a_\tau$ times. The conjugacy class $\bar{\mu}$ is then uniquely determined by the tuple of integers $(a_\tau)_{\tau\in\Phi_F}$.

\subsection{Globalization}\label{sec:global-data}
\subsubsection{CM fields}\label{sec:CM-field}
Starting from the local objects above, we are going to define some global objects, in particular a global field $\bbF$ and a group $\bbG$ over it, among others; we warn the reader that despite the common notation, these objects will not be the same as the ones defined in the Shimura variety context (see section \ref{sec:Shimura-data}). \par
Let $\bbF_0$ be a totally real field such that there is a unique place $v$ of $\bbF_0$ above $p$ and the $v$-adic completion $\bbF_{0,v}$ is isomorphic to the local field $F$. The existence of such an $\bbF_0$ is guaranteed by \cite[Lemme 8.1.2]{Far04}. Let $\bbK$ be an imaginary quadratic field in which $p$ is split and let $\bbF=\bbF_0\bbK$ be the CM field with maximal totally real subfield $\bbF_0$. The complex conjugations on $\bbK$, $\bbF$ and $\bbC$ will all be denoted by the letter ``$c$". Fix a place $w$ of $\bbF$ above $v$. Then $v$ splits in $\bbF$ as $v=ww^c$. We fix field isomorphisms $\bbF_w\cong\bbF_{0,v}\cong F$.\par 
Fix a field embedding $\tau_\bbK:\bbK\into\bbC$ and let $\Phi=\{\varphi\in\mathrm{Hom}_{\bbQ}(\bbF,\bbC)|\varphi_\bbK=\tau_\bbK\}$ be the CM-type of $\bbF$ determined by $\tau_\bbK$. In particular, we have a decomposition $\mathrm{Hom}_{\bbQ}(\bbF,\bbC)=\Phi\sqcup c\Phi$.\par 
For each place $x$ of $\bbQ$, we fix a field isomorphism $\iota_x:\bbC\xrightarrow{\sim}\bar{\bbQ}_x$. Then the isomorphism $\iota_p$ induces a bijection $\Phi\xrightarrow{\sim}\Phi_F=\mathrm{Hom}_{\bbQ}(F,\bar{\bbQ}_p)$ which permits us to associate to any $\tau\in\Phi$ an integer $a_\tau\in\{0,\dotsm,n\}$ occurring in the definition of the local cocharacter $\mu$ in \eqref{eq:local-cocharacter}.

\subsubsection{Division algebras with involution}\label{sec:division-algebra}
We write the invariant of the local division algebra $D$ in the form $\mathrm{inv}(D)=\frac{a}{n}$ for some $a\in\bbZ$.\par 
Let $\bbD$ be a central division algebra of dimension $n^2$ over $\bbF$ satisfying the following conditions:
\begin{enumerate}
    \item The opposite algebra $\bbD^{\rm op}$ is isomorphic to  $\bbD\otimes_{\bbK,c}\bbK$ (where both sides are viewed as algebras over $\bbK$).
    \item $\bbD_w\cong B^{\mathrm{op}}=M_m(D)$;
    \item At any place $x\ne w,w^c$ of $\bbF$ that is split over $\bbF_0$, $\bbD_x$ is either split or a division algebra and there is at least one such place $x$ such that $\bbD_x$ is a division algebra;
    \item At any place $x\ne w,w^c$ of $\bbF$ that is not split over $\bbF_0$, $\bbD_x$ is split;
    \item If $n$ is even, then $a+[\bbF_0:\bbQ]n/2+\sum_{\tau\in\Phi}a_\tau$ is congruent modulo 2 to the number of places $v'\ne v$ of $\bbF_0$ such that $\bbD$ is ramified at some place of $\bbF$ lying above $v'$.
\end{enumerate}

Let $\bbB:=\bbD^{\mathrm{op}}$ and let $\bbV= \bbB$ be viewed as a left $\bbB\otimes_{\bbF}\bbB^{\mathrm{op}}$-module via left and right multiplication. That is, for $b_1, b_2, x \in \bbB$, the action is given by $(b_1 \otimes b_2)(x) = b_1 x b_2$, the product in $\bbB$.  Note that this is equivalent to the set-up in $\S\ref{sec:PEL-data}$, although the conventions look superficially different.

We choose a \emph{positive} involution $\dagger$ on $\bbB$ of the second kind (i.e. $\dagger$ restricts to complex conjugation on the center $\bbF$ and for any $x\in\bbB^\times$ we have $\mathrm{Tr}_{\bbB/\bbQ}(xx^\dagger)>0$). This is possible by condition (1) and (4), see the discussion on \cite[page 51-52]{HT01}. The condition in (3) that $\bbD$ is a division algebra over at least one place $x$ of $\bbF$ that splits over $\bbF_0$ will be needed in the proof of Theorem~\ref{thm:global-base-change}: it comes from an assumption in \cite[Th\'eor\`eme A.5.2]{La99}. 

\subsubsection{Unitary similitude groups} \label{sec:unit_sim_grps}
Above we consider the pair $(\bbB, \dagger)$ where $\dagger$ is a positive involution of the second type on $\bbB = \bbD^{\rm op}$.  We next need to reverse the procedure used in sections $\S\ref{sec:Shimura-data}, \ref{sec:PEL-data}$, and construct from this a suitable involution $*_\beta$ on $\bbD$ (which is not necessarily positive).

For any invertible element $0\ne\beta\in \bbB$ such that $\beta^\dagger=-\beta$ we can associate the following objects:
\begin{itemize}
    \item An involution of the second kind $*_\beta$ on $\bbB$ defined by
    \[b^{*_\beta}:=\beta b^\dagger \beta^{-1}\quad\forall b\in\bbB.\]
    \item A non-degenerate alternating pairing $(\cdot,\cdot)_\beta$ on $\bbV$ defined by
    \[(x_1,x_2)_\beta:=\mathrm{tr}_{ \bbB/\bbQ}(x_1 \beta x_2^\dagger)  \quad\forall x_1,x_2\in\bbV.\]
    Then for all $b_1,b_2\in \bbB$ we have
    $(b_1x_1b_2, x_2)_\beta=(x_1,b_1^\dagger   x_2b_2^{*_\beta})_\beta$. 
    \item A $\bbQ$-algebraic group $\bbG_\beta$ defined as the subgroup of $\mathrm{Aut}_\bbB\bbV$ preserving the form $(\cdot,\cdot)_\beta$ up to a scalar in $\bbQ$. Since $\mathrm{End}_{\bbB}(\bbV)=\bbD$, for any $\bbQ$-algebra $R$ we have 
\[\bbG_\beta(R)=\{g\in\bbD\otimes_\bbQ R \,| \, xx^{*_\beta}\in R^\times\}.\]
\end{itemize}
The map $g\mapsto gg^{*_\beta}$ defines a homomorphism of $\bbQ$-algebraic groups $c:\bbG_\beta\to\bbG_m$. Let $\bbG_{\beta,0}\subset\bbG_\beta$ be its kernel. In other words, for any $\bbQ$-algebra $R$ we have
\[\bbG_{\beta,0}(R)=\{g\in\bbB^{op}\otimes_\bbQ R \,| \, xx^{*_\beta}=1\}.\]
Let $\bbG_\beta^{\mathrm{ad}}$ be the adjoint group of $\bbG_{\beta,0}$ (and also $\bbG_\beta$). Sometimes we also view $\bbG_{\beta,0}$ and $\bbG_\beta^{\mathrm{ad}}$ as algebraic groups defined over $\bbF_0$.

\begin{lem}\label{lem:global-group}
There exists $0\ne\beta\in\bbB^{\dagger=-1}$ such that
\begin{itemize}
    \item At any finite prime that is non-split in $\bbK$, $\bbG_\beta$ and $\bbG_{\beta,0}$ are quasi-split;
    \item For any infinite place $\tau:\bbF_0\into\bbR$, the pairing $(\cdot\,, \cdot\,)_\beta$ on \[\bbV\otimes_\bbQ\bbR\cong\prod_{\tau\in\mathrm{Hom}_{\bbQ}(\bbF_0,\bbR)}\bbV\otimes_{\bbF_0,\tau}\bbR\] 
    has invariant $(a_\tau,n-a_\tau)$ at the factor corresponding to $\tau$.
\end{itemize}
\end{lem}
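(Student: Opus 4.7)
The plan is to recast the problem as a Hasse-principle question for the existence of a unitary similitude group $\bbG_\beta$ over $\bbF_0$ with prescribed local behaviour, and then to verify that the compatibility condition forced by the product formula is precisely the parity condition (5) imposed on $\bbD$ in $\S\ref{sec:division-algebra}$.

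First I would translate the data into local invariants. Choices of $\beta \in \bbB$ with $\beta^\dagger = -\beta$ (modulo $\bbF_0^\times$-scaling) are equivalent, via the construction $\beta \mapsto (\cdot,\cdot)_\beta$, to non-degenerate $\dagger$-Hermitian forms on $\bbV$ viewed as a right $\bbB$-module, and hence to isomorphism classes of unitary groups over $\bbF_0$ associated with the pair $(\bbB,\dagger)$. For each place $v'$ of $\bbF_0$ the local group $\bbG_{\beta,0,v'}$ has the following description:
\begin{itemize}
\item if $v'$ splits in $\bbK$, then $\bbG_{\beta,0,v'} \cong \bbB_{w'}^{\times,\,0}$ where $w'$ lies over $v'$, and there is no local invariant to prescribe;
\item if $v'$ is finite and non-split in $\bbK$, then $\bbG_{\beta,0,v'}$ is an inner form of the quasi-split unitary group $U(n)$ over $\bbF_{0,v'}$; by Landherr's theorem its isomorphism class is pinned down by a single invariant in $\bbZ/2\bbZ$, and the quasi-split class is one prescribed value;
\item if $v'$ is infinite, then $\bbF_{0,v'} = \bbR$, the extension $\bbF_{v'}/\bbF_{0,v'}$ is $\bbC/\bbR$, $\bbB$ splits at $v'$ by (4), and $\bbG_{\beta,0,v'}$ is the unitary group $U(p_{v'},q_{v'})$ with $p_{v'}+q_{v'}=n$; the signature $(p_{v'},q_{v'})$ is the invariant.
\end{itemize}

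Next I would identify the target invariants. At each infinite place $\tau\in\mathrm{Hom}_{\bbQ}(\bbF_0,\bbR)$, writing the unique lift in $\Phi$ also as $\tau$, I would prescribe the signature to be $(a_\tau,\,n-a_\tau)$; this is exactly the second bullet in the lemma, and it matches the requirement, coming from the Shimura datum, that the corresponding Hermitian form on $\bbV\otimes_{\bbF_0,\tau}\bbR$ have type $(a_\tau, n-a_\tau)$. At each finite non-split place $v'$ the target invariant is the quasi-split class. Together these constitute a family of local invariants $\{\epsilon_{v'}\}$ indexed by the non-split places of $\bbF_0$.

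The main step, and the key obstacle, is to check that this family $\{\epsilon_{v'}\}$ satisfies the product formula that governs Hermitian forms over $(\bbB,\dagger)$. For Hermitian forms on a fixed rank-one right $\bbD$-module associated to the positive involution $\dagger$, the global class is classified by the sum of the $\bbZ/2\bbZ$-valued invariants $\epsilon_{v'}$ over the set of non-split places, and this sum is constrained to equal a specific element of $\bbZ/2\bbZ$ computable from the Brauer class of $\bbD$ together with $[\bbF_0:\bbQ]$ (see for example \cite[Lemma I.7.1]{HT01} and \cite[$\S5.2$]{Ko-simple}). The invariant at a finite non-split place is $0$ when the local group is quasi-split and the invariant at an infinite place $\tau$ is, up to a fixed normalisation, congruent modulo $2$ to $a_\tau + n/2$ (or $0$ when $n$ is odd). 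Summing over all non-split places therefore yields, modulo $2$, the expression $[\bbF_0:\bbQ]n/2 + \sum_{\tau \in \Phi}a_\tau$ when $n$ is even, plus a contribution from the non-quasi-split finite split places of $\bbF$ where $\bbD$ is ramified. The required global compatibility is then exactly condition (5) in $\S\ref{sec:division-algebra}$. The hard part is the careful bookkeeping of these local Hasse invariants, especially the contribution of the places where $\bbD$ ramifies, which uses the description of the local invariant of a Hermitian form in terms of the Hasse invariant of the underlying central simple algebra.

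Finally, once the family $\{\epsilon_{v'}\}$ is shown to be admissible, I would invoke the Hasse principle for Hermitian forms over $(\bbB,\dagger)$ (equivalently for $\bbG_{\beta,0}^{\mathrm{ad}}$, via $H^1(\bbF_0,\bbG_{\beta,0}^{\mathrm{ad}}) \hookrightarrow \bigoplus_{v'} H^1(\bbF_{0,v'},\bbG_{\beta,0}^{\mathrm{ad}})$ and the known image, cf.\,\cite{Ko-ellsing}) to produce a $\beta$ whose local invariants match the $\epsilon_{v'}$. Rescaling $\beta$ by an element of $\bbF_0^\times$ if necessary to adjust the similitude factor, this $\beta$ has the required properties.
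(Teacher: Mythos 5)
Your proposal takes essentially the same approach as the paper: both recast the problem as a local-global question for classes in $H^1(\bbF_0,\bbG_{\beta_0}^{\mathrm{ad}})$, identify the relevant local invariants at split finite, non-split finite, and archimedean places, and reduce the solvability (for $n$ even) to the vanishing of a $\bbZ/2\bbZ$-valued obstruction that is precisely the parity condition (5) on $\bbD$. The paper's proof is more explicit in two respects — it fixes a reference $\beta_0$ and measures all invariants relative to it via the $\mathrm{Obs}$ map, and it actually carries out the bookkeeping you defer — but the underlying strategy, key citations, and final reduction are the same.
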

\begin{proof}
We follow the proof of \cite[Lemma I.7.1]{HT01}. First pick any $0\ne\beta_0\in\bbB^{\dagger=-1}$. In this proof, we view the associated groups $\bbG_{\beta_0,0}$ and $\bbG_{\beta_0}^{\mathrm{ad}}$ as algebraic groups over $\bbF_0$. We aim to twist all the objects defined by $\beta_0$ by a cohomology class in $H^1(\bbF/\bbF_0,\bbG_{\beta_0}^{\mathrm{ad}})$ so that the conditions in the lemma are satisfied. Concretely, such a cohomology class is represented by an element $0\ne\alpha\in\bbB$ with $\alpha^{*_{\beta_0}}=\alpha$ and the twisted objects will be associated to $\beta=\alpha\beta_0$. In particular, $\bbG_{\alpha\beta_0,0}$ is the inner form of $\bbG_{\beta_0,0}$ classified by $[\alpha]\in H^1(\bbF_0,\bbG_{\beta_0}^{\mathrm{ad}})$.\par
When $n$ is odd, the natural map of pointed sets
\[H^1(\bbF_0,\bbG_{\beta_0}^{\mathrm{ad}})\to\bigoplus_{x}
H^1(\bbF_{0,x},\bbG_{\beta_0}^{\mathrm{ad}}),\]
is surjective. Here the direct sum is over all places of $\bbF_0$, Hence there is no local-global obstruction to finding $\beta$ with prescribed local conditions.\par 
From now on we assume that $n$ is even. Then there is an exact sequence of pointed sets:
\begin{equation}\label{eq:obs-exact-seq}
    H^1(\bbF_0,\bbG_{\beta_0}^{\mathrm{ad}})\to\bigoplus_{x}
H^1(\bbF_{0,x},\bbG_{\beta_0}^{\mathrm{ad}})\xrightarrow{\mathrm{Obs}}\bbZ/2\bbZ\to1
\end{equation}
where $\mathrm{Obs}$ is the sum of local obstruction maps $\mathrm{Obs}_x:H^1(\bbF_{0,x},\bbG_{\beta_0}^{\mathrm{ad}})\to\bbZ/2\bbZ$ whose definition we recall next.\par 
If $x$ is a finite place of $\bbF_0$ that splits into two different places $y,y^c$ in $\bbF$, then $\mathrm{Obs}_x$ is the composition of the isomorphisms \[H^1(\bbF_{0,x},\bbG_{\beta_0}^{\mathrm{ad}})\cong H^1(\bbF_y,\bbG_{\beta_0}^{\mathrm{ad}})\cong H^2(\bbF_y,\mu_n)\cong\bbZ/n\bbZ\]
with the natural projection $\bbZ/n\bbZ\to\bbZ/2\bbZ$.\par 
If $x$ is a finite place of $\bbF_0$ that does not split in $\bbF$, then $\mathrm{Obs}_x$ is the natural isomorphism $H^1(\bbF_{x,0},\bbG_{\beta_0}^{\mathrm{ad}})\cong\bbZ/2\bbZ$.\par 
If $x$ is an infinite place of $\bbF_0$, then $H^1(\bbF_{x,0},\bbG_{\beta_0}^{\mathrm{ad}})$ is identified with the set of unordered pairs of non-negative integers $(a_x,b_x)$ with $a_x+b_x=n$. Then the map $\mathrm{Obs}_x$ sends such a pair $(a_x,b_x)$ to $a_x-a_{x,0}$ mod 2, where $(a_{x,0},n-a_{x,0})$ is the signature of $\bbG_{\beta_0}$ at $x$. \par 
For any place $x$ of $\bbF_0$, let $u_x\in H^1(\bbF_{x,0},\bbG_{\beta_0}^{\mathrm{ad}})$ be the class of the quasi-split inner form of $\bbG_{\beta_0,0}$. Since $u_x$ comes from the global cohomology class for the quasi-split inner form of $\bbG_{\beta_0,0}$ over $\bbF_0$, we have $\sum_x\mathrm{Obs}_x(u_x)=0$.\par 
Let $\mathrm{Spl}_{\bbF/\bbF_0}$ be the set of (finite) places of $\bbF_0$ that are split in $\bbF$.
Let $s$ be the number of places in $\mathrm{Spl}_{\bbF/\bbF_0}$, different from $v$, above which $\bbD$ is ramified. By our assumption, $s\ge1$ and for any place $y$ of $\bbF$ above such a place $x$ of $\bbF_0$, $\bbD_y$ is a division algebra so that $\mathrm{Obs}_x(u_x)=1$ since $n$ is even. Also we recall that by assumption $\mathrm{Inv}(\bbD_w)=\frac{a}{n}$ so that $\mathrm{Obs}_v(u_v)=a$. Thus we get
\[\sum_{x\in\mathrm{Spl}_{\bbF/\bbF_0}}\mathrm{Obs}_x(u_x)=a+s\]
For any infinite place $x$, $u_x$ corresponds to the pair $(\frac{n}{2},\frac{n}{2})$ and hence $\mathrm{Obs}_x(u_x)=a_{x,0}-\frac{n}{2}=a_{x,0}+\frac{n}{2}$.  Therefore we get
\begin{equation}\label{eq:quasi-split-invariant}
    a+s+[\bbF_0:\bbQ]n/2+\sum_{\tau|\infty}a_{\tau,0}+\sum_{x\notin\mathrm{Spl}_{\bbF/\bbF_0},x\nmid\infty}\mathrm{Obs}_x(u_x)=0.
\end{equation}

To finish the proof we need to show the existence of a global class $[\alpha]\in H^1(\bbF_0,\bbG_{\beta_0}^{\mathrm{ad}})$ whose image in $H^1(\bbF_{0,x},\bbG_{\beta_0}^{\mathrm{ad}})$ equals to
\begin{itemize}
    \item $0$ if $x\in\mathrm{Spl}_{\bbF/\bbF_0}$;
    \item $u_x$ if $x$ is a finite place of $\bbF_0$ not in $\mathrm{Spl}_{\bbF/\bbF_0}$;
    \item $(a_x,n-a_x)$ if $x$ is an infinite place.
\end{itemize}
By the exact sequence \eqref{eq:obs-exact-seq} it suffices to show that
\[\sum_{\tau|\infty}(a_\tau+a_{\tau,0})+\sum_{x\notin\mathrm{Spl}_{\bbF/\bbF_0},x\nmid\infty}\mathrm{Obs}_x(u_x)=0.\]
But this follows from \eqref{eq:quasi-split-invariant} and assumption (5) in \S\ref{sec:division-algebra}. 
\end{proof}

\subsubsection{The PEL data $(\bbB, \dagger, \bbV, (\cdot,\cdot), h_0)$}\label{PEL_data_beta}
From now on in section $\S5$, we fix an element $\beta$ as in Lemma~\ref{lem:global-group} and we often simply write $*$, $\bbG$, $\bbG_0$ instead of $*_\beta$, $\bbG_\beta$, $\bbG_{\beta,0}$. We also fix an isomorphism $\bbB_w\cong M_m(D)^{\mathrm{op}}$ which induces isomorphisms $\bbG_{0,\bbQ_{p}}\cong \mathrm{Res}_{F/\bbQ_p}G$ and $\bbG_{\bbQ_p}\cong\mathrm{Res}_{F/\bbQ_p} G\times\bbG_m$, where here $G = {\rm GL}_m(D)$ is regarded as a group over $F \cong \bbF_{w}$. Let us recall the global setup of \S\ref{sec:global-setup} in this special case.\par  

Let $\cO_\bbB$ be a $\bbZ_{(p)}$-order in $\bbB$ such that $\cO_{\bbB}\otimes_{\bbZ_{(p)}}\cO_{\bbF_w}\cong\cO_B$. Then we have an isomorphism 
\[\cO_\bbB\otimes\bbZ_p\cong\cO_B\times\cO_B^{op}=M_m(\cO_D^{op})\times M_m(\cO_D)\]
where the first factor corresponds to $w$ and the second factor corresponds to $w^c$.\par 
Recall that in this setting $\bbV = \bbB$ is endowed with a symmetric pairing $\langle x, y \rangle = {\rm tr}_{\bbB/\bbQ}(x y^\dagger)$. In producing the positive involution $\dagger$ we may assume we started with an involution of the 2nd type $*$ on $\bbD := \bbB^{\rm op}$ and then found, using the argument mentioned in $\S\ref{sec:Shimura-data}$, an element $0 \neq \beta_0 \in \bbB^{*=-1}$ such that $x \mapsto x^\dagger = \beta_0 x^* \beta_0^{-1}$ is a positive involution on $\bbB$.  Then we use $\beta_0$ to define the $\bbQ$-valued alternating form $(\cdot, \cdot)$ on $\bbV = \bbB$, by the formula $(x, y) = {\rm tr}_{\bbB/\bbQ}(x\beta_0 y^*)$. From $\beta_0$, we then construct $\beta$ as in Lemma \ref{lem:global-group}.

From the lattice chain $\cL$ in $V\cong\bbV_{\bbF_w}$, we obtain a $(\cdot, \cdot)$-self-dual chain $\cL^+_i$ of $\cO_\bbB$-lattices in $\bbV_{\bbQ_p}=\bbV_{F_w}\oplus\bbV_{F_{w^c}}\cong V\oplus V^*$ following the method from $\S\ref{sec:integral-data}$. 

\subsubsection{The Shimura data}
We now consider a Shimura data $(\bbG, X)$, where as above $\bbG = \bbG_\beta = {\rm GU}(\bbD, *_\beta)$ and where $X$ comes from a choice of $*$-homomorphism $h_0: {\rm Res}_{\bbC/\bbR}(\bbG_m) \rightarrow \bbG_{\bbR}$, which may be constructed as in \cite[p.\,91-92]{HT01} using our choice of CM-type $\Phi$ we fixed above. The PEL data for this Shimura variety is given by the tuple $(\bbB, \dagger, \bbV = \bbB, (\cdot, \cdot), h_0)$. Then this choice of Shimura data gives rise to a conjugacy class of cocharacters

\[\xymatrix@R=1pt{
\mu_\bbC:\bbC^\times\ar[r] & \bbG(\bbC)\cong\prod_{\Phi}\mathrm{GL}_n(\bbC)\times\bbC^\times\\
z\ar@{|->}[r] & ((\mu_{\bbC,\tau}(z))_{\tau\in\Phi},z).
}\]
We can choose a representative $\mu_\bbC$ such that for any $\tau\in\Phi$,  $\mu_{\bbC,\tau}(z)=\mathrm{diag}(z,\dotsc,z,1,\dotsc,1)$ where $z$ occurs $a_\tau$ times (see Lemma \ref{lem:global-group}). The global reflex field $\bbE$ is the field of definition of the conjugacy class $\mu_{\bbC}$.\par 
The isomorphism $\iota_p:\bbC\cong\bar{\bbQ}_p$ determines a finite place $\fp$ of $\bbE$ above $p$. The completion $\bbE_\fp$ is isomorphic to the local reflex field $E$ and we fix a field isomorphism $\bbE_\fp\cong E$. Then the isomorphism $\iota_p$ takes the conjugacy class $(\mu_{\bbC,\tau}(z))_{\tau\in\Phi}$ of cocharacters of $\bbG_0(\bbC)$ to the local conjugacy class $\bar{\mu}$ of $\bbG_0(\overline{\bbQ}_p)$.

\subsection{Base change of automorphic representations}
\subsubsection{The Langlands dual group}
Let $\bbG_\bbK$ be the base change of $\bbG$ to $\bbK$ and denote $\widetilde{\bbG}:=\mathrm{Res}_{\bbK/\bbQ}\bbG_\bbK$. The complex conjugation $c\in\mathrm{Gal}(\bbK/\bbQ)$ induces an automorphism $\theta$ of $\widetilde{\bbG}$ defined over $\bbQ$ and $\bbG$ is the closed subgroup of $\widetilde{\bbG}$ consisting of $\theta$-fixed elements. There is a natural isomorphism 
\begin{equation}\label{eq:G-tilde-description}
    \widetilde{\bbG}\cong \bbB^{{\rm op},\times}\times\mathrm{Res}_{\bbK/\bbQ}\bbG_m
\end{equation}
whose composition with the embedding $\bbG\into\widetilde{\bbG}$ is given on points by $g\mapsto(g, gg^*)$ where the second factor lies in $\bbG_m\subset\mathrm{Res}_{\bbK/\bbQ}\bbG_m$. Here and throughout this subsection, we abbreviate by writing $g^*$ in place of $g^{*_\beta}$.
Using this isomorphism, the automorphism $\theta$ of $\widetilde{\bbG}$ can be described by $\theta(g,\la):=(\la^cg^{-*},\la^c)$ where $c$ denotes the automorphism of $\mathrm{Res}_{\bbK/\bbQ}\bbG_m$ induced by complex conjugation in $\mathrm{Gal}(\bbK/\bbQ)$.\par 
The Langlands dual group ${}^L\bbG=\bbG^\vee\rtimes\mathrm{Gal}(\bar\bbQ/\bbQ)$ is described explicitly as follows. Using the CM-type $\Phi$, its identity component can be described by $\bbG^\vee=\prod_{\Phi}\mathrm{GL}_n(\bbC)\times\bbC^\times$ and the action of $\sigma\in\mathrm{Gal}(\bar\bbQ/\bbQ)$ on $\bbG^\vee$ is defined by $\sigma((g_\varphi)_{\varphi\in\Phi},z)=((h_\varphi)_{\varphi\in\Phi},z')$ where
\[h_\varphi=\begin{cases}
g_{\sigma^{-1}\varphi}\quad &\text{ if }\sigma^{-1}\varphi\in\Phi\\
w_n{}^tg_{c\sigma^{-1}\varphi}^{-1}w_n^{-1}\quad &\text{ if }\sigma^{-1}\varphi\notin\Phi
\end{cases}\]
(here $w_n$ is the $n\times n$ matrix with entries $(w_n)_{ij}=(-1)^{i}\delta_{n+1-i,j}$) and  \[z'=z\prod_{\varphi\in\Phi,\sigma^{-1}\varphi\notin\Phi}\det g_\varphi.\]
Similarly, the Langlands dual group of $\widetilde{\bbG}$ is ${}^L\widetilde{\bbG}=\widetilde{\bbG}^\vee\rtimes\mathrm{Gal}(\bar{\bbQ}/\bbQ)$ where \[\widetilde{\bbG}^\vee=\prod_{\mathrm{Hom}_{\bbQ}(\bbF,\bbC)}\mathrm{GL}_n(\bbC)\times\bbC^\times\times\bbC^\times\]
and the action of $\mathrm{Gal}(\bar{\bbQ}/\bbQ)$ factors through the quotient $\mathrm{Gal}(\bbK/\bbQ)$ with the complex conjugation acting by 
\[c((g_\varphi)_\varphi,z_1,z_2)=((g_{c\varphi})_\varphi,z_2,z_1)\]
Define an $L$-embedding $\eta_{BC}:{}^L\bbG\to{}^L\widetilde{\bbG}$ by 
\[((g_\varphi)_{\varphi\in\Phi},z)\rtimes\sigma \mapsto ((h_\varphi)_{\varphi\in\Phi\sqcup c\Phi}, z,z\prod_{\varphi\in\Phi}\det(g_\varphi))\rtimes\sigma\]
where 
\[h_\varphi=\begin{cases}g_\varphi\quad&\text{ if }\varphi\in\Phi\\
w_n{}^tg_{c\varphi}^{-1}w_n^{-1}\quad&\text{ if }\varphi\notin\Phi.
\end{cases}\]
Our goal is to define base change of automorphic representations of $\bbG$ to $\widetilde{\bbG}$ for the $L$-embedding $\eta_{BC}$.

\subsubsection{Local base change at split primes}
Let $x$ be a prime of $\bbQ$ that splits in $\bbK$ as $x=yy^c$. The canonical isomorphism $\bbQ_x\cong\bbK_y$ allows us to view $\bbQ_x$ as a $\bbK$-algebra and from \eqref{eq:G-tilde-description} we obtain an isomorphism
\begin{equation}\label{eq:G(Q_x)-description-split-prime}
    \bbG(\bbQ_x)\cong(\bbB_y^{{\rm op}})^\times\times\bbQ_x^\times.
\end{equation}

Let $\pi$ be an irreducible admissible representation of $\bbG(\bbQ_x)$. Then we can decompose it as $\pi\cong\pi_y\otimes\chi_x$ where $\pi_y$ is an irreducible admissible representation of $(\bbB_y^{op})^\times$ and $\chi_x$ is a character of $\bbQ_x^\times$. Let $\omega_\pi$ be the central character of $\pi$ and decompose its restriction to $\bbK_x^\times\cong\bbK_y^\times\times\bbK_{y^c}^\times$ as $\omega_{\pi,y}\otimes\omega_{\pi,y^c}$. Similarly, let $\omega_{\pi_y}$ be the central character of $\pi_y$.
\begin{lem}
With notations as above, we have $\chi_x=\omega_{\pi,y^c}$ and $\omega_{\pi,y}=\omega_{\pi_y}\omega_{\pi,y^c}$, all viewed as characters of $\bbQ_x^\times$ via the isomorphisms $\bbQ_x\cong\bbK_y\cong\bbK_{y^c}$. In particular, we have $\pi\cong\pi_y\otimes\omega_{\pi,y^c}$. 
\end{lem}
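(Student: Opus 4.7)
The plan is to make the isomorphism $\bbG(\bbQ_x) \cong (\bbB_y^{\rm op})^\times \times \bbQ_x^\times$ completely explicit, then read off the central character identities by restricting $\omega_\pi$ to $\bbK_x^\times$ and comparing with the factorization $\pi \cong \pi_y \otimes \chi_x$.

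First I would unwind the isomorphism $\widetilde{\bbG} \cong \bbB^{{\rm op},\times}\times \mathrm{Res}_{\bbK/\bbQ}\bbG_m$ at $\bbQ_x$-points. Since $x$ splits in $\bbK$, we have $\bbB\otimes_\bbQ\bbQ_x = \bbB\otimes_\bbK \bbK_x = \bbB_y \times \bbB_{y^c}$ and $\bbK_x^\times = \bbK_y^\times\times \bbK_{y^c}^\times$, so
\[
\widetilde{\bbG}(\bbQ_x) = (\bbB_y^{\rm op})^\times \times (\bbB_{y^c}^{\rm op})^\times \times \bbK_y^\times \times \bbK_{y^c}^\times.
\]
Because $*$ is of the second kind, it restricts to complex conjugation on $\bbF$, hence swaps the $y$ and $y^c$ components; concretely it induces a $\bbQ_x$-algebra isomorphism $*\colon \bbB_y^{\rm op}\xrightarrow{\sim}\bbB_{y^c}^{\rm op}$. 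Writing $g=(g_y,g_{y^c})$ and $\lambda=(\lambda_y,\lambda_{y^c})$, the condition $\theta(g,\lambda)=(g,\lambda)$ (with $\theta(g,\lambda)=(\lambda^c g^{-*},\lambda^c)$) becomes $\lambda_y=\lambda_{y^c}=:\mu$ together with $g_{y^c}=(\mu g_y^{-1})^*$. Thus the map $g\mapsto (g_y,gg^*)$ is a group isomorphism $\bbG(\bbQ_x)\cong (\bbB_y^{\rm op})^\times\times \bbQ_x^\times$ with inverse $(g_y,\mu)\mapsto\bigl(g_y,(\mu g_y^{-1})^*\bigr)$.

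Next I would track how the center $\bbK_x^\times\subset Z(\bbG)(\bbQ_x)$ (embedded via $\bbK\hookrightarrow\bbF\hookrightarrow\bbB$) sits inside this product. A scalar $\lambda=(\lambda_y,\lambda_{y^c})\in\bbK_x^\times$, viewed in $\bbB_x^{{\rm op},\times}$, has $\lambda\lambda^*=\lambda_y\lambda_{y^c}\in\bbQ_x^\times$ embedded diagonally in $\bbK_x^\times$, so under the isomorphism above $\lambda$ corresponds to
\[
(\lambda_y,\ \lambda_y\lambda_{y^c})\in (\bbB_y^{\rm op})^\times\times \bbQ_x^\times,
\]
where $\lambda_y$ is viewed in $Z((\bbB_y^{\rm op})^\times)=\bbF_y^\times$ via $\bbK_y\hookrightarrow\bbF_y$. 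Since $\bbQ_x^\times$ is abelian, irreducibility of $\pi$ forces a factorization $\pi\cong\pi_y\otimes\chi_x$ with $\pi_y$ irreducible admissible on $(\bbB_y^{\rm op})^\times$ and $\chi_x$ a character of $\bbQ_x^\times$.

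Finally I would evaluate the central character $\omega_\pi$ on the two factors of $\bbK_x^\times$. Setting $\lambda_y=1$ gives $\omega_{\pi,y^c}(\lambda_{y^c})=\chi_x(\lambda_{y^c})$, so $\chi_x=\omega_{\pi,y^c}$; setting $\lambda_{y^c}=1$ gives $\omega_{\pi,y}(\lambda_y)=\omega_{\pi_y}(\lambda_y)\chi_x(\lambda_y)=\omega_{\pi_y}(\lambda_y)\omega_{\pi,y^c}(\lambda_y)$, yielding the second relation. The claim $\pi\cong\pi_y\otimes\omega_{\pi,y^c}$ is then immediate. The only subtle point is bookkeeping with the swap induced by $*$ on the two local factors; once the explicit form $g\mapsto (g_y,gg^*)$ of the isomorphism is fixed, everything else is a direct computation.
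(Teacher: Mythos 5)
Your proof is correct and follows the same route as the paper: both evaluate the central character of $\pi$ on $\bbK_x^\times$ under the explicit isomorphism $\bbG(\bbQ_x)\cong(\bbB_y^{\rm op})^\times\times\bbQ_x^\times$ (tracking that $(\lambda_y,\lambda_{y^c})\mapsto(\lambda_y,\lambda_y\lambda_{y^c})$) and then set each coordinate to $1$ in turn. The paper takes the isomorphism $g\mapsto(g,gg^*)$ as given from the preceding discussion, whereas you re-derive it from the $\theta$-fixed-point description; this is useful bookkeeping but not a genuinely different argument.
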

\begin{proof}
Any $(a_1,a_2)\in\bbK_x^\times\cong\bbK_y^\times\times\bbK_{y^c}^\times$ embedded in the center of $\bbG(\bbQ_x)$ acts on the representation $\pi$ by the scalar $\pi(a_1,a_2)=\omega_{\pi,y}(a_1)\omega_{\pi,y^c}(a_2)$. On the other hand, under the isomorphism \eqref{eq:G(Q_x)-description-split-prime}, $(a_1,a_2)$ is sent to $(a_1,a_1a_2)\in (\bbB_y^{op})^\times\times\bbQ_x^\times$ and hence from the decomposition $\pi\cong\pi_y\otimes\chi_x$ we get $\pi(a_1,a_2)=\omega_{\pi_y}(a_1)\chi_x(a_1a_2)$. Consequently, after the identifications $\bbQ_x\cong\bbK_y\cong\bbK_{y^c}$ we obtain
\[\omega_{\pi,y}(a_1)\omega_{\pi,y^c}(a_2)=\omega_{\pi_y}(a_1)\chi_x(a_1a_2),\quad\forall a_1,a_2\in\bbQ_x\]
Let $a_1=1$ we get $\chi_x=\omega_{\pi,y^c}$ and let $a_2=1$ we get $\omega_{\pi,y}=\omega_{\pi_y}\omega_{\pi,y^c}$.
\end{proof}
We define the local base change of $\pi$ to be the representation
\[BC(\pi)=\pi_y\otimes\pi_y^\#\otimes(\omega_{\pi,y^c}\circ c)\otimes(\omega_{\pi,y}\circ c)\]
of
\[\widetilde{\bbG}(\bbQ_x)=\bbG(\bbK_x)\cong (\bbB_y^{op})^\times\times(\bbB_{y^c}^{op})^\times\times\bbK_y^\times\times\bbK_{y^c}^\times\]
where $\pi_y^\#(g):=\pi_y(g^{-*})$.\par 
Let $w_1,\dotsc,w_s$ be the primes of $\bbF$ above $y$. Then we have a decomposition $\bbB_y=\prod_{i=1}^s\bbB_{w_i}$ and $\pi_y=\otimes_{i=1}^s\pi_{w_i}$ where $\pi_{w_i}$ is a representation of $(\bbB_{w_i}^{{\rm op}})^\times$. Consequently corresponding to the decomposition
\[\widetilde{\bbG}(\bbQ_x)\cong\prod_{i=1}^s(\bbB_{w_i}^{{\rm op}})^\times\times\prod_{i=1}^s(\bbB_{w_i^c}^{{\rm op}})^\times\times\bbK_y^\times\times\bbK_{y^c}^\times\]
we can write
\[BC(\pi)=\otimes_{i=1}^sBC(\pi)_{w_i}\otimes\otimes_{i=1}^sBC(\pi)_{w_i^c}\otimes\chi_{BC(\pi),y}\otimes \chi_{BC(\pi),y^c}\]
where $BC(\pi)_{w_i}:=\pi_{w_i}$, $BC(\pi)_{w_i^c}:=\pi_{w_i}^{\#}$, $\chi_{BC(\pi),y}:=\omega_{\pi,y^c}\circ c$ and $\chi_{BC(\pi),y^c}:=\omega_{\pi,y}\circ c$.

\subsubsection{Unramified local base change at inert primes}
Now let $x$ be a prime of $\bbQ$ such that
\begin{itemize}
    \item $x$ is inert in $\bbK$ and unramified in $\bbF$;
    \item $(\bbB_x^{op},*)\cong(M_n(\bbF_x),\dagger)$ where $g^\dagger:=w_n{}^tg^cw_n^{-1}$.
\end{itemize}
The isomorphism $\iota_x:\bbC\xrightarrow{\sim}\bar{\bbQ}_x$ induces an embedding from the Weil group $W_{\bbQ_x}$ into $\mathrm{Gal}(\bar{\bbQ}/\bbQ)$ (here $\bar{\bbQ}$ is the algebraic closure of $\bbQ$ in $\bbC$). We use $\iota_x$ to identify the CM-type $\Phi$ with the subset of $\mathrm{Hom}_{\bbQ_x}(\bbF_x,\bar{\bbQ}_x)$ consisting of homomorphisms $\varphi$ such that $\varphi|_{\bbK}=\iota_x\circ\tau_\bbK$.\par
The group $\bbG(\bbQ_x)$ is unramified and contains a standard maximal torus $T_x$ and Borel subgroup $B_x$ defined over $\bbQ_x$, consisting of diagonal and upper triangular matrices. The torus $T_x$ can be described as
\[T_x=\{(t_1,\dotsc,t_n;t_0)\in(\bbF_x^\times)^n\times\bbQ_x^\times | t_0=t_it_{n+1-i}^c,\forall i=1,\dotsc,n\}.\]
Similarly, $\widetilde{\bbG}_x\cong\mathrm{GL}_n(\bbF_x)\times\bbK_x^\times$ contains the diagonal maximal torus $\widetilde{T}_x\cong(\bbF_x^\times)^n\times\bbK_x^\times$ and standard Borel subgroup $\widetilde{B}_x=B_n(\bbF_x)\times\bbK_x^\times$ where $B_n(\bbF_x)$ is the group of upper triangular matrices in $\mathrm{GL}_n(\bbF_x)$. The automorphism $\theta$ on $\widetilde{\bbG}$ induces automorphisms of $\widetilde{T}_x$ and $\widetilde{B}_x$ so that $T_x=\widetilde{T}_x^\theta$ and $B_x=\widetilde{B}_x^\theta$ are identified as the fixed point subgroups.\par
For any smooth irreducible unramified representation $\pi$ of $\bbG(\bbQ_x)$, there is an unramified charcater $\chi$ of $T_x$ such that $\pi$ is the unique unramified subquotient of the normalized parabolic induction $i_{B_x}^{\bbG(\bbQ_x)}\chi$. The character $\chi$ corresponds to an element $\alpha=((\alpha_{1,\varphi},\dotsc,\alpha_{n,\varphi})_{\varphi\in\Phi};\alpha_0)\in\bbT^\vee=\prod_{\varphi\in\Phi}(\bbC^\times)^n\times\bbC^\times$. More precisely, if $t=(t_1,\dotsc,t_n;t_0)\in T_x$, then 
\[\chi(t)=\alpha_0^{\mathrm{val}_x(t_0)}\prod_{\varphi\in\Phi}\prod_{i=1}^n(\alpha_{i,\varphi})^{\mathrm{val}_x(\iota_x\circ\varphi(t_{i}))}\] 
where $\mathrm{val}_x$ denotes the $x$-adic valuation on $\bar{\bbQ}_x$.\par 
The Langlands parameter for $\pi$ is the conjugacy class of unramified $L$-homomorphism $\varphi_\pi:W_{\bbQ_x}\to\bbG^\vee\rtimes W_{\bbQ_x}$ such that $\varphi_\pi(\mathrm{Frob}_x)=\alpha\rtimes\mathrm{Frob}_x$ for any geometric Frobenius $\mathrm{Frob}_x\in W_{\bbQ_x}$. The base change $BC(\pi)$ is the unramified representation of $\widetilde{\bbG}_x\cong\mathrm{GL}_n(\bbF_x)\times\bbK_x^\times$ whose Langlands parameter is $\eta_{BC}\circ\varphi$. In other words, $BC(\pi)$ is the unique irreducible unramified subquotient of the normalized induction $i_{\widetilde{B}_x}^{\widetilde{\bbG}_x}\tilde{\chi}$ where $\tilde{\chi}$ is the character of $\widetilde{T}_x$ corresponding to $\eta_{BC}(\alpha)\in\bbT^\vee$. More explicitly, for any $(t_1,\dotsc,t_n;t_0)\in(\bbF_x^\times)^n\times\bbK_x^\times$, we have
\[\tilde{\chi}(t_1,\dotsc,t_n;t_0)=\chi(t_0^ct_1/t_n^c,\dotsc,t_0^ct_n/t_1^c;t_0t_0^c).\]
Let $w_1,\dotsc,w_s$ be the primes of $\bbF$ above $x$. For each $i=1,\dotsc,s$, let $\Phi_{w_i}:=\{\varphi\in\Phi,\varphi|_{\bbF_{w_i}}\ne0\}$ so that $\Phi=\sqcup_{i=1}^s\Phi_{w_i}$. Corresponding to the decomposition $\bbF_x=\prod_{i=1}^s\bbF_{w_i}$ we have an isomorphism
\[\widetilde{\bbG}_x\cong\prod_{i=1}^s\mathrm{GL}_n(\bbF_{w_i})\times\bbK_x^\times\]
which induces a decomposition
\[BC(\pi)=\otimes_{i=1}^s BC(\pi)_{w_i}\otimes\chi_{BC(\pi)}\]
where 
\begin{itemize}
    \item For each $i=1,\dotsc,s$, $BC(\pi)_{w_i}$ is an unramified representation of $\mathrm{GL}_n(\bbF_{w_i})$ with Satake parameter 
    \[(\prod_{\varphi\in\Phi_{w_i}}\alpha_{1,\varphi}\alpha_{n,\varphi}^{-1},\dotsc,\prod_{\varphi\in\Phi_{w_i}}\alpha_{n,\varphi}\alpha_{1,\varphi}^{-1}),\]
    where the $j$-th coordinate is $\prod_{\varphi\in\Phi_{w_i}}\alpha_{j,\varphi}\alpha_{n+1-j,\varphi}^{-1}$ for any $1\le j\le n$;
    \item $\chi_{BC(\pi)}$ is the unramified character of $\bbK_x^\times$ sending a uniformizer to $\alpha_0^2\prod_{i=1}^n\prod_{\varphi\in\Phi}\alpha_{i,\varphi}$.
\end{itemize}

\subsubsection{Global base change}
Let $\xi$ be an irreducible algebraic representation of $\bbG(\bbC)$ on a finite dimensional $\bbC$-vector space. Recall that we have a decomposition
\[\widetilde{\bbG}(\bbC)=\bbG(\bbK\otimes_\bbQ\bbC)\cong\bbG(\bbC)\times\bbG(\bbC)\]
where the first factor corresponds to the embedding $\tau_\bbK$ and the second factor corresponds to $\tau_\bbK^c$. With this identification, we define $\xi_\bbK$ to be the representation $\xi\otimes\xi$ of $\widetilde{\bbG}(\bbC)$.

\begin{thm}\label{thm:global-base-change}
Let $\pi$ be an irreducible automorphic representation of $\bbG(\bbA)$ whose archimedean component $\pi_\infty$ is cohomological for $\xi$. Let $\omega_\pi$ denote the central character of $\pi$. Then there is a unique irreducible automorphic representation $\mathrm{BC}(\pi)=(\Pi,\omega)$ of $\widetilde{\bbG}(\bbA)=(\bbB^{op}\otimes_\bbQ\bbA)^\times\times\bbA_\bbK^\times$ such that
\begin{enumerate}
    \item $\omega=\omega^c_\pi|_{\bbA_\bbK^\times}$ and $\omega_\infty^c=(\xi|_{\bbK_\infty^\times})^{-1}$
    \item For any place $x$ of $\bbQ$ that splits in $\bbK$, $\mathrm{BC}(\pi)_x=\mathrm{BC}(\pi_x)$
    \item For all but finitely many places $x$ of $\bbQ$ that are inert in $\bbK$, we have $\mathrm{BC}(\pi)_x=\mathrm{BC}(\pi_x)$;
    \item $\Pi_\infty$ is cohomological for $\xi_\bbK$;
    \item If $\omega_\Pi$ is the central character of $\Pi$, then $\omega_\Pi|_{\bbA_\bbK^\times}=\omega^c\omega^{-1}$;
    \item $\Pi^\#\cong\Pi$ where $\Pi^\#(g):=\Pi(g^{-*})$.
\end{enumerate}
\end{thm}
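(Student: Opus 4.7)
The plan is to construct $\mathrm{BC}(\pi) = (\Pi, \omega)$ by separately handling the two factors of the decomposition $\widetilde{\bbG} \cong \bbB^{\mathrm{op}, \times} \times \mathrm{Res}_{\bbK/\bbQ}\bbG_m$ from (\ref{eq:G-tilde-description}): the second factor is forced to be the Hecke character $\omega$ prescribed in (1), while the first factor $\Pi$, an automorphic representation of $\bbB^{\mathrm{op}, \times}(\bbA_\bbF) \cong \mathrm{GL}_m(\bbD)(\bbA_\bbF)$, will be produced by combining Labesse's stable base change for unitary groups \cite{La99} with Badulescu's global Jacquet-Langlands correspondence \cite{Badu} to descend from the quasi-split form $\mathrm{Res}_{\bbF/\bbQ}\mathrm{GL}_n$ to the inner form. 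Uniqueness will follow from strong multiplicity one on $\mathrm{Res}_{\bbF/\bbQ}\mathrm{GL}_n$ combined with the injectivity of Badulescu's global transfer. Note that since $\bbD$ is a division algebra, $\bbG$ is anisotropic modulo center over $\bbQ$, so $\pi$ is automatically cuspidal; this eliminates the need to worry about Eisenstein contributions in the discrete spectrum.

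For the Hecke character, I would set $\omega := \omega_\pi^c|_{\bbA_\bbK^\times}$; automorphicity of $\omega$ is inherited from that of $\omega_\pi$, and the identity $\omega_\infty^c = (\xi|_{\bbK_\infty^\times})^{-1}$ follows because $\pi_\infty$ being cohomological for $\xi$ forces its central character to be inverse to that of $\xi$ on the center. This establishes (1) at once.

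To produce $\Pi$, I would first twist $\pi$ by a suitable character of the similitude factor to descend it to a cuspidal automorphic representation $\pi_0$ of the unitary group $\bbG_0(\bbA)$; condition (1) pins down the similitude data, so this twist is canonical. Then I apply Labesse's stable base change \cite[Thm.\,A.5.2, Cor.\,5.3]{La99}, whose hypotheses are satisfied thanks to our requirement in $\S$\ref{sec:division-algebra}(3) that $\bbD$ is a division algebra at a place split over $\bbF_0$. The output is a discrete automorphic representation $\Pi^\flat$ on $\mathrm{Res}_{\bbF/\bbQ}\mathrm{GL}_n(\bbA)$ satisfying the expected local base change identities at split and at almost all unramified inert primes, and conjugate self-dual with respect to $c \in \mathrm{Gal}(\bbF/\bbF_0)$. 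Finally, I invoke Badulescu's global Jacquet-Langlands correspondence \cite[Thm.\,5.1]{Badu} to transfer $\Pi^\flat$ to a cuspidal automorphic representation $\Pi$ of $\bbB^{\mathrm{op}, \times}(\bbA_\bbF)$; the $d$-compatibility needed at the places where $\bbD$ ramifies holds because those local components of $\Pi^\flat$ arise as local base changes from representations of the local unitary group, which by construction already lives naturally on the inner form. Conditions (2), (3), (4), and the conjugate self-duality (6) are then checked place by place via the explicit formulas for local base change and the fact that base change preserves infinitesimal characters; (5) is verified by bookkeeping on central characters through the twist from $\pi$ to $\pi_0$.

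The hard part will be verifying that $\Pi^\flat$ is cuspidal (not merely discrete) and has the correct local components at the places where $\bbD$ ramifies, since this is what ensures that Badulescu's descent yields a cuspidal $\Pi$ for which property (6) can be rigorously established. This will require a precise local analysis comparing base change with Jacquet-Langlands at the division-algebra places, leveraging the conjugate-self-dual structure imposed by the $L$-embedding $\eta_{BC}$ and the fact that at these places $\pi_0$ already has a genuine character on the compact-mod-center group of units. A secondary subtlety is simultaneously tracking the similitude/central-character data through the twisting so that (1) and (5) hold at the end, but this is bookkeeping once the existence of $\Pi$ is established.
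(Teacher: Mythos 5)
Your proposal and the paper's proof both reduce to the unitary group and both cite Labesse's Theorem A.5.2, but then diverge on a point that matters. The paper (following \cite[Thm.\,VI.2.1]{HT01}, which refines Labesse) constructs $\Pi$ \emph{directly} on the inner form $\widetilde{\bbG}_1 = (\bbB^{\mathrm{op}})^\times = \bbD^\times$: the mechanism is a comparison of the $\theta$-twisted simple trace formula for $\bbD^\times$ with the ordinary simple trace formula for the (non-quasi-split) unitary group $\bbG_1$, both of which get equated with the stable trace formula for $\bbG_1^*$ (via vanishing of the unstable twisted orbital integrals of the Lefschetz function at $\infty$, and via Kottwitz's observation exploiting the division-algebra place from condition (3) of $\S5.1.2$). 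No automorphic representation on $\mathrm{Res}_{\bbF/\bbQ}\mathrm{GL}_n$ is ever produced, and none is needed; the stable trace formula for $\bbG_1^*$ is an intermediary on the level of distributions only.

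You instead route through a representation $\Pi^\flat$ on $\mathrm{Res}_{\bbF/\bbQ}\mathrm{GL}_n(\bbA)$ and then apply Badulescu's global Jacquet--Langlands. This misattributes what Labesse's Theorem A.5.2 (in the form used here) produces: its output \emph{is} already a representation of $\bbD^\times(\bbA_\bbF)$, not of $\mathrm{GL}_n(\bbA_\bbF)$. More importantly, the detour is not free. You correctly identify as ``the hard part'' the verification that $\Pi^\flat$ is cuspidal (not merely discrete) and $d$-compatible at the ramified places, but you don't close this gap — and it is precisely what the direct approach avoids: since $\bbD$ is a division algebra over $\bbF$, the group $\bbD^\times$ is anisotropic modulo center, so any automorphic representation there, including $\Pi$ coming from the trace-formula comparison, is automatically cuspidal. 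By contrast, cuspidality of $\Pi^\flat$ on $\mathrm{GL}_n$ would need a separate argument (it could a priori be a residual/Speh representation in the discrete spectrum), after which one still has to check the hypotheses of Badulescu's transfer. Also note the minor slip that $(\bbB^{\mathrm{op}})^\times(\bbA_\bbF)$ is $\bbD^\times(\bbA_\bbF)$, not ``$\mathrm{GL}_m(\bbD)(\bbA_\bbF)$'': you're conflating the local shape $\bbD_w \cong M_m(D)$ with the global division algebra. The opening bookkeeping (definition of $\omega$, reduction to the unitary group) is fine, but the main construction as you've outlined it does not go through without the extra cuspidality argument that you flag but don't supply; the paper's direct route is the cleaner path.
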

\begin{proof}
This is basically Theorem VI.2.1 of \cite{HT01}, which improves upon Theorem A.5.2 of \cite{La99}. We briefly review the argument here.\par 
First one reduces to base change for the unitary group $\bbG_1$ instead of the unitary similitude group $\bbG$. More precisely, one shows that there exists an automorphic representation $\pi'$ of $\bbG_1(\bbA)$ such that 
\begin{itemize}
    \item For any place $x$ of $\bbQ$ that splits in $\bbK$, $\pi'_x\cong\pi_x|_{\bbG_1(\bbQ_x)}$
    \item For all but finitely many places $x$ of $\bbQ$ that are inert in $\bbK$ and unramified in $\bbF$, $\pi'_x$ is the unique unramified subquotient of $\pi_x|_{\bbG_1(\bbQ_x)}$.
\end{itemize}
Then one applies Theorem A.5.2 of \cite{La99} (which is slightly generalized in the proof of Theorem VI.2.1 of \cite{HT01}) to $\pi'$ to obtain $\Pi$. The crucial ingredient in this result is a comparison of a simple twisted trace formula for $\widetilde{\bbG}_1:=\mathrm{Res}_{\bbK/\bbQ}\bbG_{1,\bbK}=\bbB^{op,\times}$ and a simple trace formula for $\bbG_1$. The comparison goes by equating both trace formulas with the stable trace formula for the quasi-split inner form $\bbG_1^*$ with suitable choice of test functions. More precisely, we choose $\phi=\otimes_v\phi_v\in C_c^\infty(\widetilde{\bbG}_1)$ and $f=\otimes_vf_v\in C_c^\infty(\bbG_1)$ satisfying:
\begin{itemize}
    \item For each place $v$, $\phi_v$ and $f_v$ have matching stable (twisted) orbital integrals;
    \item $\phi_\infty$ is the Lefschetz function assosciated to $\tilde{\xi}$ and the automorphism $\theta$ on $\widetilde{\bbG}_{1,\infty}$ and $f_\infty$;
    \item $f_\infty$ is, up to a positive constant multiple, the Euler-Poincar\'e function associated to $\xi$.
\end{itemize}
It follows from Theorem A.1.1 and Corollaire 1.2 of \cite{La99} that $\phi_\infty$ and $f_\infty$ have matching stable (twisted) orbital integrals and the unstable twisted orbital integrals of $\phi_\infty$ vanishes. We remark that in \cite{La99} it is assumed that $\xi$ is trivial and this assumption is removed in \cite{CL2} (which also corrects some sign errors in \cite{La99}). The vanishing of unstable twisted orbital integrals of $\phi_\infty$ implies that the twisted trace formula for $\widetilde{\bbG}_1$ equals a stable trace formula for $\bbG_1^*$. From the assumption that $\bbB_x$ is a division algebra for some place $x$ of $\bbF$ that splits over $\bbF_0$, it follows by an observation of Kottwitz that the trace formula of $\bbG_1$ equals the stable trace formula for $\bbG_1^*$. Combining these two comparisons one obtains the sought-after comparison of simple (twisted) trace formulas of $\widetilde{\bbG}_1$ and $\bbG_1$ as in Theorem A.3.1 of \cite{La99}.\par 
Using this comparison one proceeds as in the proof of Theorem VI.2.1 of \cite{HT01} to get the automorphic representation $\Pi$ satisfying the desired properties. We remark that property (2) is not explicitly stated in \cite{La99} but follows from the generalization of its arguments in \cite{HT01}.
\end{proof}

\subsection{Description of Cohomology}
The following result establishes the existence of Galois representations (satisfying suitable local-global compatibility conditions) associated to cohomological automorphic representations of $\bbG$.
\begin{thm}\label{thm:automorphic-to-Galois}
Let $\pi$ be an irreducible automorphic representation of $\bbG(\bbA)$ such that $\pi_\infty$ is cohomological for $\xi$. There exists an $n$-dimensional $\ell$-adic Galois representation $\sigma_\ell(\pi)$ of $\mathrm{Gal}(\bar{\bbF}/\bbF)$ such that 
\begin{enumerate}
    \item For any place $w$ of $\bbF$ that splits over $\bbF_0$, the restriction of $\sigma_\ell(\pi)$ to the Weil group of $\bbF_w$ is isomorphic to the $\ell$-adic representation associated to the irreducible smooth representation $\pi_w$ of $\bbB_w^{op,\times}$ under the local Langlands correspondence.
    \item For all but finitely many places $w$ of $\bbF$ such that $w$ is inert over $\bbF_0$ and unramified over $\bbQ$ and $\pi$ is unramified at $x=w|_\bbQ$, the restriction of $\sigma_\ell(\pi)$ to the Weil group of $\bbF_w$ is the unramified representation corresponding to the irreducible smooth representation $BC(\pi_x)_w$ of $\mathrm{GL}_n(\bbF_{w})$.
\end{enumerate}
\end{thm}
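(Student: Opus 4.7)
The plan is to pass from $\pi$ to a representation on $\mathrm{GL}_n$ over the CM field $\bbF$ via base change followed by Jacquet-Langlands, and then invoke the known construction of Galois representations attached to regular algebraic conjugate self-dual cuspidal automorphic representations (RACSDC) on $\mathrm{GL}_n$ over a CM field.

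First, I would apply Theorem~\ref{thm:global-base-change} to $\pi$ to obtain $\mathrm{BC}(\pi) = (\Pi, \omega)$, whose first component $\Pi$ is a cohomological (for $\xi_\bbK$) discrete automorphic representation of $\bbB^{\mathrm{op},\times}(\bbA_\bbF)$ satisfying $\Pi^\# \cong \Pi$. Since $\bbB^{\mathrm{op},\times}$ is an inner form of $\mathrm{GL}_n$ over $\bbF$ and $\Pi_\infty$ is cohomological (hence essentially a discrete-series representation), Badulescu's global Jacquet-Langlands correspondence \cite{Badu} transfers $\Pi$ to an isobaric automorphic representation $\Pi^{*} = \Pi^{*}_1 \boxplus \cdots \boxplus \Pi^{*}_k$ of $\mathrm{GL}_n(\bbA_\bbF)$ whose summands are cuspidal. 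The conjugate self-duality and regular algebraicity descend, so each $\Pi^{*}_i$ is a RACSDC representation of some $\mathrm{GL}_{n_i}(\bbA_\bbF)$ with $\sum_i n_i = n$.

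Next, to each $\Pi^{*}_i$ I would attach an $n_i$-dimensional $\ell$-adic Galois representation $\sigma_\ell(\Pi^{*}_i)$ of $\mathrm{Gal}(\bar\bbF/\bbF)$ using the existence theorems for Galois representations associated to RACSDC automorphic representations on $\mathrm{GL}_n$ over CM fields (Harris-Taylor, Shin, Chenevier-Harris, Barnet-Lamb--Gee--Geraghty--Taylor, Caraiani and others), which also supply local-global compatibility at all finite places. Set
\[
\sigma_\ell(\pi) := \bigoplus_{i=1}^k \sigma_\ell(\Pi^{*}_i).
\]

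To verify (1), at a place $w$ of $\bbF$ split over $\bbF_0$ we trace through Theorem~\ref{thm:global-base-change} to identify $\Pi_w$ with the $\bbB_w^{\mathrm{op},\times}$-component of $\pi$ at the corresponding rational prime (using the explicit split-prime base change formula). Under local Jacquet-Langlands, $\Pi_w$ corresponds to $\Pi^{*}_w$, and by \cite{Coh18} the local Langlands parameter of $\pi_w$ on the inner form $\bbB_w^{\mathrm{op},\times}$ is by definition the parameter of its $\mathrm{GL}_n(\bbF_w)$-transfer; local-global compatibility of $\Pi^{*}$ then identifies this with $\sigma_\ell(\pi)|_{W_{\bbF_w}}$. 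For (2), at almost all places $w$ inert over $\bbF_0$ and unramified over $\bbQ$ where $\pi$ is unramified, the unramified local base change formula we recorded in \S5.2 expresses the Satake parameter of $\mathrm{BC}(\pi_x)_w$ in terms of that of $\pi_x$, and this matches the Frobenius semisimple conjugacy class of $\sigma_\ell(\pi)|_{W_{\bbF_w}}$ via local-global compatibility for $\Pi^{*}$.

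The hard part will be establishing with sufficient precision that $\Pi^{*}$ decomposes as an isobaric sum of \emph{cuspidal} RACSDC representations to which the existing constructions of Galois representations apply, and that the local-global compatibility holds at \emph{all} split places (not merely almost all) in order to feed into Corollary~\ref{cor:reciprocity}. This relies on combining Badulescu's global Jacquet-Langlands with the strongest available form of local-global compatibility for RACSDC automorphic representations of $\mathrm{GL}_n$ over CM fields.
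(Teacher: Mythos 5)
Your overall plan (global base change to $\widetilde{\bbG}$, then global Jacquet--Langlands to $\mathrm{GL}_n$ over $\bbF$, then attach a Galois representation via known results for RACSDC cuspidal automorphic representations on $\mathrm{GL}_n$) matches the paper's route, but you leave as ``the hard part'' precisely the point the construction of $\bbD$ was engineered to make trivial, and in the process you introduce two real gaps.

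First, you assert that Badulescu's global Jacquet--Langlands takes $\Pi$ to an \emph{isobaric} sum $\Pi^* = \Pi^*_1 \boxplus \cdots \boxplus \Pi^*_k$ with cuspidal summands. This is not what the transfer does: it maps discrete automorphic representations of the inner form to discrete automorphic representations of $\mathrm{GL}_n(\bbA_\bbF)$, so the non-cuspidal possibilities are residual (Speh) representations, not general isobaric sums. Moreover, even in the isobaric setting you invoke, conjugate self-duality of $\Pi^*$ does not descend to the individual summands --- the Galois involution may permute them --- so the claim that each $\Pi^*_i$ is RACSDC does not follow from what you wrote. This is exactly the sort of gap that makes the ``hard part'' you flag actually hard.

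Second, and more to the point, you miss the two observations that collapse the problem. Since $\bbB^{\rm op} = \bbD$ is a global division algebra, every automorphic representation of $\bbD^\times(\bbA_\bbF)$ is cuspidal, so $\Pi$ is automatically cuspidal and $JL(\Pi)$ is a single discrete automorphic representation of $\mathrm{GL}_n(\bbA_\bbF)$. And because condition (3) of $\S\ref{sec:division-algebra}$ guarantees a place $x$ of $\bbF$, split over $\bbF_0$, at which $\bbD_x$ is a division algebra, the local component $JL(\Pi)_x$ is (essentially) square-integrable; residual representations have no square-integrable local factor, so $JL(\Pi)$ is \emph{cuspidal}. Hence there is no isobaric decomposition to control, $JL(\Pi)$ is a single RACSDC representation with a square-integrable local component at a finite place, and one may apply \cite[Thm.\,VII.1.9]{HT01} directly (rather than the later, more general literature you cite) to obtain $\sigma_\ell(\pi) := R_\ell(JL(\Pi))$ with the stated local-global compatibility at split and almost all inert places. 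Your step (1) verification at split $w$ and step (2) verification at unramified inert $w$ via the split-prime and unramified base change formulas of $\S$5.2 is then correct and matches the paper. The moral: the auxiliary place $x$ in the definition of $\bbD$ is what lets the existing Harris--Taylor machinery apply without the extra work you anticipated.
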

\begin{proof}
Consider the base change $BC(\pi)=(\Pi,\omega)$ from Theorem~\ref{thm:global-base-change}. Let $JL(\Pi)$ be the automorphic representation of $\mathrm{GL}_n(\bbA_\bbF)$ associated to $\Pi$ by the global Jacquet-Langlands correspondence established in \cite{Badu} (note that $\Pi$ is automatically cuspidal). Then $JL(\Pi)$ is conjugate self-dual and the archimedean component $JL(\Pi)_\infty$ is regular algebraic. Moreover, by our choice of $\bbB$, there is a place $x$ of $\bbF$ that splits over $\bbF_0$ such that $\bbB_x$ is a division algebra, hence $JL(\Pi)_x$ is square-integrable. Thus we can apply Theorem VII.1.9 of \cite{HT01} to $JL(\Pi)$. In the notation of \emph{loc.cit.} we obtain an $\ell$-adic Galois representation $\sigma_\ell(\pi):=R_\ell(JL(\Pi))$ that satisfies the conditions. 
\end{proof}

We will apply the above to get an understanding of the cohomology of the Shimura variety $Sh(\bbG_\beta, X)$ described in $\S\ref{PEL_data_beta}$. This is another simple Shimura variety of the type considered by Kottwitz, but unlike the original general set-up we started with, the nature of the group $\bbG = \bbG_\beta$ in this special case allows us to exploit the Galois representations mentioned above.\par

Let $H^*_{\beta,\xi}:=H^*(Sh(\bbG_\beta, X)_{\overline{\bbQ}},\cL_\xi)$. We have a decomposition \[H^*_{\beta,\xi}=\oplus_{\pi_f}\pi_f\otimes\sigma_{\beta,\xi}(\pi_f)\] 
where $\sigma_{\beta,\xi}(\pi_f)$ is a virtual representation of $\mathrm{Gal}(\overline{\bbQ}/\bbE)$.

\begin{cor}\label{cor:reciprocity}
In the setting of \S\ref{sec:global-data}, there is an isomorphism of virtual $W_{E} = W_{\bbE_\fp}$ representations
\[\sigma_{\beta,\xi}(\pi_f)|_{W_{\bbE_\fp}}\cong a(\pi_f)(r_{-\mu}\circ\varphi_{\pi_p}|_{W_{\bbE_\fp}})|\cdot|^{-\dim\Sh(\bbG_\beta,X)/2}.\]
\end{cor}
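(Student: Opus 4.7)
The strategy is to produce a candidate global Galois parameter for $\bbG_\beta$ from Theorem~\ref{thm:automorphic-to-Galois}, compare it with the tautological Galois representation appearing in $H^*_{\beta,\xi}$ at good primes via the original results of Kottwitz in \cite{Ko-simple}, and then extract the identity at $\fp$ by exploiting the fact that $p$ splits in $\bbK$. Without loss of generality we may assume $a(\pi_f) \neq 0$, in which case \eqref{eq:pi-f} produces an automorphic representation $\pi$ of $\bbG_\beta(\bbA)$ with finite part $\pi_f$ and with $\pi_\infty$ cohomological for $\xi$.

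First, Theorem~\ref{thm:automorphic-to-Galois} attaches to $\pi$ an $n$-dimensional $\ell$-adic representation $\sigma_\ell(\pi)$ of $\mathrm{Gal}(\bar{\bbQ}/\bbF)$ satisfying local-global compatibility at every place $w$ of $\bbF$ split over $\bbF_0$. Using the explicit description of $^L\bbG_\beta$ and $^L\widetilde{\bbG}_\beta$, and of the base-change embedding $\eta_{BC}$, I would apply a Shapiro-type induction to repackage $\sigma_\ell(\pi)$ (together with the central character data furnished by Theorem~\ref{thm:global-base-change}) as an admissible homomorphism
\[ \varphi_\ell(\pi)\,:\, \mathrm{Gal}(\bar{\bbQ}/\bbK)\,\longrightarrow\, {}^L\bbG_\beta(\bar{\bbQ}_\ell), \]
whose composition with $\eta_{BC}$ recovers the induction of $\sigma_\ell(\pi)$ up to the similitude factor. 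By construction, at every rational prime $x$ split in $\bbK$, the restriction of $\varphi_\ell(\pi)$ to $W_{\bbQ_x}$ matches, via local base change at split places, the semisimple parameter $\varphi_{\pi_x}$ of $\pi_x$.

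Next, I would restrict $\varphi_\ell(\pi)$ to $\mathrm{Gal}(\bar{\bbQ}/\tilde{\bbE})$, form
\[ R(\pi)\,:=\, \bigl(r_{-\mu}\circ\varphi_\ell(\pi)|_{\mathrm{Gal}(\bar{\bbQ}/\tilde{\bbE})}\bigr)\otimes|\cdot|^{-\dim\Sh(\bbG_\beta,X)/2}, \]
and compare it with $\sigma_{\beta,\xi}(\pi_f)|_{\mathrm{Gal}(\bar{\bbQ}/\tilde{\bbE})}$. For any rational prime $\ell'\neq p,\ell$ outside the finite set of bad primes (where $\pi$ is ramified or $\bbG_\beta$ is not hyperspecial), and for any place $v$ of $\tilde{\bbE}$ above $\ell'$, Kottwitz's theorem in \cite{Ko-simple} applied to $\mathrm{Sh}(\bbG_\beta,X)$ at its good-reduction prime $\ell'$ expresses $\sigma_{\beta,\xi}(\pi_f)|_{W_{\tilde{\bbE}_v}}$ in terms of $r_{-\mu}\circ\varphi_{\pi_{\ell'}}$, while the local-global compatibility of $\sigma_\ell(\pi)$ (at the split primes of $\bbF/\bbF_0$ above $\ell'$, noting $\ell'$ is automatically split in $\bbK$ for all but finitely many primes appearing in Theorem~\ref{thm:automorphic-to-Galois}(2)) computes $R(\pi)|_{W_{\tilde{\bbE}_v}}$ from the same data. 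Matching Frobenius traces at every such $v$ and invoking Chebotarev density yields the desired isomorphism of semisimplified virtual $\mathrm{Gal}(\bar{\bbQ}/\tilde{\bbE})$-representations.

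Finally, since $p$ splits in $\bbK$, the prime $\fp$ of $\bbE$ splits in $\tilde{\bbE}/\bbE$, so for a place $\tilde{\fp}$ of $\tilde{\bbE}$ above $\fp$ the local fields coincide and $W_{\bbE_\fp}=W_{\tilde{\bbE}_{\tilde{\fp}}}$. Restricting the global identity to this decomposition group and using Theorem~\ref{thm:automorphic-to-Galois}(1) and local base change at the split prime $p$ (so that $\varphi_\ell(\pi)|_{W_{\bbE_\fp}}$ matches $\varphi_{\pi_p}|_{W_{\bbE_\fp}}$) yields Corollary~\ref{cor:reciprocity}. The main obstacle is the Shapiro-lemma step: one must verify carefully that the induced representation from $\mathrm{Gal}(\bar{\bbQ}/\bbF)$ really does repackage, via $\eta_{BC}$ and the chosen CM-type $\Phi$, into a genuine $^L\bbG_\beta$-valued parameter whose restriction at split primes agrees with $\varphi_{\pi_x}$. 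A secondary, more routine, check is that the Tate twist and the multiplicity $a(\pi_f)$ appear with the right normalization when comparing at the good-reduction places.
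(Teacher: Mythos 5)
Your proposal follows the same route as the paper's proof: apply Theorem~\ref{thm:automorphic-to-Galois} to get $\sigma_\ell(\pi)$, repackage it via Shapiro's lemma together with the central character datum of Theorem~\ref{thm:global-base-change} into an $^L\bbG_\beta$-valued parameter $\tilde{\varphi}_\ell(\pi)$ on $\mathrm{Gal}(\overline{\bbQ}/\widetilde{\bbE})$ where $\widetilde{\bbE}=\bbE\bbK$, compare $r_{-\mu}\circ\tilde{\varphi}_\ell(\pi)$ (suitably twisted and scaled by $a(\pi_f)$) with $\sigma_{\beta,\xi}(\pi_f)$ at good places using Kottwitz's results in \cite{Ko-simple}, conclude globally by Chebotarev, and then restrict to the decomposition group at $\tilde{\fp}$, where $W_{\widetilde{\bbE}_{\tilde{\fp}}}\cong W_{\bbE_\fp}$ because $p$ splits in $\bbK$. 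The minor differences (your explicit reduction to $a(\pi_f)\neq 0$, slight looseness about which good primes to match) are normalizations of the same argument, so this is essentially the paper's proof.
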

\begin{proof}
Let $\widetilde{\bbE}$ be the composite of $\bbE$ and the imaginary quadratic field $\bbK$ in $\bbC$ where we use the embedding $\tau_\bbK$ fixed in \S~\ref{sec:CM-field} to view $\bbK$ as a subfield of $\bbC$. Since $p$ is split in $\bbK$, there is a place $\tilde{\fp}$ of $\widetilde{\bbE}$ above $\fp$ and an isomorphism  $\widetilde{\bbE}_{\tilde{\fp}}\cong\bbE_\fp$. 
Using Theorem~\ref{thm:automorphic-to-Galois} and the arguments in the proof of \cite[Th\`eorem\'e A.7.2]{Far04}, one can construct an $\ell$-adic representation of $\mathrm{Gal}(\overline{\bbQ}/\widetilde{\bbE})$ that agrees with $\sigma_{\beta,\xi}(\pi_f)$ at all unramified places, then we can conclude by the Chebotarev density theorem. See also \cite[proof of Theorem 8.1]{Sch-Shin}. \par
Let us explain the arguments in more detail. Let $\sigma_\ell(\pi)$ be the $n$-dimensional representation of $\mathrm{Gal}(\bar{\bbF}/\bbF)$ from Theorem \ref{thm:automorphic-to-Galois}. Under the Shapiro isomorphism, $\sigma_\ell(\pi)$ corresponds to a (conjugacy class of) homomorphism(s):
\[\varphi_\ell(\pi):\mathrm{Gal}(\overline{\bbQ}/\bbK)\to\mathrm{GL}_n(\overline{\bbQ}_\ell)^{\mathrm{Hom}_{\bbK}(\bbF,\overline{\bbQ})}\rtimes\mathrm{Gal}(\overline{\bbQ}/\bbK)=\mathrm{GL}_n(\overline{\bbQ}_\ell)^\Phi\rtimes\mathrm{Gal}(\overline{\bbQ}/\bbK).\]
Let $\omega_\pi$ be the central character of $\pi$ and let $\omega=\omega_\pi^c|_{\bbA_{\bbK}^\times}$ be as in Theorem \ref{thm:global-base-change}. Then $\omega$ is an algebraic Hecke character and let $\sigma_\ell(\omega):\mathrm{Gal}(\overline{\bbQ}/\bbK)\to\bar{\bbQ}_\ell^\times$ be the associated (one-dimensional) $\ell$-adic Galois representation. Combining the restriction of $\varphi_\ell(\pi)$ and $\sigma_\ell(\omega)$ to $\mathrm{Gal}(\overline{\bbQ}/\widetilde{\bbE})$ we get a homomorphism 
\[\tilde{\varphi}_\ell(\pi):\mathrm{Gal}(\overline{\bbQ}/\widetilde{\bbE})\to{}^LG_\ell=(\mathrm{GL}_n(\overline{\bbQ}_\ell)^{\Phi}\times\overline{\bbQ}_\ell^\times)\rtimes\mathrm{Gal}(\overline{\bbQ}/\widetilde{\bbE}).\]
By construction, for any finite prime $\tilde{\fq}$ of $\widetilde{\bbE}$ with residue characteristic $q$, we have \[\tilde{\varphi}_\ell(\pi)|_{\widetilde{\bbE}_{\tilde{\fq}}}\cong\varphi_{\pi_q}|_{\widetilde{\bbE}_{\tilde{\fq}}}\]
Consider the virtual representation $R(\pi)$ of $\mathrm{Gal}(\overline{\bbQ}/\widetilde{\bbE})$ defined by 
\[R(\pi):=a(\pi_f)(r_{-\mu}\circ\tilde{\varphi}_\ell(\pi))|\cdot|^{-\dim\Sh(\bbG_\beta,X)/2}.\]
For any prime $\tilde{\fq}$ of $\widetilde{\bbE}$ with residue characteristic $q$ such that $\pi_q$ is unramified and the Shimura variety has good reduction at $q$, we have $\sigma_{\beta,\xi}(\pi_f)|_{\widetilde{\bbE}_{\tilde{\fq}}}\cong R(\pi)|_{\widetilde{\bbE}_{\tilde{\fq}}}$ by Kottwitz \cite{Ko-simple}. By the Chebotarev density theorem we get $\sigma_{\beta,\xi}(\pi_f)|_{\mathrm{Gal}(\overline{\bbQ}/\widetilde{\bbE})}\cong R(\pi)$ (at least when restricted to all local Weil groups and after semisimplifying the Weil group actions). In particular restricting to the Weil group of $\widetilde{\bbE}_{\tilde{\fp}}\cong\bbE_\fp$ we get the desired isomorphism.
\end{proof}

Recall from Definition~\ref{def:z_{tau,mu}} the element $z_{\tau,-\mu}$ in the stable Bernstein center of $\bbG(\bbQ_p)$.

\begin{cor}\label{cor:trace-G}
Let $r\ge1$ be an integer and $\tau\in\mathrm{Frob}_E^rI_E\subset W_E$, where $E\cong\bbE_\fp$ is the local reflex field. For any $h\in C_c^\infty(\cG_{\cL}(\bbZ_p))$ and $f^p\in C_c^\infty(\bbG(\bbA_f^p))$, we have 
\[\mathrm{tr}(\tau\times hf^p| H_{\beta,\xi}^*)=\sum_\pi m(\pi)\mathrm{Tr}(f_{\tau,h}f^p f_{\xi,\infty}^\bbG|\pi)\]
where $f_{\tau,h}:=z_{\tau,-\mu}*h$ and the sum ranges over automorphic representations of $\bbG$.
\end{cor}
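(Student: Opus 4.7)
The plan is to combine the decomposition of $H^*_{\beta,\xi}$ by $\pi_f$ with the explicit description of the Galois side furnished by Corollary \ref{cor:reciprocity}, and then interpret the resulting spectral expansion in terms of the stable Bernstein center element $z_{\tau,-\mu}$ and the definition of $a(\pi_f)$.

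First, I would decompose
\[
H^*_{\beta,\xi}=\bigoplus_{\pi_f}\pi_f\otimes\sigma_{\beta,\xi}(\pi_f),
\]
so that
\[
\mathrm{tr}(\tau\times hf^p\,|\,H^*_{\beta,\xi})=\sum_{\pi_f}\mathrm{tr}(hf^p\,|\,\pi_f)\cdot\mathrm{tr}\bigl(\tau\,|\,\sigma_{\beta,\xi}(\pi_f)|_{W_{\bbE_\fp}}\bigr).
\]
By Corollary \ref{cor:reciprocity}, the right-hand factor equals
\[
a(\pi_f)\,\mathrm{tr}\bigl(\tau\,|\,(r_{-\mu}\circ\varphi_{\pi_p}|_{W_{\bbE_\fp}})\bigr)\,|\tau|^{-\dim\Sh(\bbG_\beta,X)/2}.
\]

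Next, I would match the normalizations. The classical identity $\dim\Sh(\bbG_\beta,X)/2=\langle\rho,\mu\rangle$ for simple Shimura varieties of Kottwitz type allows one to identify the twist by $|\cdot|^{-\dim\Sh/2}$ with $|\cdot|_E^{-\langle\rho,\mu\rangle}$, which by Definition \ref{def:z_{tau,mu}} is precisely the scalar by which $z_{\tau,-\mu}$ acts on $\pi_p$. Consequently
\[
\mathrm{tr}\bigl(\tau\,|\,(r_{-\mu}\circ\varphi_{\pi_p}|_{W_{\bbE_\fp}})\bigr)\,|\tau|^{-\dim\Sh/2}\cdot\mathrm{tr}(h\,|\,\pi_p)=\mathrm{tr}(z_{\tau,-\mu}*h\,|\,\pi_p)=\mathrm{tr}(f_{\tau,h}\,|\,\pi_p),
\]
and therefore
\[
\mathrm{tr}(\tau\times hf^p\,|\,H^*_{\beta,\xi})=\sum_{\pi_f}a(\pi_f)\,\mathrm{tr}(f_{\tau,h}f^p\,|\,\pi_f).
\]

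Finally, I would invoke the definition $a(\pi_f)=\sum_{\pi_\infty}m(\pi_f\otimes\pi_\infty)\,\mathrm{tr}\,\pi_\infty(f^{\bbG}_{\xi,\infty})$ from equation \eqref{eq:pi-f}, which converts the sum over $\pi_f$ into a sum over automorphic representations $\pi=\pi_f\otimes\pi_\infty$ of $\bbG$:
\[
\sum_{\pi_f}a(\pi_f)\,\mathrm{tr}(f_{\tau,h}f^p\,|\,\pi_f)=\sum_{\pi}m(\pi)\,\mathrm{tr}(f_{\tau,h}f^p f^{\bbG}_{\xi,\infty}\,|\,\pi),
\]
which is the desired identity. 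The only nontrivial verification is the dimension formula $\dim\Sh(\bbG_\beta,X)/2=\langle\rho,\mu\rangle$ for our choice of $\mu$; this follows from the explicit signature description in Lemma \ref{lem:global-group}, since $\dim\Sh(\bbG_\beta,X)=\sum_{\tau}a_\tau(n-a_\tau)$ and $\langle\rho,\mu\rangle=\tfrac{1}{2}\sum_\tau a_\tau(n-a_\tau)$ for the minuscule cocharacter $\mu=(\mu_\tau)_{\tau\in\Phi}$. All other steps are bookkeeping that uses only Corollary \ref{cor:reciprocity}, the defining property of $z_{\tau,-\mu}$, and the definition of $a(\pi_f)$.
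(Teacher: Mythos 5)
Your proof is correct and follows the same two-step path as the paper: apply Corollary \ref{cor:reciprocity} to rewrite the left-hand side as $\sum_{\pi_f}a(\pi_f)\mathrm{Tr}(f_{\tau,h}f^p|\pi_f)$, then unfold $a(\pi_f)$ via equation \eqref{eq:pi-f}. The paper's proof is just terser and silently absorbs the normalization check $\dim\Sh/2=\langle\rho,\mu\rangle$ that you make explicit (and correctly verify).
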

\begin{proof}
By Corollary~\ref{cor:reciprocity}, the left hand side is equal to 
\[\sum_{\pi_f}a(\pi_f)\mathrm{Tr}(f_{\tau,h}f^p|\pi_f).\]
This is equal to the right hand side by the definition of $a(\pi_f)$, see \eqref{eq:pi-f}.
\end{proof}

\subsection{Fixed points via generalized Kottwitz triples}
We would like to proceed from Corollary~\ref{cor:trace-coarse-expansion} and find an expression of the right hand side in terms of a trace formula,  without knowing the vanishing property of the test function $\phi_{\tau,h}$. For this purpose, we will associate to each fixed point of the Frobenius-Hecke correspondence a substitute of the Kottwitz triple defined using an auxiliary group $\bbG'$ that is constructed in a similar way to $\bbG$.
\subsubsection{An auxiliary group}\label{sec:auxiliary-group}
Let $\bbB'$ be the division algebra over $\bbF$ that is split at $w,w^c$ and isomorphic to $\bbB=\bbD^{\mathrm{op}}$ at all other finite places of $\bbF$. Choose a \emph{positive} involution $\dagger'$ of the second kind on $\bbG'$. Analogously to the construction of $\bbG$ in Lemma \ref{lem:global-group}, we can find an element $0\ne\beta'\in\bbB'$ that, together with the positive involution $\dagger'$, gives rise to an involution $*'$, a $\dagger'$-Hermitian pairing on $\bbV'=\bbB'$, and a group $\bbG'$ such that
\begin{itemize}
    \item $\bbG'_{\bbQ_p}\cong\mathrm{Res}_{F/\bbQ_p}\mathrm{GL}_n\times\bbG_m$ is the quasi-split inner form of $\bbG_{\bbQ_p}$;
    \item For any finite prime $\ell\ne p$, we have $\bbG'_{\bbQ_\ell}\cong\bbG_{\bbQ_\ell}$;
\end{itemize}
Note that we do not put any condition at the archimedean places.

\subsubsection{Generalized Kottwitz triples}
\begin{defn}\label{def:generalized-Kott-triple}
Let $j\in\bbZ_{\ge1}$ and set $r:=j[\kappa_{\bbE_\fp}:\bbF_p]$. A degree $j$ \emph{generalized Kottwitz triple} $(\ga_0';\ga,\delta)$ consists of 
\begin{itemize}
    \item A semisimple stable conjugacy class $\ga_0'\in\bbG'(\bbQ)$ that is elliptic in $\bbG'(\bbR)$,
    \item a conjugacy class $\ga\in\bbG'(\bbA_f^p)=\bbG(\bbA_f^p)$ that is stably conjugate to $\ga_0'$,
    \item a $\sigma$-conjugacy class $\delta\in\bbG(\bbQ_{p^r})$ whose naive norm $N_r\delta=\delta\sigma(\delta)\dotsm\sigma^{r-1}(\delta)$ is stably conjugate to $\ga_0'$, such that $\kappa_{\bbG_{\bbQ_p}}(p\delta)=-\mu^\#$ in $X^*(Z(\hat{\bbG})^{\Gamma(p)})$, where $\Gamma(p):=\mathrm{Gal}(\overline{\bbQ}_p/\bbQ_p)$.
\end{itemize}
\end{defn}
\begin{rem}
    Recall that $\bbG(\bbQ_{p^r})\cong(\bbD_w\otimes_{\bbQ_p}\bbQ_{p^r})^\times\times\bbQ_{p^r}^\times$. We call the image of $N_r\delta$ in $\bbQ_{p^r}^\times$ its similitude factor. On the other hand, the first factor of $N_r\delta$ has a characteristic polynomial which is a monic polynomial of degree $(\dim_{\bbF}\bbD)^{\frac{1}{2}}$ with coefficients in $F\otimes_{\bbQ_p}\bbQ_{p^r}$ (recall that $\bbD_w$ has center $\bbF_w\cong F$). We refer to this simply as the characteristic polynomial of $N_r\delta$. Similarly, for any element in $\bbG'(\bbQ_p)\cong\mathrm{GL}_n(F)\times\bbQ_p^\times$ we have its characteristic polynomial (defined by the first factor) and its similitude factor. In the definition of generalized Kottwitz triples, the condition that $N_r\delta\in\bbG(\bbQ_{p^r})$ is stably conjugate to $\ga_0'\in\bbG'(\bbQ_p)$ simply means that they have the same characteristic polynomial and similitude factor.
\end{rem}

Let $x=(A_\bullet,\la,\bar{\eta})\in\mathrm{Fix}_{j,\cL}(g^p)$ be a fixed point in the closure of the generic fiber as in \S\ref{sec:fixed-points-of-correspondence}. Then we have associated to $x$ a polarized virtual $\cO_\bbB$-abelian variety $(A,u,\la)$ over $\bbF_{p^r}$, giving rise to a
conjugacy class $\ga\in\bbG(\bbA_f^p)=\bbG'(\bbA_f^p)$ and a $\sigma$-conjugacy class $\delta\in\bbG(\bbQ_{p^r})$.

\begin{lem}\label{lem:generalized-Kott-triple}
There exists a unique semi-simple stable conjugacy class $\ga_0'\in\bbG'(\bbQ)$ such that $(\ga_0';\ga,\delta)$ forms a degree $j$ generalized Kottwitz triple. 
\end{lem}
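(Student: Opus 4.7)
The plan is to mirror the proof of \propref{prop:Kott-triple} with the auxiliary group $\bbG'$ of \S\ref{sec:auxiliary-group} taking the role of $\bbG$. The decisive simplification is that $\bbG'_{\bbQ_p}$ is quasi-split by construction, so by Lemma~\ref{lem:transfer-twisted-conj-class}(1) applied to the pair $(\bbG, \bbG' = \bbG^*)$ at $p$, the naive norm $N_r\delta$ automatically determines a semisimple conjugacy class $\mathcal{N}_r\delta \in \bbG'(\bbQ_p)$. This removes the extra hypothesis required in \propref{prop:Kott-triple}.

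First I would reproduce the framework of \cite[\S14]{Ko-points} as adapted in \propref{prop:Kott-triple}. Let $I = I_{(A,u,\lambda)}$ be the $\bbQ$-group of self-quasi-isogenies, so that $I_{\bbQ_\ell} \cong \bbG_{\gamma_\ell} \cong \bbG'_{\gamma_\ell}$ for finite $\ell \neq p$, $I_{\bbQ_p} \cong \bbG_{\delta\sigma}$, and $I_\bbR$ is anisotropic modulo center. Using \cite[Lemma~1.2.2]{KPS}, I would choose a maximal $\bbQ$-torus $T \subset I$ with $T_{\bbQ_p}$ $\bbQ_p$-elliptic in $I_{\bbQ_p}$. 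The next step is to produce local embeddings $T \hookrightarrow \bbG'$ at every place: at finite $\ell \neq p$ this is immediate from $\bbG_{\bbQ_\ell} \cong \bbG'_{\bbQ_\ell}$; at $p$ one uses the ellipticity of $T_{\bbQ_p}$ together with the fact that elliptic tori transfer to any inner form, including the quasi-split $\bbG'_{\bbQ_p}$, and then invokes the double-centralizer and Cayley--Hamilton argument from \propref{prop:Kott-triple} to promote this to an algebra embedding of the centralizer $N$ of $T$ into $C' := \mathrm{End}_{\bbB'}(\bbV')$; at $\infty$ one selects $\bbG'$ so that its archimedean signatures coincide with those of $\bbG$, whereupon the transfer of the anisotropic torus $T_\bbR$ into $\bbG'(\bbR)$ is exactly Kottwitz's archimedean argument. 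Kottwitz's Hasse-principle style argument, using ellipticity of $T$ at some place (here $p$, or equivalently $\infty$), then yields a global embedding $T \hookrightarrow \bbG'$. I would set $\gamma_0'$ to be the image of $\pi_A^{-1} \in T(\bbQ)$; the local compatibilities $\gamma_0' \sim \gamma_\ell$ at each $\ell \neq p$ and $\gamma_0' \sim \mathcal{N}_r\delta$ at $p$ follow from the construction, the cocharacter condition $\kappa_{\bbG_{\bbQ_p}}(p\delta) = -\mu^\#$ is Proposition~\ref{prop:delta-condition}, and ellipticity of $\gamma_0'$ at $\infty$ is automatic since $\pi_A^{-1}$ is a Weil $p^r$-number. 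Uniqueness of the stable class follows because it is determined by the characteristic polynomial and similitude factor, which are already pinned down by the local data.

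The hard part will be arranging the archimedean signatures of $\bbG'$ consistently. With the local invariants of $\bbB'$ fixed away from $\infty$ (split at $w, w^c$ and equal to those of $\bbB$ at all other finite primes), one needs to verify that the parity congruence of \S\ref{sec:division-algebra}(5), applied to $\bbD'$, remains compatible with prescribing $\bbG'(\bbR) \cong \bbG(\bbR)$. Concretely, the replacement $\mathrm{inv}(\bbD_w) = a/n \rightsquigarrow 0$ together with the modified count of finite primes where $\bbD'$ is ramified must still satisfy the required mod-$2$ constraint on $\sum_{\tau \in \Phi} a_\tau$ arising from Lemma~\ref{lem:global-group}. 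Once this bookkeeping is confirmed, the remainder is a direct translation of the proof of \propref{prop:Kott-triple}.
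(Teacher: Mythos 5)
Your overall framework (Kottwitz \cite[\S14]{Ko-points}) is the right one, and the first paragraph correctly identifies why $\bbG'$ is the right auxiliary group: since $\bbG'_{\bbQ_p}$ is quasi-split, the $p$-adic obstruction that forced the extra hypothesis in Proposition~\ref{prop:Kott-triple} simply disappears. However, your proposal then goes off in a different and, in one respect, incorrect direction.

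The paper's proof does \emph{not} rerun the elliptic-torus adaptation of Proposition~\ref{prop:Kott-triple}. It reverts to Kottwitz's \emph{original} argument unchanged: choose a maximal semisimple $N\subset\mathrm{End}_{\bbB}(A)$ containing $\pi_A$ and embed it $*$-compatibly into $C'=(\bbB')^{\mathrm{op}}$. The only places where Kottwitz used the quasi-split hypothesis in \cite[\S14]{Ko-points} are on pp.\,420--421, both at $p$, and both pass because $\bbB'$ (hence $C'$) is split at every place of $\bbF$ above $p$. There is no need to choose $T$ $\bbQ_p$-elliptic, no need for the elliptic-torus transfer, and no need for the Cayley--Hamilton step; those were all workarounds for the \emph{non}-quasi-split case of Proposition~\ref{prop:Kott-triple}, and with $\bbG'$ quasi-split at $p$ they are superfluous (all maximal tori, not only elliptic ones, transfer to the quasi-split form).

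The genuine gap is in your last paragraph. You write that "the hard part will be arranging the archimedean signatures of $\bbG'$" so that $\bbG'(\bbR)\cong\bbG(\bbR)$, and you worry about the parity congruence from \S\ref{sec:division-algebra}(5). But the paper deliberately does \emph{not} constrain $\bbG'$ at infinity --- see the end of \S\ref{sec:auxiliary-group}: "Note that we do not put any condition at the archimedean places." And no such constraint is needed: the archimedean step in Kottwitz's argument only requires that the anisotropic-mod-center torus $T_\bbR$ embed into $\bbG'(\bbR)$, which holds for a unitary similitude group over $\bbR$ in \emph{any} signature (it always contains an anisotropic-mod-center maximal torus), and the ellipticity of $\gamma_0'$ at $\bbR$ is automatic from $\pi_A^{-1}$ being a Weil number, as you yourself note. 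So the "hard part" you identify is a non-problem; if anything, trying to impose $\bbG'(\bbR)\cong\bbG(\bbR)$ could create a spurious global obstruction, because the local invariants of $\bbG'$ at the finite places are already pinned down and you have no remaining freedom to dictate the archimedean ones independently.

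Finally, a small notational slip: writing $\bbG' = \bbG^*$ suggests $\bbG'$ is the \emph{global} quasi-split inner form of $\bbG$, which it is not --- it is isomorphic to $\bbG$ at all finite places $\ell\neq p$ (where $\bbG$ may well fail to be quasi-split). Only $\bbG'_{\bbQ_p} = (\bbG_{\bbQ_p})^*$.
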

\begin{proof}
The arguments in \cite[\S14]{Ko-points} go through in our situation with slight modifications. \par 
First we construct a candidate $\ga_0'\in\bbG'(\bbQ)$. Starting with a maximal semisimple subalgebra $N$ of $\mathrm{End}_\bbB(A)$ containing the Frobenius endomorphism $\pi_A$, we seek an algebra embedding from $N$ into $C':=\mathrm{End}_{\bbB'}(\bbV')=(\bbB')^{\mathrm{op}}$ compatible with the Rosati involution $*_\la$ on $N$ and the involution $*'$ on $C'$, and take $\ga_0'$ to be the image of $\pi_A^{-1}$ under this embedding. To show the existence of such an embedding, one replaces the algebra $C$ in \emph{loc.cit.} by our $C'$ everywhere in pages 420-422. This works in our setting because $\bbB'$ and hence $C'$ splits at any place of $\bbF$ above $p$.\par 
Next we check that $(\ga_0';\ga,\delta)$ forms a generalized Kottwitz triple. For each $\ell\ne p$, over the algebraic closure $\overline{\bbQ}_\ell$ we have an isomorphism $\bbG'(\bar{\bbQ}_\ell)\cong\mathrm{GL}_n(\bar{\bbQ}_\ell)^{\mathrm{Hom}(\bbF_0,\bar{\bbQ}_\ell)}\times\bar{\bbQ}_\ell^\times$. Under this isomorphism, each $\mathrm{GL}_n$ factor of $\ga_\ell$ and $\ga_0'$ have the same characteristic polynomial, which is the image of the characteristic polynomial of $\pi_A^{-1}$ in the semisimple $\bbF$-algebra $N$ under suitable embeddings of $\bbF$ into $\bar{\bbQ}_\ell$. Also they have the same similitude factor in $\bar{\bbQ}_\ell^{\times}$, which equals to $(\pi_A\pi_A^{*_\la})^{-1}$. 
Hence $\ga$ and $\ga_0'$ are stably conjugate in $\bbG'(\bbQ_\ell)=\bbG(\bbQ_\ell)$.\par 
At $p$, it is clear that the naive norm $N_r\delta\in\bbG(\bbQ_{p^r})$ and $\ga_0'\in\bbG'(\bbQ_p)$ have the same similitude factor, which equals to $(\pi_A\pi_A^{*_\la})^{-1}$. Next we recall that there is an $\bbB\otimes_{\bbQ}\bbQ_{p^r}$-module $H_r$ such that $H_r\otimes_{\bbQ_{p^r}}\Breve{\bbQ}_p$ is isomorphic to the covariant rational Dieudonn\'e module of the $p$-divisible group $A[p^\infty]$. The element $\delta\in\bbG(\bbQ_{p^r})$ is defined by choosing an isomorphism of $\bbB\otimes_{\bbQ}\bbQ_{p^r}$-modules $H_r\cong\bbV\otimes_{\bbQ}\bbQ_{p^r}$ (which exists by the assumption that the fixed point $x=(A_\bullet,\la,\bar{\eta})$ lies in the closure of the generic fiber). Under the natural embedding $\mathrm{End}_\bbB(A)\to\mathrm{End}_\bbB(H_r)\cong\mathrm{End}_\bbB(\bbV\otimes\bbQ_{p^r})$ we have $N_r\delta=\pi_A^{-1}$. So the characteristic polynomials of $N_r\delta$ and $\ga_0'$ both equal to the characteristic polynomial of $\pi_A^{-1}$ in the semisimple $\bbF$-algebra $N$. Hence $N_r\delta$ and $\ga_0'$ are stably conjugate.\par 
Finally, the fact that $\kappa_{\bbG_{\bbQ_p}}(p\delta)=-\mu^{\#}$ follows from the determinant condition for the fixed point $x=(A_\bullet,\la,\bar{\eta})$.
\end{proof}

\subsubsection{Virtual abelian varieties constructed from generalized Kottwitz triples}\label{sec:ab-var-associated-to-generalized-triple}
As a converse to Lemma \ref{lem:generalized-Kott-triple}, starting from a generalized Kottwitz triple $(\gamma_0';\gamma,\delta)$, we will contruct a $c$-polarized virtual $\bbB$-abelian variety up to isogeny. The crucial ingredient will be Lemma~\ref{lem:Kottwitz-simple}, which implies vanishing of the Kottwitz invariant in our setting.\par 
Let $I_0$ be the centralizer of $\ga_0'$ in $\bbG'$. It is a connected reductive group over $\bbQ$ with simply connected derived group. As usual, we let $\hat{I}_0$ be the neutral component of the Langlands dual group of $I_0$ and put $\Gamma:=\mathrm{Gal}(\overline{\bbQ}/\bbQ)$. Recall that we have canonical $\Gamma$-equivariant isomorphism $Z(\hat{\bbG'})=Z(\hat{\bbG})$, which leads to a canonical $\Gamma$-equivariant embedding $Z(\hat{\bbG})\subset Z(\hat{I}_0)$.\par 
Using Honda-Tate theory, one constructs from $\gamma_0'$ a $c$-polarizable virtual $\bbB$-abelian variety $(A,i)$ over $\bbF_{p^r}$ up to isogeny as in \cite[p.438-439]{Ko-points}, with slight modifications. More precisely, let $f\in\bbF[T]$ be the characteristic polynomial of $(\gamma_0')^{-1}\in\bbB'$. For any root $\alpha$ of $f$ in $\bar{\bbF}$, one first checks that $\alpha$ is a $c$-number in the sense of \emph{loc. cit.} page 405, paragraph above Lemma 10.4, where $c=(\gamma_0'\gamma_0'^*)^{-1}$. Then one checks that the multiplicity $m(\alpha)$ of the root $\alpha$ is divisible by $(\dim_\bbF\bbB)^{\frac{1}{2}}$ and the quotient annihilates certain invariants at each place $v$ of $\bbF[\alpha]$.
The arguments on \emph{loc. cit.} page 438 go through in our slightly modified setting. Indeed, if $v$ is a place above a prime $\ell\ne p$, then we have $\bbB'\otimes_\bbF\bbF[\alpha]_v\cong\bbB\otimes_\bbF\bbF[\alpha]_v$; whereas if $v$ divides $p$, we use that $N_r\delta\in\bbG(\bbQ_{p^r})$ has the same characteristic polynomial as $\gamma_0'\in\bbG'(\bbQ_p)$. Finally if $v$ is archimedean, then $v$ is complex and there is nothing to check. \par 

Let $\pi_A$ be the Frobenius endomorphism of the virtual abelian variety $A$. For each prime $\ell\ne p$, the $\bbB[\pi_A]$-modules $H_\ell:=H_1(A,\bbQ_\ell)$ and $V_\ell:=V\otimes_\bbQ\bbQ_\ell$ are isomorphic where $\pi_A$ acts on $V_\ell$ by $\ga_\ell^{-1}\in\bbG(\bbQ_\ell)$.\par
Choose a $c$-polarization $\lambda$ of $(A,i)$. Let $I:=\mathrm{Aut}(A,i,\la)$. By the same reasoning as in \cite{Ko-points}, last paragraph on page 423,  $I$ is an inner form of $I_0$ so that we have a canonical $\Gamma$-equivariant isomorphism $Z(\hat{I})=Z(\hat{I}_0)$.\par  

Next we would like to adjust the polarization $\la$ by a class in $H^1(\bbQ,I)$ so that the resulting $c$-polarized virtual $B$-abelian variety gives rise to the generalized Kottwitz triple $(\ga_0';\ga,\delta)$. This is done by constructing local classes in $H^1(\bbQ_v,I)$ at each place $v$ that ``measure the discrepancy between $(A,i,\la)$ and $(\ga,\delta)$ at $v$" and show that this collection of local classes comes from a global class.\par 
First consider a prime $\ell\ne p$. The $c$-polarization $\la$ induces a $*$-alternating form $(\cdot,\cdot)_\la$  on the $B[\pi_A]$-module $H_\ell$, where we extend the involution $*$ to $B[\pi_A]$ by requiring $\pi_A^*=c\pi_A^{-1}$. By \cite[Lemma 10.7]{Ko-points} we know that $I_{\bbQ_\ell}$ is the automorphism group of the skew-Hermitian $B[\pi_A]$-module $H_\ell$, where automorphisms are only required to preserve the form $(\cdot,\cdot)_\la$ up to a scalar in $\bbQ_\ell^\times$. Fix an isomorphism of $B[\pi_A]$-modules $\varphi_\ell:H_\ell\xrightarrow{\sim}V_\ell$ where $\pi_A$ acts on $V_\ell$ by $\gamma_\ell^{-1}$. It takes the $*$-Hermitian pairing $(\cdot,\cdot)$ on $V$ to a $*_{\lambda}$-Hermitian pairing $\varphi_\ell^*(\cdot,\cdot)$ on $H_\ell$. There exists $x_\ell\in \mathrm{Aut}_{B[\pi_A]}(H_\ell\otimes\bar{\bbQ}_\ell)$ such that $x_\ell^*\varphi_\ell^*(\cdot,\cdot)=(\cdot,\cdot)_\la$. For each $\theta\in\mathrm{Gal}(\overline{\bbQ}_\ell/\bbQ_\ell)$, let $a_\ell(\theta):=x_\ell\theta(x_\ell)^{-1}$. Then $a_\ell$ defines a class in $H^1(\bbQ_\ell,I)$ and its image under the map
\[\mathrm{Inv}_\ell: H^1(\bbQ_\ell,I)\to X^*(Z(\hat{I}_0)^{\Gamma_\ell})\]
equals 
\[\mathrm{Inv}_\ell(a_\ell)=\alpha_\ell(\ga_0';A,\la,i)-\alpha_\ell(\ga_0';\ga,\delta).\]
Here $\alpha_\ell(\gamma_0';A,\lambda,i)$ is the invariant defined in \cite[Page 424]{Ko-points} which measures the difference between the skew-Hermitian $\bbB[\pi_A]$-modules $H_\ell$ and $V_\ell$ where $\pi_A$ acts on $V_\ell$ by $(\gamma_0')^{-1}$; while $\alpha_\ell(\ga_0';\ga,\delta)$ is the invariant defined in \cite[\S2]{Ko-lambda} that measures the difference between the conjugacy classes $\ga_\ell$ and $\ga_0'$ inside the same stable conjugacy class of $\bbG(\bbQ_\ell)$.\par 

Next consider the prime $p$. The covariant rational Dieudonn\'e module of $A$ defines a skew-Hermitian isocrystal $(H_r,\Phi)$ over $\bbQ_{p^r}$ where $\Phi$ is the inverse of the Verschiebung operator. By \cite[Lemma 10.8]{Ko-points}, $I_{\bbQ_p}$ is isomorphic to the automorphism group of $(H,\Phi)$ and hence is an inner form of products of Weil restrictions of general linear groups by our assumption that $p$ splits in the imaginary quadratic extension $\bbK$. Hence $H^1(\bbQ_p,I)=1$ and we let $a_p$ be the trivial class.\par

The following observation of Kottwitz is the key to finding the desired global cohomology class $H^1(\bbQ,I)$.
\begin{lem}\label{lem:Kottwitz-simple}
We have $Z(\hat{I}_0)^\Gamma=Z(\hat{\bbG})^\Gamma$.
\end{lem}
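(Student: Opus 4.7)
The plan is to adapt Kottwitz's proof of the analogous statement for $\bbG$ from \cite{Ko-simple} to the auxiliary group $\bbG'$. The essential inputs are: (i) the canonical $\Gamma$-equivariant identification $Z(\hat{\bbG})=Z(\hat{\bbG'})$, which holds because $\bbG$ and $\bbG'$ are $\bbQ$-forms of the same quasi-split group (both are unitary similitude groups attached to central simple algebras over $\bbF$ with an involution of the second kind), so the centres of their dual groups coincide as $\Gamma$-modules; and (ii) the hypothesis that $\gamma_0'$ is elliptic in $\bbG'(\bbR)$.

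First I would describe $I_0$ explicitly. Set $E:=\bbF[\gamma_0']\subset\bbB'$; since $\gamma_0'$ is semisimple and satisfies $\gamma_0'(\gamma_0')^{*'}\in\bbQ^\times$, the algebra $E$ decomposes as a product of CM fields $E=\prod_i E_i$ over $\bbF$, stable under $*'$, with totally real subalgebra $E_0:=E^{*'}$. The centralizer $I_0$ is then the similitude group of the induced $\bbB'\otimes_\bbF E$-skew-Hermitian structure on $\bbV'$, sharing the similitude factor $\bbG_m$ with $\bbG'$. In particular $Z(\hat{I}_0)$ is an explicit complex torus, and the embedding $Z(\hat{\bbG})\hookrightarrow Z(\hat{I}_0)$ is induced by the inclusion $\bbF\subset E$ (norm-pullback on characters).

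Next I would compute the quotient $T:=Z(\hat{I}_0)/Z(\hat{\bbG})$. Its geometric cocharacter lattice is canonically the kernel of the sum-along-fibres map $\bigoplus_i\bbZ[\widetilde{\Phi}_i]\twoheadrightarrow\bbZ[\Phi]$, where $\widetilde{\Phi}_i$ is the set of embeddings $E_i\hookrightarrow\overline{\bbQ}$ lying above the fixed CM type $\Phi$ of $\bbF$. The $\Gamma$-action on $X_*(T)$ factors through a finite Galois group and permutes the embeddings; complex conjugation $c$ acts by simultaneously swapping $\widetilde{\varphi}$ with $c\widetilde{\varphi}$ and inverting the centre, which I would then combine with the ellipticity input.

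Thirdly I would invoke the ellipticity of $\gamma_0'$ at infinity. This says precisely that the maximal $\bbR$-split torus in the centre of $I_{0,\bbR}$ coincides with that of $\bbG'_\bbR$; dually, this forces $c\in\Gamma$ to act as $-1$ on $X_*(T)$, so $T$ is an anisotropic $\bbQ$-torus and $T(\bbC)^\Gamma$ is finite. Plugging this into the long exact sequence of Galois cohomology attached to
\begin{equation*}
1\to Z(\hat{\bbG})\to Z(\hat{I}_0)\to T\to 1,
\end{equation*}
together with the connectedness and divisibility of $Z(\hat{\bbG})$, will show that the composition $Z(\hat{I}_0)^\Gamma\to T^\Gamma$ is zero, which is the desired equality.

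The main obstacle will be verifying the Tate--Nakayama style interpretation of the ellipticity hypothesis in our auxiliary setting: unlike $\bbG=\bbG_\beta$, the group $\bbG'$ has no prescribed signature at the archimedean place, so one cannot invoke a global property of $\bbG'_\bbR$ and must instead argue pointwise starting from the elliptic element $\gamma_0'$. Once the $c$-action on $X_*(T)$ has been pinned down in this pointwise manner, the remaining steps are a formal manipulation of cocharacter lattices directly parallel to Kottwitz's treatment.
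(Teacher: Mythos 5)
Your strategy is the right one: the paper's proof of this lemma is the single sentence ``This is contained in the proof of \cite[Lemma 2]{Ko-simple}'', and that argument does indeed go through verbatim for $(\bbG', \gamma_0')$. The reason the wholesale citation is valid is precisely the two facts you list as input: $Z(\hat{\bbG'})=Z(\hat{\bbG})$ is a canonical $\Gamma$-equivariant identification (the dual group and its Galois action depend only on the quasi-split form, which $\bbG$ and $\bbG'$ share), and ellipticity of $\gamma_0'$ at $\bbR$ is a pointwise condition on the element, not a global property of $\bbG'_\bbR$. Kottwitz's Lemma 2 never invokes the archimedean signature of the group $\bbG$; it uses only that $\bbG$ is a ${\rm GU}$-group of a central simple algebra over the CM field $\bbF$ with involution of the second kind, and that $\gamma_0$ is elliptic at $\bbR$. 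Both are true of $(\bbG',\gamma_0')$. So the ``main obstacle'' you flag in your last paragraph is not a real obstacle, and the worry that one ``cannot invoke a global property of $\bbG'_\bbR$'' is moot because Kottwitz's argument does not invoke one either.

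Where your sketch does have a genuine gap is the final deduction. Ellipticity at $\bbR$ gives you that $c$ acts by $-1$ on $X_*(T)\otimes\bbQ$, hence that $T$ is $\bbR$-anisotropic and $T^\Gamma$ is finite (in fact $T^\Gamma\subset T[2]$). But finiteness of $T^\Gamma$ ``together with connectedness and divisibility of $Z(\hat{\bbG})$'' does not by itself show that $Z(\hat{I}_0)^\Gamma\to T^\Gamma$ is zero. That map kills the identity component $(Z(\hat{I}_0)^\Gamma)^\circ$ trivially (a connected group maps trivially to a finite group), but $Z(\hat{I}_0)^\Gamma$ is a diagonalizable group whose component group $\pi_0\bigl(Z(\hat{I}_0)^\Gamma\bigr)$, i.e.\ the torsion of $X^*(Z(\hat{I}_0))_\Gamma$, can very well be nontrivial and map nontrivially to $T^\Gamma$. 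To rule that out one needs, as Kottwitz does, the full Galois structure of the CM algebra $\bbF[\gamma_0']$ (not merely the single archimedean conjugation $c$) in order to identify $Z(\hat{I}_0)^\Gamma$ with $Z(\hat{\bbG})^\Gamma$ directly. Equivalently, one needs either $T^\Gamma=0$ on the nose, or injectivity of the connecting map $T^\Gamma\to H^1(\Gamma,Z(\hat{\bbG}))$; neither follows from divisibility of $Z(\hat{\bbG})$ alone. So the last sentence of your Step~3 should be replaced by the explicit computation with the lattice $X^*(Z(\hat{I}_0))_\Gamma$ from Kottwitz's proof, rather than a general nonsense argument about divisible groups.
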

\begin{proof}
This is contained in the proof of \cite[Lemma 2]{Ko-simple}.
\end{proof}

\begin{cor}
There exists a cohomology class $a\in\ker(H^1(\bbQ,I)\to H^1(\bbR,I))$ such that for all finite primes $v$ of $\bbQ$, its image in $H^1(\bbQ_v,I)$ equals to the class $a_v$ defined as above.
\end{cor}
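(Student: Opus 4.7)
The plan is to invoke Kottwitz's exact sequence (see \cite[Prop.\,2.6]{Ko-lambda})
\[
H^1(\bbQ, I) \longrightarrow \bigoplus_v H^1(\bbQ_v, I) \xrightarrow{\;\sum_v \mathrm{Inv}_v\;} X^*(Z(\hat{I})^{\Gamma}),
\]
which says that an adelic family of classes lifts to a global class if and only if the sum of its local invariants vanishes in $X^*(Z(\hat I)^\Gamma)$. First I would complete the prescribed family by setting $a_\infty = 0 \in H^1(\bbR, I)$ and reduce the computation of the total invariant by means of two identifications: since $I$ is an inner form of $I_0$, we have $Z(\hat{I}) = Z(\hat{I}_0)$ canonically as $\Gamma$-modules, and by Lemma~\ref{lem:Kottwitz-simple} the $\Gamma$-invariants coincide with $Z(\hat{\bbG})^\Gamma$. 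Thus it suffices to show the vanishing $\sum_v \mathrm{Inv}_v(a_v) = 0$ in $X^*(Z(\hat{\bbG})^\Gamma)$, after which $a$ may be taken to be any preimage.

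Next I would split the sum along the defining decomposition $\mathrm{Inv}_\ell(a_\ell) = \alpha_\ell(\gamma_0'; A, \lambda, i) - \alpha_\ell(\gamma_0'; \gamma, \delta)$ for $\ell \neq p, \infty$, complemented by $\mathrm{Inv}_p(a_p) = 0 = \mathrm{Inv}_\infty(a_\infty)$. Introducing the terms $\alpha_p, \alpha_\infty$ for both summands (and cancelling them against each other) recasts the total invariant as the difference of two global sums
\[
\sum_v \alpha_v(\gamma_0'; A, \lambda, i) \;-\; \sum_v \alpha_v(\gamma_0'; \gamma, \delta),
\]
each of which I then argue vanishes. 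The second sum is exactly the Kottwitz invariant $\alpha(\gamma_0'; \gamma, \delta)$ of the generalized triple; its natural receptacle is $X^*(\mathfrak{K}(I_0/\bbQ))$, and as already noted in the discussion following \eqref{eq:c-formula}, $\mathfrak{K}(I_0/\bbQ)$ is trivial for our unitary similitude group $\bbG$ by \cite{Ko-simple}, forcing the sum to vanish in $X^*(Z(\hat{\bbG})^\Gamma)$. The first sum is a product-formula type identity: it measures the global discrepancy between the $\bbB$-module $H_1(A, \bbA)$ with its $\ast$-pairing induced by $\lambda$ and the globally defined skew-Hermitian $\bbB[\pi_A]$-module $\bbV_\bbA$ with $\pi_A$ acting by $(\gamma_0')^{-1}$, and its vanishing is the content of the reciprocity-type argument of \cite[$\S$17]{Ko-points} adapted to our setting (the modifications are the same as in the proof of Lemma~\ref{lem:generalized-Kott-triple}, using that $\bbB'_v \cong \bbB_v$ for $v \nmid p\infty$ and that $\bbB'$ is split at the places above $p$).

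The main obstacle I expect is the careful bookkeeping at the boundary places $p$ and $\infty$, where the invariants $\alpha_p$ and $\alpha_\infty$ of the two sums must be matched before being cancelled. At $p$ this is compatible with the construction because $\delta$ was read off from the Frobenius $\pi_A$ on the rational Dieudonn\'e module, so $\alpha_p(\gamma_0'; A, \lambda, i)$ and $\alpha_p(\gamma_0'; \gamma, \delta)$ agree as elements of $X^*(Z(\hat{\bbG})^{\Gamma_p})$; at $\infty$ the matching follows from the archimedean comparison of Hodge data (the $\ast$-Hermitian form $(\cdot, \cdot)_\beta$ on $\bbV_\bbR$ was arranged in Lemma~\ref{lem:global-group} to have the signatures dictated by $\mu$). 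Once these two local identifications are in place, the exact sequence above yields the required global class $a \in \ker(H^1(\bbQ, I) \to H^1(\bbR, I))$.
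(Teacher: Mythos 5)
Your high-level blueprint is the right one (invoke Kottwitz's exact sequence, identify $Z(\hat{I})^\Gamma = Z(\hat{I}_0)^\Gamma = Z(\hat{\bbG})^\Gamma$ via Lemma~\ref{lem:Kottwitz-simple}, reduce to vanishing of the total invariant, then split it into the ``$A,\lambda,i$'' and the ``$\gamma,\delta$'' contributions), but the execution departs from the paper in a way that creates a real gap.

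The paper never introduces archimedean or $p$-adic terms $\alpha_\infty$, $\alpha_p$: it exploits $a_\infty = 0$ and $a_p = 1$ (the latter because $H^1(\bbQ_p,I)=1$) to immediately reduce the target to $\sum_{\ell\ne p}\mathrm{Inv}_\ell(a_\ell)$, and then works term by term. Your proposal instead adds $\alpha_p,\alpha_\infty$ to both sums and cancels them, asserting matching at $\infty$ via ``the archimedean comparison of Hodge data.'' This step is where the argument breaks: the generalized triple lives on the auxiliary group $\bbG'$, which by construction in \S\ref{sec:auxiliary-group} carries \emph{no} condition at the archimedean places — its signatures are uncontrolled — so there is no reason an archimedean invariant $\alpha_\infty(\ga_0';\ga,\delta)$ defined relative to $\bbG'$ should agree with the Hodge-theoretic invariant $\alpha_\infty(\ga_0'; A,\la,i)$, which is tied to $\bbG_\beta$. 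The whole point of the paper's setup is precisely to avoid ever having to compare these two archimedean quantities.

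Two further points. For the vanishing of $\sum_{\ell\ne p}\alpha_\ell(\ga_0';A,\la,i)|_{Z(\hat{\bbG})^\Gamma}$, the paper first writes $A \cong A_0^n$ for a CM abelian variety $A_0$ with CM by a maximal CM subfield $N\subset\bbB$, and \emph{then} applies the CM-lifting vanishing of \cite[Lemma 13.2]{Ko-points}. Your citation to ``\cite[\S17]{Ko-points}'' points to where the argument is consumed, not where the lifting lemma is proved, and the $A = A_0^n$ reduction is omitted. For the second sum, the paper does not need to pass through the global Kottwitz invariant or $\mathfrak{K}(I_0/\bbQ)$: after the identification $Z(\hat{I}_0)^\Gamma = Z(\hat{\bbG})^\Gamma$, each \emph{local} class $\alpha_\ell(\ga_0';\ga,\delta)$ is already trivial when restricted to $Z(\hat{\bbG})^{\Gamma}$, because by construction it lies in $\ker\bigl(H^1(\bbQ_\ell,I_0)\to H^1(\bbQ_\ell,\bbG')\bigr)$. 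Your appeal to the triviality of $\mathfrak{K}(I_0/\bbQ)$ is morally equivalent but requires the full adelic product (including the undefined archimedean term), which the paper deliberately avoids.
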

\begin{proof}
By Lemma~\ref{lem:Kottwitz-simple}, we have $Z(\hat{I}_0)^{\Gamma}=Z(\hat{\bbG})^\Gamma$. Moreover, since $a_p=1$, to show the existence of the class $a$, it suffices to show that the sum over all finite primes $\ell\ne p$ of the restrictions of $\mathrm{Inv}(a_\ell)$ to $X^*(Z(\hat{\bbG})^{\Gamma})$ is trivial. As in \cite{Ko-points}, we can choose a maximal CM subfield $N$ of $\bbB$ that also embeds into $\mathrm{End}_{\bbB}(A)$. Under a suitable isomorphism $\bbB\otimes_\bbF N\cong M_n(N)$, the $c$-polarized virtual $B$-abelian variety $(A,i,\la)$ over $\bbF_{p^r}$ is decomposed as $A=A_0^n$ where $A_0$ is a $c$-polarised abelian variety with CM by $N$. As in \cite[Lemma 13.2]{Ko-points}, by lifting CM abelian varieties to characteristic 0 we can prove the vanishing of $\sum_{\ell\ne p}\alpha_\ell(\ga_0',A_0)|_{Z(\hat{\bbG})^\Gamma}$, which implies the vanishing of $\sum_{\ell\ne p}\alpha_\ell(\ga_0';A,\la,i)|_{Z(\hat{\bbG})^\Gamma}$. On the other hand, by definition, $\alpha_\ell(\ga_0';\ga,\delta)$ is trivial on $Z(\hat{\bbG})$ since by construction (cf. \cite[\S2]{Ko-lambda}) it belongs to $\ker(H^1(\bbQ_\ell,I_0)\to H^1(\bbQ_\ell,\bbG'))$. Thus we get the desired vanishing statement. 
\end{proof}

We represent the cohomology class $a\in\ker(H^1(\bbQ,I)\to H^1(\bbR,I))$ by a cocycle of the form $x^{-1}\theta(x)$ for all $\theta\in\mathrm{Gal}(\overline{\bbQ}/\bbQ)$, where $x\in\mathrm{End}_\bbB(A)\otimes\overline{\bbQ}$. From the proof of \cite[Lemma 17.1]{Ko-points} we see that some scalar multiple $\la''$ of $\la':=\hat{x}\la x$ defines a $c$-polarization of the virtual $\bbB$-abelian variety $(A,i)$ and the generalized Kottwitz triple associated to $(A,i,\la'')$ is $(\ga_0';\ga,\delta)$.

\subsection{Finishing the proof of the main local theorem}
For each generalized Kottwitz triple $(\ga_0';\ga,\delta)$, choose a polarized virtual $\cO_\bbB$-abelian variety $(A,\la,i)$ that maps to it as in \S\ref{sec:ab-var-associated-to-generalized-triple}. Let $I=I_{(A,\la,i)}$ be the automorphism group of $(A,\la,i)$.
Define the number $c(\ga_0';\ga,\delta)$ to be the product of $\mathrm{vol}(I(\bbQ)\backslash I(\bbA_f))$ and the cardinality of the finite set
\[\ker[\ker^1(\bbQ,I_0)\to\ker^1(\bbQ,\bbG')]\]
By the discussion in \cite[\S3]{Ko-lambda}, the (isomorphism class of the) group $I$ is independent of the choice of $(A,\la,i)$ and thus $c(\ga_0';\ga,\delta)$ is well-defined.

\begin{thm}\label{thm:Lefschetz-trace-formula}
The following equality holds
\[\mathrm{Tr}(\tau\times hf^p| H_{\beta,\xi}^*)=\sum_{(\ga_0';\ga,\delta)}c(\ga_0';\ga,\delta)\,O_\ga(f^p)\,TO_{\delta\sigma}(\phi_{\tau,h})\,\mathrm{tr}\,\xi(\ga_0')\]
where $(\ga_0';\ga,\delta)$ ranges over equivalence classes of generalized Kottwitz triples such that the image of $\ga_0'$ in $\bbG'(\bbQ_p)$ is transferred from $\bbG_\beta(\bbQ_{p^r})$.
\end{thm}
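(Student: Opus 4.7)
The starting point is Corollary~\ref{cor:trace-coarse-expansion}, which expresses $\mathrm{Tr}(\tau\times hf^p| H_{\beta,\xi}^*)$ as a sum over isomorphism classes of polarized virtual $\cO_\bbB$-abelian varieties $(A,u,\la)$ over $\bbF_{p^r}$, with summand
\[
\mathrm{vol}(I(\bbQ)\backslash I(\bbA_f))\,O_{\gamma}(f^p)\,TO_{\delta\sigma}(\phi_{\tau,h})\,\mathrm{tr}\,\xi(\gamma_\ell).
\]
The plan is to reorganize this sum by grouping the $(A,u,\la)$ according to the generalized Kottwitz triple they produce, and then match each group with the corresponding term on the right-hand side of the theorem.

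First I would use Lemma~\ref{lem:generalized-Kott-triple} to associate to each isogeny class a unique generalized Kottwitz triple $(\gamma_0';\gamma,\delta)$ with $\gamma_0' \in \bbG'(\bbQ)$. By construction $N_r\delta$ is stably conjugate to $\gamma_0'$, so the image of $\gamma_0'$ in $\bbG'(\bbQ_p)$ is automatically transferred from $\bbG_\beta(\bbQ_{p^r})$; thus only such generalized Kottwitz triples arise. Conversely, for any such triple the construction in \S\ref{sec:ab-var-associated-to-generalized-triple} (using Honda-Tate theory for $\gamma_0'$, then adjusting the polarization via the vanishing argument based on Lemma~\ref{lem:Kottwitz-simple}) produces at least one polarized virtual $\cO_\bbB$-abelian variety mapping to $(\gamma_0';\gamma,\delta)$. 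Next I would check that the factors $O_\gamma(f^p)$, $TO_{\delta\sigma}(\phi_{\tau,h})$, and $\mathrm{tr}\,\xi(\gamma_\ell)$ depend only on the triple and not on the chosen $(A,u,\la)$: the first two are obvious from the definitions, while for the third one uses that $\gamma_\ell$ and $\gamma_0'$ are stably conjugate in $\bbG_\beta(\bbQ_\ell)=\bbG'(\bbQ_\ell)$ for $\ell\neq p$, so their traces under the algebraic representation $\xi$ (viewed over $\bar{\bbQ}$, where $\bbG_\beta$ and $\bbG'$ become isomorphic) coincide; this lets us replace $\mathrm{tr}\,\xi(\gamma_\ell)$ with $\mathrm{tr}\,\xi(\gamma_0')$.

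The main obstacle is computing the number of isomorphism classes of $(A,u,\la)$ that produce a given generalized Kottwitz triple, and reconciling the volume factor. Here I would adapt the arguments in Kottwitz \cite[\S3]{Ko-lambda} (modified as in the proof of Lemma~\ref{lem:generalized-Kott-triple} and in \S\ref{sec:ab-var-associated-to-generalized-triple}): the inner form $I$ of $I_0$ that arises as the automorphism group is, by Lemma~\ref{lem:Kottwitz-simple}, independent of the chosen $(A,u,\la)$, and the fiber is in bijection (up to isomorphism and the choice of scalar multiple of $\la''$) with the finite set
\[
\ker\!\bigl[\ker^1(\bbQ,I_0)\to\ker^1(\bbQ,\bbG')\bigr].
\]
Putting everything together yields the desired expression, with the coefficient $c(\gamma_0';\gamma,\delta)$ equal to the product $\mathrm{vol}(I(\bbQ)\backslash I(\bbA_f))\cdot|\ker[\ker^1(\bbQ,I_0)\to\ker^1(\bbQ,\bbG')]|$. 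Finally, the fact that every isogeny class in $\mathrm{Fix}_{j,\cL}(g^p)$ automatically lies in the closure of the generic fibre (the only case for which $\phi_{\tau,h}$ can be nonzero, by Proposition~\ref{prop:delta-condition}) ensures that the sum can be restricted to generalized Kottwitz triples as stated, completing the proof.
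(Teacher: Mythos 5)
Your proposal is correct and follows essentially the same route as the paper's (much terser) proof: start from Corollary~\ref{cor:trace-coarse-expansion}, group the isogeny classes by the generalized Kottwitz triple they determine via Lemma~\ref{lem:generalized-Kott-triple}, use the converse Honda-Tate construction of \S\ref{sec:ab-var-associated-to-generalized-triple} to show every such triple arises, and count the fiber by $|\ker[\ker^1(\bbQ,I_0)\to\ker^1(\bbQ,\bbG')]|$. The one place you add a detail the paper leaves implicit is the replacement of $\mathrm{tr}\,\xi(\gamma_\ell)$ by $\mathrm{tr}\,\xi(\gamma_0')$ via stable conjugacy over $\bar{\bbQ}_\ell$ and the fact that $\bbG_\beta$ and $\bbG'$ become isomorphic over $\bar{\bbQ}$; that observation is correct and worth making explicit.
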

\begin{proof}
 For each 
$(A,u,\la)$ we associate a generalized Kottwitz triple $(\ga_0',\ga,\delta)$ and the number of $(A,u,\la)$ associated to a given triple $(\ga_0',\ga,\delta)$ is equal to $\ker[\ker^1(\bbQ,I_0)\to\ker^1(\bbQ,\bbG')]$. Then the identity follows from Corollary~\ref{cor:trace-coarse-expansion}.
\end{proof}

Let $f_{\tau,h}^*\in C_c^\infty(\bbG'(\bbQ_p))$ be the stable base change transfer of $\phi_{\tau,h}\in C_c^\infty(\bbG_\beta(\bbQ_{p^r}))$. Then for any conjugacy class $\ga_p\in\bbG'(\bbQ_p)\cong\mathrm{GL}_n(F)\times\bbQ_p^\times$ which is the norm of $\delta\in\bbG_\beta(\bbQ_{p^r})$, we have
\[SO_{\ga_p}(f_{\tau,h}^*)=O_{\ga_p}(f_{\tau,h}^*)=e(\delta)TO_{\delta\sigma}(\phi_{\tau,h})\]
where $e(\delta)=e(\bbG_{\beta,\delta\sigma})\in\{\pm1\}$ is the Kottwitz sign; see \S\ref{sec:kottwitz-sign}.

\begin{cor}\label{cor:trace-G'}
We have the following equality
\[\mathrm{Tr}(\tau\times hf^p| H_{\beta,\xi}^*)=\sum_{\pi'}m(\pi')\mathrm{tr}\,\pi'(f_{\tau,h}^*f^pf^{\bbG'}_{\xi,\infty})\]
where the sum runs over automorphic representations $\pi'$ of $\bbG'$ and $m(\pi')$ denotes the automorphic multiplicity.
\end{cor}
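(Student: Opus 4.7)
The proof would follow the pseudo-stabilization template of \S\ref{sec:pseudo-stabilization}, but with the auxiliary group $\bbG'$ playing the role that $\bbG_\beta$ played there. My starting point is Theorem \ref{thm:Lefschetz-trace-formula}, in which ordinary Kottwitz triples for $\bbG_\beta$ have already been replaced by the generalized Kottwitz triples $(\ga_0'; \ga, \delta)$ indexed by $\bbG'$. The plan is to convert each local factor in that sum to a form suitable for the simple trace formula of $\bbG'$, and then apply stabilization.

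At $p$, I would use the definition of the stable base change transfer $f^*_{\tau,h}$: for any $\delta \in \bbG_\beta(\bbQ_{p^r})$ with norm $\ga_p \in \bbG'(\bbQ_p)$, one has $e(\delta)\, TO_{\delta\sigma}(\phi_{\tau,h}) = O_{\ga_p}(f^*_{\tau,h}) = SO_{\ga_p}(f^*_{\tau,h})$, and if $\ga_p$ is not such a norm then the orbital integral vanishes. In particular, the restriction in Theorem \ref{thm:Lefschetz-trace-formula} to classes transferred from $\bbG_\beta(\bbQ_{p^r})$ becomes automatic once we sum $O_{\ga_p}(f^*_{\tau,h})$ over all stable conjugacy classes in $\bbG'(\bbQ_p)$. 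At finite primes $\ell \neq p$, the isomorphism $\bbG'_{\bbQ_\ell}\cong \bbG_{\bbQ_\ell}$ makes the factor $O_\ga(f^p)$ carry over verbatim. At infinity, I define $f^{\bbG'}_{\xi,\infty}$ just as in \S\ref{sec:pseudo-coefficients}, using the discrete series $L$-packet of $\bbG'(\bbR)$ attached to $\xi$; this packet exists because $\bbG'(\bbR)$ is an inner form of $\bbG(\bbR)$ and hence admits discrete series. The analogue of \eqref{eq:SO-infty} then gives $SO_{\ga_\infty}(f^{\bbG'}_{\xi,\infty}) = \mathrm{tr}\,\xi(\ga_\infty)\,\mathrm{vol}(A_{\bbG'}(\bbR)^0 \backslash I(\bbR))^{-1}\, e(I^c_{\ga_\infty})$ for $\ga_\infty$ elliptic, and zero otherwise.

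Combining these, the Kottwitz sign product formula $e(\ga)\, e(\delta) = e(I_\bbR) = e(I^c_{\ga_\infty})$ (where $I$ is the automorphism group of the associated virtual abelian variety) together with the analogue of \eqref{eq:c-formula} for $\bbG'$ converts the geometric sum in Theorem \ref{thm:Lefschetz-trace-formula} into the stable expression
$$\tau(\bbG') \sum_{\ga_0'} SO_{\ga_0'}(f^*_{\tau,h}\, f^p\, f^{\bbG'}_{\xi,\infty}),$$
where $\ga_0'$ ranges over stable conjugacy classes in $\bbG'(\bbQ)$. A direct application of the simple trace formula for $\bbG'$, in the style of \cite[Lemma 4.1]{Ko-simple}, then identifies this with $\sum_{\pi'} m(\pi')\, \mathrm{tr}\,\pi'(f^*_{\tau,h}\,f^p\, f^{\bbG'}_{\xi,\infty})$, as claimed.

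The main obstacle is verifying that the pseudo-stabilization machinery developed for $\bbG_\beta$ in \S\ref{sec:pseudo-stabilization} really does carry over to $\bbG'$ without modification. The two points to check are (i) the vanishing of the Kottwitz invariant, i.e.\ $|\mathfrak{K}(I_0/\bbQ)| = 1$ for the inner forms $I_0$ attached to our generalized Kottwitz triples, which reduces to the analogue of Lemma \ref{lem:Kottwitz-simple} for $\bbG'$, and (ii) the existence and compatibility of the pseudo-coefficient $f^{\bbG'}_{\xi,\infty}$ with the stable orbital integral identity \eqref{eq:SO-infty}, together with the triviality of $\ker^1(\bbQ,\bbG')$ needed to normalize constants. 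Both should reduce to the fact that $\bbG'$, like $\bbG_\beta$, is the unitary similitude group of a division algebra with involution of the second kind, so shares the relevant structural features (simply connected derived group, vanishing Hasse principle obstruction, and existence of $\bbR$-elliptic maximal tori at infinity).
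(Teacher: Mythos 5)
Your proposal is correct and follows exactly the route the paper intends: the paper's own proof is simply the one-line remark that the corollary follows from Theorem \ref{thm:Lefschetz-trace-formula} by the pseudo-stabilization process of \S\ref{sec:pseudo-stabilization}, and your write-up is a faithful unwinding of that process with $\bbG'$ in place of $\bbG_\beta$. In particular you correctly identify the three ingredients the paper relies on implicitly --- that the vanishing of $O_{\gamma_p}(f^*_{\tau,h})$ on non-norms lifts the restriction in Theorem \ref{thm:Lefschetz-trace-formula}, that the Kottwitz sign product formula for the inner form $I$ cancels the local signs, and that Lemma \ref{lem:Kottwitz-simple} (via $Z(\hat{\bbG}')=Z(\hat{\bbG})$) gives $|\mathfrak{K}(I_0/\bbQ)|=1$ so that the analogue of \eqref{eq:c-formula} and Kottwitz's simple stable trace formula apply to $\bbG'$.
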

\begin{proof}
This follows from Theorem~\ref{thm:Lefschetz-trace-formula} by the pseudo-stabilization process in \S\ref{sec:pseudo-stabilization}.
\end{proof}

Combining Corollary~\ref{cor:trace-G} and Corollary~\ref{cor:trace-G'}, we will be able to deduce our main local theorems. We claim that for any irreducible generic representation $\pi_p'$ of $\bbG'(\bbQ_p)$ which is not transferred from $\bbG_\beta(\bbQ_p)$, $\mathrm{tr}\,\pi_p'(f_{\tau,h}^*)=0$. Since the $L^2$-Bernstein variety for $\bbG_\beta(\bbQ_p)$ is a union of connected components of the $L^2$-Bernstein variety for $\bbG'(\bbQ_p)$, the property of not being a transfer from $\bbG_\beta(\bbQ_p)$ is Zariski closed. Thus by \cite[Proposition 3.1]{Shin-AJM}, to prove the claim we may assume that $\pi'_p$ is the local component of an automorphic representation $\pi'$ of $\bbG'$ such that $\mathrm{tr}\,\pi_\infty'(f_{\xi,\infty}^{\bbG'})\ne0$. Choose $f^p\in C_c^\infty(\bbG'(\bbA_f^p))$ such
that its trace on $(\pi')^{p,\infty}$ is nonzero and its trace on all other irreducible admissible representations of $\bbG'(\bbA_f^p)\cong\bbG_\beta(\bbA_f^p)$ vanishes. Then by Corollary~\ref{cor:trace-G'}, $\mathrm{Tr}(\tau\times hf^p|H^*_\xi)$ is a nonzero multiple of $\mathrm{tr}\,\pi_p'(f_{\tau,h}^*)$. If this is nonzero, then by Corollary~\ref{cor:trace-G} (with the same choice of $f^p$), there is an automorphic representation $\pi$ of $\bbG_\beta$ such that $\pi^{p,\infty}\cong(\pi')^{p,\infty}$. But then by Theorem~\ref{thm:global-base-change} and strong multiplicity one, $\pi$ and $\pi'$ have the same transfer to an automorphic representation of $\mathrm{GL}_{n,\bbF}$ and hence $\pi_p'$ is the Jacquet-Langlands transfer of $\pi_p$, contradicting our assumption at the beginning. This proves our claim and then Theorem~\ref{thm:vanishing-property} follows by Lemma~\ref{lem:vanishing-criterion}.\par 
Next we consider an irreducible generic representation $\pi_p$ of $\bbG_\beta(\bbQ_p)$ and let $\pi_p'$ be its Jacquet-Langlands transfer to $\bbG'(\bbQ_p)$. We claim that
\[\tr(z_{\tau,-\mu}*h|\pi_p)=\tr(f_{\tau,h}^*|\pi_p').\]
We may assume that $\pi_p$ is the local component of an automorphic representation $\pi$ of $\bbG_\beta(\bbA)$ such that $\tr\,\pi_\infty(f_{\xi,\infty}^{\bbG})=1$. By Theorem~\ref{thm:global-base-change}, \cite[Theorem 5.1]{Badu} and \cite[Theorem VI.2.9]{HT01}, $\pi_p'$ is the local component of an automorphic representation $\pi'$ of $\bbG'$ such that $\tr\,\pi'_\infty(f_{\xi,\infty}^{\bbG'})=1$ and $\pi_f^p\cong(\pi'_f)^p$. We choose $f^p$ that singles out $\pi_f^p$ as before, then the claim follows from Corollary~\ref{cor:trace-G} and Corollary~\ref{cor:trace-G'}. Combined with the previous claim we proved (that $\mathrm{tr}\,\pi_p'(f_{\tau,h}^*)=0$ if $\pi_p'$ is not transferred from $\bbG_\beta(\bbQ_p)$), we see that $f_{\tau,h}^*$ is a Jacquet-Langlands transfer of $z_{\tau,-\mu}*h$. For any semisimple $\ga\in\bbG_\beta(\bbQ_p)=G(F)$ let $\ga^*\in G^*(F)$ be a matching element. If $\ga=\cN_r\delta$ and hence $\ga^*=\cN_r^*\delta$ for some $\delta\in\bbG_\beta(\bbQ_{p^r})=G(F_r)$, we have 
\[e(G_\ga)O_\ga(z_{\tau,\mu}*h)=O_{\ga^*}(f^*_{\tau,h})=e(G_{\delta\sigma})TO_{\delta\sigma}(\phi_{\tau,h}).\]
If $\ga$ is not conjugate to a norm from $G(F_r)$, then $\ga^*$ is also not a norm from $G(F_r)$ and hence we have
\[e(G_\ga)O_\ga(z_{\tau,\mu}*h)=O_{\ga^*}(f^*_{\tau,h})=0.\]
This shows that $z_{\tau,\mu}*h$ is a base change of $\phi_{\tau,h}$ in the sense of Definition \ref{st_bc_defn}, and hence this finishes the proof of Theorem~\ref{thm:main-local}.
\bibliographystyle{alpha}
\bibliography{shimura}

\vspace{.25in}
\end{document}